\theoremstyle{plain}
\newtheorem{theorem}{Theorem}[section]
\newtheorem{corollary}[theorem]{Corollary}
\newtheorem{lemma}[theorem]{Lemma}
\newtheorem{proposition}[theorem]{Proposition}
\theoremstyle{definition}
\newtheorem{definition}[theorem]{Definition}
\newtheorem{problem}{Problem}
\newtheorem*{question}{Question}
\newtheorem*{notation}{Notation}
\newtheorem*{claim}{Claim}
\newtheorem*{remark}{Remark}
\newtheorem*{case}{Case}
\newtheorem*{ttheorem}{Theorem}
\numberwithin{equation}{section}
\newcommand{\N}{\mathbb{N}}
\newcommand{\si}{\sigma}
\newcommand{\e}{\varepsilon}
\begin{document}
\title{Function spaces not containing $\ell_{1}$}
\author{S.A. Argyros}
\address[S.A. Argyros]{Department of Mathematics,
National Technical University of Athens}
\email{sargyros@math.ntua.gr}
\author{A. Manoussakis}
\address[A. Manoussakis]{Department of Sciences,
Technical University of Crete } \email{amanouss@science.tuc.gr}
\author{M. Petrakis}
\address[M. Petrakis]{Department of Sciences,
Technical University of Crete } \email{minos@science.tuc.gr}
\keywords {Symmetric basis, James function space}
\subjclass{[2000] 46B20, 46B26}
\begin{abstract}
For $\Omega$ bounded and open subset of $\mathbb{R}^{d_{0}}$ and
$X$ a reflexive Banach space with $1$-symmetric basis, the
function space $JF_{X}(\Omega)$ is defined.
This class of spaces
includes the classical James function space.
Every member of this
class is separable and has non-separable dual.
We provide a proof
of topological nature that $JF_{X}(\Omega)$ does not contain an
isomorphic copy of $\ell_{1}$.
We also investigate the structure
of these spaces and their duals.
\end{abstract}
\maketitle
\tableofcontents
\section*{Introduction.}
The study of  separable Banach spaces not containing $\ell_{1}$
and having a non-separable dual was initialized in the middle of
70's with the fundamental papers of R.C. James \cite{J} and
J.Lindenstrauss-C.Stegall~\cite{ls}, where the first examples of
such spaces were provided.
James example is the widely known James
Tree space $JT$, which is a sequence space.
The space $JT$ is also
investigated in \cite{ls} where additionally a function space
sharing similar properties is defined.
The later is called the
James function space $JF$.
The space $JT$ and its variations have
been studied extensively e.g. \cite{AI}, \cite{A}, \cite{BHO},
\cite{H}.
The most impressive member of this class of spaces has
been provided by W.T. Gowers~\cite{G}.
Mixing the $JT$ structure
with $H.I$ constructions, Gowers was able to present a separable
Banach space not containing $\ell_{1}$ such that every subspace
has non-separable dual.
Thus his space does not contain
$\ell_{1}$, $c_{0}$ or a reflexive subspace.
Examples lying
between $JT$ and $H.I$ spaces are also contained in \cite{AT}.
Non
separable versions of $JT$ are contained in \cite{A}, \cite{f}.
In
the present paper we deal with function spaces related to James
function space, on which the norm is defined as follows:

The space $JF$ is the completion of $L^{1}((0,1))$ endowed with
the following norm
$$
\Vert f\Vert_{JF}= \sup\left\{\left (\sum_{j=1}^{m}
(\int_{I_{j}}fd\mu)^{2}\right)^{1/2} :\{I_{j}\}_{j=1}^{m}\,\,
\text{interval\,partition\,of}\,\,[0,1]\right\}\,\, .
$$
Our goal is to define and study norms extending the above norm.
Thus we consider the following class of spaces.
\\
\indent Let $\Omega$ be a bounded open subset of
$\mathbb{R}^{d_{0}}$. We denote by $\mathcal{P}(\Omega)$ the set
of all open parallelepipeds contained in $\Omega$ (i.e each
$T\in\mathcal{P}(\Omega)$ has the form
$T=\prod_{d=1}^{d_{0}}(\alpha_{d},\beta_{d})$,
$\alpha_{d}<\beta_{d}$ for every $d\leq d_{0}$). For
$(X,\Vert\cdot\Vert_{X})$ a reflexive Banach space with
$1-$symmetric basis $(e_{n})_{n}$ the space $JF_{X}(\Omega)$ is
defined as the completion of $L^{1}(\Omega)$ under the norm,
\begin{equation}\label{e2}
\Vert f\Vert_{JF_{X}(\Omega)}=\sup \left\{\Vert\sum_{j=1}^{m}
\left(\int_{T_{j}}fd\mu\right)e_{j}\Vert_{X}
:\{T_{j}\}_{j=1}^{m}\subset\mathcal{P}(\Omega),\,T_{i}\cap
T_{j}=\emptyset\right\}\,\, .
\end{equation}
Clearly for $\Omega=(0,1)$ and $X=\ell_{2}$, $JF_{X}(\Omega)$ is
the James function space $JF$.
Throughout the paper, by $JF_{X}$
we shall denote the space $JF_{X}((0,1))$.

It is an easy observation that for $f\in L^{1}(\Omega)$,
$\Vert f\Vert_{JF_{X}(\Omega)}\leq\Vert f\Vert_{L^{1}}$, hence
$JF_{X}(\Omega)$ is separable.
It turns out that its dual
$JF^{\ast}_{X}(\Omega)$ is non-separable.
The main result of the
paper is the following :

\smallskip
{\bf Theorem A}. For $\Omega$ and $X$ as before the space
$JF_{X}(\Omega)$ does not contain an isomorphic copy of
$\ell_{1}$.
\smallskip

The proof of this result is of topological nature and it is
different from Lindenstrauss - Stegall's proof for $JF$,
\cite{ls}. Thus our argument leads also to a new proof of this
result in the case of $JF$. S.V. Kisliakov, \cite{K}, has also
provided another proof of the fact that $\ell_{1}$ does not embed
in $JF$. His elegant argument is also of topological nature, and
it uses the representation of $JF$ as a subspace of the space of
functions of bounded $2-$variation.

For $\Omega$ subset of $\mathbb{R}^{d_{0}}$, $d_{0}>1$, our
arguments use  properties of the parallelepipeds and it is not
clear to us, if the non embedding of $\ell_{1}$ holds for norms
which are defined by families of convex sets different than
$\mathcal{P}(\Omega)$.

It is worth noting that the structure of $JF_{X}(\Omega)$ depends
on the geometry of the set $\Omega$.
For example if $\Omega$ is a
finite union of parallelepipeds, then on the positive cone of
$L^{1}(\Omega)$ the $\Vert\cdot\Vert_{1}$ and
$\Vert\cdot\Vert_{JF_{X}(\Omega)}$ are equivalent.
This property
is no longer true if $\Omega$ is the Euclidean ball of
$\mathbb{R}^{d_{0}}$, $d_{0}>1$.
However as we show in the fourth
section, $JF_{X}$ is isomorphic to a complemented subspace of
$JF_{X}(\Omega)$, for any $\Omega$ bounded open subset of
$\mathbb{R}^{d_{0}}$.
Also for $d_{0}>1$, $\Omega$ bounded open
subset of $\mathbb{R}^{d_{0}}$, $JF_{X}(\Omega)$ is not isomorphic
to a subspace of $JF_{X}$.

\noindent Let's now pass to describe how this paper is organized.

The first section is devoted to the proof of Theorem A
mentioned above.
We also show that the formal identity
$I:L^{1}(\Omega)\mapsto JF_{X}(\Omega)$ is a Dunford-Pettis
strictly singular operator.

In the second section we show that the Haar system is a Schauder
basis of $JF_{X}$ and that $X$ is isomorphic to a complemented
subspace of $JF_{X}$.

In the third section  we study the quotients of
$JF_{X}^{*}(\Omega)$.
We prove two results which show an essential
difference between the structures of $JF_{X}(\Omega)$, when
$\Omega$ is either $(0,1)$ or a bounded open subset of
$\mathbb{R}^{d_{0}}$ with $d_0>1$.
In particular we show the
following :

\smallskip
{\bf Proposition B}. Let $\Delta$ be a countable dense subset of
$(0,1)$ and let $Y$ be the closed subspace of $JF_{X}$ generated
by the set $\{\chi_{I}: I$ is an interval with  endpoints in
$\Delta\}$. Then
$$
JF_{X}^{\ast}/Y=X^{\ast}_{\mathbb{R}}\,\, .
$$
Here
$X_{\mathbb{R}}$ denotes the space endowed with the norm induced
on $c_{00}(\mathbb{R})$ by the space $X$.
Clearly
$X_{\mathbb{R}}^{*}$ is a reflexive space.
In particular, in the
$JF$ case, we obtain that
$$
JF^{\ast}/Y\equiv\ell_{2}(\mathbb{R})\,\, .
$$
It is well known that $JT$ shares a similar property.
Next we show
the following.

\smallskip
{\bf Proposition C}. Let $d_{0}>1$, and $\Omega$ be a bounded open
subset of $\mathbb{R}^{d_{0}}$.
Then for every  separable subspace
$Y$ of $JF_{X}^{*}(\Omega)$, the quotient $JF_{X}^{*}(\Omega)/Y$
is not reflexive.

\smallskip
Propositions B and C yield that for $\Omega$ as in Proposition C,
$JF^{\ast}_{X}(\Omega)$ is not a quotient of $JF^{\ast}_{X}$.
Hence $JF_{X}(\Omega)$ does not embed in $JF_{X}$.

In section 4, we prove that $JF_{X}( (0,1)^{d_{1}})$ is isomorphic
to a complemented subspace of $JF_{X}(\Omega)$, where $\Omega$
denotes a bounded open subset of $\mathbb{R}^{d_{0}}$ and $d_{1}<
d_{0}$.

The fifth section is devoted to the isomorphic embedding of
$c_{0}$ in $JF$. It is stated in \cite{ls} that there exists a
subsequence of  Rademacher functions equivalent to the usual basis
of $c_{0}$. This is a peculiar property which has as consequence
that $JF$ is not embedded into $JT$. The later holds since $JT$ is
$\ell_{2}$ saturated, \cite{J}, \cite{AI}. In this section we
characterize those reflexive Banach spaces $X$ with $1-$symmetric
basis such that the Rademacher functions in $JF_{X}$ contain a
subsequence equivalent to $c_{0}$ basis. It turns out that these
spaces must satisfy a property, defined as Convex Combination
Property $(CCP)$. $CCP$ trivially holds on $\ell_{p}$ spaces,
$1<p<\infty$, but not in all spaces with $1-$symmetric basis. For
example the Lorentz space $d(w,p)$, where $w=(\frac{1}{n})_{n}$
and $1<p<\infty$, fails this property. Concerning the $CCP$ we
prove the following.

\smallskip
{\bf Theorem D}. The following are equivalent:
\begin{enumerate}
\item The space $X$ satisfies $CCP$.
\item The normalized sequence
$(\frac{r_{n}}{\Vert r_{n}\Vert})_{n\in\N}$ in $JF_{X}$ of
Rademacher  functions contains a subsequence equivalent to the
usual basis of $c_{0}$.
\end{enumerate}
The last section contains the study of  alternative descriptions
of $JF_{X}$ and $JF_{X}^{**}$.
Namely we introduce the space of
functions of $X-$bounded variation, which is defined as follows.
$$
V_{X}=\{f:\;[0,1]\to \mathbb{R}: f(0)=0, \Vert
f\Vert_{V_{X}}<\infty\}
$$
where
$$ \Vert f\Vert_{V_{X}}=\sup\{
\Vert\sum_{i=1}^{n-1}(f(t_{i+1})-f(t_{i}))e_{i}\Vert_{X}
:\mathcal{P}=\{t_{i}\}_{i=1}^{n}\,\,\,\text{partition
of}\,\,\,[0,1]\}.
$$
We also consider the closed subspace $V_{X}^{0}=\{f\in V_{X}:
\lim_{\delta(\mathcal{P})\to 0}\alpha_{X}(f,\mathcal{P})=0\}$ of
$V_{X}$.
This definition extends the corresponding definition of
$V_{p}$, $V_{p}^{0}$, $1<p<\infty$, appeared in \cite{K} and used
in his proof that $\ell_{1}$ does not embed in $JF_{p}$.
It is not
hard to see that $JF_{X}$ is isometric to $V_{X}^{0}$ and moreover
to show the following.

\smallskip
{\bf Theorem E}. The following hold:
\begin{enumerate}
\item $JF_{X}$ is isometric to $V_{X}^{0}$.
\item $JF_{X}^{**}$ is isometric to $V_{X}$.
\item
On the bounded subsets of $V_{X}^{0}$ the weak topology
coincides
with the topology of pointwise convergence in $C[0,1]$.
\end{enumerate}
The above representations of $JF_{X}$ as $V_{X}^{0}$ and its
second dual as  $V_{X}$ have certain advantages.
For example
$JF_{X}$, as a completion of $L^{1}(0,1)$ is not contained in the
set of measurable functions, while $V_{X}^{0}$ is contained in the
continuous functions $C[0,1]$.
Further $V_{X}$ is a set of Baire-1
functions.
The use of $V_{X}^{0}$, $V_{X}$ appears very useful in
the study of the subspaces of $V_{X}^{0}$.
Indeed we first
investigate the properties of $f\in C[0,1]\cap(V_{X}\setminus
V_{X}^{0})$ and  prove the following.

\smallskip
{\bf Proposition F}. A function $f\in C[0,1]\cap(V_{X}\setminus
V_{X}^{0})$ iff  $f$ is a difference of bounded semicontinuous
functions.
\smallskip

For the proof of this result we make use of some recent results
from descriptive set theory, \cite{KL}, \cite{Ro}.
As a
consequence of the above result we obtain the following theorem.

\smallskip
{\bf Theorem G.}  A subspace $Y$ of $V_{X}^{0}$ contains $c_{0}$
iff $C[0,1]\cap (\overline{Y}^{w^{*}}\setminus Y)\not=\emptyset$.
\smallskip

Furthermore for $X=\ell_{p}$, $1<p<\infty$, we get

\smallskip
{\bf Theorem H.} A non reflexive subspace $Y$ of $V_{p}^{0}$
either contains $c_{0}$ or $\ell_{p}$.
\smallskip

Finally we prove the following.

{\bf Theorem I.} A closed subspace $Y$ of $V_{X}^{0}$ has the
point of continuity property, $(PCP)$ iff
$(\overline{Y}^{w^{*}}\setminus Y)\cap C[0,1]=\emptyset$.

Theorems G and H concern the isomorphic structure of the spaces
$JF_{X}$. This is not  completely clarified even in the case of
$JF$. A detailed study of $JF$ has been provided by S.Buechler's
Ph.D. Thesis \cite{Bu}, where the following results are included.
For all $2\leq p<\infty$ and $\e>0$ the space $\ell_{p}$ is
$(1+\e)-$isomorphic to a subspace of $JF$, and also every
normalized weakly null sequence has an unconditional subsequence.
These two results indicate the richness and the regularity of
$JF$. Moreover it is shown that for $1<p<2$, $\ell_{p}$ is not
isomorphic to a subspace of $JF$.

In the last part of the paper we present some open
problems related to our investigation.
\section{The non embedding of $\ell_{1}$ into $JF_{X}(\Omega)$.}
\label{sl1} This section contains the proof that $\ell_{1}$ does
not embed into $JF_{X}(\Omega)$.
We start with some preliminary
result concerning the structure of these spaces.

We recall, from the introduction, that $JF_{X}(\Omega)$ is defined
for each $\Omega$ bounded open subset of $\mathbb{R}^{d_{0}}$, and
$X$ reflexive space with $1-$symmetric basis, and it is the
completion of $L^{1}(\Omega)$ under the norm described in the
introduction, see (\ref{e2}).
A direct application of the triangle
inequality yields that for $f\in L^{1}(\Omega)$,
$$
\Vert f\Vert_{JF_{X}(\Omega)}\leq\Vert f\Vert_{1}
$$
where $\Vert\cdot\Vert_{1}$ denotes the $L^{1}(\Omega)$ norm.
Hence $JF_{X}(\Omega)$ is a separable Banach space.
Further we
recall that $\mathcal{P}(\Omega)$ denotes the set of all open
parallelepipeds contained in $\Omega$.
Every
$T\in\mathcal{P}(\Omega)$ with $\mu_{d_{0}}(T)>0$, $\mu_{d}$
throughout this paper, denotes Lebesgue measure in
$\mathbb{R}^{d}$, defines a bounded linear functional on
$JF_{X}(\Omega)$ under the rule
$$
L^{1}(\Omega)\ni f\mapsto T^{\ast}(f)=\int_{T}fd\mu
$$
We easily see that $\Vert
T^{\ast}\Vert_{JF_{X}^{\ast}(\Omega)}=1$.
Also it is easy to see
that for $T_{1}, T_{2}\in\mathcal{P}(\Omega)$ with
$T_{1}\not=T_{2}$, $\Vert T_{1}^{\ast}-T_{2}^{\ast}\Vert\geq 1$,
hence the space $JF^{\ast}_{X}(\Omega)$ is non-separable.
\begin{remark} The functional $T^{\ast}$ defined by a
parallelepiped $T$ is the same if $T$ is considered either open or
closed. Hence we shall not distinguish the cases if $T$ is open or
closed.

>From the definition of the norm, it is easy to see that, if
$\Omega_{1}=\prod_{i=1}^{d_{0}}(\alpha_{i},\beta_{i})$ and
$\Omega_{2}=\prod_{i=1}^{d_{0}}(\gamma_{i},\delta_{i})$  then
$JF_{X}(\Omega_{1})$ is $1-$isometric to $JF_{X}(\Omega_{2})$.
Also if $\Omega$ is  open bounded subset of $\mathbb{R}^{d_{0}}$
and $T$ an open parallelepiped contained in $\Omega$, then
$JF_{X}(T)$ is $1-$complemented subspace of $JF_{X}(\Omega)$.
\end{remark}
\begin{notation}
Throughout the paper we denote by $X$ a reflexive Banach space
with 1-symmetric basis $(e_{i})_{i}$, and by
$\Vert\cdot\Vert_{X}$, $\Vert\cdot\Vert_{X^{\ast}}$ the norm in
$X$ and $X^{\ast}$ respectively. The space $X$ satisfies the
property $\lim_{n\to\infty}\Vert \sum_{i=1}^{n}e_{i}\Vert=\infty$
(c.f \cite{LT}).
\end{notation}
The following subset of $JF^{\ast}_{X}(\Omega)$ plays a key role
in the proof of the main result of this section :
$$
\mathcal{S}=\left\{\sum_{n=1}^{k}a_{n}T_{n}^{*} :
\{T_{n}\}_{n=1}^{k}\,\,\text{pairwise disjoint elements
of}\,\,\,\mathcal{P}(\Omega)\,\,\text{and} \,\,\,
\Vert\sum_{n=1}^{k}a_{n}e_{n}^{*} \Vert_{X^{*}}\leq 1\right\}.
$$
Observe that the definition of the norm of $JF_{X}(\Omega)$ yields
that for $\phi\in\mathcal{S}$,
$\Vert\phi\Vert_{JF_{X}^{\ast}(\Omega)}\leq 1$.
\begin{lemma}\label{l1}
The set $\mathcal{S}$ norms isometrically the space
$JF_{X}(\Omega)$.
\end{lemma}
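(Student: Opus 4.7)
The plan is to prove the two inequalities
$$
\sup_{\phi \in \mathcal{S}} |\phi(f)| \;=\; \|f\|_{JF_X(\Omega)}
$$
separately, first for $f \in L^1(\Omega)$, and then extend by density.

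The inequality $\sup_{\phi \in \mathcal{S}} |\phi(f)| \leq \|f\|_{JF_X(\Omega)}$ is the easy direction, and is essentially already observed in the paragraph preceding the lemma. Explicitly, for $\phi = \sum_{n=1}^k a_n T_n^* \in \mathcal{S}$ and $f \in L^1(\Omega)$, write
$$
\phi(f) \;=\; \sum_{n=1}^k a_n \int_{T_n} f \, d\mu \;=\; \Bigl\langle \sum_{n=1}^k a_n e_n^{\,*}, \ \sum_{n=1}^k \Bigl(\int_{T_n} f \, d\mu\Bigr) e_n \Bigr\rangle,
$$
and apply the duality between $X$ and $X^*$ together with the defining inequality of $\|\cdot\|_{JF_X(\Omega)}$.

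For the reverse inequality, I would fix $f \in L^1(\Omega)$ and $\varepsilon > 0$, choose pairwise disjoint $T_1, \dots, T_m \in \mathcal{P}(\Omega)$ with
$$
\Bigl\|\sum_{j=1}^m \Bigl(\int_{T_j} f \, d\mu\Bigr) e_j \Bigr\|_X > \|f\|_{JF_X(\Omega)} - \varepsilon,
$$
and then use $X$--$X^*$ duality to find $\xi \in X^*$ with $\|\xi\|_{X^*} = 1$ attaining this norm on $\sum_j(\int_{T_j} f\, d\mu) e_j$. Setting $a_j := \xi(e_j)$ and using $1$-unconditionality of the basis, truncation $\xi \mapsto \sum_{j=1}^m a_j e_j^{\,*}$ does not increase the $X^*$-norm, so $\sum_{j=1}^m a_j e_j^{\,*}$ still has norm at most $1$; moreover it agrees with $\xi$ on $\sum_j(\int_{T_j} f\, d\mu) e_j$ because the latter is supported on $\{1,\dots,m\}$. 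Hence $\phi := \sum_{j=1}^m a_j T_j^* \in \mathcal{S}$ and $\phi(f) > \|f\|_{JF_X(\Omega)} - \varepsilon$, as required.

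Finally, since $L^1(\Omega)$ is dense in $JF_X(\Omega)$ and each $\phi \in \mathcal{S}$ has norm at most $1$ on $JF_X(\Omega)$, a standard $3\varepsilon$ argument extends the equality $\sup_{\phi \in \mathcal{S}} |\phi(f)| = \|f\|_{JF_X(\Omega)}$ to all $f \in JF_X(\Omega)$. The only nontrivial point is the truncation step: one must observe that the restriction of a functional in $X^*$ to the span of $e_1,\dots,e_m$ is $1$-bounded — this is where the $1$-symmetric (equivalently $1$-unconditional) hypothesis on the basis is used. Beyond that, the argument is a direct unpacking of definitions and Hahn–Banach in $X$.
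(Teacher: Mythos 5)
Your proof is correct and follows essentially the same route as the paper: the easy inequality from $\mathcal{S}\subset B_{JF_X^*(\Omega)}$, and the reverse by choosing near-optimal disjoint parallelepipeds and a norming functional $\sum_{j=1}^m a_j e_j^*$ for the resulting vector in $X$. The truncation observation and the density step are details the paper leaves implicit, but they do not change the argument.
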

\begin{proof}
Indeed, since $\mathcal{S}$ is a subset of $B_{
JF_{X}^{\ast}(\Omega)}$ we obtain that for $f\in L^{1}(\Omega)$,
$$
\sup\{\langle\phi, f\rangle :\phi\in\mathcal{S}\} \leq\Vert
f\Vert_{JF_{X}(\Omega)}\,\, .
$$
For the converse given $\varepsilon>0$,
$f\in L^{1}(\Omega)$ choose $\{T_{i}\}_{i=1}^{n}$ disjoint
elements of $\mathcal{P}(\Omega)$ such that
$$
\Vert f\Vert_{JF_{X}(\Omega)}-\varepsilon\leq \Vert\sum_{i=1}^{n}
\left(\int_{T_{i}}fd\mu\right)e_{i}\Vert_{X}.
$$
Next choose
$\{\alpha_{i}\}_{i=1}^{n}$ such that
$\Vert\sum_{i=1}^{n}\alpha_{i}e_{i}^{\ast}\Vert_{X^{\ast}}=1$
and
$$
\sum_{i=1}^{n}\alpha_{i}\int_{T_{i}}fd\mu =
\Vert\sum_{i=1}^{n}\left(\int_{T_{i}}fd\mu\right)e_{i}\Vert_{X}.
$$
Clearly setting $\phi=\sum_{i=1}^{n}\alpha_{i}T_{i}^{\ast}$ we
obtain that $\phi\in\mathcal{S}$ and
$$
\Vert f\Vert_{JF_{X}(\Omega)}-\varepsilon\leq\langle
\phi,f\rangle\,\,.
$$
\end{proof}
As a consequence we obtain the following
\begin{lemma}\label{l2}
The set  $\overline{\mathcal{S}}^{w^{\ast}}$ contains the extreme
points of $B_{JF_{X}^{\ast}(\Omega)}$. Hence
$B_{JF_{X}^{\ast}(\Omega)}$ =
$\overline{co}^{w^{\ast}}(\mathcal{S})$.
\end{lemma}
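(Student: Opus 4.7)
The plan is to deduce both assertions from the bipolar theorem (equivalently Hahn--Banach separation in the $w^{*}$ topology) combined with Milman's converse to Krein--Milman. The heart of the matter is to upgrade the norming property of $\mathcal{S}$ established in Lemma~\ref{l1} into a statement about the whole $w^{*}$-closed convex hull of $\mathcal{S}$. Once that is done, the extreme point claim follows for free.

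First I would observe that $\mathcal{S}$ is balanced (i.e.\ symmetric): if $\phi=\sum_{n=1}^{k}a_{n}T_{n}^{*}\in\mathcal{S}$, then replacing $(a_{n})$ by $(-a_{n})$ (which does not change the norm of $\sum a_{n}e_{n}^{*}$ in $X^{*}$) shows $-\phi\in\mathcal{S}$, and more generally $\lambda\phi\in\mathcal{S}$ for $|\lambda|\le 1$. Also $\mathcal{S}\subset B_{JF_{X}^{*}(\Omega)}$ by construction. Consequently
$$
\sup_{\phi\in\mathcal{S}}|\langle\phi,f\rangle|=\sup_{\phi\in\mathcal{S}}\langle\phi,f\rangle=\Vert f\Vert_{JF_{X}(\Omega)}
$$
for every $f\in JF_{X}(\Omega)$, by Lemma~\ref{l1}.

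Next I would show $\overline{\operatorname{co}}^{w^{*}}(\mathcal{S})=B_{JF_{X}^{*}(\Omega)}$. The inclusion $\subseteq$ is immediate since $B_{JF_{X}^{*}(\Omega)}$ is convex, $w^{*}$-closed (Banach--Alaoglu) and contains $\mathcal{S}$. For the reverse, suppose there existed $\phi_{0}\in B_{JF_{X}^{*}(\Omega)}\setminus\overline{\operatorname{co}}^{w^{*}}(\mathcal{S})$. By the Hahn--Banach separation theorem applied in the locally convex space $(JF_{X}^{*}(\Omega),w^{*})$, whose continuous dual is $JF_{X}(\Omega)$, one finds $f\in JF_{X}(\Omega)$ and $\alpha<\beta$ with
$$
\langle\phi,f\rangle\le\alpha<\beta\le\langle\phi_{0},f\rangle\quad\text{for every }\phi\in\overline{\operatorname{co}}^{w^{*}}(\mathcal{S}).
$$
Taking the supremum over $\phi\in\mathcal{S}$ on the left, and using the displayed equality together with $\|\phi_{0}\|\le 1$ on the right, I get $\Vert f\Vert_{JF_{X}(\Omega)}\le\alpha<\beta\le\langle\phi_{0},f\rangle\le\Vert f\Vert_{JF_{X}(\Omega)}$, a contradiction.

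Finally, since $B_{JF_{X}^{*}(\Omega)}=\overline{\operatorname{co}}^{w^{*}}(\overline{\mathcal{S}}^{w^{*}})$ and $\overline{\mathcal{S}}^{w^{*}}$ is $w^{*}$-compact (as a $w^{*}$-closed subset of the $w^{*}$-compact unit ball), Milman's theorem yields that every extreme point of $B_{JF_{X}^{*}(\Omega)}$ lies in $\overline{\mathcal{S}}^{w^{*}}$, proving the lemma. There is no real obstacle here beyond keeping track of the symmetry of $\mathcal{S}$, which is exactly what allows the one-sided norming provided by Lemma~\ref{l1} to be promoted to the two-sided separation inequality used above.
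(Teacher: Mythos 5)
Your proof is correct, but it runs in the opposite logical direction from the paper's. The paper first establishes the extreme point claim directly: it argues by contradiction, takes an extreme point $x^{*}\notin\overline{\mathcal{S}}^{w^{*}}$, invokes Choquet's lemma that the $w^{*}$-slices at an extreme point of the $w^{*}$-compact ball form a neighborhood base, produces a slice $S(x^{*},f,t)$ disjoint from $\mathcal{S}$, and then contradicts the norming property of Lemma~\ref{l1} by an explicit computation with a simple function $f$ and a family of parallelepipeds nearly attaining its norm; the identity $B_{JF_{X}^{\ast}(\Omega)}=\overline{co}^{w^{\ast}}(\mathcal{S})$ is then deduced from Krein--Milman. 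You instead prove the convex hull identity first, via Hahn--Banach separation in the $w^{*}$-topology (the bipolar argument, using that $\mathcal{S}$ is balanced and that Lemma~\ref{l1} makes the support functional of $\mathcal{S}$ equal to the norm of $JF_{X}(\Omega)$), and then extract the extreme point statement from Milman's partial converse to Krein--Milman applied to the $w^{*}$-compact set $\overline{\mathcal{S}}^{w^{*}}$. Both routes are sound and rest on the same input, namely Lemma~\ref{l1}; yours is arguably cleaner in that it replaces the slice lemma and the explicit $\varepsilon$-bookkeeping by two off-the-shelf theorems, while the paper's version is more self-contained and exhibits concretely which functionals witness the separation. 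One small remark: the balancedness observation, while true and harmless, is not strictly needed, since Lemma~\ref{l1} already identifies the one-sided supremum $\sup_{\phi\in\mathcal{S}}\langle\phi,f\rangle$ with $\Vert f\Vert_{JF_{X}(\Omega)}$, which is all the separation argument requires.
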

\begin{proof}
Assume on the contrary, that there exists an extreme point
$x^{*}\in B_{JF_{X}^{*}}$ with
$x^{*}\not\in\overline{\mathcal{S}}^{w^{*}}$. Since the $w^{*}-$
slices of $x^{*}$ define a neighborhood basis for the
$w^{*}$-topology, there exists a slice $S(x^{*},f,t)$ disjoint
from $\mathcal{S}$. We may assume further that $f$ is linear
combination of characteristic functions and $\Vert f\Vert=1$. From
the above we get that
\begin{enumerate}
\item For some
$\varepsilon>0$, $x^{*}(f)>\sup\{w^{\ast}(f) :w^*\in\mathcal{S}\}
+\varepsilon $.
\item There exists
$\{T_{j}\}_{j=1}^{n}\subset\mathcal{P}(\Omega)$ such that
$$
1=\Vert f \Vert_{JF_{X}(\Omega)}\leq \Vert \sum_{j=1}^{n}
\left(\int_{T_{j}}fd\mu\right)e_{j}\Vert_{X}
+\frac{\varepsilon}{2} \,\, .
$$
\end{enumerate}
Let $x^{\ast}=\sum_{j=1}^{n}\beta_{j}e_{j}^{\ast}\in B_{X^{\ast}}$
such that $x^{\ast}(\sum_{j=1}^{n}
\left(\int_{T_{j}}fd\mu\right)e_{j})= \Vert \sum_{j=1}^{n}
\left(\int_{T_{j}}fd\mu\right)e_{j}\Vert$. We set
$w^{\ast}=\sum_{j=1}^{n}b_{j}T_{j}^{*}$ and observe that
$w^{\ast}\in\mathcal{S}$ and also
$$
w^{\ast}(f)=\Vert \sum_{j=1}^{n} \left(
\int_{T_{j}}fd\mu\right)e_{j}\Vert\,\,.
$$
Summing up all the above we obtain
$$
1-\frac{\varepsilon}{2}\leq\left<f,w^{\ast}\right>
<x^{*}(f)-\varepsilon\leq 1-\varepsilon\,\, ,
$$
a contradiction, and the proof is complete.
\end{proof}
\begin{lemma}\label{l3}
Let $(T_{n})_{n\in \mathbb{N}}$ be a subset of
$\mathcal{P}(\Omega)$ and suppose that
$w^{*}-\lim_{n\to\infty}T_{n}^{*}=s^{\ast}$.
Then either
$s^{\ast}$ is equal to the characteristic function of a
parallelepiped $T$ with $T\in\mathcal{P}(\Omega)$ and
$T_{n}\rightarrow T$ pointwise or $s^{\ast}=0$ and
$\mu_{d_{0}}(T_{n})\rightarrow 0.$
\end{lemma}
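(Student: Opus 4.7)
The plan is to reduce everything to convergence of the coordinate parameters defining each $T_n$, then translate that back to weak* convergence via dominated convergence and the absolute continuity of the integral. Write $T_n=\prod_{d=1}^{d_0}(\alpha_d^n,\beta_d^n)$. Since $\Omega$ is bounded, all $2d_0$ coordinate sequences $(\alpha_d^n)_n,(\beta_d^n)_n$ lie in a fixed compact interval, so after passing to a subsequence I may assume $\alpha_d^n\to\alpha_d$ and $\beta_d^n\to\beta_d$ with $\alpha_d\leq\beta_d$ for every $d\leq d_0$.

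Two cases then split the argument. If $\alpha_d=\beta_d$ for some $d$, then $\mu_{d_0}(T_n)=\prod_d(\beta_d^n-\alpha_d^n)\to 0$; absolute continuity of the Lebesgue integral gives $T_n^{*}(f)=\int_{T_n}f\,d\mu\to 0$ for every $f\in L^{1}(\Omega)$. Since the sequence $(T_n^{*})$ is norm-bounded (each functional has norm one) and $L^{1}(\Omega)$ is dense in $JF_X(\Omega)$, this pointwise convergence on a dense subset extends to the whole space, forcing $s^{*}=0$. If instead $\alpha_d<\beta_d$ for every $d$, set $T=\prod_{d=1}^{d_0}(\alpha_d,\beta_d)$. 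For any $x\in T$ the strict inequalities $\alpha_d<x_d<\beta_d$ eventually hold with $\alpha_d^n,\beta_d^n$ in place of $\alpha_d,\beta_d$, so $x\in T_n\subset\Omega$ for large $n$; hence $T\subset\Omega$ and $T\in\mathcal{P}(\Omega)$. Moreover $\chi_{T_n}\to\chi_T$ pointwise off the null set $\partial T$, so dominated convergence gives $T_n^{*}(f)\to T^{*}(f)$ for $f\in L^{1}(\Omega)$, and $s^{*}=T^{*}$ by the same density argument.

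To upgrade from the extracted subsequence to the full sequence I would invoke uniqueness of the weak* limit. If $s^{*}=0$ but some subsequence satisfied $\mu_{d_0}(T_{n_k})\geq c>0$, a further parameter extraction would yield a nondegenerate limit parallelepiped and hence a nonzero weak* limit, contradicting $s^{*}=0$; so $\mu_{d_0}(T_n)\to 0$. Symmetrically, if $s^{*}=T^{*}\neq 0$, every convergent parameter subsequence must reproduce exactly the same $T$ (otherwise we would obtain either a distinct nonzero limit functional or the zero functional), which forces the full coordinate sequences to converge and $T_n\to T$ pointwise. The only delicate point is verifying $T\in\mathcal{P}(\Omega)$ in the nondegenerate case, since a priori the closure of $T$ could touch $\partial\Omega$; this is handled by the strict-inequality argument above, and the rest of the proof is a routine compactness-plus-convergence-theorem package.
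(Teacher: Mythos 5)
Your argument is correct and follows essentially the same route as the paper's proof: extract a convergent (sub)sequence of the parallelepipeds by compactness, split into the degenerate case (interior of the limit empty, so $\mu_{d_0}(T_n)\to 0$ and $s^{*}=0$) and the nondegenerate case (limit $T\in\mathcal{P}(\Omega)$ with $s^{*}=T^{*}$), identifying the limit functional through convergence of the integrals $\int_{T_n}f\,d\mu$. Your version merely replaces the paper's pointwise limit of the closures by convergence of the defining coordinates and spells out the routine steps the paper leaves implicit (the density-plus-boundedness passage from $L^{1}(\Omega)$ to $JF_X(\Omega)$ and the uniqueness-of-weak*-limit argument upgrading subsequences to the full sequence).
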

\begin{proof}
Choose any subsequence $(T_{n_{k}})_{k}$ such that
$\overline{T}_{n_{k}}\rightarrow S$ pointwise.
If
$T^{o}\not=\emptyset$ then $ S^{o}\in\mathcal{P}(\Omega)$, and
$\lim_{k\to\infty}\mu(T_{n_{k}}\bigtriangleup S)=0$.
Set $S^{o}=T$.
It follows that $\int_{T_{n_{k}}}\psi \rightarrow
\int_{T}\psi, \forall\psi\in L^{1}(\Omega)$, and hence
$\int_{}\psi s^{\ast }=\int_{T}\psi$ for all $\psi\in
L^{1}(\Omega)$, which yields that $s^{\ast}=T^{*}.$
If
$S^{o}=\emptyset$ then it is easy to see that
$\mu_{d_{0}}(T_{n})\rightarrow 0$ and
$T_{n}^{\ast}\stackrel{w^{\ast}}{\longrightarrow} 0\,.$
\end{proof}
\noindent As a consequence we obtain the following corollary.
\begin{corollary}\label{l4}
Let $n_{0}\in \mathbb{N}$ be fixed,
$s_{n}^{\ast}=\sum_{i=1}^{n_{0}}a_{i,n}T_{i,n}^{*}$ with
$\Vert\sum_{i=1}^{n_{0}}\alpha_{i,n}e_{i}^{*}\Vert_{X^{\ast}}
\leq 1.$
Assume that $w^{*}-\lim_{n\to\infty}s_{n}^{\ast}=s^{\ast }.$
Then there exist $(a_{i})_{i}^{n}$ in $\mathbb{R}$ and disjoint
parallelepipeds $\{T_{i}\}_{i=1}^{n_{0}}$ such that
$\Vert\sum_{i=1}^{n_{0}}\alpha_{i}e_{i}^{*}\Vert_{X^{\ast}}
\leq 1$ and $s^{\ast }=\sum_{i=1}^{n_{0}}a_{i}T_{i}^{*}.$
\end{corollary}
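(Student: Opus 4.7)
The plan is to use a finite diagonal argument to extract convergent subsequences of both the coefficient vectors and the parallelepipeds, then to read off the limit using Lemma \ref{l3}.

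First I would bound the scalar coefficients. Since $(e_i^*)_i$ is the $1$-symmetric basis of the reflexive space $X^*$, its biorthogonal functionals are exactly the basis vectors $e_i\in X^{**}=X$, which are uniformly bounded. Hence $|a_{i,n}|=|e_i(\sum_{j=1}^{n_0}a_{j,n}e_j^*)|\le\|e_i\|$ for every $n$ and every $i\le n_0$. Because $n_0$ is fixed, a finite diagonal extraction produces a subsequence (still indexed by $n$) for which $a_{i,n}\to a_i\in\mathbb{R}$ for every $i\le n_0$. Applying Lemma \ref{l3} coordinate by coordinate and passing to further subsequences, I may also assume that for each $i\le n_0$ the sequence $(T_{i,n}^*)_n$ is $w^*$-convergent, the limit being either $T_i^*$ for some $T_i\in\mathcal{P}(\Omega)$ with $T_{i,n}\to T_i$ pointwise, or $0$ with $\mu_{d_0}(T_{i,n})\to 0$. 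Let $I\subseteq\{1,\dots,n_0\}$ be the set of indices where the first alternative holds.

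Taking the $w^*$-limit coordinatewise in $s_n^*=\sum_{i=1}^{n_0}a_{i,n}T_{i,n}^*$ gives $s^*=\sum_{i\in I}a_iT_i^*$. For the norm bound, the sequence $\sum_{i=1}^{n_0}a_{i,n}e_i^*$ is bounded in the reflexive space $X^*$ and converges coordinatewise to $\sum_{i\in I}a_ie_i^*$; hence it converges weakly, and weak lower semicontinuity of the norm gives $\|\sum_{i\in I}a_ie_i^*\|_{X^*}\le\liminf_n\|\sum_{i=1}^{n_0}a_{i,n}e_i^*\|_{X^*}\le 1$. To produce the full family of $n_0$ disjoint parallelepipeds required by the statement, for each $i\notin I$ I set $a_i=0$ and pick any parallelepiped $T_i\subset\Omega$ disjoint from the previously chosen ones; only finitely many such choices are needed, and they are possible because $\Omega$ is open.

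The main obstacle is verifying that the limit parallelepipeds $\{T_i\}_{i\in I}$ are pairwise disjoint: disjointness of the approximating $T_{i,n}, T_{j,n}$ is only guaranteed as open sets, and their closures can touch in the limit. However, if some $x$ belonged to $T_i\cap T_j$ with $i\ne j$ in $I$, then $x$ would lie in the interior of both limiting closed parallelepipeds, and hence some neighborhood of $x$ would be contained in both $T_{i,n}$ and $T_{j,n}$ for all large $n$, contradicting the disjointness of $T_{i,n}$ and $T_{j,n}$. This rules out intersections and completes the argument.
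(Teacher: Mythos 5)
Your argument is correct and follows essentially the route the paper intends, namely reading the corollary off Lemma~\ref{l3} coordinate by coordinate after a finite diagonal extraction, with the norm bound coming from (weak) lower semicontinuity and the disjointness of the limit parallelepipeds checked via convergence of their defining coordinates. One harmless imprecision: the coordinatewise limit of $\sum_{i=1}^{n_{0}}a_{i,n}e_{i}^{*}$ is $\sum_{i=1}^{n_{0}}a_{i}e_{i}^{*}$ rather than $\sum_{i\in I}a_{i}e_{i}^{*}$, but since the basis is $1$-unconditional the restriction to $I$ has norm no larger, so the bound $\Vert\sum_{i\in I}a_{i}e_{i}^{*}\Vert_{X^{*}}\leq 1$ still follows.
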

We define
$$
V=\{\sum_{n}\alpha_{n}e^{*}_{n}\in B_{X^{*}}:
\{\vert\alpha_{n}\vert\}_{n}\,\,\text{is in decreasing order}\}.
$$
It is easy to see that $V$ is a $w-$compact subset of
$B_{X^{*}}.$
\begin{lemma}\label{l5}
Let $s^{*}_{n}\in \mathcal{S}$ and
$w^{*}-\lim_{n\to\infty}s^{*}_{n}= s^{\ast }.$ Then $ s^{\ast }$
is of the form $\sum_{i=1}^{\infty }a_{i}T_{i}^{*},$ with $\Vert
\sum_{i=1}^{\infty }a_{i}e_{i}^{*}\Vert\leq 1$ and
$\{T_{i}\}_{i}\subset\mathcal{P}(\Omega)$ pairwise
disjoint.
Moreover
\begin{equation}\label{e11}
\Vert\sum_{i}a_{i}T_{i}^{*}\Vert\leq
\Vert\sum_{i}a_{i}e_{i}^{*}\Vert_{X^{*}}\,\, ,
\end{equation}
and hence
$\mathcal{\bar{S}}^{\Vert\cdot\Vert}=\mathcal{\bar{S}}^{w^{*}}.$
\end{lemma}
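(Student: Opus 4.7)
\emph{Proof proposal.} The plan is to extract enough convergence from the sequence $(s_n^{\ast})$ to identify $s^{\ast}$ explicitly as a series $\sum_i a_iT_i^{\ast}$, then to deduce the norm estimate (\ref{e11}) by $X$--$X^{\ast}$ duality, and finally the equality of closures by approximating tails.

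Since $(e_i)$ is $1$-symmetric, we may permute the indices in each $s_n^{\ast}$ so that $(|a_{i,n}|)_i$ is in decreasing order; this leaves the value of $s_n^{\ast}$ unchanged and places $b_n:=\sum_i a_{i,n}e_i^{\ast}\in V$. By weak compactness of $V$ in $X^{\ast}$ we pass to a subsequence with $b_n\to b:=\sum_i a_ie_i^{\ast}$ weakly; in particular $a_{i,n}\to a_i$ for every $i$, and since $(e_i^{\ast})$ is boundedly complete in the reflexive $X^{\ast}$, the series $b$ converges in norm with $\Vert b\Vert_{X^{\ast}}\leq 1$. A further diagonal extraction, together with Lemma \ref{l3}, ensures that for each fixed $i$ either $T_{i,n}^{\ast}\stackrel{w^{\ast}}{\longrightarrow}T_i^{\ast}$ for some $T_i\in\mathcal{P}(\Omega)$, or $T_{i,n}^{\ast}\stackrel{w^{\ast}}{\longrightarrow}0$ because $\mu_{d_0}(T_{i,n})\to 0$. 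Absorbing the degenerate indices into zero terms and relabeling, we obtain a (possibly finite) family $\{T_i\}_i$ of pairwise disjoint parallelepipeds---disjointness of the interiors being preserved by pointwise limits of the closures of disjoint open parallelepipeds.

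The crux is to identify $s^{\ast}$ with $\sum_i a_iT_i^{\ast}$. Since the linear span of $\{\chi_T:T\in\mathcal{P}(\Omega)\}$ is dense in $L^1(\Omega)$, hence in $JF_X(\Omega)$, it suffices to compare both sides on each $\chi_T$. For every fixed $i$, $a_{i,n}\mu(T_{i,n}\cap T)\to a_i\mu(T_i\cap T)$, so the question reduces to interchanging $\lim_n$ with $\sum_i$. Here the decreasing order combined with $1$-unconditionality yields the crucial bound
$$
|a_{N,n}|\;\leq\;\frac{\Vert b_n\Vert_{X^{\ast}}}{\Vert\sum_{i=1}^N e_i^{\ast}\Vert_{X^{\ast}}}\;\leq\;\frac{1}{\Vert\sum_{i=1}^N e_i^{\ast}\Vert_{X^{\ast}}},
$$
and the denominator tends to infinity because by reflexivity $X^{\ast}$ does not contain $c_0$ and so the property stated in the Notation block applies equally to $(e_i^{\ast})$. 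Hence $\bigl|\sum_{i>N}a_{i,n}\mu(T_{i,n}\cap T)\bigr|\leq|a_{N+1,n}|\mu(T)\to 0$ uniformly in $n$, legitimizing the interchange and giving $\langle s^{\ast},\chi_T\rangle=\sum_i a_i\mu(T_i\cap T)$.

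Inequality (\ref{e11}) then follows by duality: for $f\in JF_X(\Omega)$ with $\Vert f\Vert_{JF_X(\Omega)}\leq 1$, disjointness of $\{T_i\}$ gives $\Vert\sum_{i=1}^M(\int_{T_i}f\,d\mu)e_i\Vert_X\leq 1$ for every $M$, and the reflexive $1$-symmetric structure of $X$ upgrades this to norm convergence of the full series with the same bound; the $X$--$X^{\ast}$ duality pairing then yields $\bigl|\langle\sum_i a_iT_i^{\ast},f\rangle\bigr|\leq\Vert b\Vert_{X^{\ast}}$, which is (\ref{e11}). Finally, for $\overline{\mathcal{S}}^{\Vert\cdot\Vert}=\overline{\mathcal{S}}^{w^{\ast}}$: each partial sum $\sum_{i=1}^N a_iT_i^{\ast}$ lies in $\mathcal{S}$ (by $1$-unconditionality) and by (\ref{e11}), $\Vert\sum_{i>N}a_iT_i^{\ast}\Vert\leq\Vert\sum_{i>N}a_ie_i^{\ast}\Vert_{X^{\ast}}\to 0$, so the inclusion $\supseteq$ follows, the reverse being automatic. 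The main obstacle is the uniform tail bound displayed above; the remainder is diagonal bookkeeping and standard duality.
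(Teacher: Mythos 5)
Your proposal is correct and follows essentially the same route as the paper: reorder each $s_n^{\ast}$ into decreasing modulus so the coefficient vectors lie in the $w$-compact set $V$, diagonalize on the parallelepipeds via Lemma~\ref{l3}, verify $w^{\ast}$-convergence to $\sum_i a_iT_i^{\ast}$ by testing on $\chi_T$ with a uniform tail bound coming from $\Vert\sum_{i=1}^N e_i^{\ast}\Vert_{X^{\ast}}\to\infty$, and then read off (\ref{e11}) and the equality of closures. Your explicit bound $|a_{N,n}|\leq 1/\Vert\sum_{i=1}^{N}e_i^{\ast}\Vert_{X^{\ast}}$ is just a spelled-out version of the paper's appeal to reflexivity at the same step.
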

\begin{proof}
Let $s^{*}_{n}=\sum_{i=1}^{k_{n}}a_{i,n}T_{i,n}^{*}$, where
$\{T_{i,n}\}_{i=1}^{k_{n}}$ are pairwise disjoint and
$\Vert\sum_{i=1}^{k_{n}}a_{i,n}e_{i}^{*}\Vert\leq 1.$ From the
previous corollary we may assume that $\lim_{n}k_{n}=\infty$, and
also that for each $n\in\mathbb{N}$, $\vert a_{i,n}\vert\geq\vert
a_{i+1,n}\vert $ for $i=1,2,\ldots,k_{n}-1.$ Hence the sequence
$(\sum_{i=1}^{k_{n}}a_{i,n}e_{i}^{*})_{n}\subset V.$ Since $V$ is
$w-$compact, by passing to a subsequence if it is necessary we
assume that there exists a
$x^{*}=\sum_{i=1}^{\infty}\alpha_{i}e_{i}^{*}\in V$ such that
$w-\lim_{n\to\infty}\sum_{i=1}^{k_{n}}a_{i,n}e_{i}^{*}=x^{*}.$ We
may also assume that there exists a sequence $\{T_{i}\}_{i}$ of
disjoint parallelepipeds in $\mathcal{P}(\Omega)$ such that
$\lim_{n\to\infty}\mu_{d_{0}}(T_{i,n}\triangle T_{i})=0$, for
every $i=1,2,\ldots$ We set
$u^{*}=\sum_{i=1}^{\infty}\alpha_{i}T_{i}^{*}.$ Since $x^{*}\in
B_{X^{*}}$ we have that $u^{*}\in B_{JF_{X}^{*}(\Omega)}.$
\begin{claim}
$w^{*}-\lim_{n\to\infty}s^{\ast }_{n}= u^{*}$\,\,.
\end{claim}
\noindent {\it Proof of the Claim}. Let $\varepsilon >0$ and
$f=\chi_{T}$, $T$ a parallelepiped in $\mathcal{P}(\Omega).$
Since for every $n\in\mathbb{N}$ the sequence $\{a_{i,n}\}_{i}$
is decreasing and the space $X$ is reflexive, we obtain that there
exists $i_{0}\in\mathbb{N}$ such that $\vert
a_{i,n}\vert<\varepsilon$ for every $i\geq i_{0}$ and every
$n\in\mathbb{N}.$ We may also assume that $\Vert
\sum_{i>i_{0}}a_{i}e_{i}^{*}\Vert <\varepsilon.$ Let
$n_{0}\in\mathbb{N}$ such that for every $n\geq n_{0}$, $k_{n}\geq
i_{0}$, $\vert a_{i,n}-\alpha_{i}\vert
\leq\frac{\varepsilon}{i_{0}}$ and
$\mu_{d_{0}}(T_{i,n}\bigtriangleup T_{i})
\leq\frac{\varepsilon\Vert f\Vert}{i_{0}}$ for every
$i=1,2,\ldots,i_{0}.$ Then for every $n\geq n_{0}$ we have that,
\begin{align*}
\vert &\sum_{i=1}^{k_{n}}a_{i,n}T_{i,n}^{*}(\chi_{T})-
\sum_{i=1}^{\infty}a_{i}T_{i}^{*}(\chi_{T})\vert
\\
&\leq\vert\sum_{i=1}^{i_{0}}a_{i,n}T_{i,n}^{*}(\chi_{T})
-\sum_{i=1}^{i_{0}}a_{i}T_{i}^{*}(\chi_{T})\vert +
\vert\sum_{i=i_{0}+1}^{k_{n}}a_{i,n}T_{i,n}^{*}(\chi_{T})\vert
+\sum_{i=i_{0}+1}^{\infty}a_{i}T_{i}^{*}(\chi_{T})\vert
\\
&\leq\vert\sum_{i=1}^{i_{0}}a_{i,n}T_{i,n}^{*}(\chi_{T})-
\sum_{i=1}^{i_{0}}a_{i}T_{i}^{*}(\chi_{T})\vert+\varepsilon
\sum_{i=i_{0}+1}^{k_{n}}\mu_{d_{0}}(T_{i,n}\cap T) +
\Vert\sum_{i=i_{0}+1}^{\infty}a_{i}T_{i}^{*}\Vert\,
\Vert\chi_{T}\Vert
\\
&\leq\varepsilon\Vert\chi_{T}\Vert + \varepsilon\Vert\chi_{T}\Vert
+\varepsilon\Vert\chi_{T}\Vert +\varepsilon\Vert\chi_{T}\Vert\leq
4\varepsilon\Vert\chi_{T}\Vert\,\,.
\end{align*}
Since the linear span of the characteristic of parallelepipeds in
$\Omega$ is a dense subset of $JF_{X}(\Omega)$, we have the
result. Inequality~(\ref{e11}) follows directly from the
definition of the norming set $\mathcal{S}$ and immediately
implies that $\mathcal{\bar{S}}^{\Vert\cdot\Vert}=
\mathcal{\bar{S}}^{w^{*}}.$
\end{proof}
\begin{corollary}\label{l6}
Let $(f_{n})_{n}$ be a bounded sequence in $JF_{X}(\Omega).$
Then $(f_{n})_{n}$ is weakly Cauchy iff for every
$T\in\mathcal{P}(\Omega)$ the numerical sequence $T^{*}(f_{n})$
is convergent.
\end{corollary}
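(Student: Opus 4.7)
The forward implication is immediate, since each parallelepiped $T\in\mathcal{P}(\Omega)$ of positive measure defines an element $T^{\ast}\in JF_{X}^{\ast}(\Omega)$. For the converse, my plan is to combine Rainwater's theorem with the structural description of $\overline{\mathcal{S}}^{w^{\ast}}$ furnished by Lemmas~\ref{l2} and~\ref{l5}. Recall the weakly-Cauchy form of Rainwater's theorem: a bounded sequence $(f_{n})$ in a Banach space $E$ is weakly Cauchy iff $(\phi(f_{n}))_{n}$ is Cauchy for every extreme point $\phi$ of $B_{E^{\ast}}$ (this follows from the usual weak-convergence statement applied to the difference sequences $f_{n_{k}}-f_{m_{k}}$). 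By Lemma~\ref{l2} every extreme point of $B_{JF_{X}^{\ast}(\Omega)}$ lies in $\overline{\mathcal{S}}^{w^{\ast}}$, and by Lemma~\ref{l5} this set coincides with $\overline{\mathcal{S}}^{\Vert\cdot\Vert}$, its elements being of the form $\phi=\sum_{i=1}^{\infty}a_{i}T_{i}^{\ast}$ with pairwise disjoint $T_{i}\in\mathcal{P}(\Omega)$ and $\Vert\sum_{i=1}^{\infty}a_{i}e_{i}^{\ast}\Vert_{X^{\ast}}\leq 1$. It therefore suffices to verify that $(\phi(f_{n}))_{n}$ is Cauchy for every such $\phi$.

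For $\phi\in\mathcal{S}$ (a finite linear combination) this is immediate from linearity and the hypothesis. For a general $\phi=\sum_{i=1}^{\infty}a_{i}T_{i}^{\ast}\in\overline{\mathcal{S}}^{w^{\ast}}$ I would truncate: set $\phi_{N}=\sum_{i=1}^{N}a_{i}T_{i}^{\ast}$ and $\psi_{N}=\phi-\phi_{N}$. Inequality~(\ref{e11}) of Lemma~\ref{l5} yields
\[
\Vert\psi_{N}\Vert_{JF_{X}^{\ast}(\Omega)}\leq\Vert\sum_{i>N}a_{i}e_{i}^{\ast}\Vert_{X^{\ast}},
\]
and since $X$ is reflexive the basis $(e_{i})$ is shrinking, so $(e_{i}^{\ast})$ is a Schauder basis of $X^{\ast}$ and the right-hand side tends to $0$ as $N\to\infty$. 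With $M=\sup_{n}\Vert f_{n}\Vert_{JF_{X}(\Omega)}$, the estimate $|\psi_{N}(f_{n})|\leq M\Vert\psi_{N}\Vert$ is uniform in $n$. A standard $\varepsilon/3$ argument, combining this uniform smallness of the tail with the Cauchyness of $(\phi_{N}(f_{n}))_{n}$ (from the hypothesis and linearity), shows that $(\phi(f_{n}))_{n}$ is Cauchy.

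The main delicate point is the truncation step: one must use inequality~(\ref{e11}) to transfer an $X^{\ast}$-tail estimate into an estimate on the $JF_{X}^{\ast}(\Omega)$-norm of $\psi_{N}$, and the reflexivity of $X$ is precisely what ensures that $(e_{i}^{\ast})$ is a basis of $X^{\ast}$ and so the tail vanishes. Once these ingredients are assembled, Rainwater's theorem converts Cauchyness at the extreme points of $B_{JF_{X}^{\ast}(\Omega)}$ into weak Cauchyness of $(f_{n})$.
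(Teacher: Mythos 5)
Your proof is correct and follows essentially the same route as the paper: both reduce the converse to Rainwater's theorem via Lemma~\ref{l2} (extreme points lie in $\overline{\mathcal{S}}^{w^{\ast}}$) and Lemma~\ref{l5} ($\overline{\mathcal{S}}^{w^{\ast}}=\overline{\mathcal{S}}^{\Vert\cdot\Vert}$, with elements of the form $\sum_{i}a_{i}T_{i}^{\ast}$). The only difference is cosmetic: the paper simply observes that the set of functionals on which $(f_{n})$ converges is a norm-closed subspace containing the span of the $T^{\ast}$'s, whereas you make this explicit by the truncation estimate from inequality~(\ref{e11}); both amount to the same argument.
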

\begin{proof}
Assume that $(T^{*}(f_{n}))_{n}$ is convergent for all
$T\in\mathcal{P}(\Omega).$ Then the same remains valid for every
$x^{*}$ in the norm closure of the linear span of the set
$\{T^{*}: T\in\mathcal{P}(\Omega)\}.$ The later subspace of
$JF_{X}^{*}(\Omega)$ contains
$\mathcal{\bar{S}}^{\Vert\cdot\Vert}$ which from, Lemma~\ref{l5},
coincides with $\mathcal{\bar{S}}^{w^{*}}.$ Lemma~\ref{l2} yields
that  $\mathcal{\bar{S}}^{w^{*}}$ contains the extreme points of
the ball of $JF_{X}^{*}(\Omega)$, and the result is obtained from
Rainwater theorem \cite{rain}. The other direction is obvious.
\end{proof}
We pass now to prove that $\ell_{1}\not\hookrightarrow
JF_{X}(\Omega)$ for every $\Omega$ open and bounded subset of
$\mathbb{R}^{d_{0}}$, and every reflexive Banach space $X$ with
1-symmetric basis. Lemmas \ref{l10}, \ref{l11} will be the main
ingredients for the proof. For a given sequence $(f_{n})_{n}$,
using these lemmas we choose constructively a $w-Cauchy$
subsequence. As we have mentioned in the introduction, it is not
clear to us, if we can have corresponding results for norms
defined by convex subsets different from parallelepipeds.
\begin{notation}Let $T=\Pi_{d=1}^{d_{0}}(\alpha_{d},\beta_{d})$
be an element of $\mathcal{P}(\Omega).$ In the sequel for
$d\leq d_{0}$
we denote by $m_{d}(T)$ the number $\beta_{d}-\alpha_{d}.$
\end{notation}
Let $(f_{n})_{n}$ be a normalized sequence in $JF_{X}(\Omega)\cap
L_{1}(\Omega).$ We shall show that $(f_{n})_{n}$ contains a
$w-Cauchy$ subsequence. We start with the following lemma.
\begin{lemma}\label{l10}
For every  $\varepsilon> 0$, there exists
$n(\varepsilon)\in\mathbb{N}$ such that for every $1\leq d \leq
d_{0} $, $L\in[\mathbb{N}]$ and $\delta>0$ there exist
$L^{\prime}\in [L]$ and $n(\varepsilon)$ disjoint parallelepipeds
$T_{\varepsilon,d,1}^{\delta}$,\ldots,$T_{\varepsilon,d,
n(\varepsilon)}^{\delta}$ with
$m_{d}(T_{\varepsilon,d,i}^{\delta})<\delta$ for all $i\leq
n(\varepsilon)$, satisfying the following property:

For each $d\leq d_{0}$ and every parallelepiped $T\in
\mathcal{P}(\Omega)$ which is disjoint from
$\{T_{\varepsilon,d,i}^{\delta}\}_{i=1}^{n(\varepsilon)}$, with
the property $m_{d}(T)<\delta$ we have that
$$
\limsup_{n\in
L^{\prime}}\vert\int_{T}f_{n}d\mu\vert\leq\varepsilon \,\,.
$$
\end{lemma}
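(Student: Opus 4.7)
The plan is to apply a Ramsey-style refinement of $L$ combined with the $w^{*}$-limit analysis of Lemma~\ref{l3}: for each $n\in L$ extract a maximal disjoint family of thin parallelepipeds on which $\vert\int f_{n}d\mu\vert>\varepsilon$, pass to a subsequence $L'$ along which these families $w^{*}$-converge, and take slight enlargements of the resulting limits as the exceptional parallelepipeds $T^{\delta}_{\varepsilon,d,i}$. Maximality of the extracted families then forces the desired $\limsup$ bound.

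To extract $n(\varepsilon)$, observe that whenever $T_{1},\ldots,T_{k}\in\mathcal{P}(\Omega)$ are pairwise disjoint and $\vert\int_{T_{j}}f_{n}d\mu\vert>\varepsilon$ for each $j\leq k$, the $1$-unconditionality of $(e_{i})_{i}$ together with the definition of $\Vert\cdot\Vert_{JF_{X}(\Omega)}$ yields
$$
\Vert f_{n}\Vert_{JF_{X}(\Omega)}\;\geq\;\Big\Vert\sum_{j=1}^{k}\Big(\int_{T_{j}}f_{n}d\mu\Big)e_{j}\Big\Vert_{X}\;\geq\;\varepsilon\,\Big\Vert\sum_{j=1}^{k}e_{j}\Big\Vert_{X}.
$$
Since $(f_{n})_{n}$ is normalized in $JF_{X}(\Omega)$ and $\Vert\sum_{i=1}^{k}e_{i}\Vert_{X}\to\infty$, an integer $n(\varepsilon)$ depending only on $\varepsilon$ bounds such $k$ from above.

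Fix now $d\leq d_{0}$, $L\in[\N]$, $\delta>0$, and set $\mathcal{A}_{n}=\{T\in\mathcal{P}(\Omega):m_{d}(T)<\delta,\ \vert\int_{T}f_{n}d\mu\vert>\varepsilon\}$. By Zorn, pick for each $n\in L$ a maximal disjoint subfamily $\mathcal{B}_{n}=\{U_{n,1},\ldots,U_{n,k_{n}}\}\subset\mathcal{A}_{n}$; the previous paragraph gives $k_{n}\leq n(\varepsilon)$. A pigeonhole pass produces $L_{1}\in[L]$ with $k_{n}\equiv k$ constant, and $w^{*}$-compactness of $B_{JF^{*}_{X}(\Omega)}$ together with a diagonal refinement produces $L'\in[L_{1}]$ on which $(U^{*}_{n,i})_{n\in L'}$ is $w^{*}$-convergent for each $i\leq k$. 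Lemma~\ref{l3} then classifies each limit: either $T^{*}_{i}$ for some $T_{i}\in\mathcal{P}(\Omega)$ with $U_{n,i}\to T_{i}$ pointwise, or $0$ with $\mu_{d_{0}}(U_{n,i})\to 0$ and the closures of $U_{n,i}$ collapsing to a set of empty interior. Choose $T^{\delta}_{\varepsilon,d,i}\in\mathcal{P}(\Omega)$ containing this limit, with $m_{d}(T^{\delta}_{\varepsilon,d,i})<\delta$ and small enough that $U_{n,i}\subset T^{\delta}_{\varepsilon,d,i}$ for all sufficiently large $n\in L'$; if fewer than $n(\varepsilon)$ pieces have been produced, append disjoint thin auxiliary parallelepipeds to reach exactly $n(\varepsilon)$. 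For any $T\in\mathcal{P}(\Omega)$ with $m_{d}(T)<\delta$ that is disjoint from every $T^{\delta}_{\varepsilon,d,i}$, the inclusions $U_{n,i}\subset T^{\delta}_{\varepsilon,d,i}$ (for $n\in L'$ large) force $T$ to be disjoint from $\bigcup_{i}U_{n,i}$, and maximality of $\mathcal{B}_{n}$ then precludes $T\in\mathcal{A}_{n}$, giving $\vert\int_{T}f_{n}d\mu\vert\leq\varepsilon$ and the required $\limsup$ estimate.

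The main obstacle is the boundary case in which a limit $T_{i}$ satisfies $m_{d}(T_{i})=\delta$ (and its degenerate analogue when the vanishing limit set has $d$-extent exactly $\delta$): here no enlargement of $T_{i}$ in direction $d$ preserves $m_{d}<\delta$, and no fixed parallelepiped with $m_{d}<\delta$ can contain $U_{n,i}$ for all large $n$. This is circumvented by splitting such a critical limit into two sub-parallelepipeds of $d$-extent strictly less than $\delta$ whose enlargements together cover $U_{n,i}$ eventually; the resulting bounded inflation of the number of exceptional parallelepipeds is absorbed into the definition of $n(\varepsilon)$ at the outset.
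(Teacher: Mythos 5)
Your key quantitative step --- that pairwise disjoint $T_{1},\dots,T_{k}$ with $\vert\int_{T_{j}}f_{n}d\mu\vert>\varepsilon$ force $1=\Vert f_{n}\Vert_{JF_{X}(\Omega)}\geq\varepsilon\Vert\sum_{j=1}^{k}e_{j}\Vert_{X}$, so that $k\leq n(\varepsilon)$ --- is exactly the engine of the paper's argument. But the paper runs it as a pure contradiction: assuming the lemma fails for some $\varepsilon_{0}$, it greedily selects $m$ disjoint thin parallelepipeds $S_{1},\dots,S_{m}$ and nested infinite sets $L_{1}\supset\dots\supset L_{m}$ with $\vert\int_{S_{j}}f_{n}\vert\geq\varepsilon_{0}$ for all $n\in L_{m}$, where $m$ is chosen so that $\varepsilon_{0}\Vert\sum_{i=1}^{m}e_{i}\Vert_{X}>1$. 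That is the entire proof; no maximal families, no $w^{*}$-limits, no enlargements are needed. Your direct construction is a legitimate alternative and would identify the exceptional parallelepipeds explicitly, but as written it has two genuine gaps.

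First, the family you produce is not disjoint, although the statement requires $n(\varepsilon)$ \emph{disjoint} parallelepipeds. The $U_{n,1},\dots,U_{n,k}$ are disjoint for each fixed $n$, but their limits can share boundary faces (e.g. $U_{n,1}=(0,\tfrac12-\tfrac1n)\times(0,\tfrac{\delta}{2})$ and $U_{n,2}=(\tfrac12+\tfrac1n,1)\times(0,\tfrac{\delta}{2})$), and then any enlargements large enough to eventually contain the $U_{n,i}$ necessarily overlap. This is repairable --- subdivide the union of the enlargements along all of their defining hyperplanes into boundedly many disjoint cells, each contained in one enlargement and hence still of $d$-width less than $\delta$, and absorb the resulting multiplicative constant (depending only on $n(\varepsilon)$ and $d_{0}$) into $n(\varepsilon)$ --- but you do not address it, whereas you do address the less dangerous $m_{d}=\delta$ boundary case. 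Second, you refine $L$ so that the functionals $U_{n,i}^{*}$ are $w^{*}$-convergent; in the degenerate case $U_{n,i}^{*}\to 0$ this gives no control on where the sets $U_{n,i}$ sit (they may oscillate between two different degenerate limit positions while $\mu_{d_{0}}(U_{n,i})\to 0$), so no fixed thin parallelepiped need contain them eventually. You must instead pass to a subsequence along which all $2d_{0}$ defining coordinates of each $U_{n,i}$ converge, and enlarge the resulting Hausdorff limits. A minor further point: when a limit touches $\partial\Omega$ its enlargement leaves $\Omega$, so the requirement $T^{\delta}_{\varepsilon,d,i}\in\mathcal{P}(\Omega)$ should be dropped (nothing in the lemma or its later use needs it). With these repairs your route works, but the paper's contradiction argument obtains the same conclusion with a fraction of the machinery.
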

In the above lemma, when $d_{0}=1$ we require the measure of the
interval be arbitrarily small. In  higher dimensions, for our
consideration, it is not sufficient that $\mu(T)$ be arbitrarily
small, but we require that $m_{d}(T)$ be arbitrarily small for
each of the sides of the parallelepiped. This is due mainly to the
geometry of parallelepipeds of higher dimensions.
\begin{proof}
On the contrary, suppose that the conclusion does not hold. Then
there exists $\varepsilon_{0}>0$ such that for every $n\in
\mathbb{N}$ there exist
$d_{n}\leq d_{0}$, $L(n)\in [\mathbb{N}]$
and $\delta_{n}>0$
such that for every
$L^{\prime}\in [L(n)]$ and disjoint parallelepipeds
$T_{1},T_{2},\ldots,T_{n}$, with $m_{d_{n}}(T_{j} )<\delta_{n}$,
$j\leq n$, there exists a parallelepiped $T$ disjoint from
$T_{j}$, $j\leq n$,  with $m_{d}(T)<\delta_{n}$ and
$$
\limsup_{n\in L^{\prime}}\vert\int_{T}f_{n}d\mu\vert
>\varepsilon_{0}.
$$
Let $m\in\mathbb{N}$ be such that
$\varepsilon_{0}\Vert\sum_{i=1}^{m}e_{i}\Vert >1.$ We set
$d=d_{m}$, $\delta=\delta_{m}$ and $L=L(m).$ Next we inductively
choose disjoint parallelepipeds $\{S_{j}\}_{j=1}^{m}$ with
$m_{d}(S_{j})<\delta$ and $L_{1}\supset L_{2}\supset\ldots\supset
L_{m}$ of $L$, such that for every $1\leq j\leq m$ and $n\in
L_{j}$, $\vert\int_{S_{j}}f_{n}d\mu\vert\geq\varepsilon_{0}.$ The
choice goes as follows:

We choose $S_{1}\in\mathcal{P}(\Omega)$ with
$m_{d}(S_{1})<\delta.$ From our assumption
$\limsup_{L}\vert\int_{S_{1}}f_{n}\vert>\varepsilon_{0}$, hence
there exists $L_{1}\subset L$ such that
$\vert\int_{S_{1}}f_{n}\vert\geq\varepsilon_{0}.$ This completes
the choice of $S_{1}$ and $L_{1}.$ Assume that
$S_{1},\ldots,S_{j-1}$, $L_{1}\supset\ldots\supset L_{j-1}$,
$j\leq m$, have been chosen satisfying the inductive assumption.
Since $j-1<m$ and $L_{j-1}\subset L$, there exists
$S_{j}\in\mathcal{P}(\Omega)$ disjoint from $S_{1},\ldots,S_{j-1}$
with $m_{d}(S_{j})<\delta$ and
$\limsup_{L_{j-1}}\vert\int_{S_{j}}f_{n}\vert>\varepsilon_{0}.$ We
choose $L_{j}\subset L_{j-1}$ such that
$\vert\int_{S_{j}}f_{n}\vert\geq\varepsilon_{0}$ for all $n\in
L_{j}$ and this completes the inductive construction.

Take any $n\in L_{m}.$ Then $n\in L_{j}$ for all $1\leq j\leq m$
and hence, $\vert\int_{L_{j}}f_{n}d\mu\vert\geq\varepsilon_{0}.$
Hence
$$
\Vert f_{n}\Vert\geq
\Vert\sum_{j=1}^{m}\left(\int_{S_{j}}f_{n}d\mu\right)e_{j}\Vert
\geq\varepsilon_{0}\Vert\sum_{j=1}^{m}e_{j}\Vert>1\,\,,
$$
which contradicts our assumption that $\Vert f_{n}\Vert=1.$
\end{proof}
{\bf Selection of a $w-Cauchy$ subsequence.}

To obtain the desired $w-Cauchy$ subsequence of $(f_{n})_{n}$ we
shall apply repeatedly Lemma~\ref{l10} in the following manner:

Fix $\varepsilon=\frac{1}{k}$, and $1\leq d\leq d_{0}.$ Then
Lemma~\ref{l10} yields that there exists $n(k)$ such that for
every $L\in[\mathbb{N}]$ and every $\delta=\frac{1}{m}$, a finite
sequence $\{T_{k,d,1}^{\frac{1}{m}},\ldots,
T_{k,d,n(\varepsilon)}^{\frac{1}{m}}\}$ and
$L_{m}^{d}(\varepsilon)\subset L$ are defined so that the
conclusion of Lemma~\ref{l10} is fulfilled.

\noindent Therefore for a fixed $k$ we inductively define
sequences $\{L_{m}^{d}(k)\}_{m}$, $\{T_{k,d,1}^{\frac{1}{m}},$
$\ldots,T_{k,d,n(k)}^{\frac{1}{m}}\}_{m}$ such that
\begin{enumerate}
\item
$\{L_{m}^{d}(k)\}_{m}$ is a decreasing sequence of subsets of
$\mathbb{N}$.
\item For each $m\in\mathbb{N}$ the pair $
\{T_{k,d,1}^{\frac{1}{m}},\ldots,T_{k,d,n(k)}^{\frac{1}{m}}\}$ and
$L_{m}^{d}(k)$ satisfies the conclusion of Lemma~\ref{l10} for
$\delta=\frac{1}{m}$ i.e.
\begin{align}\label{e12}
\text{for\,every}\,\,\,T\in\mathcal{P}(\Omega)\,\,\text{disjoint
from}&\,\,\,\, \{T_{k,d,i}^{\frac{1}{m}}\}_{i\leq n(k)}\,\,\,
\text{and}\,\,\,m_{d}(T)<\frac{1}{m}\,\,
\mathrm{we\,have}\notag\\
&\limsup_{n\in
L_{m}^{d}(k)}\vert\int_{T}f_{n}d\mu\vert\leq\frac{1}{k}\,\,.
\end{align}
\end{enumerate}
Notice that for a fixed $k$ and $d\leq d_{0}$,
$\lim_{m\to\infty}m_{d}(T_{k,d,i}^{\frac{1}{m}})=0$ for every
$i\leq n(k).$ Hence passing to a subsequence we may assume that
there exist $n(k)$ faces $H_{k,d,1}$,$\ldots$,$H_{k,d,n(k)}$, each
one of the form
$$
\prod_{i=1}^{d_{0}}[\alpha_{i},\beta_{i}]\,\,\,
\mathrm{with}\,\,\,\alpha_{d}=\beta_{d}\,\,,
$$
and moreover
\begin{equation}\label{e13}
\lim_{m\to\infty}
dist_{H}(H_{k,d,i},T_{k,d,i}^{\frac{1}{m}})=0\,\,\,\,\,
\mathrm{for\,\ all} \,\,i\leq n(k)\,\,,
\end{equation}
where $dist_{H}$ denotes the Hausdorff distance.
We set $A_{k,d}$
be the set of all coordinates of the extremes points of the
faces $H_{k,d,1}$,$\ldots,H_{k,d,n(k)}.$
Clearly $A_{k,d}$ is a finite set.

Let $L_{\infty}^{d}(k)$ denote any diagonal set of the
decreasing sequence $\{L_{m}^{d}(k)\}_{m}.$
Applying the above procedure inductively we obtain
\begin{flalign}\label{e14}
L_{\infty}^{1}(k)\supset\ldots\supset
L_{\infty}^{d_{0}}(k)=L_{\infty}(k)\,\,,
\notag \\
\{T_{k,d,1}^{\frac{1}{m}},\ldots,
T_{k,d,n(k)}^{\frac{1}{m}}\}_{m\in\mathbb{N}},\,\,1\leq d\leq d_{0}\,,
\\
\{A_{k,d}\},\,\,1\leq d \leq d_{0},\,\, A_{k,d}\,\, \mathrm{is\,
finite\, subset\,of}\,\, \mathbb{R}\,,
\notag
\end{flalign}
such that for every $d\leq d_{0}$, $L_{\infty}^{d}(k)$,\,
$\{T_{k,d,1}^{\frac{1}{m}},$
$\ldots,T_{k,d,n(k)}^{\frac{1}{m}}\}_{m\in\mathbb{N}}$ satisfies
(\ref{e12}), and $A_{k,d}$ is defined as above.

Proceeding now by induction for $k=1,2,\ldots$ we  choose
$\{L_{\infty}(k)\}_{k}$ a decreasing sequence of infinite sets,
$\{\{T_{k,d,1}^{\frac{1}{m}},\ldots
,T_{k,d,n(k)}^{\frac{1}{m}}\}_{m\in\mathbb{N},d\leq d_{0}}\}_{k} $
and $\{\,\{A_{k,d}\}_{d=1}^{d_{0}}\}_{k}$ such that for any
$k\in\mathbb{N}$, the corresponding families satisfies
(\ref{e14}). Set
$$ F=\cup_{k\in\mathbb{N}}
\cup_{d=1}^{d_{0}}A_{k,d}\cup\mathbb{Q}\,.
$$
Clearly $F$ is a countable set and hence the set $M$ of the
parallelepipeds in $\mathcal{P}(\Omega)$ with vertices in
$F^{d_{0}}$ is also countable. Therefore
there exists a diagonal subset $L$ of
$\{L_{\infty}(k)\}_{k}$ such that
$$
\lim_{n\in L}\int_{T}f_{n}d\mu\,\,\,\,\,\,\text{exists for
every}\,\,T\in M\,\,.
$$
Our intention is to show that $(f_{n})_{n\in L}$ is $w-Cauchy.$
This follows from the next lemma.
\begin{lemma}\label{l11} For any $T$ in $\mathcal{P}(\Omega)$
$$
\lim_{n\in L}\int_{T}f_{n}d\mu\,\,\,\,\,\,\text{exists\,.}
$$
\end{lemma}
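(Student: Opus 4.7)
The plan is to show that $\bigl(\int_T f_n\,d\mu\bigr)_{n\in L}$ is Cauchy by approximating $T$ by a suitable $T'\in M$ (for which the limit already exists by construction of $L$) and estimating the error $\int_T f_n-\int_{T'} f_n$ using Lemma~\ref{l10}. A standard telescoping argument, replacing the coordinates of $T=\prod_d(\alpha_d,\beta_d)$ one at a time by those of $T'=\prod_d(\alpha'_d,\beta'_d)$, expresses this error as the signed sum of at most $2d_0$ integrals over boundary slabs, each of which is a parallelepiped thin in exactly one coordinate direction $d$. The task is therefore to choose $T'$ so that every such slab is thin enough in its direction and is disjoint from the ``bad set'' $\{T^{\frac{1}{m}}_{k,d,i}\}_{i\le n(k)}$ furnished by Lemma~\ref{l10}, which then delivers the $\limsup$ bound $1/k$.

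Fix $\varepsilon>0$, choose $k\in\mathbb{N}$ with $4d_0/k<\varepsilon$, and for each $d\le d_0$ and $i\le n(k)$ let $\gamma^d_{k,i}$ denote the common $x_d$-value on the limit face $H_{k,d,i}$; by~(\ref{e13}) together with the construction of $A_{k,d}$, $\gamma^d_{k,i}\in A_{k,d}\subset F$. I build $T'$ coordinate by coordinate as follows. If $\alpha_d=\gamma^d_{k,i}$ for some $i$, then $\alpha_d\in F$ and I set $\alpha'_d=\alpha_d$, eliminating the slab on that face. Otherwise I set $\eta_d^\alpha=\tfrac{1}{3}\min_i|\alpha_d-\gamma^d_{k,i}|>0$, use density of $\mathbb{Q}\subset F$ to pick $\alpha'_d\in F$ with $|\alpha'_d-\alpha_d|<\eta_d^\alpha$, and perform the same construction for $\beta_d$. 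Each non-empty slab in direction $d$ then has $m_d$-width at most $\eta_d^\alpha$ (or $\eta_d^\beta$), and its $x_d$-projection stays at distance at least $2\eta_d^\alpha$ (resp.\ $2\eta_d^\beta$) from every $\gamma^d_{k,i}$.

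Now, invoking~(\ref{e13}), choose $m$ so large that $1/m<\eta:=\min_{d,\star}\eta_d^{\star}$ and $dist_H(H_{k,d,i},T^{\frac{1}{m}}_{k,d,i})<\eta/2$ for every relevant $d,i$. Then each non-empty slab $S$ in direction $d$ satisfies $m_d(S)<1/m$ and has its $x_d$-projection disjoint from that of every $T^{\frac{1}{m}}_{k,d,i}$; hence $S$ is disjoint from the bad set, and Lemma~\ref{l10} yields $\limsup_{n\in L^d_m(k)}|\int_S f_n\,d\mu|\le 1/k$. Because $L$ is a diagonal of $\{L_\infty(k)\}_k$ and each $L_\infty(k)\subset L^d_\infty(k)$ is a diagonal of $\{L^d_m(k)\}_m$, the set $L$ lies eventually in $L^d_m(k)$ for this fixed $k,d,m$. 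Summing over the at most $2d_0$ slab integrals, $|\int_T f_n-\int_{T'}f_n|\le 4d_0/k<\varepsilon$ for $n\in L$ sufficiently large, and the triangle inequality combined with the convergence of $\int_{T'}f_n$ along $L$ (since $T'\in M$) yields the Cauchy property.

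The main obstacle is the ``coincidence'' case in which some $\alpha_d$ (or $\beta_d$) equals a limit coordinate $\gamma^d_{k,i}$: no small perturbation $\alpha'_d\neq\alpha_d$ in $F$ can produce a slab disjoint from the bad set, because the parallelepipeds $T^{\frac{1}{m}}_{k,d,i}$ accumulate precisely on $H_{k,d,i}$. This is exactly why the coordinate set $A_{k,d}$ of the limit faces was built into $F$ from the start, so that the legal choice $\alpha'_d=\alpha_d$ remains available and eliminates the problematic slab altogether.
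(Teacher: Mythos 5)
Your argument is essentially the paper's own proof: the authors likewise replace the ``bad'' endpoints $\alpha_d,\beta_d$ by nearby points $p_d,q_d\in F$ to form a central parallelepiped $K$ with vertices in $F^{d_0}$, decompose $T\setminus K$ into at most $2d_0$ slabs $T^i_d\subset S^i_d$, each thin in one direction and disjoint from the exceptional parallelepipeds, and apply (\ref{e12}) to each slab; your telescoping of $\int_T f_n-\int_{T'}f_n$ is the same decomposition in disguise, and your treatment of the coincidence case $\alpha_d=\gamma^d_{k,i}$ via $A_{k,d}\subset F$ is precisely the role the sets $A_{k,d}$ were built to play. One repair is needed in the order of choices: you fix $\alpha'_d$ with $|\alpha'_d-\alpha_d|<\eta^{\alpha}_d$ \emph{before} choosing $m$ with $\frac{1}{m}<\eta=\min_{d,\star}\eta^{\star}_d$, and then assert $m_d(S)<\frac{1}{m}$, which does not follow, since the slab width is only bounded by $\eta^{\alpha}_d\geq\eta>\frac{1}{m}$; instead choose $m$ first (it depends only on $\eta$, hence only on $T$ and the limit faces) and then pick $\alpha'_d\in F$ with $\alpha_d<\alpha'_d<\alpha_d+\frac{1}{m}$ (and symmetrically $\beta_d-\frac{1}{m}<\beta'_d<\beta_d$), exactly as the paper does with $p_d,q_d$. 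Perturbing inward in this way also guarantees that $T'$ and every slab remain inside $T\subset\Omega$, so that the slabs are legitimately elements of $\mathcal{P}(\Omega)$ to which Lemma~\ref{l10} applies and $T'$ genuinely belongs to $M$.
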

\begin{proof}
Let $k\in\mathbb{N}.$ It is enough to show that
$$
\limsup_{n\in L}\int_{T}f_{n}d\mu - \liminf_{n\in
L}\int_{T}f_{n}d\mu <\frac{ 4d_{0} }{k}\,\,.
$$
Let $T=\{(x_{1},x_{2},\ldots,x_{d_{0}} ):\, \alpha_{d}< x_{d}<
\beta_{d},\, \forall d\leq d_{0}\}$ and for $d\leq d_{0}$,
$$
\Pi_{d}^{1}=\{(x_{1},x_{2},\ldots,x_{n_{0}} )\in T
:\,x_{d}=\alpha_{d}\,\,\mathrm{and}\,\, \alpha_{j}< x_{j}<
\beta_{j},\,\,\mathrm{ for}\,\, j\not= d \}\, ,
$$
$$
\Pi_{d}^{2}=\{(x_{1},x_{2},\ldots,x_{n_{0}} )\in T
:\,x_{d}=\beta_{d}\,\,\mathrm{and}\,\,\alpha_{j}\leq x_{j}\leq
\beta_{j},\,\,\mathrm{for}\,\, j\not= d\}\,\,.
$$
We set
\begin{center}$
I_{1}=\{1\leq d\leq d_{0} : \{a_{d},b_{d}\}\not\subset F \}.$
\end{center}
We assume that $I_{1}$ is not empty, otherwise $T$ belongs in
$\mathcal{M}$ and therefore $\lim_{n\in L}\int_{T}f_{n}d\mu$
exists. For every $d\leq d_{0}$ with $d\in I_{1}$,\,
$\#\{a_{d},b_{d}\}\cap F\geq 1.$
For simplicity we assume that for every
$d\in I_{1}$, $\#\{a_{d},b_{d}\}\cap F=2.$
The proof of the general case follows similar arguments.

Hence, we assume that for every $d\in I_{1}$,
$$
\{\alpha_{d},\beta_{d}\}\cap \{pr_{d}(H_{k,d,i}),\,\,i\leq
n(k)\}=\emptyset \,,
$$
where $pr_{d}$ denotes the $d-projection$ in
$\mathbb{R}^{d_{0}}.$

For every $d\in I_{1},$ we set
$$
\delta_{d}^{1}=\min\{ dist\Bigl( a_{d}, pr_{d}( H_{k,d,i})\Bigl) :
i\leq n(k)\}>0\,,
$$
and
$$
\delta_{d}^{2}=\min\{dist\Bigl( b_{d}, pr_{d}(H_{k,d,i})\Bigl) :
i\leq n(k)\}>0 \,.
$$
Set $\delta_{d}=\min\{\delta_{d}^{1},\delta_{d}^{2}\}$, and
$\delta_{0}=\frac{1}{2} \min\{ \delta_{d} : d\in I_{1} \}>0$. We
choose $m\in\mathbb{N}$ such that
$\frac{1}{m}<\frac{\delta_{0}}{10}$ and
$dist_{H}(H_{k,d,i},\,T_{k,d,i}^{\frac{1}{m}})
<\frac{\delta_{0}}{10},$ for every $i\leq n(k)$ and every $d\in
I_{1}.$

For each  $d\in I_{1}$   we choose $p_{d},q_{d}\in F$ such that
\begin{center}
$0<p_{d}-\alpha_{d} <\frac{1}{m}$ \,\,\, and \,\,\,
$0<\beta_{d}-q_{d}<\frac{1}{m}.$
\end{center}
For  $d\in I_{1}$, we consider the following parallelepipeds
\begin{center}
$S_{d}^{1}=\{(x_{1},\ldots,x_{d_{0}})\in T : \alpha_{d}<
x_{d}<p_{d} $ and $ \alpha_{j}< x_{j}<\beta_{j}$ for $j\not=
d\}$\,\,
\end{center}
\begin{center}
$S_{d}^{2}=\{(x_{1},\ldots,x_{d_{0}})\in T : q_{d}<x_{d}<
\beta_{d} $ and $ \alpha_{j}< x_{j}<\beta_{j}$ for $j\not=
d\}$\,\,.
\end{center}
We observe that $m_{d}(S_{d}^{i})<\frac{1}{m}$, for $i=1,2$ and
$d\in I_{1}.$ Furthermore  $S_{d}^{i}$ is disjoint from the
elements of the set $\{ T^{\frac{1}{m}}_{k,d,i},\,\,  i\leq
n(k)\}.$ Clearly the above two properties are also satisfied by
any parallelepiped $R$ which is contained in $S_{d}^{i}.$ The
properties of $\{T_{k,d,i}^{\frac{1}{m}}\}_{i=1}^{n(k)}$, yield
that for every parallelepiped $R$ contained in $S_{d}^{i}$ we have
that
\begin{equation}\label{e15}
\limsup_{n\in L}\int_{R}f_{n}d\mu- \liminf_{n\in
L}\int_{R}f_{n}d\mu\leq \frac{2}{k}\,.
\end{equation}
Let $K$ be the parallelepiped
$$
K=\{(x_{1},\ldots,x_{n_{0} })\in T : p_{d}\leq x_{d}\leq q_{d},
\mathrm{for}\,\,  d\in
I_{1},\,\,\mathrm{otherwise}\,\,\alpha_{d}<x_{d}<\beta_{d} \}\,.
$$
Clearly the parallelepiped $K$ has vertices in $F^{d_{0}}$ and
hence
\begin{equation}\label{e16}
\limsup_{n\in L}\int_{K}f_{n}d\mu - \liminf_{n\in
L}\int_{K}f_{n}d\mu =0\,.
\end{equation}
For every $d\in I_{1}$ let,
\begin{align*}
T_{d}^{1}=\{ (x_{1},\ldots,x_{d_{0} })\in T:
x_{j}\in pr_{d}(K)\,\, \text{for\, every }\,\, j<d,\\
\alpha_{d}< x_{d}<p_{d}\,\,\mathrm{and}\,\, \alpha_{j}<
x_{j}<\beta_{j}\,\,\text{for}\,\, j>d\}\,
\\
\intertext{and} T_{d}^{2}=\{ (x_{1},\ldots,x_{d_{0}})\in T :
x_{j}\in pr_{d}(K)\,\,\text{for\, every }\,\, j<d,\,\,
\\
q_{d}<x_{d}< \beta_{d}\,\,\mathrm{and}\,\, \alpha_{j}<
x_{j}<\beta_{j}\,\,\text{for}\,\, j>d\}\,.
\end{align*}
The following properties are easily established.
\begin{enumerate}
\item For $d_{1},d_{2}\in I_{1}$, $i,j\in\{1,2\}$ such that
$T_{d_{1}}^{i}\not= T_{d_{2}}^{j}$, we have that
$\mu_{d_{0}}(T_{d_{1}}^{i}\cap T_{d_{2}}^{j})=0.$
\item $T=\cup_{d\in I_{1}}(T_{d}^{1}\cup T_{d}^{2})\cup K$.
\item $T_{d}^{i}$ is contained in $S_{d}^{i}$ for every $d\in
I_{1}$, $i=1,2$.
\end{enumerate}
\noindent Then  $(3)$ and (\ref{e15}) yield
\begin{equation}\label{e17}
\limsup_{n\in L}\int_{T_{d}^{i}} f_{n}d\mu
-\liminf\int_{T_{d}^{i}}f_{n}d\mu \leq \frac{2}{k}\,\,.
\end{equation}
Finally  $(1)$,$(2)$, (\ref{e16}) and (\ref{e17}) yield
$$
\limsup_{n\in L}\int_{T} f_{n}d\mu -\liminf\int_{T}f_{n}d\mu \leq
\frac{4d_{0} }{k}\,.
$$
The proof is complete.
\end{proof}
\begin{theorem}\label{l112} Let $\Omega$ be an open bounded
subset of $\mathbb{R}^{d_{0}}$ and $X$ be a reflexive
Banach space with symmetric basis.
Then $\ell_{1}$ does not embed into $JF_{X}(\Omega).$
\end{theorem}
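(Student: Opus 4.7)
The plan is to invoke Rosenthal's $\ell_1$ theorem: it suffices to show that every bounded sequence in $JF_X(\Omega)$ has a weakly Cauchy subsequence. Since the statements of Lemmas \ref{l10} and \ref{l11}, together with the inductive selection scheme preceding Lemma \ref{l11}, have already been established for normalized sequences inside $L^1(\Omega)\cap JF_X(\Omega)$, my task is essentially to assemble these ingredients and perform one mild density reduction.

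First I would reduce to the $L^1$ case. Given an arbitrary normalized sequence $(g_n)_n$ in $JF_X(\Omega)$, density of $L^1(\Omega)$ yields $f_n\in L^1(\Omega)$ with $\Vert g_n-f_n\Vert_{JF_X(\Omega)}<1/n$; after passing to a tail, I may further assume $(f_n)_n$ is normalized. A weakly Cauchy subsequence of $(f_n)_n$ produces a weakly Cauchy subsequence of $(g_n)_n$, since $\Vert f_n-g_n\Vert\to 0$ implies $\langle \varphi,f_n-g_n\rangle\to 0$ for every $\varphi\in JF_X^\ast(\Omega)$.

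Next I would apply the apparatus developed in the section. Running the inductive scheme (constructions of $L_\infty^d(k)$, the finite sets $A_{k,d}$, and $L_\infty(k)$) and then diagonalizing over $k\in\mathbb{N}$ inside the decreasing sequence $\{L_\infty(k)\}_k$ produces an infinite set $L\subset\mathbb{N}$ such that $\lim_{n\in L}\int_T f_n\,d\mu$ exists for every $T\in M$, where $M$ is the countable family of parallelepipeds with vertices in $F^{d_0}$. Lemma \ref{l11} then extends this convergence from $M$ to all of $\mathcal{P}(\Omega)$: for every single $T\in\mathcal{P}(\Omega)$ and every $k$, the oscillation of $\int_T f_n\,d\mu$ along $L$ is at most $4d_0/k$, hence the limit exists. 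Finally, Corollary \ref{l6} (which rests on Lemmas \ref{l2}, \ref{l5} and Rainwater's theorem) upgrades pointwise convergence of $(T^\ast(f_n))_n$ on $\mathcal{P}(\Omega)$ to weak Cauchyness of $(f_n)_{n\in L}$ in $JF_X(\Omega)$.

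The main obstacle has actually been surmounted in the preceding lemmas: the combinatorial heart of the argument is Lemma \ref{l10}, which isolates, dimension by dimension, finitely many ``bad'' thin parallelepipeds outside of which small-width integrals are uniformly controlled, and Lemma \ref{l11}, which exploits the geometric observation that any parallelepiped $T\in\mathcal{P}(\Omega)$ differs from one with vertices in $F^{d_0}$ by $2d_0$ thin slabs whose widths can be made arbitrarily small in the appropriate coordinate direction. At the level of the theorem itself, the only remaining step is the clean application of Rosenthal and the density reduction above.
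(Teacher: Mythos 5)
Your proposal is correct and follows essentially the same route as the paper: a perturbation/density reduction to $L^{1}(\Omega)$, the diagonal selection built on Lemma~\ref{l10}, the extension to all parallelepipeds via Lemma~\ref{l11}, and the passage to weak Cauchyness through Corollary~\ref{l6} (Rainwater), with Rosenthal's theorem finishing the argument. You in fact spell out the role of Corollary~\ref{l6} more explicitly than the paper's one-line proof does, but the content is identical.
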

\begin{proof}
It is enough to show that every normalized sequence $(f_{n})_{n}$
in $JF_{X}(\Omega)$ has a $w-$Cauchy subsequence. A perturbation
argument yields that $(f_{n})_{n}$ could be assumed to belong to
$L^{1}(\mu).$ Lemmas \ref{l10} and \ref{l11}  yield that it
contains a $w-$Cauchy subsequence.
\end{proof}

In \cite{pe} it has been shown that the identity $I:
L^{1}([0,1])\mapsto JF_{X}$ is a strongly regular operator. This
result is naturally extended to the corresponding
$I:L^{1}(\Omega)\mapsto JF_{X}(\Omega).$ The proof of it uses
martingale techniques. A rather simple argument yields that $I$ is
Dunford - Pettis operator (i.e maps weakly compact sets to norm
compact) a property weaker than the strong regularity. For  sake
of completeness we include a proof of it. The proof uses the
following
\begin{lemma}\cite{pe}\label{pe1}
Let $X$ be a Banach space with $1-$symmetric basis, not containing
$\ell_{1}.$ Then for each $\varepsilon>0$ there exists $\delta>0$
such that for every $\{\alpha_{i}\}_{i=1}^{k}$ with
$\sum_{i=1}^{k}\vert\alpha_{i}\vert\leq 1$ and
$\max\{\vert\alpha_{i}\vert: i\leq k\}\leq\delta$ we have
$$
\Vert\sum_{i=1}^{k}\alpha_{i}e_{i}\Vert<\varepsilon\,.
$$
\end{lemma}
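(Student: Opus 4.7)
The plan is to reduce the question, via a convexity/extreme-point argument, to showing that the fundamental sequence $s_n = \|e_1+\cdots+e_n\|_X$ satisfies $s_n/n\to 0$, and then to derive this asymptotic from the hypothesis that $X$ does not contain $\ell_1$.

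First, using that a $1$-symmetric basis is automatically $1$-unconditional, I would observe that $\|\sum\alpha_i e_i\|_X = \|\sum|\alpha_i|e_i\|_X$, and so one may assume $\alpha_i\ge 0$. Given $\delta>0$, set $n=\lfloor 1/\delta\rfloor$, so that $(\alpha_i)_{i=1}^{k}$ lies in the polytope $K=\{x\in[0,1/n]^{k}:\sum_{i}x_i\le 1\}$. Since $x\mapsto\|\sum_{i}x_i e_i\|_X$ is convex and continuous, its maximum over $K$ is attained at an extreme point; a routine linear-programming analysis shows that every extreme point of $K$ has coordinates in $\{0,1/n\}$ with at most $n$ nonzero entries. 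Since $1$-unconditionality makes $(s_m)$ nondecreasing, this yields
$$\Bigl\|\sum_{i=1}^{k}\alpha_i e_i\Bigr\|_X\;\le\;\frac{s_n}{n}.$$

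The next, and central, task is to show $s_n/n\to 0$. The triangle inequality together with $1$-symmetry yields the subadditivity $s_{m+n}\le s_m+s_n$, so Fekete's lemma ensures $s_n/n\to c:=\inf_n s_n/n$. The crux is to rule out $c>0$. To that end I would argue that any lower bound $s_n\ge cn$ forces $(e_n)$ to be equivalent to the $\ell_1$ basis: for positive $(\alpha_i)_{i=1}^{n}$, averaging $\|\sum_{i}\alpha_{\pi(i)}e_i\|_X$ over all permutations $\pi$ of $\{1,\ldots,n\}$ and applying Jensen's inequality yields
$$\Bigl\|\sum_{i=1}^{n}\alpha_i e_i\Bigr\|_X\;\ge\;\Bigl\|\frac{1}{n}\Bigl(\sum_{i=1}^{n}\alpha_i\Bigr)\sum_{i=1}^{n}e_i\Bigr\|_X\;=\;\frac{s_n}{n}\sum_{i=1}^{n}\alpha_i\;\ge\;c\sum_{i=1}^{n}\alpha_i,$$
which, combined with the trivial bound $\|\sum\alpha_i e_i\|_X\le\sum|\alpha_i|$, contradicts the hypothesis $\ell_1\not\hookrightarrow X$. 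Hence $c=0$, and choosing $\delta$ with $n=\lfloor 1/\delta\rfloor$ large enough that $s_n/n<\varepsilon$ finishes the argument.

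The main obstacle is the last implication, converting $\liminf_n s_n/n>0$ into an embedding of $\ell_1$; the rest of the argument is essentially bookkeeping. It is precisely the $1$-symmetry of the basis that powers the permutation-averaging trick, allowing one to pass from an estimate on the single vector $e_1+\cdots+e_n$ to an $\ell_1$ lower bound on all finitely supported vectors simultaneously.
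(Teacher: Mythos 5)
Your argument is correct. Note that the paper itself gives no proof of this lemma; it is quoted from the reference [Pe], so there is nothing internal to compare against. Your route is the standard dichotomy for symmetric bases: reduce to the fundamental function via the extreme points of the polytope $\{x : 0\le x_i\le 1/n,\ \sum_i x_i\le 1\}$, observe that $s_n=\Vert e_1+\dots+e_n\Vert$ is subadditive so $s_n/n\to c=\inf_n s_n/n$ by Fekete, and rule out $c>0$ by the permutation-averaging (Jensen) argument, which upgrades the single estimate $s_n\ge cn$ to a lower $\ell_1$-estimate $\Vert\sum\alpha_i e_i\Vert\ge c\sum\vert\alpha_i\vert$ on all finitely supported vectors, contradicting $\ell_1\not\hookrightarrow X$. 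All the steps check out: the extreme points of the polytope do have coordinates in $\{0,1/n\}$ with at most $n$ of them nonzero, Bauer's maximum principle applies since the norm is convex and the polytope is compact, and $1$-symmetry makes every permuted copy $\sum_i\alpha_{\pi(i)}e_i$ have the same norm, which is exactly what the averaging needs. This is essentially the classical Lindenstrauss--Tzafriri argument showing that a symmetric basis is either equivalent to the $\ell_1$ basis or satisfies $s_n/n\to 0$, and it is a perfectly acceptable self-contained proof of the cited lemma.
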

\begin{proposition}\label{l114}
Let $(f_{n})_{n}$ be a weakly null sequence in $L^{1}(\Omega).$
Then $\Vert f_{n}\Vert_{JF_{X}(\Omega)}\to 0$.
\end{proposition}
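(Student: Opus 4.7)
The plan is to argue by contradiction. Suppose $\|f_n\|_{JF_X(\Omega)}\not\to 0$; passing to a subsequence I fix $\varepsilon>0$ and, for each $n$, disjoint parallelepipeds $T_1^n,\dots,T_{k_n}^n\in\mathcal{P}(\Omega)$ with $\bigl\|\sum_j (\int_{T_j^n}f_n\,d\mu)\,e_j\bigr\|_X\ge\varepsilon$. Since $(f_n)$ is weakly null in $L^1(\Omega)$, it is bounded (say by $M$) and, by the Dunford--Pettis characterization of relatively weakly compact subsets of $L^1$, uniformly integrable. The strategy is to split the witnessing parallelepipeds at each level $n$ into a \emph{large} family $\{B_i^n\}$ (those with $\mu_{d_0}\ge\eta$) and a \emph{small} family $\{S_i^n\}$ (those with $\mu_{d_0}<\eta$), for $\eta$ to be specified, and to control each family separately via the triangle inequality in $X$.

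For the small family uniform integrability suffices. Pick $\eta>0$ so that $\mu(A)<\eta$ forces $\int_A|f_n|<\gamma$ for every $n$, with $\gamma>0$ to be fixed. Then $\max_i|\int_{S_i^n}f_n|\le\gamma$ while $\sum_i|\int_{S_i^n}f_n|\le M$. Since $X$ is reflexive it contains no copy of $\ell_1$, so Lemma \ref{pe1} applies: for any $\varepsilon'>0$ there exists $\delta>0$ such that $\sum|\alpha_i|\le 1$ together with $\max|\alpha_i|\le\delta$ forces $\|\sum\alpha_i e_i\|_X<\varepsilon'$. Rescaling by $M$ and choosing $\gamma<M\delta$ corresponding to $\varepsilon'=\varepsilon/3$ yields $\bigl\|\sum_i(\int_{S_i^n}f_n)\,e_i\bigr\|_X<\varepsilon/3$.

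For the large family the point is finiteness combined with the rigidity of parallelepipeds. Since $\Omega$ is bounded, at each step there are at most $N_0:=\lfloor\mu_{d_0}(\Omega)/\eta\rfloor$ large parallelepipeds; after a further extraction I may assume this number is a constant $N_1\le N_0$. For each $i\le N_1$ the functionals $(B_i^n)^*$ lie in the weak-$*$ compact unit ball of $JF_X^*(\Omega)$, so extracting successively I may also assume $(B_i^n)^*\xrightarrow{w^*}s_i^*$. Because $\mu_{d_0}(B_i^n)\ge\eta$, the vanishing case of Lemma \ref{l3} is excluded, so $s_i^*=T_i^*$ with $T_i\in\mathcal{P}(\Omega)$, $B_i^n\to T_i$ pointwise, and $\mu_{d_0}(B_i^n\triangle T_i)\to 0$. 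Writing
\[
\int_{B_i^n} f_n=\int_{T_i}f_n+\int_{B_i^n\setminus T_i}f_n-\int_{T_i\setminus B_i^n}f_n,
\]
the first summand tends to $0$ because $\chi_{T_i}\in L^\infty$ and $f_n\rightharpoonup 0$ in $L^1$, while the remaining two tend to $0$ by uniform integrability applied to the shrinking symmetric differences. Hence each of the $N_1$ large coefficients tends to $0$; since $N_1$ is fixed and $\|e_i\|_X=1$, the triangle inequality drives the large contribution below $\varepsilon/3$ for $n$ large. Combined with the small-family estimate this contradicts the standing lower bound $\varepsilon$.

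The principal obstacle is the large family. The small parallelepipeds are handled uniformly in $n$ by absolute continuity alone, but the large ones demand converting the mere weak convergence of $(f_n)$ into actual vanishing of integrals over sets $B_i^n$ that themselves vary with $n$. This is exactly where the geometric rigidity of parallelepipeds encoded in Lemma \ref{l3} is needed, and uniform integrability must be invoked a second time to absorb the error arising from the symmetric differences $B_i^n\triangle T_i$.
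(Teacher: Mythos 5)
Your argument is correct, but it is organized quite differently from the paper's, and the paper's version is considerably shorter. The paper keeps your first step (normalize so that $\Vert f_{n}\Vert_{1}=1$, fix a witnessing family with $\Vert\sum_{j}(\int_{T_{n,j}}f_{n})e_{j}\Vert_{X}>\varepsilon/2$ and note $\sum_{j}\vert\int_{T_{n,j}}f_{n}\vert\leq 1$) and then applies Lemma~\ref{pe1} in the contrapositive: since the whole sum has norm at least $\varepsilon/2$, \emph{some single} coefficient must exceed the threshold $\delta$, i.e.\ for each $n$ there is one parallelepiped $T_{n,j_{n}}$ with $\vert\int_{T_{n,j_{n}}}f_{n}\vert>\delta$. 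Passing to a limit parallelepiped $T$ and using uniform integrability over $T_{n,j_{n}}\bigtriangleup T$ then contradicts $\int_{T}f_{n}\to 0$ directly --- no large/small dichotomy is needed, because a single stubborn coefficient already kills weak nullity. Your version instead estimates the \emph{entire} sum: Lemma~\ref{pe1} plus uniform integrability absorbs the small-measure parallelepipeds, and the finitely many large ones are sent to limits via Lemma~\ref{l3} and handled one at a time. This buys you a quantitative statement (the whole witnessing expression tends to $0$, not merely the existence of a contradiction), at the cost of an extra extraction scheme and the weak-$*$ compactness machinery, which for the large family could anyway be replaced by compactness of the vertices in $\overline{\Omega}$. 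Two small points to tidy up: in the small-family estimate the threshold should be calibrated to $\varepsilon'=\varepsilon/(3M)$ rather than $\varepsilon/3$ after you divide the coefficients by $M$ (or simply normalize $\Vert f_{n}\Vert_{1}=1$ at the outset, as the paper does); and when you split the norm into the small and large parts you are implicitly using $1$-unconditionality of $(e_{i})$ to restrict to a subset of indices, which is worth saying.
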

\begin{proof}
Assume on the contrary that there exists a normalized
weakly null sequence $(f_{n})_{n}$ in $L^{1}(\Omega)$
so that for all $n\in\mathbb{N}$
\begin{center}
$\Vert f_{n}\Vert_{JF_{X}(\Omega)}>\varepsilon>0$.
\end{center}
For each $f_{n}$ choose a family
$\{T_{n,j}\}_{j=1}^{k_{n}}\in\mathcal{P}(\Omega)$ such that
\begin{equation}\label{e18}
\Vert\sum_{j=1}^{k_{n}}
\left(\int_{T_{n,j}}f_{n}d\mu\right)e_{j}\Vert_{X}>
\frac{\varepsilon}{2}\,.
\end{equation}
Further we observe that
\begin{equation}\label{e19}
\sum_{j=1}^{k_{n}}\vert\int_{T_{n,j}}f_{n}d\mu\vert\leq 1\,.
\end{equation}
Hence Lemma~\ref{pe1} yields that there exists $\delta >0$ such
that for each $n\in\mathbb{N}$ there exists $j_{n}$ satisfying
$$
\vert\int_{T_{n,j_{n}}}f_{n}d\mu\vert >\delta\,.
$$
Passing, if it is necessary, to a subsequence we may assume that
$T_{n,j_{n}}\to T.$

The uniform integrability of $(f_{n})_{n}$ guarantees that
$\lim_{n\to\infty}\vert\int_{T} f_{n}d\mu\vert>\frac{\delta}{2}$,
which contradicts the weak convergence of $(f_{n})_{n}$ to zero.
\end{proof}
\begin{corollary}\label{115}
The identity $I:L^{1}(\Omega)\mapsto JF_{X}(\Omega)$ is
Dunford-Pettis and strictly singular operator.
\end{corollary}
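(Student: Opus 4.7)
The plan is to verify the two properties separately, both of them as essentially immediate consequences of the work already done in this section. For the Dunford--Pettis property, I would note that Proposition~\ref{l114} already gives the key statement that $I$ sends weakly null sequences in $L^{1}(\Omega)$ to norm null sequences in $JF_{X}(\Omega)$. By linearity and continuity of $I$, this upgrades at once to: if $f_{n}\to f$ weakly in $L^{1}(\Omega)$, then $I(f_{n}-f)\to 0$ in $\Vert\cdot\Vert_{JF_{X}(\Omega)}$, so $If_{n}\to If$ in norm. This is the sequential characterization of a Dunford--Pettis operator and, via Eberlein--\v{S}mulian, translates to: $I$ maps relatively weakly compact subsets of $L^{1}(\Omega)$ to relatively norm compact subsets of $JF_{X}(\Omega)$.

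For strict singularity, I would argue by contradiction. Suppose there is an infinite-dimensional closed subspace $Z\subseteq L^{1}(\Omega)$ on which $I$ restricts to an isomorphism onto $I(Z)\subseteq JF_{X}(\Omega)$. The first step is to show that $Z$ must be reflexive. Since $I|_{Z}$ is an isomorphism, $Z$ is isomorphic to a subspace of $JF_{X}(\Omega)$, so by Theorem~\ref{l112} it cannot contain a copy of $\ell_{1}$. Rosenthal's $\ell_{1}$-theorem then yields that every bounded sequence $(f_{n})_{n}\subset Z$ has a weakly Cauchy subsequence $(f_{n_{k}})_{k}$. Because $L^{1}(\Omega)$ is weakly sequentially complete, $(f_{n_{k}})_{k}$ converges weakly in $L^{1}(\Omega)$ to some $f$, and since $Z$ is norm closed, hence weakly closed by Mazur, we have $f\in Z$. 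By Eberlein--\v{S}mulian, $Z$ is reflexive.

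Once reflexivity of $Z$ is in hand, I would combine it with the Dunford--Pettis property just established: the unit ball $B_{Z}$ is weakly compact, so $I(B_{Z})$ is norm compact in $JF_{X}(\Omega)$. Since $I|_{Z}$ is an isomorphism onto its image, it pulls this norm compactness back, forcing $B_{Z}$ itself to be norm compact. Thus $Z$ is finite-dimensional, contradicting our assumption and completing the proof.

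The argument is essentially a reassembly of Proposition~\ref{l114} and Theorem~\ref{l112} with two classical external inputs (Rosenthal's $\ell_{1}$-theorem and the weak sequential completeness of $L^{1}$), so no real obstacle is expected; the only point requiring care is the passage from ``$Z$ contains no copy of $\ell_{1}$'' to ``$Z$ is reflexive,'' where invoking weak sequential completeness of $L^{1}$ together with the fact that $Z$ is weakly closed is essential to upgrade weakly Cauchy subsequences inside $Z$ into weakly convergent ones.
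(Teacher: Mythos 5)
Your proposal is correct and follows essentially the same route as the paper: Dunford--Pettis from Proposition~\ref{l114}, and strict singularity from the dichotomy that an infinite-dimensional closed subspace of $L^{1}(\Omega)$ is either reflexive (ruled out because a Dunford--Pettis operator cannot be an isomorphism on an infinite-dimensional reflexive subspace) or contains $\ell_{1}$ (ruled out by Theorem~\ref{l112}). The only difference is that the paper simply cites this dichotomy as a well-known property of $L^{1}$, whereas you derive it from Rosenthal's $\ell_{1}$-theorem and the weak sequential completeness of $L^{1}$, which is a perfectly valid (and self-contained) way to obtain the same fact.
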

\begin{proof} Proposition~\ref{l114} yields immediately
that the identity is a Dunford - Pettis operator.
The strict singularity follows from a well known property of
$L_{1}(\Omega)$, namely  every
closed subspace $Z$ of it  either is reflexive or contains
$\ell_{1}.$ Since the identity is  D-P operator  we obtain that
it is not isomorphism on any reflexive subspace of
$L_{1}(\Omega)$, while Theorem~\ref{l112} yield that the
identity is not isomorphism on any subspace containing
$\ell_{1}.$ Hence it is strictly singular.
\end{proof}
\section{The Haar system in $JF_{X}$.}\label{sHaar}
In this section we give a simple proof that the Haar system
$(h_{n})_{n}$ is a basis for $JF_{X}.$ We also prove that $X$ is
isomorphic to a complemented subspace of $JF_{X}.$
Our approach is based on elementary properties of
symmetric sequences.
\begin{notation} For
every $Q=\{I_{j}\}_{j=1}^{m}$ partition of $[0,1]$, we set
$$
\tau(Q,f)= \Vert\sum_{j=1}^{m}(\int_{I_{j}}f)e_{j}\Vert_{X}
\,\,.
$$
\end{notation}
\begin{proposition}\label{p21}
Let $f\in JF_{X}$ and
$\mathcal{P}_{0}=\{t_{0},t_{1},\ldots,t_{n}\}$
be a partition of $[0,1]$
such that  $f|_{(t_{i-1},t_{i})}=\alpha_{i}$ for every
$i=1,2,\ldots,n.$
Then for every
partition $Q=\{I_{j}\}_{j=1}^{m}$ , of $[0,1]$ into disjoint
intervals, there exists a partition
$\mathcal{P}\subseteq\mathcal{P}_{0}$ such that $\tau
(Q,f)\leq\tau(\mathcal{P},f).$
\end{proposition}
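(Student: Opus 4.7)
The plan is to apply a finite sequence of single-endpoint modifications to $Q$, each preserving or increasing $\tau(\cdot,f)$, until we arrive at a partition $\mathcal{P}\subseteq\mathcal{P}_{0}$. The descent quantity will be the number of interior endpoints of the current partition that lie outside $\mathcal{P}_{0}$; this will strictly decrease at each step.

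Write $Q$ with endpoints $0=s_{0}<s_{1}<\cdots<s_{m}=1$, so that $I_{j}=(s_{j-1},s_{j})$, and fix $j$ with $s_{j}\notin\mathcal{P}_{0}$. For $s\in[s_{j-1},s_{j+1}]$ define
$$
v(s)=\sum_{k<j}\Bigl(\int_{I_{k}}f\Bigr)e_{k}+\Bigl(\int_{s_{j-1}}^{s}f\Bigr)e_{j}+\Bigl(\int_{s}^{s_{j+1}}f\Bigr)e_{j+1}+\sum_{k>j+1}\Bigl(\int_{I_{k}}f\Bigr)e_{k},
$$
so that $\|v(s_{j})\|_{X}=\tau(Q,f)$. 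Because $f$ is constant on each $(t_{i-1},t_{i})$, the two varying coordinates of $v$ are piecewise affine functions of $s$, with kinks exactly at the points of $\mathcal{P}_{0}\cap(s_{j-1},s_{j+1})$. Consequently $s\mapsto\|v(s)\|_{X}$ is piecewise convex on $[s_{j-1},s_{j+1}]$ and therefore attains its maximum at some
$$
s^{*}\in\{s_{j-1},s_{j+1}\}\cup\bigl(\mathcal{P}_{0}\cap(s_{j-1},s_{j+1})\bigr).
$$

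Replace $s_{j}$ by $s^{*}$ in $Q$ to obtain $Q'$. If $s^{*}\in\mathcal{P}_{0}$, the offending endpoint has been shifted into $\mathcal{P}_{0}$. If $s^{*}=s_{j-1}$ or $s_{j+1}$, one of the intervals becomes empty, and by the $1$-symmetry of $(e_{n})_{n}$, $\|v(s^{*})\|_{X}$ equals $\tau$ of the reduced partition with that empty interval omitted. In every case $\tau(Q',f)=\|v(s^{*})\|_{X}\geq\|v(s_{j})\|_{X}=\tau(Q,f)$ and the descent quantity has strictly decreased. After at most $m-1$ such iterations we arrive at a partition $\mathcal{P}\subseteq\mathcal{P}_{0}$ with $\tau(Q,f)\leq\tau(\mathcal{P},f)$, as required.

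The step that needs some care is the piecewise-convexity assertion: one partitions $[s_{j-1},s_{j+1}]$ by the kinks $\mathcal{P}_{0}\cap(s_{j-1},s_{j+1})$, observes that $s\mapsto v(s)$ is the restriction of an affine map into $X$ on each piece, and concludes that $\|v(\cdot)\|_{X}$, being the composition of an affine map with a convex norm, is convex on each piece and hence attains its maximum on $[s_{j-1},s_{j+1}]$ at a kink or an endpoint. Once this is in place, the finite descent closes the argument without further difficulty.
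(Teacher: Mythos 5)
Your proof is correct, and it takes a genuinely different and in fact cleaner route than the paper's. The paper proceeds by a left-to-right finite induction on the intervals of $Q$, with a case analysis (Cases 1, 2 and their subcases) governed by the signs of the partial integrals, and it relies on two auxiliary inequalities (Lemmas~\ref{l22} and \ref{l23}) about coalescing and redistributing coordinates relative to a $1$-symmetric basis. You instead fix one offending endpoint at a time and observe that along the affine motion of that endpoint the two affected coordinates of $\sum_j(\int_{I_j}f)e_j$ are piecewise affine with breakpoints only at $\mathcal{P}_0$, so that $s\mapsto\Vert v(s)\Vert_X$ is piecewise convex and is maximized either at a point of $\mathcal{P}_0$ or at a collapse of an interval; the count of endpoints outside $\mathcal{P}_0$ then strictly decreases and the descent terminates. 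This eliminates the sign case analysis entirely and isolates exactly what is used: convexity of the norm along affine paths, plus $1$-unconditionality and symmetry to discard a vanished coordinate and re-index. What the paper's route buys in exchange is that Lemma~\ref{l22} is established as a standalone tool that is reused later (e.g.\ in Section~5). One small point to align with how the proposition is actually invoked: the paper allows $Q$ to be a disjoint family of intervals that need not abut, and disposes of the gaps by inserting them as extra intervals, which can only increase $\tau(Q,f)$ by $1$-unconditionality; you should record this one-line reduction before starting your descent, since your indexing $I_j=(s_{j-1},s_j)$ presupposes a gap-free partition.
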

This result is very useful for computing norms of functions in
$JF_{X}$ and studying the structure of the space. One immediate
consequence is that the Haar system is a Schauder basis for
$JF_{X}.$

To give an idea of the proof consider a partition
$P_{0}=\{0=t_{0}<t_{1}<t_{2}<t_{3}=1\}$ of $[0,1]$ into three
subintervals, $T_{i}=[t_{i-1},t_{i}]$, $i=1,2,3.$ Assume that the
function $f$ on $[0,1]$ takes value $\alpha_{i}$ on $T_{i}$,
$i\leq 3$, and $\alpha_{1},\alpha_{3}\geq 0$ and $\alpha_{2}\leq
0.$ One can check that
$$
\Vert f\Vert_{JF}=\max\{\vert\int_{0}^{1}f\vert,\Vert
(\int_{T_{1}}f)e_{1}+(\int_{T_{2}}f)e_{2}+
\int_{T_{3}}f)e_{3}\Vert_{X}\}\,\,.
$$
The proof for the general case
uses finite induction to replace any partition $Q$ of $[0,1]$ by a
partition $Q^{\prime}\subset \mathcal{P}_{0}$, so that $\tau
(f,Q)\leq\tau (f,Q^{\prime}).$

This  proposition will give us as corollary the result of this
section. In the proof we shall use the following that restates a
result from \cite{pe}.
\begin{lemma}\label{l22}
Let $X$ be a Banach space with a $1-$symmetric basis
$(e_{i})_{i\in\mathbb{N}}$ and $x=\sum_{n=1}^{k}\alpha_{n}e_{n}$ a
linear combinations  with $\alpha_{n}\geq 0$. Then for every
$\{\sigma_{j}\}_{j=1}^{\ell}$disjoint partition of
$\{1,\ldots,n\}$ we have that
$$
\Vert x\Vert_{X}\leq\Vert\sum_{j=1}^{\ell}
\left(\sum_{n\in\sigma_{j}}\alpha_{n}\right)e_{j}\Vert_{X}\,\,.
$$
\end{lemma}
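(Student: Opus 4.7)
The plan is to use duality together with the $1$-symmetry (hence $1$-unconditionality) of the basis $(e_{n})_{n}$ and its biorthogonal system. Write $y=\sum_{j=1}^{\ell}\beta_{j}e_{j}$, where $\beta_{j}=\sum_{n\in\sigma_{j}}\alpha_{n}\geq 0$; the claim becomes $\Vert x\Vert_{X}\leq\Vert y\Vert_{X}$. First I would pick a norming functional $x^{\ast}\in S_{X^{\ast}}$ with $x^{\ast}(x)=\Vert x\Vert_{X}$. Because $x$ is supported on $\{1,\dots,k\}$ and has non-negative coefficients, I may restrict $x^{\ast}$ to $\{1,\dots,k\}$ and then replace each of its coefficients by its absolute value; neither operation decreases the pairing with $x$, while each of them preserves or decreases $\Vert x^{\ast}\Vert_{X^{\ast}}$ by $1$-unconditionality of the dual basis. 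Hence one may assume $x^{\ast}=\sum_{n=1}^{k}\gamma_{n}e_{n}^{\ast}$ with $\gamma_{n}\geq 0$, $\Vert x^{\ast}\Vert_{X^{\ast}}\leq 1$, and $\Vert x\Vert_{X}=\sum_{n=1}^{k}\gamma_{n}\alpha_{n}$.

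Next, for each $j\leq\ell$ pick $i_{j}\in\sigma_{j}$ with $\gamma_{i_{j}}=\max_{n\in\sigma_{j}}\gamma_{n}$, and set $\gamma_{j}^{\ast}=\gamma_{i_{j}}$. Since $\alpha_{n}\geq 0$,
$$
\Vert x\Vert_{X}=\sum_{j=1}^{\ell}\sum_{n\in\sigma_{j}}\gamma_{n}\alpha_{n}\leq\sum_{j=1}^{\ell}\gamma_{j}^{\ast}\beta_{j}.
$$
Now consider $z^{\ast}=\sum_{j=1}^{\ell}\gamma_{j}^{\ast}e_{i_{j}}^{\ast}$. The indices $i_{j}$ being pairwise distinct, $z^{\ast}$ is obtained from $x^{\ast}$ by annihilating the coordinates outside $\{i_{j}:j\leq\ell\}$, so $1$-unconditionality yields $\Vert z^{\ast}\Vert_{X^{\ast}}\leq\Vert x^{\ast}\Vert_{X^{\ast}}\leq 1$. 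Applying a permutation sending $i_{j}\mapsto j$ and invoking $1$-symmetry of $(e_{n}^{\ast})_{n}$ gives $\Vert\sum_{j=1}^{\ell}\gamma_{j}^{\ast}e_{j}^{\ast}\Vert_{X^{\ast}}\leq 1$ as well.

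Pairing this functional with $y$ yields $\sum_{j}\gamma_{j}^{\ast}\beta_{j}\leq\Vert y\Vert_{X}$, and combining with the displayed estimate completes the proof. The main delicate point is purely organizational: one must justify carefully that each manipulation of the norming functional $x^{\ast}$ is permissible under the hypotheses on the basis, namely (i) truncating its support to $\{1,\dots,k\}$, (ii) replacing signs by $+$, (iii) keeping from each block $\sigma_{j}$ only the single coordinate $i_{j}$ where $\gamma_{n}$ is maximal, and (iv) relabelling these coordinates as $1,\dots,\ell$. Steps (i)--(iii) use $1$-unconditionality and step (iv) uses $1$-symmetry, and all four apply to the dual basis because these properties transfer to $(e_{n}^{\ast})_{n}$.
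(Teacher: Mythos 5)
Your argument is correct. Note that the paper itself gives no proof of Lemma~2.2: it is stated as a restatement of a result from the reference [Pe], so there is no in-text argument to compare against line by line. Your duality proof is sound and self-contained: taking a norming functional $x^{\ast}$, reducing (by $1$-unconditionality of the dual basis) to one with non-negative coefficients supported on $\{1,\dots,k\}$, dominating $\sum_{n\in\sigma_{j}}\gamma_{n}\alpha_{n}$ by $\bigl(\max_{n\in\sigma_{j}}\gamma_{n}\bigr)\beta_{j}$, and then recognizing $\sum_{j}\gamma_{i_{j}}^{\ast}e_{i_{j}}^{\ast}$ as a coordinate restriction of $x^{\ast}$ (hence of norm at most $1$) which after a permutation tests $y=\sum_{j}\beta_{j}e_{j}$ --- each step is legitimate, and the transfer of $1$-unconditionality and $1$-symmetry to the biorthogonal functionals, which you flag as the delicate point, is indeed standard. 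For comparison, the argument most likely intended by the cited source is a primal one: assuming each $\beta_{j}>0$ (blocks with $\beta_{j}=0$ may be discarded), for every choice function $c$ with $c(j)\in\sigma_{j}$ set $y_{c}=\sum_{j}\beta_{j}e_{c(j)}$ and $\lambda_{c}=\prod_{j}\alpha_{c(j)}/\beta_{j}$; then $\sum_{c}\lambda_{c}=1$, $x=\sum_{c}\lambda_{c}y_{c}$, and $\Vert y_{c}\Vert_{X}=\Vert y\Vert_{X}$ by $1$-symmetry, so $\Vert x\Vert_{X}\leq\Vert y\Vert_{X}$ by convexity. The two routes are dual to one another and of comparable length; yours has the mild advantage of not needing the positivity normalization on the $\beta_{j}$, while the convex-combination argument avoids any discussion of the dual basis.
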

\begin{lemma}\label{l23}
Let $X$ be a Banach space with 1-symmetric basis $(e_{i})_{i}.$
Let $x=\sum_{i=1}^{j-1}\beta_{i}e_{i}+ (\beta_{j}
+\lambda_{j}\alpha)e_{j}+ \lambda_{j+1}\alpha e_{j+1}+
\sum_{i=j+2}^{r}\beta_{i}e_{i}$, where
$\lambda_{j},\lambda_{j+1}\geq 0$ and
$\lambda_{j}+\lambda_{j+1}\leq 1.$ Then
$$
\Vert x\Vert\leq
\begin{cases}
\Vert \sum_{i=1}^{j-1}\beta_{i}e_{i}+
(\beta_{j}+(\lambda_{j}+\lambda_{j+1})\alpha e_{j}
+\sum_{i=j+2}^{r}\beta_{i}e_{i}\Vert & if\,\alpha\cdot\beta_{j} >0\\
\\
\Vert \sum_{i=1}^{j-1}\beta_{i}e_{i}+ \beta_{j}e_{j}+
(\lambda_{j}+\lambda_{j+1})\alpha e_{j+1}
+\sum_{i=j+2}^{r}\beta_{i}e_{i}\Vert & if\,\alpha\cdot\beta_{j}
<0\,\,.
\end{cases}
$$
\end{lemma}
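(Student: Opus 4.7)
The plan is to reduce both cases to Lemma~\ref{l22}, using the 1-unconditionality of the basis (which lets us pass to absolute values and exploit coordinate monotonicity) together with the 1-symmetry (which lets us freely permute positions). Recall that 1-unconditionality gives $\Vert z\Vert=\Vert\,\vert z\vert\,\Vert$ for every vector $z$, and in particular that the norm is monotone in each non-negative coordinate; 1-symmetry lets us relabel positions at will. Denote by $y_1,y_2$ the two vectors on the right-hand side of the inequalities.

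\textbf{Case 1} ($\alpha\beta_j>0$). Since $\beta_j$ and $\lambda_j\alpha$ have the same sign, $\vert\beta_j+\lambda_j\alpha\vert=\vert\beta_j\vert+\lambda_j\vert\alpha\vert$. Hence the coordinates of $\vert x\vert$ at positions $j$ and $j+1$ are $\vert\beta_j\vert+\lambda_j\vert\alpha\vert$ and $\lambda_{j+1}\vert\alpha\vert$, and their sum equals $\vert\beta_j\vert+(\lambda_j+\lambda_{j+1})\vert\alpha\vert$, which is precisely the $j$-th coordinate of $\vert y_1\vert$. Applying Lemma~\ref{l22} to $\vert x\vert$ with the partition merging $\{j,j+1\}$ into one group and each other index as a singleton, together with a 1-symmetric relabeling that places the merged coordinate at position $j$, yields $\Vert x\Vert\leq\Vert y_1\Vert$.

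\textbf{Case 2} ($\alpha\beta_j<0$). Set $A=\vert\alpha\vert$, $B=\vert\beta_j\vert$. Since $\beta_j$ and $\lambda_j\alpha$ have opposite signs, $\vert x_j\vert=\vert B-\lambda_j A\vert$ and $\vert x_{j+1}\vert=\lambda_{j+1}A$, while $\vert y_{2,j}\vert=B$ and $\vert y_{2,j+1}\vert=(\lambda_j+\lambda_{j+1})A$, all other coordinates coinciding. In the subcase $\lambda_j A\leq B$ one has $\vert x_j\vert=B-\lambda_j A\leq B$ and $\vert x_{j+1}\vert\leq(\lambda_j+\lambda_{j+1})A$, so termwise monotonicity gives $\Vert x\Vert\leq\Vert y_2\Vert$ directly. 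In the subcase $\lambda_j A>B$ one has $\vert x_j\vert=\lambda_j A-B\leq\lambda_j A$, so monotonicity lets us first enlarge $\vert x_j\vert$ to $\lambda_j A$; Lemma~\ref{l22} then merges the two positive coordinates at $j,j+1$ into a single one of value $(\lambda_j+\lambda_{j+1})A$, which by 1-symmetry we relabel to position $j+1$; finally monotonicity lets us add a coordinate of value $B$ at position $j$, producing exactly $\vert y_2\vert$.

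The slightly delicate step is the subcase $\lambda_j A>B$ of Case~2, because termwise comparison of $\vert x\vert$ with $\vert y_2\vert$ can fail there: both $\lambda_j A-B$ and $\lambda_{j+1}A$ may exceed $B$, so no permutation of $\vert y_2\vert$ majorizes $\vert x\vert$ coordinatewise. The fix is to enlarge $\vert x_j\vert$ first (bounding it by $\lambda_j A$, which is possible only because $\beta_j$ and $\lambda_j\alpha$ have opposite signs) and only then invoke Lemma~\ref{l22}, so that the total positive mass at positions $j,j+1$ already matches that of $y_2$ before the 1-symmetric rearrangement is performed.
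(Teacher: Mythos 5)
Your proof is correct and follows essentially the same route as the paper: Case 1 is handled by applying Lemma~\ref{l22} to the block at positions $j,j+1$, and Case 2 by combining coordinatewise monotonicity (1-unconditionality), Lemma~\ref{l22}, and the symmetry of the basis. The only cosmetic difference is that the paper treats your two subcases of Case 2 at once by replacing the $j$-th coefficient with $\max\{\vert\beta_{j}\vert,\lambda_{j}\alpha\}$ and adding $\min\{\vert\beta_{j}\vert,\lambda_{j}\alpha\}$ at a fresh position before merging, whereas you split according to which of $\vert\beta_j\vert$, $\lambda_j\vert\alpha\vert$ is larger.
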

\begin{proof}
Indeed, if $\beta_{j}\cdot \alpha >0$ using the Lemma~\ref{l22}
for the block $(\beta_{j}+\lambda_{j}\alpha)e_{j}+
\lambda_{j+1}\alpha e_{j+1}$, we have the inequality. If
$\beta_{j}\cdot \alpha <0$, we may assume that $\alpha >0$ since
the basis is 1-unconditional. Then $\vert
\beta_{j}+\lambda_{j}\alpha\vert \leq
\max\{\vert\beta_{j}\vert,\lambda_{j}\alpha\}.$ Substituting the
coefficient of $e_{j}$ by the coefficient
$\max\{\vert\beta_{j}\vert, \lambda_{j}\alpha\}$ and adding the
term $\min\{\vert\beta_{j}\vert,\lambda_{j}\alpha\}e_{m}$ and
reordering if  necessary, we have that the norm will increase, due
to the symmetric property of the basis. Using Lemma~\ref{l22} we
have the result.
\end{proof}
\begin{proof}[Proof of Proposition~\ref{p21}]
We  use finite induction in order to replace the partition
$Q=\{I_{j}\}$ by a partition $\mathcal{P}=\{S_{l}\}$ with
endpoints in $\mathcal{P}_{0}$ such that $\tau(Q,f)\leq \tau
(\mathcal{P},f).$ In each step of the induction we shall replace
some of the intervals $I_{j}$ by  appropriate intervals $S_{j}.$

For every partition $P=\{R_{j}\}_{j}$ we may assume that $\max
R_{j-1}=\min R_{j},$ for every $j$, otherwise we add the interval
$(\max R_{j-1},\min R_{j})$ in the partition, and we have that
$\tau(P,f)$ increases, due the symmetric property of the basis of
$X$.

For every $i=1,2,\ldots,n$ we set $T_{i}=[t_{i-1},t_{i}].$ For
every $i=1,2,\ldots,n$, let $A_{i}=\{1\leq j\leq m: I_{j}\subseteq
T_{i}\}.$ We also set $B=\{1\leq j\leq m: I_{j}\cap
T_{i}\not=\emptyset\,\,\, \text{for at least two}\,\,i's,\, 1\leq
i\leq n\}.$ Let
$A_{i}=\{j_{i_{1}},j_{i_{1}}+1,\ldots,j_{i_{1}}+r_{i}\}\not
=\emptyset.$ From Lemma~\ref{l22} we have that
\begin{align*}
\tau(Q,f) \leq \Vert
\sum_{j\in B}(\int_{I_{j}}f)e_{j}+
\sum_{i: A_{i}\not=\emptyset}\sum_{j\in A_{i}} (
\alpha_{i}\mu(\cup_{j\in A_{i}} I_{j})e_{j_{i_{1}}} \Vert\,\,.
\end{align*}
For every $i$ such that $A_{i}\not=\emptyset$ we replace
$\cup_{j\in A_{i}}I_{j}$ by the maximal interval contained in
$T_{i}$ and it is disjoint with $I_{j}$, $j\in B.$ We have that
$\tau(f,Q)$ increases, due the symmetric property of the basis. In
the sequel, from the above observation, we shall assume that if
two intervals, in the inductive construction, intersect an
interval $T_{i}$, at least one of them intersects another interval
$T_{j}$ as well.

Assume that we have replaced the partition $\{I_{j}\}_{j}$ by a
partition $\{S_{1},\ldots,S_{l-1}, I_{l}, I_{l+1},$
$\ldots,I_{m}\}$ which increases $\tau(Q,f)$, such that the
intervals $S_{i}$, $i\leq l-1$, have endpoints in
$\mathcal{P}_{0}$ and also that we have replaced
$I_{l}$ by  an interval such
that the initial point belongs to $\mathcal{P}_{0}.$ Let
$t_{j_{l}}\in\mathcal{P}_{0}$ be such that $t_{j_{l}-1}< \max
I_{l}<t_{j_{l}}.$ We distinguish two cases.

\smallskip
\noindent {\it Case 1.}  $I_{l}\subset
T_{j_{l}}=[t_{j_{l}-1},t_{j_{l}}].$

We have assumed that $I_{l+1}$ is not contained in $T_{j_{l}}.$
>From the hypothesis for $f$ we have that
$$
\int_{I_{l+1} }f= \int_{I_{l+1}\cap  T_{j_{l}} }f+ \int_{I_{l+1}
\setminus T_{j_{l}}}f= \alpha_{j_{l}} \mu(I_{l+1}\cap
T_{j_{l}})+\int_{I_{l+1}\setminus T_{j_{l}} }f\,\,.
$$
We have two
subcases:

\noindent {\it Subcase 1a.}  $(\int_{I_{l+1}\setminus T_{j_{l}}}f)
\cdot\alpha_{j_{l}} <0$.

Then, from Lemma~\ref{l23}, for the block
$(\int_{I_{l}}f)e_{l}+(\int_{I_{l+1}}f)e_{l+1}$, and the inductive
hypothesis, replacing $I_{l}$ by $S_{l}=T_{j_{l}}\supset
I_{l}\cup(I_{l+1}\cap T_{j_{l}})$ we have that
\begin{align*}
\tau(Q,f) & \leq \Vert \sum_{j=1}^{l-1}(\int_{S_{j} } f)e_{j}+
\left(\int_{
S_{l} }f \right)e_{l} +(\int_{I_{l+1}\setminus T_{j_{l}}}
f)e_{l+1}+\sum_{j> l+1}\left(\int_{I_{j} }f\right)e_{j}
\Vert_{X}\,\,.
\end{align*}
We also replace the interval $I_{l+1}$ by the interval
$S_{l+1}=I_{l+1}\setminus T_{j_{l}}$ which has initial point in
$\mathcal{P}_{0}.$

\noindent {\it Subcase 1b.} $(\int_{I_{l+1} }f)\cdot
\alpha_{j_{l}} >0.$

In this case, using Lemma~\ref{l23}, we can replace the interval
$I_{l}$ by the interval $S_{l}=[\min T_{j_{l}},\max I_{l+1}],$
which has initial point in $\mathcal{P}_{0}$ and we delete the
interval $I_{l+1}$ and therefore the coefficient of $e_{l+1}.$ The
norm increases, since
$$
\left| \alpha_{j_{l}}\mu(I_{l})+ \alpha_{j_{l}}\mu(I_{l+1}\cap
T_{j_{l}})+\int_{I_{l+1}\setminus T_{j_{l}}}f\right| \leq \left|
\alpha_{j_{l}}\mu(T_{j_{l}})+\int_{I_{l+1} \setminus
T_{j_{l}}}f\right| =\left\vert\int_{S_{l}}f\right|\, .
$$
For the interval $[\min T_{j_{l}}, \max I_{l+1}],$ it could be the
case that $\max I_{l+1}\not\in\mathcal{P}_{0}.$ If such a case
occurs, then in case 2 we show how to replace it.

\smallskip
\noindent {\it Case 2.} $I_{l}\varsubsetneqq
T_{j_{l}}=[t_{j_{l}-1},t_{j_{l}}]$.
\newline
Then the interval $I_{l}$ intersects an interval $T_{i}$ for some
$i<j_{l}$, since $\max I_{l}<t_{j_{l}}.$ From the hypothesis for
$f$ we have that
\begin{center}
$ \int_{I_{l} } f=\int_{I_{l}\setminus
T_{j_{l}}}f+\alpha_{j_{l}}\mu(I_{l}\cap T_{j_{l}}) $.
\end{center}
We have two subcases.

\smallskip
\noindent {\it Subcase 2a.} $(\int_{I_{l}\setminus T_{ j_{l} }
}f)\cdot \alpha_{j_{l}} >0.$
\newline
By our assumptions we have that If $I_{l+1}\cap\not=\emptyset.$
Using Lemma~\ref{l22}, for the block $(\int_{I_{l}}f)e_{l}+
(\int_{I_{l+1}}f)e_{l+1}$ we may assume that $I_{l+1}\subsetneqq
T_{j_{l}}.$ From the hypothesis for $f$ we have
\begin{center}
$\int_{I_{l+1}}f= \alpha_{j_{l}}\mu(I_{l+1}\cap T_{j_{l}})+
\int_{I_{l+1}\setminus T_{j_{l}}}f $.
\end{center}
If $\alpha_{j_{l}}\cdot (\int_{I_{l+1}\setminus T_{j_{l}}}f)>0$,
applying  Lemma~\ref{l22} for the block
$(\int_{I_{l}}f)e_{l}+(\int_{I_{l+1}}f)e_{l+1}$ we get that,
\begin{align*}
\tau(Q,f) \leq \Vert \sum_{j=1}^{ l-1}\left(\int_{ S_{j}
}f\right)e_{j} +\left(\int_{I_{l} \cup T_{j_{l}}\cup I_{l+1}
}f\right)e_{l}+ \sum_{j\geq l+2}\left(\int_{I_{j}
}f\right)e_{j}\Vert_{X}\,\,.
\end{align*}
We have replaced the intervals $I_{l}$, $I_{l+1}$ by the interval
$S_{l}=I_{l} \cup T_{j_{l}}\cup I_{l+1}$ which has initial point
in $\mathcal{P}_{0}.$

If $(\int_{I_{l+1}\setminus T_{j_{l}}}f)\cdot \alpha_{j_{l}}<0$
using the symmetric property of the basis and Lemma~\ref{l23},
replacing $I_{l}$ by $S_{l}=[\min I_{l},\max T_{j_{l}}]$, we get
that,
\begin{align*}
\tau(Q,f) \leq & \Vert\sum_{j=1}^{l-1}(\int_{S_{j}}f)e_{j}+
(\int_{S_{l}}f)e_{l}+(\int_{I_{l+1}\setminus T_{j_{l}}}f)e_{l+1}+
\sum_{j>l+1}(\int_{I_{j}}f)e_{j}\Vert_{X}\,\,.
\end{align*}
We also replace the interval $I_{l+1}$ by the interval
$S_{l+1}=I_{l+1}\setminus T_{j_{l}}$ which has initial point in
$\mathcal{P}_{0}.$

\noindent {\it Subcase 2b.} $ (\int_{I_{l}\setminus T_{j_{l}}
}f)\cdot \alpha_{j_{l}} <0$\,.

In this case we  replace $I_{l}$ by $S_{l}=I_{l}\setminus
T_{j_{l}}$, which has endpoints in $\mathcal{P}_{0}$, and we add
the term $(\int_{I_{l}\setminus T_{j_{l}}}f)e_{l+1}$, transferring
the sum $\sum_{j\geq l+1}(\int_{I_{j} }f)e_{j}.$ The interval
$I_{l}\setminus T_{j_{l}}$ has initial point in $\mathcal{P}_{0}$,
and we follow the arguments of Case $1$ for the interval
$I_{l}\setminus T_{j_{l}}.$

Following the above arguments for the intervals which we get in
the above cases, with initial point in $\mathcal{P}_{0}$,
we have the result.
\end{proof}
Let us recall the definition of the Haar system $(h_{n})_{n}.$
We set $h_{1}=\chi_{[0,1]}$ and
$$
h_{2^{k}+i}=
\chi_{[\frac{2i-2}{2^{k+1}},\frac{2i-1}{2^{k+1}}]}-
\chi_{(\frac{2i-1}{2^{k+1}},\frac{2i}{2^{k+1}}]}\,\,\text{for
every}\,\, 1\leq i\leq 2^{k}, k=0,1,\ldots
$$
\begin{corollary}\label{c24}
The Haar system is a Schauder basis for $JF_{X}$.
\end{corollary}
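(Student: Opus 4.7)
The plan is to verify the two defining properties of a Schauder basis: density of the linear span, and uniform boundedness of the partial sum projections. For density, I would observe that for every $k\geq 0$ the span of $h_{1},\ldots,h_{2^{k}}$ coincides with the space of functions that are constant on every dyadic interval of generation $k$. These dyadic step functions are dense in $L^{1}([0,1])$ in $L^{1}$-norm, and since $\Vert f\Vert_{JF_{X}}\leq\Vert f\Vert_{1}$ for $f\in L^{1}([0,1])$, the span $\mathrm{span}\{h_{n}:n\in\mathbb{N}\}$ is dense in $JF_{X}$ as well.

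For the projection bounds, let $P_{n}$ denote the natural projection onto $\mathrm{span}(h_{1},\ldots,h_{n})$. My plan rests on the following key observation: for every $n$ there is a partition $\mathcal{P}_{0}=\mathcal{P}_{0}(n)$ of $[0,1]$ with dyadic endpoints on whose intervals $P_{n}f$ is constant, and moreover the value of $P_{n}f$ on each interval $J\in\mathcal{P}_{0}$ equals the average of $f$ over $J$. Concretely, writing $n=2^{k}+i$ with $0\leq i\leq 2^{k}$, $\mathcal{P}_{0}(n)$ consists of the first $i$ dyadic intervals of generation $k$ each split in half, together with the remaining $2^{k}-i$ unrefined intervals of generation $k$; thus $P_{n}f$ is the conditional expectation of $f$ with respect to this finite atomic $\sigma$-algebra. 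The consequence I need is that $\int_{J}P_{n}f=\int_{J}f$ whenever $J$ is any union of intervals of $\mathcal{P}_{0}$.

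Now given any partition $Q$ of $[0,1]$ into disjoint intervals, Proposition~\ref{p21} applied to the simple function $P_{n}f$ and the partition $\mathcal{P}_{0}$ produces a subpartition $\mathcal{P}\subseteq\mathcal{P}_{0}$ with
$$
\tau(Q,P_{n}f)\leq\tau(\mathcal{P},P_{n}f).
$$
Every interval in $\mathcal{P}$ is a union of intervals of $\mathcal{P}_{0}$, so by the equality of integrals just noted, $\tau(\mathcal{P},P_{n}f)=\tau(\mathcal{P},f)\leq\Vert f\Vert_{JF_{X}}$. Taking the supremum over $Q$ yields $\Vert P_{n}f\Vert_{JF_{X}}\leq\Vert f\Vert_{JF_{X}}$, so that the Haar system is in fact monotone with basis constant~$1$.

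The only slightly delicate point is handling the intermediate projections $P_{n}$ for $n$ not a power of $2$; but since these correspond to conditional expectations on atomic $\sigma$-algebras generated by dyadic intervals of mixed generations, the averaging property $\int_{J}P_{n}f=\int_{J}f$ on unions of atoms remains true verbatim, and Proposition~\ref{p21} covers all such step functions uniformly. Combining density of the span with the uniform (in fact unit) bound on $P_{n}$ gives that $(h_{n})_{n}$ is a monotone Schauder basis of $JF_{X}$.
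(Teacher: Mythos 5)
Your proof is correct and follows essentially the same route as the paper: both rest on Proposition~\ref{p21} to replace an arbitrary partition by a subpartition of the dyadic partition on which the truncated Haar sum is constant, and on the fact that the omitted Haar functions have mean zero over each atom of that partition. The only cosmetic difference is that the paper verifies monotonicity one coefficient at a time, showing $\Vert\sum_{i\leq n}a_{i}h_{i}\Vert\leq\Vert\sum_{i\leq n+1}a_{i}h_{i}\Vert$ using that ${\rm supp}\,h_{n+1}$ lies in a single atom, whereas you bound the full partial-sum projection at once via the conditional-expectation identity $\int_{J}P_{n}f=\int_{J}f$; both arguments yield basis constant $1$.
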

\begin{proof}
It is enough to show that $\Vert
\sum_{i=1}^{n}a_{i}h_{i}\Vert_{JF_{X}}\leq
\Vert\sum_{i=1}^{n+1}a_{i}h_{i}\Vert_{JF_{X}}.$ Set
$\mathcal{P}=(I_{j})_{j=1}^{m}$ the partition corresponding to the
simple function $f=\sum_{i=1}^{n}a_{i}h_{i}$, by
Proposition~\ref{p21}. Standard properties of the Haar system
yield that ${\rm supp}h_{n+1}$ is contained in some $I_{j}.$
Choose a subset $Q$ of $\mathcal{P}$ such that $\tau(f,Q)=\Vert
f\Vert_{JF_{X}}.$ Clearly $\tau(f,Q)=\tau(g,Q)$, where
$g=\sum_{i=1}^{n+1}a_{i}h_{i}$, and this completes the proof.
\end{proof}
{\it Remark.}\, S.Bellenot \cite{be}, has provided a proof that
the Haar system is a basis for $JF$. His proof is based on the
notion a neighborly basis, introduced by R.C.James.
\begin{lemma}\label{l25}
Let $ A_{n}$ be a sequence of successive intervals such that
$\mu(A_{2n-1})=\mu (A_{2n})$ for every $n\in\mathbb{N}.$ We set
$y_{n}=\chi_{A_{2n-1}}-\chi_{A_{2n}}$ for every $n\in\mathbb{N}.$
Then we have that
$$
\Vert\sum_{n}\alpha_{n}e_{n}\Vert_{X}\leq
\Vert\sum_{n}\alpha_{n}\frac{y_{n}}{\mu(A_{2n-1})}\Vert_{JF_{X}}
\leq 2\Vert\sum_{n}\alpha_{n}e_{n}\Vert_{X}\,\,.
$$
Hence $(\frac{
y_{n} }{ \mu(A_{2n-1}) })_{n}$ is $2$-equivalent to the unit
vector basis $(e_{n})_{n}$ of $X.$
\end{lemma}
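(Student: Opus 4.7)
My plan is to prove the two sides of the equivalence separately, reducing via density to the case of finite linear combinations $f = \sum_{n=1}^{N}\alpha_{n}y_{n}/\mu(A_{2n-1})$.

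For the lower bound, the family $\{A_{2j-1}\}_{j=1}^{N}$ is pairwise disjoint and lies in $\mathcal{P}((0,1))$, and a direct computation using $\int_{A_{2j-1}}y_{n}=\mu(A_{2j-1})\delta_{j,n}$ gives $\int_{A_{2j-1}}f=\alpha_{j}$; the definition of the $JF_{X}$ norm then yields $\|f\|_{JF_{X}}\geq\|\sum_{j=1}^{N}\alpha_{j}e_{j}\|_{X}$.

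For the upper bound, I would first note that the supremum defining $\|\cdot\|_{JF_{X}}$ can be taken equivalently over interval partitions of $[0,1]$: any pairwise disjoint family extends to a partition by filling the gaps, and $1$-unconditionality of $(e_{n})$ ensures the corresponding $\tau$-value does not decrease. Let $\mathcal{P}_{0}$ be the finite ordered set consisting of $0$, $1$, and the endpoints of the $A_{n}$'s for $n\leq 2N$. Since $f$ is piecewise constant with respect to $\mathcal{P}_{0}$, Proposition~\ref{p21} allows me to further restrict the supremum to partitions $\mathcal{P}\subseteq\mathcal{P}_{0}$. The key structural observation is that for any interval $I=[L,R]$ with $L,R\in\mathcal{P}_{0}$, each $A_{k}$ is either fully contained in $I$ or disjoint from $I$, so $\int_{I}y_{n}/\mu(A_{2n-1})\in\{-1,0,+1\}$. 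Writing $c_{I}:=\int_{I}f=\sum_{n}\epsilon_{I,n}\alpha_{n}$, the condition $\epsilon_{I,n}=+1$ forces $R\in[\max A_{2n-1},\max A_{2n})$; since these ranges are pairwise disjoint in $n$, at most one index contributes $\epsilon_{I,n}=+1$ per $I$, and symmetrically at most one contributes $-1$. Moreover, for fixed $n$ the condition $\epsilon_{I,n}=+1$ singles out the unique interval of the partition containing $A_{2n-1}$, so the partial map $I\mapsto n_{I}^{+}$ is injective, and analogously for $I\mapsto n_{I}^{-}$.

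Writing $c_{I}=\alpha_{n_{I}^{+}}-\alpha_{n_{I}^{-}}$ (undefined indices contributing $0$), the triangle inequality reduces the task to bounding each of $\|\sum_{I}\alpha_{n_{I}^{\pm}}e_{I}\|_{X}$ by $\|\sum_{n}\alpha_{n}e_{n}\|_{X}$. By the symmetric property of $(e_{n})$, the injectivity of $I\mapsto n_{I}^{\pm}$ permits relabelling to identify this norm with $\|\sum_{n\in N^{\pm}}\alpha_{n}e_{n}\|_{X}$ for some $N^{\pm}\subseteq\{1,\ldots,N\}$, and $1$-unconditionality then gives $\|\sum_{n\in N^{\pm}}\alpha_{n}e_{n}\|_{X}\leq\|\sum_{n}\alpha_{n}e_{n}\|_{X}$. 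Summing the two contributions produces the factor $2$. The main subtle point is the geometric/counting step forcing each $\mathcal{P}_{0}$-aligned partition interval to produce at most two nonzero $\alpha_{n}$-contributions with injective indexing; once this structural reduction is in place, the rest follows routinely from the basic properties of the $1$-symmetric basis.
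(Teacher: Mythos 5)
Your proposal is correct and follows essentially the same route as the paper: the lower bound via the disjoint family $\{A_{2j-1}\}_{j}$, and the upper bound by invoking Proposition~\ref{p21} to reduce to partitions aligned with the endpoints of the $A_{n}$'s, observing that each partition interval then contributes at most one $+\alpha_{n}$ and one $-\alpha_{m}$ with each index used at most twice, and concluding with the triangle inequality and the symmetry/unconditionality of $(e_{n})_{n}$. Your write-up merely makes explicit the injectivity and gap-filling details that the paper's case analysis leaves implicit.
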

\begin{proof}
For the left inequality we consider the partition
$(A_{2n-1})_{n}.$ Then
\begin{align*}
\Vert\sum_{n}\alpha_{n}\frac{ y_{n} }{ \mu(A_{2n-1})
}\Vert_{JF_{X}} \geq \Vert\sum_{k}(\int_{A_{2k-1}}
\sum_{n}\alpha_{n}\frac{y_{n}}{\mu(A_{2n-1})})e_{k}\Vert_{X} \geq
\Vert \sum_{n}\alpha_{n}e_{n}\Vert_{X}.
\end{align*}
For the right inequality, let $(I_{j})_{j}$ be any partition of
$[0,1].$ The function
$\sum_{n}\alpha_{n}\frac{y_{n}}{\mu(A_{2n-1})}$ satisfies the
assumptions of Proposition~\ref{p21}, so we may assume that each
of the intervals $I_{j}$ is a finite union of successive $A_{n}.$
It easy to see that for each interval $I_{j}$, we have the
following estimates
\begin{equation*}
\left|\int_{I_{j}}\sum_{n}\alpha_{n}\frac{y_{n}}{\mu(A_{2n-1})}
\right| \leq
\begin{cases}
0   & \text{if}\,\,\,I_{j}=[A_{2k-1}, A_{2m}]\,\,\,,k\leq m\\
\vert\alpha_{m}\vert &  \text{if}\,\,\, I_{j}=[A_{2k-1},
A_{2m-1}]\,\,\,,k\leq m \\
\vert\alpha_{k}\vert  &
\text{if}\,\,\, I_{j}=[A_{2k}, A_{2m}]\,\,\,,k\leq m \\
\vert-\alpha_{k}+\alpha_{m} \vert  & \text{if}\,\,\,
I_{j}=[A_{2k},A_{2m-1}]\,\,\,,k< m
\end{cases}
\end{equation*}
Since the intervals are successive, we have that each $\alpha_{n}$
appears at most two times, and therefore
\begin{equation}\label{last}
\Vert\sum_{n}\alpha_{n}\frac{y_{n}}{\mu(A_{2n-1})}\Vert_{JF_{X}}
\leq 2\Vert\sum_{n}\alpha_{n}e_{n}\Vert_{X}\,\,.
\end{equation}
\end{proof}
\begin{notation}{\rm In the sequel we denote by
$\langle A\rangle$ the linear subspace generated by a subset $A$
of a normed space $Y.$ }
\end{notation}
\begin{theorem}\label{xinjf}
$X$ is isomorphic to a complemented subspace of $JF_{X}.$
\end{theorem}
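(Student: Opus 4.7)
The plan is to construct an explicit projection using Lemma~\ref{l25} together with biorthogonal functionals built from the parallelepiped functionals $T^{\ast}$.

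Fix a sequence $(A_n)_{n\in\mathbb{N}}$ of successive intervals in $(0,1)$ with $\mu(A_{2n-1})=\mu(A_{2n})$ for all $n$, and set
$$z_n = \frac{\chi_{A_{2n-1}}-\chi_{A_{2n}}}{\mu(A_{2n-1})}.$$
By Lemma~\ref{l25} the closed subspace $Y = \overline{\langle z_n : n\in\mathbb{N}\rangle}$ is $2$-isomorphic to $X$ under $z_n \mapsto e_n$. To exhibit $Y$ as complemented in $JF_X$, I define biorthogonal functionals $\phi_n = \tfrac{1}{2}(A_{2n-1}^{\ast} - A_{2n}^{\ast}) \in JF_X^{\ast}$. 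Since the intervals $(A_n)$ are pairwise disjoint and $\mu(A_{2n-1})=\mu(A_{2n})$, a direct computation gives $\phi_n(z_m)=\delta_{nm}$. The candidate projection is
$$P(f) = \sum_{n=1}^{\infty} \phi_n(f)\, z_n, \qquad f\in JF_X.$$

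The key estimate is as follows. Writing $c_n = A_n^{\ast}(f)$ and $b_n = c_{2n-1}-c_{2n} = 2\phi_n(f)$, I claim
$$\Vert \sum_{n=1}^{\infty} b_n e_n \Vert_X \leq 2\Vert f\Vert_{JF_X}.$$
For each fixed $N$, applying the definition of $\Vert\cdot\Vert_{JF_X}$ to the disjoint family $\{A_1,\dots,A_N\}\subset\mathcal{P}((0,1))$ yields $\Vert\sum_{n=1}^N c_n e_n\Vert_X \leq \Vert f\Vert_{JF_X}$, first for $f\in L^{1}$ and then by density for arbitrary $f\in JF_X$. Since $X$ is reflexive with a $1$-unconditional (in fact symmetric) basis, $(e_n)$ is boundedly complete, so this upgrades to $\Vert\sum_n c_n e_n\Vert_X \leq \Vert f\Vert_{JF_X}$. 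Splitting $\sum_n b_n e_n = \sum_n c_{2n-1}e_n - \sum_n c_{2n}e_n$ and invoking $1$-symmetry (to reindex odd or even positions consecutively) together with $1$-unconditionality (to pass to subseries) bounds each of the two pieces by $\Vert\sum_k c_k e_k\Vert_X$, giving the stated estimate with constant~$2$.

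Given the estimate, Lemma~\ref{l25} applied to the tails of the series yields
$$\Vert \sum_{n=N}^{M} \phi_n(f) z_n\Vert_{JF_X} \leq \Vert \sum_{n=N}^{M} b_n e_n\Vert_X,$$
so the partial sums of $P(f)$ are Cauchy, $P(f)\in Y$ is well-defined, and $\Vert P(f)\Vert_{JF_X}\leq 2\Vert f\Vert_{JF_X}$. The biorthogonality $\phi_n(z_m)=\delta_{nm}$ guarantees $P|_Y = \mathrm{id}_Y$, so $P$ is a bounded linear projection of $JF_X$ onto $Y$, and $Y$ is isomorphic to $X$ by Lemma~\ref{l25}. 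The only nontrivial technical point in this plan is the splitting estimate $\Vert\sum_n b_n e_n\Vert_X \leq 2\Vert f\Vert_{JF_X}$; once this is in hand, biorthogonality, Lemma~\ref{l25}, and bounded completeness combine routinely to deliver the conclusion.
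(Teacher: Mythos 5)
Your proposal is correct and follows essentially the same route as the paper: both build the copy of $X$ from Lemma~\ref{l25} and project onto it using interval functionals, obtaining a projection of norm at most $2$. The only difference is that the paper takes the biorthogonal functionals to be simply $A_{2n-1}^{\ast}$ (which already satisfy $A_{2n-1}^{\ast}(z_k)=\delta_{n,k}$ and make the norm estimate immediate from the disjointness of the odd intervals), whereas your symmetrized choice $\tfrac12(A_{2n-1}^{\ast}-A_{2n}^{\ast})$ works equally well but forces you to carry out the extra (correct) splitting estimate via unconditionality and symmetry.
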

\begin{proof}
Let $(y_{n})_{n}$ be the sequence defined in the previous lemma.
We prove that the space generated by this sequence is a
complemented subspace of $JF_{X}.$ Consider the map
\begin{align*}
P: JF_{X}\mapsto & \overline{\langle \{y_{n}/\mu(A_{2n-1})
:n\in N\}\rangle}\,\,\,\text{defined by the rule}\\
& f\mapsto \sum_{n}A_{2n-1}^{*}(f)\frac{y_{n}}{\mu(A_{2n-1})}\,\,.
\end{align*}
>From the definition of the map $T$ we have that
$A_{2n-1}^{*}(\frac{y_{k}}{\mu(A_{2k-1})})=\delta_{n,k}$, and from
inequality~(\ref{last})  we have that
\begin{align*}
\Vert\sum_{n}A^{\ast}_{2n-1}(f)
\frac{y_{n}}{\mu(A_{2n-1})}\Vert_{JF_{X}}
&\leq 2 \Vert \sum_{n}A^{\ast}_{2n-1}(f)e_{n}\Vert_{X}\\
&= 2\Vert\sum_{n}(\int_{A_{2n-1}}f)e_{n}\Vert_{X}\leq 2\Vert
f\Vert_{JF_{X}}\,\,.
\end{align*}
It follows that $P$ is a projection with $\Vert P\Vert\leq 2.$
\end{proof}
\begin{remark} It is clear that we can choose subsequences
of the Haar system which fulfill the assumptions of
Lemma~\ref{l25}, and therefore are weakly null. However Haar
system $(h_{n})_{n}$ is not a weakly null sequence. We describe
subsequences of the Haar system which does not converges weakly to
$0$.

Consider $\beta_{1}\in\mathbb{N}$ and $k_{1}\in\mathbb{N}$. For
$n\geq 2$ set $\beta_{n}=8\beta_{n-1}+2$, and set
$$x_{n}=\chi_{[\frac{\beta_{n}}{2^{k_{1}}8^{n-1}},
\frac{\beta_{n}+1}{2^{k_{1}}8^{n-1}}]}-
\chi_{(\frac{\beta_{n}+1}{2^{k_{1}}8^{n-1}},
\frac{\beta_{n}+2}{2^{k_{1}}8^{n-1}}]} \,\,\,\text{for}\,\,\,
n\geq 1.$$ The following are easily established.
\begin{enumerate}
\item The sequence
$(\frac{\beta_{n}}{2^{k_{1}}8^{n-1}})_{n}$ is increasing, while
the sequences $(\frac{\beta_{n}+1}{2^{k_{1}}8^{n-1}})_{n}$,
$(\frac{\beta_{n}+2}{2^{k_{1}}8^{n-1}})_{n}$ are decreasing.
\item $\lim_{n}\frac{\beta_{n}}{2^{k_{1}}8^{n-1}}=
\frac{\beta_{1}}{2^{k_{1}}}+\frac{2}{7\cdot 2^{k_{1}}}
\equiv\beta_{0}.$
\item $\frac{\beta_{n}+1}{2^{k_{1}}8^{n-1}}-\beta_{0}\leq
3(\beta_{0}-\frac{\beta_{n}}{2^{k_{1}}8^{n-1}})$\,\, for every
$n\in\mathbb{N}$.
\end{enumerate}
Set $I=[\frac{\beta_{1}}{2^{k_{1}}},\beta_{0}].$ From the above
properties it follows that
$$\int_{I}\frac{x_{n}}
{\mu([\frac{\beta_{n}}{2^{k_{1}}8^{n-1}},
\frac{\beta_{n}+1}{2^{k_{1}}8^{n-1}}])}= \frac{\mu([
\frac{\beta_{n}}{2^{k_{1}}8^{n-1}},\beta_{0}])}
{\mu([\frac{\beta_{n}}{2^{k_{1}}8^{n-1}},
\frac{\beta_{n}+1}{2^{k_{1}}8^{n-1}}])}\geq \frac14\,\,.
$$
If $\frac{\beta_{1}}{2^{k_{1}}}$ is the initial point of a Haar
function, then the sequence $(x_{n})_{n}$ is a subsequence of the
Haar system, and $(\frac{x_{n}}{\Vert x_{n}\Vert_{JF_{X}}})_{n}$
does not converges weakly to $0$ in $JF_{X}$. On the other hand if
$ (h_{n})_{n\in M}$ is a subsequence of the Haar system such that
$\max{\rm supp}h_{n}=\alpha$ for all $n\in M$, it is not hard to
see that the there exists a subsequence $(h_{n})_{n\in L}$ of
$(h_{n})_{n\in M}$ such that $(\frac{h_{n}}{\Vert
h_{n}\Vert})_{n\in L}$ is weakly null, and in particular is
equivalent to the unit vector basis of $X.$
\end{remark}
%
%
\section{Quotients of $JF_{X}^{\ast }(\Omega)$.}\label{sdual}
This section is devoted to the study of quotients
$JF^{\ast}_{X}(\Omega)/Y$, where $Y$ is a separable subspace of
$JF_{X}^{*}(\Omega)$.
\begin{definition}
Let $X$ be a Banach space with a symmetric basis, and $\Gamma$
an infinite set. We denote by $X_{\Gamma}$ the completion of
$c_{00}(\Gamma)$ under the norm
$$
\Vert
\sum_{i=1}^{n}\alpha_{i}e_{\gamma_{i}}\Vert=
\Vert\sum_{i=1}^{n}\alpha_{i}e_{i}\Vert_{X},\,\,
\mathrm{where\,}\,\,
\gamma_{i}\not=\gamma_{j}\,\,\mathrm{for}\,\,i\not= j\,\,.
$$
\end{definition}
\noindent It is obvious that $X_{\Gamma}$ is reflexive iff
$X$ is reflexive.

We prove the following
\begin{theorem}\label{t32}
Let $\Delta$ be any countable dense subset of $[0,1]$, and
$\{I_{i}\}_{i=1}^{\infty}$ be an enumeration of the subintervals
of $(0,1)$ with endpoints in $\Delta.$ We set
$Y=\overline{\langle\{I_{j}^{\ast} :j=1,2,\ldots\}\rangle}.$ The
quotient space $JF_{X}^{*}/Y$ is isomorphic to $X^{*}_{\Gamma}$,
where the set $\Gamma$ has the cardinality of the continuum.
\end{theorem}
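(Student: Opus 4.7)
The plan is to identify $JF_{X}^{*}/Y$ with $X_{\Gamma}^{*}$, where $\Gamma := (0,1)\setminus\Delta$ has cardinality $\mathfrak{c}$, via an explicit isomorphism built from ``germs at non-$\Delta$-points''. Without loss of generality I may assume $0,1 \in \Delta$. For each $p \in \Gamma$, define the germ $L_{p}^{*} := \pi((s,p)^{*})$, where $s \in \Delta$ with $s<p$ is arbitrary; this is independent of the choice of $s$ since $(s,p)^{*} - (s',p)^{*} = \pm(\min\{s,s'\},\max\{s,s'\})^{*} \in Y$. Define $T : X_{\Gamma}^{*} \to JF_{X}^{*}/Y$ on the canonical dense subspace $c_{00}(\Gamma)$ by $T(e_{p}^{*}) = L_{p}^{*}$. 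For the upper bound: given $\xi = \sum_{i=1}^{n} c_{i} e_{p_{i}}^{*}$ with $p_{1}<\dots<p_{n}$, choose $s_{i} \in \Delta$ with $s_{1}<p_{1}<s_{2}<p_{2}<\cdots<s_{n}<p_{n}$ by density. The intervals $I_{i} := (s_{i},p_{i})$ are pairwise disjoint and $\pi(\sum c_{i} I_{i}^{*}) = T(\xi)$; since $\sum c_{i} I_{i}^{*} \in \mathcal{S}$, the defining property of $\mathcal{S}$ yields $\|T(\xi)\|_{JF_{X}^{*}/Y} \leq \|\sum c_{i} I_{i}^{*}\|_{JF_{X}^{*}} \leq \|\sum c_{i} e_{i}^{*}\|_{X^{*}} = \|\xi\|_{X_{\Gamma}^{*}}$, so $T$ extends continuously with $\|T\| \leq 1$.

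The central step is to show $T$ is bounded below, by constructing witness functionals in $Y^{\perp} = (JF_{X}^{*}/Y)^{*} \subseteq JF_{X}^{**}$. For any finitely supported $x = \sum_{i} b_{i} e_{p_{i}} \in X_{\Gamma}$, let $g_{x} : [0,1] \to \mathbb{R}$ be the function with $g_{x}(p_{i}) = b_{i}$ and $g_{x} \equiv 0$ elsewhere, and define a linear functional $g_{x}^{**}$ on the span of $\mathcal{S}$ by the Stieltjes-type rule $g_{x}^{**}\bigl(\sum_{j} a_{j}(a_{j}',b_{j}')^{*}\bigr) := \sum_{j} a_{j}\bigl(g_{x}(b_{j}') - g_{x}(a_{j}')\bigr)$. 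By $X$-$X^{*}$ duality, $|g_{x}^{**}(\phi)| \leq \|\sum_{j} a_{j} e_{j}^{*}\|_{X^{*}} \cdot \|\sum_{j}(g_{x}(b_{j}') - g_{x}(a_{j}'))e_{j}\|_{X}$. Since the intervals $(a_{j}', b_{j}')$ are disjoint, each $p_{i}$ appears as $b_{j}'$ for at most one $j$ and as $a_{k}'$ for at most one $k$, so $\sum_{j}(g_{x}(b_{j}') - g_{x}(a_{j}'))e_{j} = \sum_{i} b_{i}\bigl(e_{j_{1}(i)} - e_{j_{2}(i)}\bigr)$ with both $j_{1}(\cdot)$ and $j_{2}(\cdot)$ injective; the $1$-symmetry of $(e_{i})$ yields $\|\sum_{i} b_{i}(e_{j_{1}(i)} - e_{j_{2}(i)})\|_{X} \leq 2\|x\|_{X_{\Gamma}}$. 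Hence $\|g_{x}^{**}\| \leq 2\|x\|_{X_{\Gamma}}$ on the span of $\mathcal{S}$; extend by Hahn-Banach to all of $JF_{X}^{*}$ with the same norm. Because $g_{x}|_{\Delta} \equiv 0$ the extension annihilates $Y$ and so lies in $Y^{\perp}$. Pairing against the lift $\sum c_{i} I_{i}^{*}$ of $T(\xi)$ gives $\langle g_{x}^{**}, T(\xi)\rangle = \sum_{i} c_{i}b_{i}$; optimizing over $x$ with $\|\sum_{i}b_{i}e_{i}\|_{X} \leq 1$ and $\sum c_{i}b_{i} = \|\sum c_{i}e_{i}^{*}\|_{X^{*}}$ (attainable by $X$-$X^{*}$ duality) yields $\|T(\xi)\|_{JF_{X}^{*}/Y} \geq \tfrac{1}{2}\|\xi\|_{X_{\Gamma}^{*}}$.

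Finally, since $T$ is bounded below, it has closed range, and surjectivity reduces to density of the range. Equivalently, one must show that every $\psi \in Y^{\perp}$ annihilating all germs $\{L_{p}^{*} : p \in \Gamma\}$ vanishes. Rigorizing this is the main obstacle; the cleanest route is to invoke the isometric identification $JF_{X}^{**} \cong V_{X}$ from Theorem~E of Section~6, which realizes every $\psi \in Y^{\perp}$ as a function $g \in V_{X}$ vanishing on $\Delta$, so that $\psi(L_{p}^{*}) = g(p) - g(s) = g(p)$; annihilating all germs then forces $g|_{\Gamma} \equiv 0$ and hence $\psi = 0$. Granted this, $T$ is an isomorphism, and the explicit estimates exhibit $JF_{X}^{*}/Y$ as isomorphic to $X_{\Gamma}^{*}$ with constant at most $2$.
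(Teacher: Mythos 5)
Your route is genuinely different from the paper's, and most of it is sound: the upper bound, the germ map $T$, and the surjectivity-via-annihilator argument are all workable. But there is one concrete gap in the lower-bound step. You define $g_{x}^{**}$ on the linear span of $\mathcal{S}$, verify $|g_{x}^{**}(\phi)|\leq 2\Vert x\Vert$ for $\phi\in\mathcal{S}$, conclude that $\Vert g_{x}^{**}\Vert\leq 2\Vert x\Vert$ \emph{on the span} of $\mathcal{S}$, and then extend by Hahn--Banach. That inference is invalid: the unit ball of the span of $\mathcal{S}$ in the $JF_{X}^{*}$-norm is strictly larger than the convex hull of $\mathcal{S}$ --- Lemma~1.2 only gives $B_{JF_{X}^{*}}=\overline{co}^{w^{*}}(\mathcal{S})$, and $g_{x}^{**}$ is not a priori $w^{*}$-continuous, so the bound on $\mathcal{S}$ does not pass to arbitrary elements of the span of small $JF_{X}^{*}$-norm. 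A combination $\sum_{j}a_{j}I_{j}^{*}$ can have small $JF_{X}^{*}$-norm while every rewriting over disjoint intervals has $\Vert\sum_{k}c_{k}e_{k}^{*}\Vert_{X^{*}}$ large (inequality~(1.2) goes only one way: already $(0,\tfrac12)^{*}+(\tfrac12,1)^{*}=(0,1)^{*}$ shows the $X^{*}$-norm of the coefficients is not controlled by the $JF_{X}^{*}$-norm), so your duality estimate gives no control on the span. The repair is to drop Hahn--Banach and observe that $g_{x}$ is literally an element of $V_{X}$ with $\Vert g_{x}\Vert_{V_{X}}\leq 2\Vert\sum_{i}b_{i}e_{i}\Vert_{X}$ (each $p_{i}$ contributes its jump to at most two consecutive differences of any partition), hence defines an element of $JF_{X}^{**}\cong V_{X}$ of that norm --- an identification you invoke later anyway. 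You should also justify that the Stieltjes rule is well defined on the span: if $\sum_{j}a_{j}I_{j}^{*}=0$ then $\sum_{j}a_{j}\chi_{I_{j}}=0$ a.e., whence $\sum_{j}a_{j}(\delta_{b_{j}'}-\delta_{a_{j}'})=0$ as a measure.

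With that repair the proof goes through, but note the cost: both your lower bound and your surjectivity step then rest on the representation $JF_{X}^{**}\cong V_{X}$ from Section~6 (a forward reference, not circular, but external to Sections~1--3). The paper's argument stays in the predual: for the lower bound it writes $dist(x^{*},Y)=\lim_{m}dist(x^{*},Y_{m})$ with $Y_{m}$ finite dimensional and, for each $m$, exhibits $f=\sum_{j}\beta_{j}(\chi_{Q_{j,m}^{2}}-\chi_{Q_{j,m}^{1}})/\mu(Q_{j,m}^{2})\in JF_{X}$ of norm at most $2$ (Lemma~2.5) whose bumps straddle each $\alpha_{j}$ and are small enough to be annihilated by $I_{1}^{*},\dots,I_{m}^{*}$; this yields the same constant $\tfrac12$ with no bidual, no Hahn--Banach, and no well-definedness issues. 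For surjectivity it proves density of the germ classes directly (Lemma~3.3) by splitting intervals at points of $\Delta$ and using Haydon's theorem, rather than computing the annihilator of the range.
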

Before  passing  to the proof of the  theorem we make some
preliminary observations. We denote by $\widehat{x^{\ast}}$, the
equivalence class of the functional $x^{\ast}\in JF_{X}^{\ast}.$
Since $\ell_{1}$ does not embed into $JF_{X}$, we have that
$B_{JF_{X}^{*}}=\overline{co}(ExtB_{JF_{X}^{\ast}})$, \cite{Had},
hence Lemmas~\ref{l2}, \ref{l5} yields that
$JF_{X}^{*}=\overline{\langle\{I^{\ast}:\, I=(\alpha,\beta)\subset
(0,1)\}\rangle}.$ Therefore $\langle\{\widehat{I^{\ast}}:\,
I=(\alpha,\beta)\subset (0,1)\}\rangle$ is dense in
$JF_{X}^{\ast}/Y.$
\begin{lemma}\label{l33}
The $\langle\{\widehat{I^{\ast}}:\, I=(\alpha,\delta)
:\,\alpha\not\in\Delta, \delta\in\Delta \}\rangle$ is dense in
$JF_{X}^{\ast}/Y.$
\end{lemma}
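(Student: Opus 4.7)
The plan is to reduce the claim to the density statement recorded just before the lemma, namely that the linear span of all $\widehat{I^{\ast}}$ with $I=(\alpha,\beta)$ an open subinterval of $(0,1)$ is dense in $JF_{X}^{\ast}/Y$. It therefore suffices to express each such $\widehat{I^{\ast}}$ as a finite linear combination of classes $\widehat{J^{\ast}}$ with $J=(\alpha',\delta')$ satisfying $\alpha'\notin\Delta$ and $\delta'\in\Delta$.

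I would proceed by a short case analysis on the endpoints $\alpha,\beta$ of an arbitrary $I\subset(0,1)$. If $\alpha,\beta\in\Delta$, then $I$ is one of the intervals $I_{j}$, so $I^{\ast}\in Y$ and $\widehat{I^{\ast}}=0$. If $\alpha\notin\Delta$ and $\beta\in\Delta$, the interval $I$ already has the required form. The two remaining cases are handled by the same device: using the density of $\Delta$ in $[0,1]$ (together with $\beta<1$, which holds because $I\subset(0,1)$) choose $\delta\in\Delta$ with $\beta<\delta<1$. Since $\mu(\{\beta\})=0$, one has the identity
$$
(\alpha,\delta)^{\ast}=(\alpha,\beta)^{\ast}+(\beta,\delta)^{\ast}
$$
in $JF_{X}^{\ast}$. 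If $\alpha\in\Delta$ and $\beta\notin\Delta$, then $(\alpha,\delta)^{\ast}\in Y$, so $\widehat{(\alpha,\beta)^{\ast}}=-\widehat{(\beta,\delta)^{\ast}}$ with $(\beta,\delta)$ of the required form. If $\alpha\notin\Delta$ and $\beta\notin\Delta$, then both $(\alpha,\delta)$ and $(\beta,\delta)$ are of the required form, and $\widehat{(\alpha,\beta)^{\ast}}=\widehat{(\alpha,\delta)^{\ast}}-\widehat{(\beta,\delta)^{\ast}}$.

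There is essentially no obstacle: the argument is purely algebraic once the density of $\Delta$ in $[0,1]$ is used to produce the auxiliary point $\delta$. Combining the case analysis with the density statement preceding the lemma, the span in the statement contains a dense subspace of $JF_{X}^{\ast}/Y$ and is therefore itself dense.
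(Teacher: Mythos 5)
Your argument is correct and follows essentially the same route as the paper: both rest on the additivity $I^{\ast}=I_{1}^{\ast}+I_{2}^{\ast}$ across a point of $\Delta$ together with the observation that $\widehat{I^{\ast}}=0$ when both endpoints lie in $\Delta$ (you split just past $\beta$, while the paper splits inside $I$ and then flips the right-hand piece via $-(\delta,\beta)^{\ast}\in\widehat{(\beta,\delta^{\prime})^{\ast}}$, which is the same trick). The one small imprecision is your claim that $\beta<1$ follows from $I\subset(0,1)$ — the case $I=(\alpha,1)$ with $1\notin\Delta$ needs a separate (routine) word — but the paper waves away the same boundary issue by assuming $\{0,1\}\subset\Delta$.
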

\begin{proof}
For simplicity we assume that $\{0,1\}\subset\Delta.$ First we
observe that for $I=(\alpha,\beta)$, $\widehat{I^{\ast}}\not=0$
iff $\{\alpha,\beta\}\nsubseteq \Delta $ and also since $\Delta$
is a dense subset of $(0,1)$ we obtain for any
$I=(\alpha,\beta)$, $I^{\ast}=I_{1}^{\ast}+I_{2}^{\ast}$, where
$I_{1}=(\alpha,\delta)$, $I_{2}=(\delta,\beta)$ with some
$\delta\in\Delta.$ To complete the proof we observe that for
$I_{1}=(\delta,\beta)$, $I_{2}=(\beta,\delta^{\prime})$,
$\delta,\delta^{\prime}\in\Delta$  we have that
$-I_{1}^{\ast}\in\widehat{I_{2}^{\ast}}.$
\end{proof}
\begin{proof}[Proof of Theorem~\ref{t32}]
Let $\alpha_{1}<\alpha_{2}<\ldots<\alpha_{n}$ be such that
$\alpha_{i}\not\in\Delta$ for $i=1,2,\ldots,n$ and $
\widehat{S_{1}^{\ast}}=
\widehat{\chi^{*}}_{(\alpha_{1},\delta_{1})},\ldots,
\widehat{S_{n}^{\ast}}=
\widehat{\chi^{*}}_{(\alpha_{n},\delta_{n})}$, be  elements  of
$JF_{X}^{\ast}/Y$ with $\delta_{i}\in\Delta$, $i\leq n.$

We shall prove that
$$
\frac{1}{2}
\Vert\sum_{j=1}^{n}\lambda_{j}e_{j}^{*}\Vert_{X^{\ast}} \leq
\Vert\sum_{j=1}^{n}\lambda_{j}
\widehat{S_{j}^{*}}\Vert_{JF_{X}^{*}/Y} \leq
\Vert\sum_{j=1}^{n}\lambda_{j}e_{j}^{*}\Vert_{X^{\ast}}\,\,,
$$
for every finite sequence $(\lambda_{j})_{j}$ of reals. This
together with Lemma~\ref{l33} implies that $JF_{X}^{\ast}/Y$ is
isomorphic to $X^{\ast}(\Gamma)$, where
$\Gamma=\{\alpha\in(0,1)\setminus\Delta\}.$ Let's observe that we
may assume that
$\alpha_{1}<\delta_{1}<\alpha_{2}<\delta_{2}<\ldots.$

Let $\{I_{j}^{*} :I_{j}=(d_{1},d_{2}) :d_{1},d_{2}\in\Delta\}$ be
the set of intervals generating the subspace $Y.$ We set
$Y_{m}=\langle \{I_{i}^{*}\}_{i=1}^{m}\rangle.$ Since
$\cup_{m=1}^{\infty}Y_{m}$ is dense in $Y$ we obtain that
$$
dist(\sum_{j=1}^{n}\lambda_{j}S_{j}^{\ast}, Y)= \lim_{m\to\infty}
dist(\sum_{j=1}^{n}\lambda_{j}S_{j}^{\ast}, Y_{m})\,\,.
$$
For $m\in\mathbb{N}$ and $j=1,\ldots,n$,  there exists
$q_{j}^{m}\in (0,1)$ such that the interval
$(\alpha_{j}-q_{j}^{m},\alpha_{j}+q_{j}^{m})$ has the following
properties:
\begin{enumerate}
\item For every $i\leq m$, either
$(\alpha_{j}-q_{j}^{m},\alpha_{j}+q_{j}^{m})\subset I_{i}$ or
$(\alpha_{j}-q_{j}^{m},\alpha_{j}+q_{j}^{m})\cap I_{i}=
\emptyset.$

\smallskip

\item
$(\alpha_{j},\alpha_{j}+q_{j}^{m})
\subset(\alpha_{j},\delta_{j}).$
\end{enumerate}
Set $Q_{j,m}^{1}=(\alpha_{j}-q_{j}^{m},\alpha_{j})$ and
$Q_{j,m}^{2}=(\alpha_{j},\alpha_{j}+q_{j}^{m})$ and choose
$\{\beta_{j}\}_{j=1}^{n}$ such that
\begin{equation}\label{e31}
\Vert\sum_{j=1}^{n}\beta_{j}e_{j}\Vert_{X}=1,\,\,\,
\Vert\sum_{j=1}^{n}\lambda_{j}e_{j}^{\ast}\Vert_{X^{*}}=
\sum_{j=1}^{n}\lambda_{j}\beta_{j}\,\,.
\end{equation}
Set finally
$$
f=\sum_{j=1}^{n}\beta_{j}
\frac{\chi_{Q_{j,m}^{2}}-\chi_{Q_{j,m}^{1}}}{\mu(Q_{j,m}^{2})}\,\,.
$$
Lemma~\ref{l25} yields that
\begin{equation}\label{e32}
\Vert f\Vert_{JF_{X}}\leq 2
\end{equation}
Also, for every $1\leq i\leq m$ property $(1)$ yields that
$\langle I_{i}^{\ast},f\rangle=0.$ Hence (\ref{e31}),(\ref{e32})
implies
$$
dist(\sum_{j=1}^{n}\lambda_{j}S_{j}^{\ast},
Y_{m})\geq\frac12\langle
\sum_{j=1}^{n}\lambda_{j}S_{j}^{\ast},f\rangle
\geq\frac12\sum_{j=1}^{n}\lambda_{j}\beta_{j}= \frac12
\Vert\sum_{j=1}^{n}\lambda_{j}e_{j}^{\ast}\Vert_{X^{\ast}}\,\,.
$$
This proves the left inequality, namely
$$
\frac12\Vert\sum_{j=1}^{n}
\lambda_{j}e_{j}^{\ast}\Vert_{X^{\ast}}\leq
\Vert\sum_{j=1}^{n}\lambda_{j}
\widehat{S_{j}^{\ast}}\Vert_{JF_{X}^{*}/Y}\,\,.
$$
The right inequality follows immediately from the disjointness
of
$\{S_{j}\}_{j=1}^{m}$, since
$$
\Vert\sum_{j=1}^{n}\lambda_{j}
\widehat{S_{j}^{\ast}}\Vert_{JF_{X}^{*}/Y} \leq
\Vert\sum_{j=1}^{n}\lambda_{j}S_{j}^{\ast} \Vert_{JF_{X}^{\ast}}
\leq
\Vert\sum_{j=1}^{n}\lambda_{j}e_{j}^{\ast}\Vert_{X^{\ast}}\,\,.
$$
The proof is complete.
\end{proof}
\begin{remark}
If $Y$ denotes the subspace of $JF_{X}^{\ast}$ generated by the
biorthogonal functionals of the $Haar$ system, the previous
theorem yields that $JF_{X}^{*}/Y$ is isomorphic to
$X^{*}_{\Gamma}.$ In the particular case of James function space
$JF$, the corresponding quotient is isomorphic to
$\ell_{2}(\Gamma).$ This result is the analogue of the
corresponding result for the James tree space $JT$, \cite{ls}.
Next we will see that these results are no longer valid for the
class of the spaces $JF_{X}^{\ast}(\Omega)$, $\Omega$ open bounded
subset of $\mathbb{R}^{d_{0}}$, $d_{0}>1.$
\end{remark}
\begin{proposition}\label{p34}
Let $d_{0}>1$ and $\Omega=(0,1)^{d_{0}}.$ Let $(T_{j})_{j\in J}$
be a family in $\mathcal{P}(\Omega)$ such that there exists
$1\leq d\leq d_{0}$ with
$$
(0,1)\not\subset \{pr_{d}(x): x\,\,\mathrm{\,is\,a \,vertex\,of
}\, \,T_{j}\,\,\text{for some}\,\,j\in J\}\,.
$$
Then if $Y=\overline{\langle T_{j}^{*}:j\in J \rangle}$, the
quotient $JF_{X}^{*}(\Omega)/Y$ is not reflexive.
\end{proposition}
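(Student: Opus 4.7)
The plan is to embed a non-reflexive space isomorphically into $JF_X^*(\Omega)/Y$; since quotients of reflexive spaces are reflexive, such an embedding immediately forces non-reflexivity of the quotient. Without loss of generality take $d=1$, and by hypothesis pick $t\in(0,1)$ not equal to $\mathrm{pr}_1(v)$ for any vertex $v$ of any $T_j$.

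Fix $a\in(0,t)$ and define $\Phi$ on the linear span of parallelepiped functionals in $JF_X^*((0,1)^{d_0-1})$ by $\Phi(J^*):=((a,t)\times J)^*$. The assignment $J\mapsto(a,t)\times J$ sends disjoint families in $\mathcal{P}((0,1)^{d_0-1})$ to disjoint families in $\mathcal{P}(\Omega)$, so Lemma~\ref{l5} gives, for any disjoint $J_1,\dots,J_n$ and scalars $\alpha_i$,
$$\Bigl\|\sum\alpha_i\bigl((a,t)\times J_i\bigr)^*\Bigr\|_{JF_X^*(\Omega)}=\Bigl\|\sum\alpha_i e_i^*\Bigr\|_{X^*}=\Bigl\|\sum\alpha_i J_i^*\Bigr\|_{JF_X^*((0,1)^{d_0-1})}.$$
Hence $\Phi$ extends by density to a linear isometry $\bar\Phi\colon E\hookrightarrow JF_X^*(\Omega)$, where $E$ is the norm-closed linear span of parallelepiped functionals in $JF_X^*((0,1)^{d_0-1})$. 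The space $E$ is non-reflexive: using the dyadic tree of sub-parallelepipeds of $(0,1)^{d_0-1}$, one produces $JT$-like sequences in $E$ with no weak Cauchy subsequences.

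For $g\in L^1((0,1)^{d_0-1})$ and $\epsilon>0$ introduce the test functions
$$f^g_\epsilon(x_1,\bar x):=\frac{1}{\epsilon}\bigl(\chi_{(t,t+\epsilon)}(x_1)-\chi_{(t-\epsilon,t)}(x_1)\bigr)g(\bar x).$$
A direct computation gives $\langle\bar\Phi(\omega^*),f^g_\epsilon\rangle=-\langle\omega^*,g\rangle$ whenever $\omega^*\in E$ and $\epsilon<t-a$. For any finite subfamily $T_{j_1},\dots,T_{j_N}$, writing $T_{j_i}=(a_i,b_i)\times J'_i$ and $\rho_N:=\min_{i\le N}\min(|t-a_i|,|t-b_i|)>0$, each $T_{j_i}$ (for $\epsilon<\rho_N$) either misses the slab $(t-\epsilon,t+\epsilon)\times(0,1)^{d_0-1}$ or contains it entirely in its first coordinate, so the sign cancellation of $h_\epsilon:=\tfrac{1}{\epsilon}(\chi_{(t,t+\epsilon)}-\chi_{(t-\epsilon,t)})$ gives $\int_{T_{j_i}}f^g_\epsilon=0$. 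Granted the uniform estimate $\|f^g_\epsilon\|_{JF_X(\Omega)}\le C\|g\|_{JF_X((0,1)^{d_0-1})}$ (the multidimensional analog of Lemma~\ref{l25}, resting on $\|h_\epsilon\|_{JF_X((0,1))}\le 2$), this yields $d(\bar\Phi(\omega^*),Y_N)\ge|\langle\omega^*,g\rangle|/\bigl(C\|g\|_{JF_X((0,1)^{d_0-1})}\bigr)$ for every finite $N$. Since $d(\bar\Phi(\omega^*),Y)=\inf_Nd(\bar\Phi(\omega^*),Y_N)$, taking the supremum over $g$ gives $d(\bar\Phi(\omega^*),Y)\ge\|\omega^*\|/C$. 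Therefore $q\circ\bar\Phi$ (with $q$ the quotient map) is an isomorphic embedding of the non-reflexive $E$ into $JF_X^*(\Omega)/Y$, and the quotient is not reflexive. The principal obstacle is the uniform tensor-norm estimate, which requires extending the cancellation argument of Lemma~\ref{l25} from $(0,1)$ to partitions of $\Omega$ into arbitrary parallelepipeds.
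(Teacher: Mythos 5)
Your strategy---embedding a non-reflexive space into the quotient rather than, as the paper does, exhibiting a single bounded sequence $\widehat{S_n^*}$ (with $S_n=(\alpha,1)\times(0,r_n)^{d_0-1}$) having no weakly convergent subsequence---is legitimate in principle, but as written it has two genuine gaps. First, the displayed identity $\Vert\sum\alpha_i((a,t)\times J_i)^*\Vert_{JF_X^*(\Omega)}=\Vert\sum\alpha_ie_i^*\Vert_{X^*}=\Vert\sum\alpha_iJ_i^*\Vert_{JF_X^*((0,1)^{d_0-1})}$ is false: Lemma~\ref{l5} gives only the one-sided inequality $\Vert\sum\alpha_iT_i^*\Vert\leq\Vert\sum\alpha_ie_i^*\Vert_{X^*}$, and both equalities fail badly. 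For $X=\ell_2$ and $J_1,\dots,J_n$ consecutive parallelepipeds partitioning $(0,1)^{d_0-1}$ with $\alpha_i=n^{-1/2}$, one has $\sum\alpha_iJ_i^*=n^{-1/2}\bigl((0,1)^{d_0-1}\bigr)^*$, of norm $n^{-1/2}$, while $\Vert\sum\alpha_ie_i^*\Vert_{X^*}=1$; the same happens for the lifted functionals. So $\Phi$ is not an isometry, and your extension of $\Phi$ to all of $E$ is unjustified as stated. (It is repairable: $\Vert\Phi(\phi)\Vert\leq\Vert\phi\Vert$ because $\langle\Phi(\phi),f\rangle=\langle\phi,F\rangle$ with $F(\bar x)=\int_a^tf(x_1,\bar x)\,dx_1$ and $\Vert F\Vert_{JF_X((0,1)^{d_0-1})}\leq\Vert f\Vert_{JF_X(\Omega)}$---but that is a different argument from the one you give, and the lower bound for $q\circ\bar\Phi$ must then come entirely from your test functions.) Second, the uniform estimate $\Vert f^g_\epsilon\Vert_{JF_X(\Omega)}\leq C\Vert g\Vert_{JF_X((0,1)^{d_0-1})}$, which you yourself flag as ``the principal obstacle,'' is precisely where the work lies and is not proved. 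It does hold with $C=2$: split $f^g_\epsilon$ into its two slabs, observe that a function supported in a parallelepiped $T$ has the same norm in $JF_X(\Omega)$ as in $JF_X(T)$, and apply the rescaling isometry of the Remark in Section~\ref{sl1} together with Lemma~\ref{l42}/Theorem~\ref{t41} to each slab; no cancellation in the style of Lemma~\ref{l25} is available or needed. Without some such argument your lower bound on $d(\bar\Phi(\omega^*),Y)$ is unsupported. A further slip: your reason that $E$ is non-reflexive (``sequences with no weak Cauchy subsequences'') would assert, via Rosenthal's theorem, that $\ell_1\hookrightarrow E$; the correct reason is that, by Haydon's theorem as used in the proof of Theorem~\ref{t36}, $E$ is all of $JF_X^*((0,1)^{d_0-1})$, the dual of a non-reflexive space.

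For contrast, the paper's proof avoids all tensor-norm machinery: for each finite $F\subset J$ and each pair of convex blocks of $(S_n^*)_n$ it produces one test function $f=(\chi_{Q_2}-\chi_{Q_1})/\mu(Q_2)$, supported near the hyperplane $x_1=\alpha$ and positioned so that every $T_j$, $j\in F$, either contains or misses $Q_1\cup Q_2$; its norm $\Vert e_1+e_2\Vert_X$ is an elementary two-term computation. This yields $\varepsilon$-separation of all convex blocks of $\widehat{S_n^*}$ in the quotient, hence no weakly convergent subsequence, hence non-reflexivity. Your approach, once the two gaps are filled (boundedness of $\Phi$ by the duality argument above, and the slab estimate via Section~\ref{sembedding}), would prove somewhat more, namely that $JF_X^*(\Omega)/Y$ contains an isomorphic copy of $JF_X^*((0,1)^{d_0-1})$; but in its present form it does not constitute a proof.
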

\begin{proof}
Assume that for $d=1$ there exists $\alpha\in (0,1)\setminus
\{pr_{1}(x): x$ is a vertex of $T_{j}$ for some $j\in J\}.$ We
choose $r_{n}\in (0,1)$ strictly decreasing to zero and set
$S_{n}=(\alpha,1)\times (0,r_{n})^{d_{0}-1}.$
\begin{claim} $ \{
\widehat{S_{n}^{\ast}} \}_{n}$ has no weakly converging
subsequences.
\end{claim}
\noindent {\it Proof of the Claim}. For $F$ a finite subset of
$J$ (i.e $F\in\mathcal{P}_{<\omega}(J)$) we set $Y_{F}=\langle
T^{*}_{j} : j\in F\rangle.$ Clearly for every $x^{*}\in
JF_{X}^{*}(\Omega)$,
$$
\Vert\widehat{x^{*}}\Vert=dist(x^{*},Y)=\inf\{dist(x^{*},Y_{F}):
F\in\mathcal{P}_{<\omega}(J)\}\,\,.
$$
To show that $\{\widehat{S_{n}^{*}}\}_{n}$ does not have
$w-$convergent subsequence it is enough to prove that for
$\varepsilon=\frac{1}{\Vert e_{1}+e_{2}\Vert_{X}}$ and every
$\{R_{k}^{*}\}_{k\in\mathbb{N}}$ convex block subsequence of
$\{S_{n}^{*}\}_{n}$ there exists $k_{1}<k_{2}$ such that
$$
\Vert \widehat{R_{k_{1}}^{*}}-\widehat{R_{k_{2}}^{*}}\Vert\geq
\varepsilon\,\,.
$$
To see this, we consider $G_{1}$,$G_{2}$ finite subsets of
$\mathbb{N}$ with $\max G_{1}=\ell< q=\min G_{2}$ and
$R_{1}=\sum_{n\in G_{1}}\alpha_{n}S_{n}^{*}$,
$R_{2}^{*}=\sum_{n\in G_{2}}\alpha_{n}S_{n}^{*}.$ Observe that
$$
\sum_{n\in G_{1}}\alpha_{n}\chi_{S_{n}}(w)=1\,\,
\mathrm{for\,all\,}\,w\in (\alpha,1)\times
(r_{q},r_{\ell})^{d_{0}-1}\,\,.
$$
Next we consider any $F\in\mathcal{P}_{<\omega}(J)$
and we show that
$$
dist(R_{1}^{*}-R_{2}^{*},Y_{F})\geq\frac{1}{ \Vert
e_{1}+e_{2}\Vert_{X}}\,,
$$
which immediately yields the claim.

Indeed, set
$$
\delta=\min\Bigl\{ \{\vert\alpha-pr_{1}(x)\vert:\,x\,\, \text{is a
vertex of}\,\, T_{j}, j\in F\}\cup
\{\alpha,1-\alpha\}\Bigl\}>0\,\,.
$$
Moreover we choose $r_{q}^{\prime}<r_{\ell}^{\prime}$ such that
$r_{q}<r_{q}^{\prime}<r_{\ell}^{\prime}<r_{\ell}$ and
\begin{center}
$(r_{q}^{\prime},r_{\ell}^{\prime})\cap\{pr_{d}(x) :x$ is a vertex
of $T_{j}, j\in F\}=\emptyset $ for all $d=2,3,\ldots, d_{0}.$
\end{center}
We set
$$
Q_{1}=(\alpha-\frac{\delta}{2},\alpha)\times
(r_{q}^{\prime},r_{\ell}^{\prime})^{d_{0}-1}\,\,\,\,
\text{and}\,\,\,\, Q_{2}=(\alpha,\alpha+\frac{\delta}{2})\times
(r_{q}^{\prime},r_{\ell}^{\prime})^{d_{0}-1}\,\,.
$$
Observe the following.
\begin{enumerate}
\item For $j\in F$ either $Q_{1}\cup Q_{2}\subset T_{j}$ or
$(Q_{1}\cup Q_{2})\cap T_{j}=\emptyset$.

\smallskip
\item $\sum_{n\in G_{1}}\alpha_{n}\chi_{S_{n}}(w)=0,\,
\forall w\in Q_{1}$ and $\sum_{n\in
G_{1}}\alpha_{n}\chi_{S_{n}}(w)=1,\,\forall w\in Q_{2}$.

\smallskip
\item
$\sum_{n\in G_{2}}\alpha_{n}\chi_{S_{n}}(w)=0,\,\forall w\in
Q_{1}\cup Q_{2}$.
\end{enumerate}
Consider now the element of $JF_{X}(\Omega)$ defined by
\begin{center}
$ f=\frac{\chi_{Q_{2}}-\chi_{Q_{1}}}{\mu(Q_{2})}$\,\,.
\end{center}
An easy computation yields that $ \Vert f\Vert=\Vert
e_{1}+e_{2}\Vert_{X}.$
Properties $(1),(2)$ and $(3)$, stated
above, imply that
$$ dist(R_{1}^{*}-R_{2}^{*}, Y_{F})\geq\langle
R_{1}^{*}-R_{2}^{*}, \frac{1}{\Vert e_{1}+e_{2}\Vert_{X}}f\rangle
= \langle R_{1}^{*},\frac{1}{\Vert e_{1}+e_{2}\Vert_{X}}f\rangle
=\frac{1}{\Vert e_{1}+e_{2}\Vert_{X}}\,\,.
$$
This completes the proof of the claim, and the proof of the
proposition.
\end{proof}
%
%
Let's pass now to some consequences of the above proposition.
\begin{proposition}\label{p35}
Let $\Omega$ be a bounded open subset of $\mathbb{R}^{d_{0}}$,
$d_{0}>1$, and $\{T_{j}\}_{j\in J}\subset\mathcal{P}(\Omega)$
such that for some $1\leq d\leq d_{0}$
$$
pr_{d}(\Omega)\not\subset\{pr_{d}(x) : x\,\,\text{is a vertex
of}\,\,T_{j}, j\in J\}.
$$
If $Y=\overline{\langle T_{j}^{*}: j\in J\rangle}$,  then
$JF_{X}^{*}(\Omega)/Y$ is not reflexive.
\end{proposition}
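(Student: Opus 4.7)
The plan is to adapt the proof of Proposition~\ref{p34} by using the openness of $\Omega$ to produce an ambient parallelepiped $T_{0} \subset \Omega$ inside which the earlier local construction can be replicated.

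Without loss of generality assume $d=1$. By hypothesis there exists $\alpha \in pr_{1}(\Omega)$ with $\alpha \notin \{pr_{1}(x) : x \text{ is a vertex of } T_{j},\, j \in J\}$. Since $\Omega$ is open, there exist $(y_{2},\ldots,y_{d_{0}}) \in \mathbb{R}^{d_{0}-1}$ and $\epsilon > 0$ such that
$$
T_{0} := (\alpha-\epsilon,\,\alpha+\epsilon) \times \prod_{d=2}^{d_{0}}(y_{d}-\epsilon,\,y_{d}+\epsilon) \subset \Omega.
$$
Fix a strictly decreasing sequence $r_{n} \downarrow 0$ with $r_{n} < \epsilon$, and set
$$
S_{n} := (\alpha,\,\alpha+\epsilon) \times \prod_{d=2}^{d_{0}}(y_{d},\,y_{d}+r_{n}) \in \mathcal{P}(\Omega).
$$
My goal is to show that the bounded sequence $\{\widehat{S_{n}^{\ast}}\}_{n}$ in $JF_{X}^{\ast}(\Omega)/Y$ admits no weakly convergent subsequence; Eberlein--Smulian applied to its closed linear span then forces $JF_{X}^{\ast}(\Omega)/Y$ to be non-reflexive.

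To establish the claim I would mimic the convex-block argument of Proposition~\ref{p34}. Given any convex block subsequence of $\{S_{n}^{\ast}\}$ with two successive blocks $R_{i}^{\ast} = \sum_{n \in G_{i}} \alpha_{n} S_{n}^{\ast}$, $i=1,2$, $\ell := \max G_{1} < q := \min G_{2}$, and any finite $F \subset J$ with $Y_{F} := \langle T_{j}^{\ast} : j \in F\rangle$, the separating test function is $f = (\chi_{Q_{2}} - \chi_{Q_{1}})/\mu(Q_{2})$, where
$$
Q_{1} := (\alpha - \tfrac{\delta}{2},\,\alpha) \times \prod_{d=2}^{d_{0}}(y_{d}+r_{q}',\,y_{d}+r_{\ell}'),\qquad Q_{2} := (\alpha,\,\alpha + \tfrac{\delta}{2}) \times \prod_{d=2}^{d_{0}}(y_{d}+r_{q}',\,y_{d}+r_{\ell}').
$$
Here $\delta \in (0,\epsilon)$ is chosen below the distance from $\alpha$ to every first-coordinate vertex projection of the finitely many $T_{j}$, $j \in F$, while $r_{q} < r_{q}' < r_{\ell}' < r_{\ell}$ is chosen so that, for each $d \geq 2$, the interval $(y_{d}+r_{q}',y_{d}+r_{\ell}')$ avoids the finite set $\{pr_{d}(x) : x \text{ vertex of } T_{j},\, j \in F\}$. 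Both selections are possible because $\alpha$ lies outside all first-coordinate vertex projections and the relevant sets are finite.

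A direct check then yields the same three properties as in Proposition~\ref{p34}: $\int_{T_{j}} f = 0$ for every $j \in F$ (so $f \perp Y_{F}$), since each such $T_{j}$ either contains or is disjoint from $Q_{1} \cup Q_{2}$; $R_{1}^{\ast}(f) = 1$, because for $n \in G_{1}$ one has $r_{\ell}' < r_{\ell} \leq r_{n}$, forcing $Q_{2} \subset S_{n}$ and $Q_{1} \cap S_{n} = \emptyset$; and $R_{2}^{\ast}(f) = 0$, because for $n \in G_{2}$ one has $r_{n} \leq r_{q} < r_{q}'$, giving $S_{n} \cap Q_{2} = \emptyset$. A single application of Lemma~\ref{l25} bounds $\|f\|_{JF_{X}(\Omega)} \leq \|e_{1}+e_{2}\|_{X}$, so $dist(R_{1}^{\ast}-R_{2}^{\ast},Y_{F}) \geq \|e_{1}+e_{2}\|_{X}^{-1}$ uniformly in $F$, and hence $\|\widehat{R_{1}^{\ast}} - \widehat{R_{2}^{\ast}}\| \geq \|e_{1}+e_{2}\|_{X}^{-1}$. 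The main obstacle compared to Proposition~\ref{p34} is the coordinated choice of $\delta$ and $(r_{q}',r_{\ell}')$ once the cube structure of $(0,1)^{d_{0}}$ is no longer available; this is exactly what openness of $\Omega$ (to secure $T_{0}$) and the exclusion of $\alpha$ from all first-coordinate vertex projections are designed to supply.
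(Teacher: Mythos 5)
Your proposal is correct and follows essentially the same route as the paper: the paper also uses the openness of $pr_{d}(\Omega)$ to locate a parallelepiped $S\subset\Omega$ on which the hypothesis of Proposition~\ref{p34} holds locally, and then repeats that proposition's convex-block separation argument verbatim. You merely spell out the details (the choice of $T_{0}$, the sets $S_{n}$, $Q_{1}$, $Q_{2}$, and the coordinated choice of $\delta$ and $r_{q}',r_{\ell}'$) that the paper compresses into ``the result is obtained with the same arguments as in the previous proposition.''
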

\begin{proof}
Since $pr_{d}(\Omega)$ is open, there exists
$S=\prod_{i=1}^{d_{0}}(\alpha_{i},\beta_{i})\subset\Omega$ such
that
\begin{center} $(\alpha_{d},\beta_{d})\not\subset\{pr_{d}(x):
x\,\,\text{is a vertex of}\,\, T_{j}, j\in J\}\,\,.$
\end{center}
The result is obtained with the same arguments as in the previous
proposition.
\end{proof}
%
%
\begin{theorem} \label{t36}
Let $\Omega$ be a bounded open subset of $\mathbb{R}^{d_{0}}$,
$d_{0}>1.$ Then for every separable  subspace $Y$ of
$JF_{X}^{*}(\Omega)$ the quotient $JF_{X}^{*}(\Omega)/Y$ is not
reflexive.
\end{theorem}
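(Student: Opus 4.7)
The plan is to reduce Theorem~\ref{t36} to Proposition~\ref{p35} by the standard observation that if $Y\subseteq Z$ are closed subspaces of a Banach space $E$, then $E/Z$ is (isometric to) a quotient of $E/Y$; hence, if $E/Z$ fails to be reflexive, so does $E/Y$ (since reflexivity passes to quotients). Consequently, it suffices to produce a closed subspace $Z\supseteq Y$ of $JF_{X}^{*}(\Omega)$ of the form $Z=\overline{\langle T_{j}^{*}: j\in J\rangle}$ with $\{T_{j}\}_{j\in J}\subset\mathcal{P}(\Omega)$ and with the projection-omission property required by Proposition~\ref{p35}.

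Next I would invoke the fact, already used in Section~\ref{sdual}, that $\{T^{*}:T\in\mathcal{P}(\Omega)\}$ is a norming generating family for $JF_{X}^{*}(\Omega)$. Indeed, Lemma~\ref{l2} gives $B_{JF_{X}^{*}(\Omega)}=\overline{\mathrm{co}}^{w^{*}}(\mathcal{S})$, Lemma~\ref{l5} gives $\overline{\mathcal{S}}^{\|\cdot\|}=\overline{\mathcal{S}}^{w^{*}}$, and, since $\ell_{1}\not\hookrightarrow JF_{X}(\Omega)$ by Theorem~\ref{l112}, Haydon's theorem yields $B_{JF_{X}^{*}(\Omega)}=\overline{\mathrm{co}}(\mathrm{ext}\,B_{JF_{X}^{*}(\Omega)})$ in norm. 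Combining these facts, every element of $JF_{X}^{*}(\Omega)$ is a norm limit of finite linear combinations of functionals of the form $T^{*}$ with $T\in\mathcal{P}(\Omega)$.

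Now, given a separable subspace $Y$ of $JF_{X}^{*}(\Omega)$, pick a countable norm-dense subset $\{y_{n}\}_{n\in\mathbb{N}}$ of $Y$. By the previous paragraph, each $y_{n}$ lies in $\overline{\langle T^{*}:T\in\mathcal{F}_{n}\rangle}$ for some countable $\mathcal{F}_{n}\subset\mathcal{P}(\Omega)$. Set $J=\bigcup_{n}\mathcal{F}_{n}$; then $J$ is countable, enumerate it as $\{T_{j}\}_{j\in J}$, and put $Z=\overline{\langle T_{j}^{*}: j\in J\rangle}$. By construction $Y\subseteq Z$.

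Finally, I would verify the hypothesis of Proposition~\ref{p35} for this countable family. Fix any $d\leq d_{0}$; since $\Omega$ is open and non-empty in $\mathbb{R}^{d_{0}}$, the projection $pr_{d}(\Omega)$ is a non-empty open subset of $\mathbb{R}$, hence uncountable. The set
\[
V_{d}=\{pr_{d}(x):x\ \text{is a vertex of}\ T_{j},\ j\in J\}
\]
is, on the other hand, countable, because $J$ is countable and each parallelepiped $T_{j}$ has only finitely many vertices. Therefore $pr_{d}(\Omega)\not\subseteq V_{d}$, and Proposition~\ref{p35} applies to give that $JF_{X}^{*}(\Omega)/Z$ is not reflexive. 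Since $JF_{X}^{*}(\Omega)/Z$ is a quotient of $JF_{X}^{*}(\Omega)/Y$, the latter is not reflexive either, completing the proof. The only genuinely non-trivial ingredient is the generation of $JF_{X}^{*}(\Omega)$ in norm by the $T^{*}$'s, but this has already been established in the earlier lemmas combined with Haydon's theorem; the rest of the argument is a clean cardinality and quotient manipulation.
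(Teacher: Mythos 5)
Your proof is correct and follows essentially the same route as the paper: both use Lemmas~\ref{l2}, \ref{l5} and Haydon's theorem to see that $JF_{X}^{*}(\Omega)$ is the closed span of $\{T^{*}:T\in\mathcal{P}(\Omega)\}$, enclose the separable $Y$ in a countably generated $Z=\overline{\langle T_{n}^{*}\rangle}$, and apply Proposition~\ref{p35}. You merely make explicit two steps the paper labels ``clearly'' --- the countability argument verifying the hypothesis of Proposition~\ref{p35} and the passage from non-reflexivity of $JF_{X}^{*}(\Omega)/Z$ to that of $JF_{X}^{*}(\Omega)/Y$ --- and both are handled correctly.
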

\begin{proof}
Since $\ell_{1}$ does not embed into $JF_{X}(\Omega)$, by Lemmas
\ref{l2}, \ref{l5} and Haydon's theorem \cite{Had}, we obtain that
$JF_{X}^{*}(\Omega)=\overline{\langle T^{*} :
T\in\mathcal{P}(\Omega)\rangle}.$ Hence for any $Y$ separable
subspace of $JF_{X}^{*}(\Omega)$ there exists a sequence
$\{T_{n}^{*}\}_{n}$ such that $Y\hookrightarrow
Z=\overline{\langle T_{n}^{*}:n\in\mathbb{N}\rangle}.$ Clearly
$\{T_{n}\}_{n}$ satisfies the assumption of Proposition~\ref{p35},
hence $JF_{X}^{*}(\Omega)/Z$ is not reflexive. This implies that
$JF_{X}^{\ast}(\Omega)/Y$ is also not reflexive.
\end{proof}
\begin{corollary}\label{c37}
Let $\Omega$ be a bounded open subset of $\mathbb{R}^{d_{0}}$,
$d_{0}>1.$ Then the space $JF_{X}^{\ast}(\Omega)$ is not
isomorphic to a quotient of $JF_{X}^{\ast}.$ In particular
$JF_{X}(\Omega)$ is not isomorphic to a subspace of $JF_{X}.$
\end{corollary}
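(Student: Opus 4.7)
The plan is to combine Theorem~\ref{t32} and Theorem~\ref{t36} via a standard duality argument; no new ingredients are needed. By Theorem~\ref{t32}, there is a separable closed subspace $Y \subset JF_X^{*}$ (the closed span of $I^{*}$ for intervals $I$ with endpoints in a countable dense subset of $(0,1)$) such that $JF_X^{*}/Y$ is isomorphic to $X^{*}_{\Gamma}$. Since $X$ is reflexive, $X_{\Gamma}$ is reflexive, and hence so is its dual $X^{*}_{\Gamma}$; in particular $JF_X^{*}/Y$ is reflexive.

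For the first assertion, suppose toward a contradiction that there exists a bounded linear surjection $q: JF_X^{*} \to JF_X^{*}(\Omega)$. Set $V = \overline{q(Y)}$, which is a separable closed subspace of $JF_X^{*}(\Omega)$. Since $q(Y) \subseteq V$, the map $q$ factors through a bounded linear surjection $\tilde{q}: JF_X^{*}/Y \to JF_X^{*}(\Omega)/V$. As every quotient of a reflexive space is reflexive, $JF_X^{*}(\Omega)/V$ is reflexive. This directly contradicts Theorem~\ref{t36}, which asserts that $JF_X^{*}(\Omega)/V$ is non-reflexive for every separable subspace $V$ of $JF_X^{*}(\Omega)$.

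For the second assertion, suppose $JF_X(\Omega)$ is isomorphic to a closed subspace $Z$ of $JF_X$ via some isomorphism $j: JF_X(\Omega) \to Z$. By the Hahn-Banach theorem the restriction map $r: JF_X^{*} \to Z^{*}$ is a bounded linear surjection, and $Z^{*}$ is identified with $JF_X^{*}(\Omega)$ via $(j^{*})^{-1}$. Composing, we obtain a bounded linear surjection $JF_X^{*} \to JF_X^{*}(\Omega)$, contradicting the first assertion. There is no genuine technical obstacle here: the work is already embedded in Theorems~\ref{t32} and~\ref{t36}, and the corollary is essentially an exercise in assembling them together with the two elementary facts that quotients of reflexive spaces are reflexive and that subspace embeddings dualize to quotient maps.
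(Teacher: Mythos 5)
Your proof is correct and is essentially the paper's argument in slightly different clothing: the paper forms $W=\overline{\langle Y\cup Z\rangle}$ and identifies $JF_{X}^{\ast}/W$ both as a quotient of $JF_{X}^{\ast}/Y\approx X^{\ast}_{\Gamma}$ (hence reflexive) and as $JF_{X}^{\ast}(\Omega)$ modulo the separable space $W/Z$ (hence non-reflexive by Theorem~\ref{t36}), which is exactly your factorization through $V=\overline{q(Y)}$. The only nit is that the identification of $Z^{\ast}$ with $JF_{X}^{\ast}(\Omega)$ is via $j^{\ast}$ rather than $(j^{\ast})^{-1}$, a harmless slip.
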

\begin{proof}
On the contrary, assume that there exists a subspace $Z$ of
$JF^{\ast}_{X}$ such that $JF_{X}^{\ast}(\Omega)$ is isomorphic to
the quotient space $JF^{\ast}_{X}/Z.$ Let $\Delta$ be a countable
dense subset of $(0,1)$ and $Y=\overline{\langle\{I^{*}: I\,
\text{has endpoints in}\,\Delta\rangle\} }.$ Clearly the space $Y$
is separable. We set $W=\overline{\langle Y \cup Z\rangle}.$ It is
well known that $JF^{\ast}_{X}/W$ is isometric to
$JF^{\ast}_{X}/Y{\Bigl /} W/Y$ and also isometric to
$JF^{\ast}/Z{\Bigl /} W/Z.$ Moreover the quotient space $W/Z$ is
separable. Hence we have that
\begin{equation}\label{e33}
  JF_{X}^{\ast}/W\approx JF^{\ast}_{X}/Z{\Bigl /}
W/Z\approx JF^{\ast}_{X}(\Omega){\Bigl /}W/Z\,\,,
\end{equation}
and therefore by Theorem~\ref{t36}, $JF_{X}^{\ast}/W$ is not
reflexive. On the other hand, by Theorem~\ref{t32}, we have that
$JF_{X}^{\ast}/Y$ is reflexive, and therefore
\begin{equation}\label{e34}
JF_{X}^{\ast}/W\approx JF^{\ast}_{X} /Y{\Bigl/} W/Y\approx
X^{\ast}_{\Gamma}{\Bigl /}W/Y\,\,.
\end{equation}
>From (\ref{e33}) and (\ref{e34}) we derive a contradiction.

The second part follows from a duality argument.
\end{proof}
\section{The embedding of $JF_{X}$ into $JF_{X}(\Omega)$.}
\label{sembedding}
In this section we prove the following:
\begin{theorem}\label{t41}
Let $1\leq d_{0}\leq d_{1}$ and $\Omega$ be a bounded open subset
of $\mathbb{R}^{d_{1}}.$ Then $JF_{X}((0,1)^{d_{0}})$ is isometric
to a complemented subspace of $JF_{X}(\Omega).$
\end{theorem}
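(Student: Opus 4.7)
My plan begins with a reduction. Since $\Omega$ is open, it contains an open parallelepiped, and by the remark at the beginning of Section~\ref{sl1}, $JF_X(P)$ is $1$-complemented in $JF_X(\Omega)$ and is $1$-isometric to $JF_X((0,1)^{d_1})$ for any such $P$. It therefore suffices to embed $JF_X((0,1)^{d_0})$ as an isometric $1$-complemented subspace of $JF_X((0,1)^{d_1})$.

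To this end I would use the maps $i : JF_X((0,1)^{d_0}) \to JF_X((0,1)^{d_1})$ given on $L^1$ by $(if)(x_1,\ldots,x_{d_1}) = f(x_1,\ldots,x_{d_0})$, and $T : JF_X((0,1)^{d_1}) \to JF_X((0,1)^{d_0})$ given on $L^1$ by $(Tg)(x_1,\ldots,x_{d_0}) = \int_{(0,1)^{d_1-d_0}} g(x_1,\ldots,x_{d_0},\vec y)\,d\vec y$. The identity $T \circ i = \mathrm{id}$ is immediate since $(0,1)^{d_1-d_0}$ has unit measure. To see $T$ extends to an operator of norm $\le 1$ on $JF_X$, note that for disjoint parallelepipeds $\{R_j\}_{j=1}^m$ in $(0,1)^{d_0}$ the family $\{R_j \times (0,1)^{d_1-d_0}\}_{j=1}^m$ is disjoint in $(0,1)^{d_1}$ and $\int_{R_j}Tg = \int_{R_j \times (0,1)^{d_1-d_0}}g$, whence $\Vert\sum_j (\int_{R_j}Tg)e_j\Vert_X \le \Vert g\Vert_{JF_X((0,1)^{d_1})}$. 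The same lifting trick yields one direction of the isometry for $i$, namely $\Vert if\Vert_{JF_X((0,1)^{d_1})} \ge \Vert f\Vert_{JF_X((0,1)^{d_0})}$.

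The main obstacle is the reverse inequality $\Vert if\Vert_{JF_X((0,1)^{d_1})} \le \Vert f\Vert_{JF_X((0,1)^{d_0})}$, which once established makes $i$ an isometry and turns $P := i \circ T$ into a norm-one projection onto $i(JF_X((0,1)^{d_0}))$. For a disjoint family of parallelepipeds $\{S_j\}_{j=1}^m$ in $(0,1)^{d_1}$, I would write $S_j = R_j \times V_j$ with $R_j \subset (0,1)^{d_0}$ and $V_j \subset (0,1)^{d_1-d_0}$ parallelepipeds, so that $\int_{S_j}if = \mu_{d_1-d_0}(V_j)\int_{R_j}f$. The crucial observation is that for each $\vec t \in (0,1)^{d_1-d_0}$ the index set $J_{\vec t} := \{j : \vec t \in V_j\}$ labels a \emph{disjoint} family $\{R_j\}_{j \in J_{\vec t}}$ in $(0,1)^{d_0}$: if $j \ne j'$ both lie in $J_{\vec t}$ then the disjointness of $S_j$ and $S_{j'}$, combined with $V_j \cap V_{j'} \ni \vec t$, forces $R_j \cap R_{j'} = \emptyset$. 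A Fubini-type disintegration in the finite-dimensional subspace $\langle e_1,\ldots,e_m\rangle$ gives
\begin{equation*}
\sum_{j=1}^m \Bigl(\int_{S_j}if\Bigr)e_j \;=\; \int_{(0,1)^{d_1-d_0}} \sum_{j \in J_{\vec t}}\Bigl(\int_{R_j}f\Bigr)e_j\,d\vec t,
\end{equation*}
after which the Bochner-type triangle inequality and the bound $\Vert\sum_{j \in J_{\vec t}}(\int_{R_j}f)e_j\Vert_X \le \Vert f\Vert_{JF_X((0,1)^{d_0})}$ (which uses $1$-symmetry of $(e_n)$ to reindex $J_{\vec t}$ as an initial segment of $\N$) complete the argument. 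This use of $1$-symmetry to normalise the varying index sets $J_{\vec t}$ is the one delicate step; everything else is direct from the definitions.
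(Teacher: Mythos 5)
Your proof is correct, and the key geometric fact is the same one the paper exploits: if disjoint parallelepipeds $S_j=R_j\times V_j$ in $(0,1)^{d_1}$ all meet a common fibre point $\vec t$ (i.e. $\vec t\in V_j$ for each such $j$), then the corresponding bases $R_j$ are pairwise disjoint in $(0,1)^{d_0}$, and the fibre has total measure one. The difference is which side of the duality you run it on. The paper's Lemma 4.2(2) takes a norming functional $x^{*}=\sum_i b_i T_i^{*}$, partitions the fibre into cells $Q_{\ell}$ adapted to the $T_i$, and rewrites $x^{*}(\tilde x)$ as $y^{*}(x)$ where $y^{*}=\sum_{\ell}\mu_{d_1-d_0}(Q_{\ell})y_{\ell}^{*}$ is a subconvex combination of functionals $y_{\ell}^{*}\in B_{JF_X^{*}((0,1)^{d_0})}$; your version is the primal form of the identical computation, writing the vector $\sum_j(\int_{S_j}if)e_j$ as an average over $\vec t$ of vectors of norm at most $\Vert f\Vert$ (the integrand is a simple function, constant on the cells generated by the $V_j$, so the ``Bochner'' integral is just a finite subconvex combination), and the reindexing of $J_{\vec t}$ via $1$-symmetry is exactly the point where the paper also invokes symmetry. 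Where you genuinely diverge is the complementation: the paper passes to the quotient $JF_X(\Omega)/Y_{\perp}$ by the preannihilator of the subspace $Y$ spanned by the cylinder functionals $(R\times(0,1)^{d_1-d_0})^{*}$ and checks that $Q\circ U$ is an onto isometry, whereas you exhibit the projection explicitly as $P=i\circ T$ with $T$ the fibre-averaging operator; the bound $\Vert T\Vert\le 1$ follows from the same cylinder-lifting identity as the easy inequality, $T$ extends from the dense copy of $L^{1}$ to the completion without difficulty, and $T\circ i=\mathrm{id}$ is immediate. Your route to the projection is more concrete and self-contained than the quotient argument, at the cost of introducing one extra operator; both are valid.
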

Since $JF_{X}((0,1)^{d_{1}})$ is isometric to $1-complemented$
subspace of $JF_{X}(\Omega)$, it is enough to prove the result for
$\Omega=(0,1)^{d_{1}}.$

We set $\mathcal{D}$ be the dense  subspace of
$L^{1}((0,1)^{d_{0}})$ consisting of the functions of the form
$x=\sum_{j=1}^{m}\alpha_{j}\chi_{R_{j}}$, where $\{R_{j}\}_{j}$
are disjoint parallelepipeds in $\mathcal{P}((0,1)^{d_{0}}).$

To each $x=\sum_{j=1}^{m}\alpha_{j}\chi_{R_{j}}\in\mathcal{D}$ we
correspond the vector $\tilde{x}
=\sum_{j=1}^{m}\alpha_{j}\chi_{R_{j}\times (0,1)^{d_{1}-d_{0}}}$
of $JF_{X}(\Omega).$ Denote by
$$
U:\mathcal{D}\mapsto JF_{X}(\Omega)
$$
the above assignment, which is a linear operator.
\begin{lemma}\label{l42}
Let $x\in\mathcal{D}$ and $x^{*}\in JF_{X}^{*}(\Omega)$ such that
$x^{*}=\sum_{i=1}^{n}b_{i}T_{i}^{*}$,
$\{T_{i}\}_{j=1}^{n}\subset\mathcal{P}(\Omega)$ disjoint, and
$\Vert\sum_{i=1}^{n}b_{i}e_{i}^{*}\Vert_{X^{*}}\leq 1.$ Then
\begin{enumerate}
\item
$\,\, \Vert x\Vert_{JF_{X}((0,1)^{d_{0}})} \leq
\Vert\tilde{x}\Vert_{JF_{X}(\Omega)}$.
\item $\,\, x^{*}(\tilde{x}) \leq \Vert x
\Vert_{ JF_{X}((0,1)^{d_{0}})}$.
\end{enumerate}
\end{lemma}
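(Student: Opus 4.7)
\textbf{Plan for the proof of Lemma~\ref{l42}.}

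For part (1), the plan is simply to lift any parallelepiped family from $(0,1)^{d_0}$ to $\Omega = (0,1)^{d_1}$ via the product with $(0,1)^{d_1-d_0}$. Given disjoint $\{R_j\}_{j=1}^m \subset \mathcal{P}((0,1)^{d_0})$, the family $\widetilde{R}_j := R_j \times (0,1)^{d_1-d_0}$ consists of disjoint parallelepipeds in $\mathcal{P}(\Omega)$, and by Fubini,
$$
\int_{\widetilde{R}_j} \tilde{x}\, d\mu_{d_1} = \int_{R_j} x\, d\mu_{d_0},
$$
because $\tilde{x}$ is independent of the last $d_1-d_0$ coordinates and the fibre has measure $1$. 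Plugging into the definition of the norm of $\tilde{x}$ and taking the supremum over all admissible $\{R_j\}$ yields (1).

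For part (2), the strategy is to realize $x^{*}(\tilde{x})$ as a Fubini integral over the last $d_1-d_0$ coordinates, with each slice giving rise to an element of $\mathcal{S}$. Write each $T_i = U_i \times V_i$ with $U_i \in \mathcal{P}((0,1)^{d_0})$ and $V_i \in \mathcal{P}((0,1)^{d_1-d_0})$. Then
$$
x^{*}(\tilde{x}) = \sum_{i=1}^n b_i\, \mu_{d_1-d_0}(V_i) \int_{U_i} x\, d\mu_{d_0} = \int_{(0,1)^{d_1-d_0}} \phi_v(x)\, dv,
$$
where, setting $I(v) = \{i : v \in V_i\}$, the functional $\phi_v := \sum_{i \in I(v)} b_i U_i^{*}$ lives in $JF_{X}^{*}((0,1)^{d_0})$. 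Once I bound $\|\phi_v\| \le 1$, integrating gives $x^{*}(\tilde{x}) \le \mu_{d_1-d_0}((0,1)^{d_1-d_0}) \cdot \|x\|_{JF_X((0,1)^{d_0})} = \|x\|_{JF_X((0,1)^{d_0})}$.

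The main point, and the only step with real content, is the slice-wise norm estimate $\|\phi_v\| \le 1$. Two ingredients combine here. First, the disjointness of the $T_i$'s forces $\{U_i : i \in I(v)\}$ to be pairwise disjoint in $\mathcal{P}((0,1)^{d_0})$: if $i \neq j$ are both in $I(v)$, then $v \in V_i \cap V_j$, so $T_i \cap T_j = \emptyset$ forces $U_i \cap U_j = \emptyset$. Second, since $(e_i^{*})$ is $1$-symmetric (hence $1$-unconditional) in $X^{*}$, restricting to the subset $I(v)$ and re-enumerating gives
$$
\Bigl\|\sum_{j=1}^{|I(v)|} b_{i_j} e_j^{*}\Bigr\|_{X^{*}} \le \Bigl\|\sum_{i=1}^n b_i e_i^{*}\Bigr\|_{X^{*}} \le 1,
$$
so $\phi_v \in \mathcal{S}$ and Lemma~\ref{l1} gives $\|\phi_v\|_{JF_X^{*}((0,1)^{d_0})} \le 1$. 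Hence $\phi_v(x) \le \|x\|_{JF_X((0,1)^{d_0})}$, and integrating over $v$ completes the proof. The only place that could cause trouble is the manipulation of measurability and Fubini at the level of simple functions, which is routine since $x \in \mathcal{D}$ is a finite linear combination of characteristic functions of parallelepipeds.
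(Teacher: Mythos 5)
Your proof is correct. Part (1) is exactly the paper's argument. For part (2) you take a genuinely cleaner route than the paper, though both rest on the same two facts: that for a fixed point $v$ of the fibre the sets $\{U_i : i\in I(v)\}$ must be pairwise disjoint (since $T_i\cap T_j=(U_i\cap U_j)\times(V_i\cap V_j)$), and that $1$-symmetry lets you restrict the coefficient sequence to $I(v)$ without increasing its $X^{*}$-norm. The paper discretizes instead of integrating: it first refines $x$ so that each $R_j$ is either contained in or disjoint from each $\pi_1(T_i)$, then chooses a finite disjoint family $\{Q_\ell\}$ in $(0,1)^{d_1-d_0}$ so that each $T_i$ is (a.e.) a union of products $\pi_1(T_i)\times Q_\ell$, and finally exhibits $x^{*}(\tilde{x})$ as $y^{*}(x)$ for an explicit subconvex combination $y^{*}=\sum_\ell \mu_{d_1-d_0}(Q_\ell)\,y^{*}_\ell$ of norm-one functionals $y^{*}_\ell\in\mathcal{S}$, using $\sum_\ell\mu_{d_1-d_0}(Q_\ell)\le 1$. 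Your continuous disintegration $x^{*}(\tilde{x})=\int_{(0,1)^{d_1-d_0}}\phi_v(x)\,dv$ with $\phi_v\in\mathcal{S}$ for every $v$ reaches the same bound while avoiding the refinement step and the construction of the $Q_\ell$; what the paper's version buys in exchange is an explicit element of $B_{JF_X^{*}((0,1)^{d_0})}$ that realizes the value $x^{*}(\tilde{x})$ exactly, which is slightly more information than the inequality but is not needed for the statement. The measurability of $v\mapsto\phi_v(x)$ is indeed trivial here since it is a simple function of $v$.
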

\begin{proof}
$(1).$ This follows easily. Indeed  for every
$\{S_{l}\}_{l=1}^{k}\subset\mathcal{P}((0,1)^{d_{0}})$ disjoint,
we consider the disjoint family $\{S_{l}\times
(0,1)^{d_{1}-d_{0}}\}_{l=1}^{k}$ of $(0,1)^{d_{1}}$, and notice
that
$$
\Vert \sum_{l}
\left(\int_{S_{l}}\sum_{j=1}^{m}\alpha_{j}\chi_{R_{j}}\right)
e_{l}\Vert = \Vert \sum_{l} \left(\int_{S_{l}\times
(0,1)^{d_{1}-d_{0}}} \sum_{j=1}^{m}\alpha_{j}\chi_{R_{j}\times
(0,1)^{d_{1}-d_{0}}}\right)e_{l}\Vert\,\,.
$$
Taking the supremum in both sides we obtain $(1).$

$(2)$ To see the second inequality, assume that
$x=\sum_{j=1}^{m}\alpha_{j}\chi_{R_{j}}$,
$x^{*}=\sum_{i=1}^{n}b_{i}T_{i}^{*}$ are given. Denote by
$\pi_{1}:\mathbb{R}^{d_{1}}\mapsto\mathbb{R}^{d_{0}}$,
$\pi_{2}:\mathbb{R}^{d_{1}}\mapsto\mathbb{R}^{d_{1}-d_{0}}$ the
natural projections of $\mathbb{R}^{d_{1}}$ onto the two
orthogonal subspaces $\mathbb{R}^{d_{0}}$,
$\mathbb{R}^{d_{1}-d_{0}}.$

Assume additionally, that the families $\{R_{j}\}_{j=1}^{m}$,
$\{T_{i}\}_{i=1}^{n}$ satisfy the following property
\begin{equation}\label{e41}
\text{For
every}\,\,j=1,\ldots,m\,,\,i=1,\ldots,n\,\,\text{either}\,\,
R_{j}\subset\pi_{1}(T_{i})\,\,\text{or}\,\,
R_{j}\cap\pi_{1}(T_{i})=\emptyset\,\,.
\end{equation}
If (\ref{e41}) fails, then we rewrite $x$ as
$\sum_{\ell=1}^{k}\alpha_{\ell}\chi_{R_{\ell}^{\prime}}$ so that
the families $\{R_{\ell}^{\prime}\}_{\ell}$, $\{T_{i}\}_{i}$
satisfying (\ref{e41}).

\noindent  Next we choose family $\{ Q_{\ell} \}_{ \ell
=1}^{k}\subset\mathcal{P}((0,1)^{d_{1}-d_{0}})$ such that
\begin{enumerate}
\item $\{ Q_{\ell}\}_{\ell=1}^{k}$ are pairwise disjoint
parallelepipeds.
\item For every $\ell=1,\ldots,k$, $i=1,\ldots,n$ either
$Q_{\ell}\subset\pi_{2}(T_{i})$ or
$Q_{\ell}\cap\pi_{2}(T_{i})=\emptyset.$
\item If $B_{i}=\{\ell: Q_{\ell}\subset\pi_{2}(T_{i})\}$, then
$T_{i}=\cup_{\ell\in B_{i}}(\pi_{1}(T_{i})\times Q_{\ell})$
almost everywhere.
\end{enumerate}
We also set
\begin{center}
$A_{j}=\{i: R_{j}\subset \pi_{1}(T_{i})\}$\,\,\,\,\,\, for
$j=1,\ldots,m$
\end{center}
The above properties yield that
\begin{align*}
x^{*}(\tilde{x})=& \sum_{i=1}^{n}b_{i}T_{i}^{*}(\tilde{x})
=\sum_{j=1}^{m}\alpha_{j}\mu_{d_{0}}(R_{j}) \sum_{i\in
A_{j}}b_{i}\sum_{\ell\in B_{i}}\mu_{d_{1}-d_{0}}(Q_{\ell})
\\
=& \sum_{\ell=1}^{k}\mu_{d_{1}-d_{0}}(Q_{\ell})\sum_{i:\ell\in
B_{i}}b_{i}\sum_{j: i\in
A_{j}}\alpha_{j}\mu_{d_{0}}(R_{j})\\
= &\sum_{\ell=1}^{k}\mu_{d_{1}-d_{0}}(Q_{\ell})\sum_{i:\ell\in
B_{i}}b_{i}(\pi_{1}(T_{i}))^{*}
(\sum_{j: i\in A_{j}}\alpha_{j}\chi_{R_{j}})\\
= &\sum_{\ell=1}^{k}\mu_{d_{1}-d_{0}}(Q_{\ell})\sum_{i:\ell\in
B_{i}}b_{i}(\pi_{1}(T_{i}))^{*}(x)\,\,.
\end{align*}
Recall that $\mu_{d_{0}}$, $\mu_{d_{1}-d_{0}}$ denotes the
Lebesgue measure on $\mathbb{R}^{d_{0}}$,
$\mathbb{R}^{d_{1}-d_{0}}$ respectively. To see that
$\sum_{i}b_{i}T_{i}^{*}(\tilde{x})\leq \Vert x\Vert_{
JF_{X}((0,1)^{d_{0}})}$ we notice that property $(1)$ yields that
\begin{equation}\label{e42}
\sum_{\ell}\mu_{d_{1}-d_{0}}(Q_{\ell})\leq 1\,\,.
\end{equation}
For fixed $\ell$, if $1\leq i_{1}\not= i_{2}\leq n$ are such that
$\ell\in B_{i_{1}}$, $\ell\in B_{i_{2}}$, it holds that
$\pi_{1}(T_{i_{1}})\cap\pi_{1}(T_{i_{2}})=\emptyset.$ Therefore
setting
$$
y^{*}_{\ell}=\sum_{i:\ell\in B_{i}}b_{i}\pi_{1}(T_{i})^{*}
\quad\text{for}\,\, \ell=1,\ldots,k\,\,,
$$
we conclude that $y^{*}_{\ell}\in B_{JF_{X}^{*}}.$

Finally  set
$$
y^{*}=\sum_{\ell=1}^{k}\mu_{d_{0}-1}(Q_{\ell})y_{\ell}^{*}\,.
$$
Observe that (\ref{e42})  yields that $y^{*}$ is a subconvex
combination of $\{y^{*}_{\ell}\}_{\ell=1}^{k}$, hence $y^{*}\in
B_{JF_{X}^{*}((0,1)^{d_{0}})}$ and $y^{*}(x)=x^{*}(\tilde{x}).$
This completes the proof of the lemma.
\end{proof}
%
\begin{proof}[Proof of Theorem~\ref{t41}] From Lemma~\ref{l42}
it follows that $U:\mathcal{D}\mapsto JF_{X}(\Omega)$ is an
isometry, which is extended to an isometry of
$JF_{X}((0,1)^{d_{0}})$ into $JF_{X}(\Omega).$ It remains to show
that $U(JF_{X}(0,1)^{d_{0}})$ is $1-complemented$ subspace.
Indeed, we set
$$
Y=\overline{\langle R\times(0,1)^{d_{1}-d_{0}}:
R\in\mathcal{P}((0,1)^{d_{0}})\rangle}\subset
JF_{X}^{*}(\Omega)\,\,,
$$
and $Q:JF_{X}(\Omega)\mapsto JF_{X}(\Omega)/Y_{\bot}.$ Since $U$
is an isometry, we obtain that  $B_{Y}$ $1-norms$ the subspace
$U(JF_{X}(0,1)^{d_{0}})$, hence $Q\circ U$ is also an isometry. To
see that is onto, we observe that for every
$T\in\mathcal{P}(\Omega)$ there exists $0\leq\lambda\leq 1$, such
that $\chi_{T}-\lambda\chi_{\pi_{1}(T)\times
(0,1)^{d_{1}-d_{0}}}\in Y_{\bot}.$
\newline
This completes the proof of the theorem.
\end{proof}
\section{Subsequences of Rademacher functions
equivalent to  $c_{0}$ basis.}\label{sc0}
Before stating the next
definition we introduce some notation. Let $(n_{k})_{k}$ be an
increasing sequence of $\mathbb{N}$ and $(\sigma_{k})_{k}$ a
sequence of successive subsets of $\N$ with $\#\si_{k}=
2^{n_{k}}$. We denote by
$\lambda_{k}^{-1}=\Vert\sum_{n\in\si_{k}}e_{n}\Vert_{X}$, and we
set $u_{k}=\lambda_{k}\sum_{n\in\si_{k}}e_{n}$, which clearly
satisfies $\Vert u_{k}\Vert_{X}=1.$
\begin{definition}\label{d51}
Let $X$ be a reflexive Banach space with $1-$symmetric basis
$(e_{n})_{n}$. The space $X$ satisfies the {\it Convex Combination
Property} ($CCP$) if there exist a strictly increasing sequence
$(n_{k})_{k}$ and $C>0$ such that the following is fulfilled:

For $(\si_{k})_{k}$, $(u_{k})_{k}$ as above with
$\#\si_{k}=2^{n_{k}}$, every $(I_{k})_{k}$ with $I_{k}\subset
\sigma_{k}$ and $\sum_{k}\frac{\# I_{k}}{\#\si_{k}}\leq 1$ we have
that
$$
\Vert\sum_{k=1}^{\infty}\lambda_{k}\sum_{n\in I_{k}}
e_{n}\Vert_{X}\leq C\,\,.
$$
\end{definition}
Our goal is to prove the following:
\begin{theorem} \label{500}The following
are equivalent:
\begin{enumerate}
\item The space $X$ satisfies $CCP$.
\item The normalized sequence
$(\frac{r_{n}}{\Vert r_{n}\Vert})_{n\in\N}$ in $JF_{X}$ of
Rademacher  functions contains a subsequence equivalent to the
usual basis of $c_{0}$.
\end{enumerate}
\end{theorem}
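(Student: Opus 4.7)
The starting computation is that $\|r_{n_k}\|_{JF_X}=2^{-n_k}\lambda_k^{-1}$: the partition of $[0,1]$ into $2^{n_k}$ dyadic intervals of length $2^{-n_k}$ realises exactly this value by $1$-symmetry, and Proposition~\ref{p21} together with Lemma~\ref{l23} shows that merging any adjacent pair of these intervals can only decrease $\tau(\cdot,r_{n_k})$, so the dyadic partition is optimal. Consequently $R_{n_k}=2^{n_k}\lambda_k\,r_{n_k}$, and for any interval $I\subset(0,1)$,
\[
\left|\int_I R_{n_k}\,d\mu\right|\le 2\lambda_k\min\bigl(1,\,2^{n_k}\mu(I)\bigr).
\]
In both directions I will replace $(n_k)$ by a lacunary subsequence; subsequences of CCP sequences still satisfy CCP, and subsequences of $c_0$-equivalent Rademacher sequences stay $c_0$-equivalent.

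For $(2)\Rightarrow(1)$: given $(I_k)$ with $I_k\subset\sigma_k$, $|\sigma_k|=2^{n_k}$, and $\sum_k|I_k|/2^{n_k}\le 1$, I would place successive disjoint intervals $J_k\subset(0,1)$ of length $|I_k|/2^{n_k}$ aligned with the dyadic grid at level $n_k$, and within each $J_k$ take the natural $|I_k|$ dyadic subintervals $T_{k,j}$ of length $2^{-n_k}$. The alternating structure of the $r_{n_l}$ combined with dyadic alignment gives $\int_{T_{k,j}}R_{n_k}\,d\mu=\pm\lambda_k$, $\int_{T_{k,j}}R_{n_l}\,d\mu=0$ for $l>k$ (cancellation over an even number of full periods), and $|\int_{T_{k,j}}R_{n_l}\,d\mu|\le 2^{n_l-n_k}\lambda_l$ for $l<k$. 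Testing the $c_0$-bound with $a_l=1$ gives $\bigl\|\sum_{k,j}\bigl(\int_{T_{k,j}}\sum_l R_{n_l}\,d\mu\bigr)e_{(k,j)}\bigr\|_X\le C$. By $1$-unconditionality and $1$-symmetry the principal term has $X$-norm equal to $\|\sum_k\lambda_k\sum_{n\in I_k}e_n\|_X$, the quantity to be bounded, while the error is controlled by $\sum_l\lambda_l\sum_{k>l}2^{n_l-n_k}\lambda_k^{-1}$. Since $X$ does not contain $\ell_1$, one has $\lambda_k^{-1}=o(2^{n_k})$, so a sufficiently lacunary subsequence makes this error arbitrarily small, yielding CCP.

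For $(1)\Rightarrow(2)$: I pass to a subsequence along which $\lambda_k$ decays and $2^{n_k}\lambda_k$ grows geometrically. Given an arbitrary partition $\{T_i\}$ of $[0,1]$ with $\ell_i=\mu(T_i)$, and $(a_l)$ with $\max|a_l|\le 1$, I assign to each $T_i$ the critical scale $k_i^*$ by $2^{-n_{k_i^*}}\le\ell_i<2^{-n_{k_i^*-1}}$, and set $A_k=\{i:k_i^*=k\}$. The packing inequality $\sum_{i\in A_k}\ell_i\ge|A_k|\cdot 2^{-n_k}$ gives $\sum_k|A_k|/2^{n_k}\le 1$, so after a $1$-symmetric rearrangement placing each $A_k$ inside the prescribed $\sigma_k$ of Definition~\ref{d51}, CCP yields $\|\sum_k\lambda_k\sum_{i\in A_k}e_i\|_X\le C$. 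Splitting $\sum_l a_l\int_{T_i}R_{n_l}\,d\mu$ into the oscillating regime $l\ge k_i^*$ (where $|\int|\le 2\lambda_l$ gives a geometric tail $\lesssim\lambda_{k_i^*}$) and the constant regime $l<k_i^*$ (where $|\int|\le\ell_i 2^{n_l}\lambda_l$ is dominated by its top term), a careful accounting produces the pointwise estimate $|v_i|\le C'\lambda_{k_i^*}\max|a_l|$ with $v_i=\int_{T_i}\sum_l a_lR_{n_l}\,d\mu$; substituting into $\|\sum_i v_ie_i\|_X$ and regrouping by $A_k$ finishes the $c_0$-upper bound. The lower bound $\|\sum_l a_lR_{n_l}\|_{JF_X}\ge\tfrac12\max|a_l|$ follows by testing against the dyadic partition at level $n_{l_0}$ with $|a_{l_0}|=\max|a_l|$ and using lacunarity to absorb the cross-scale error.

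The principal obstacle is the constant-regime term in $(1)\Rightarrow(2)$: the geometric sum $\sum_{l<k_i^*}\ell_i 2^{n_l}\lambda_l$ is dominated by its top summand $\ell_i 2^{n_{k_i^*-1}}\lambda_{k_i^*-1}\le\lambda_{k_i^*-1}$, which is $\lambda_{k_i^*-1}$ rather than $\lambda_{k_i^*}$ and hence off by one from the CCP indexing of $\{A_k\}$. I would overcome this by splitting each $A_k$ along the size of $\ell_i$ within $[2^{-n_k},2^{-n_{k-1}})$: for the ``lower'' intervals with $\ell_i\approx 2^{-n_k}$ the extra factor $\ell_i 2^{n_{k-1}}$ converts $\lambda_{k-1}$ back to $\lambda_k$ via the subadditivity bound $\lambda_{k-1}/\lambda_k\le 2^{n_k-n_{k-1}}$, while the ``upper'' intervals number at most $2^{n_{k-1}+O(1)}$ so their contribution can be handled by CCP applied to a shifted block with a bounded multiplicity. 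The same lacunarity choice simultaneously controls the error term in $(2)\Rightarrow(1)$.
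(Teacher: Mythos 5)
Your direction $(2)\Rightarrow(1)$ is essentially the paper's Proposition~\ref{p55}: the same dyadically aligned test partition, the same observation that $\int_{T_{k,j}}R_{n_l}=0$ for $l>k$, and a cross-term estimate for $l<k$ that differs only in form (you bound it by $\sum_l\lambda_l\sum_{k>l}2^{n_l-n_k}\lambda_k^{-1}$ and absorb it by lacunarity, the paper by the quantities $\tau(\tilde r_{n_j},Q_j)$); both work. The lower bound you add in $(1)\Rightarrow(2)$ is also fine, and is an acceptable substitute for the paper's remark that the upper estimate holds for every subsequence.

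The genuine gap is exactly where you flag it, in the constant regime of $(1)\Rightarrow(2)$, and your proposed repair does not close it. For $i\in A_k$ the contribution of $R_{n_{k-1}}$ to $T_i$ is $\approx\ell_i2^{n_{k-1}}\lambda_{k-1}$, and the condition for this to be $\le\lambda_k$ is $\ell_i\le2^{-n_{k-1}}\lambda_k/\lambda_{k-1}$. With that threshold the ``upper'' intervals number up to $2^{n_{k-1}}\lambda_{k-1}/\lambda_k$, which is far larger than your claimed $2^{n_{k-1}+O(1)}$ (e.g.\ for $X=\ell_2$ it is $2^{(n_{k-1}+n_k)/2}$); so they do not fit into the block $\sigma_{k-1}$ of size $2^{n_{k-1}}$, while their coefficients, being of order $\lambda_{k-1}\gg\lambda_k$, are too large for the block $\sigma_k$. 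Conversely, if you take a threshold of the form $c\,2^{-n_{k-1}}$ with $c$ an absolute constant so that the upper count is $O(2^{n_{k-1}})$, the lower intervals only satisfy $\ell_i2^{n_{k-1}}\lambda_{k-1}\le c\,\lambda_{k-1}$, not $\le\lambda_k$. No choice of cut-off makes both halves work, so the claimed pointwise estimate $|v_i|\le C'\lambda_{k_i^*}\max|a_l|$ is not established. The missing idea, which is how the paper's Proposition~\ref{p36} proceeds, is to \emph{merge} rather than count: all intervals of $A_k$ lying inside a single dyadic cell $D$ at level $n_{k-1}$ are collapsed (via Lemma~\ref{l22}, which says merging coefficients of a $1$-symmetric basis increases the norm) into one coefficient $\lambda_{k-1}\beta_D$ with $\beta_D=2^{n_{k-1}}\mu(D\cap\bigcup A_k)\in[0,1]$; there are then at most $2^{n_{k-1}}$ surviving coefficients with $\sum_D\beta_D/2^{n_{k-1}}=\mu(\bigcup A_k)$, and the convex-combination strengthening of $CCP$ (Proposition~\ref{p52}, which you never prove but implicitly need, since the $\beta_D$ are genuine fractions and not indicators) applies at the block $\sigma_{k-1}$. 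The paper in fact sidesteps the whole bookkeeping by interleaving auxiliary thresholds $\varepsilon_\ell$ with the choice of $n_{k_\ell}$ so that each $\tilde r_{n_{k_\ell}}$ contributes only to intervals of its own size class up to an error $4^{-\ell}$, and then performs the merge on the class $Q^2_\ell$; some such merging step has to be inserted into your argument before it is complete.
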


\begin{remark} It is an easy exercise that $\ell_{p}$,
$1<p<\infty,$ have $CCP$. Lorentz space $d(w_{1},p)$,
$1<p<\infty$, $w_{1}=(\frac{1}{n})_{n}$ fails $CCP$. Indeed,
choose a rapidly increasing sequence $(n_{k})_{k}$ of integers,
$(\sigma_{k})$ subsets of $\N$ with $\#\sigma_{k}=2^{n_{k}}$ and
$I_{k}\subset\sigma_{k}$ with $\# I_{k}=
\frac{\#\sigma_{k}}{2^{k}}$. From the definition of the norm of
the space $d(w_{1},p)$, using that $\ln(n)\approx
\sum_{i=1}^{n}\frac{1}{i}$, we easily see that
$$
\Vert\sum_{n\in\sigma_{k}}e_{n}\Vert_{d(w_{1},p)} \approx
(\ln(2^{n_{k}}))^{1/p}\,\,,
$$
and
$$
\Vert\sum_{k}\sum_{n\in I_{k}} \frac{e_{n}}{
\Vert\sum_{i\in\sigma_{k}}e_{i}\Vert}\Vert_{d(w_{1},p)} \gtrapprox
\left(\sum_{k}\frac{\ln(2^{n_{k}-k})- \ln(2^{n_{k-1}-k+2})}{
\ln(2^{n_{k}})}\right)^{1/p}\to\infty\,\,.
$$
We do not know if $JF_{d(w_{1},p)}$ contains $c_{0}$.
\end{remark}
The next proposition yields that $CCP$ implies a formally stronger
property.
\begin{proposition}\label{p52} Let $X$ have the $CCP$ and
$(n_{k})_{k}$, $(\si_{k})_{k}$, $(u_{k})_{k}$,
$(\lambda_{k})_{k}$, $C>0$ as before. Then for every sequence
$(\alpha_{n})_{n\in\sigma}$, $\si=\cup_{k}\si_{k}$ such that
\begin{align}
0\leq \alpha_{n}\leq 1\,\,\,\,\,\,\text{and}\,\,\,\,\,\,
\sum_{k=1}^{\infty}
\frac{\sum_{n\in\si_{k}}\alpha_{n}}{\#\si_{k}}\leq
1\,\,, \label{e51}\\
\intertext{we have that} \Vert\sum_{k=1}^{\infty}
\sum_{n\in\si_{k}}\lambda_{k}\alpha_{n}e_{n}\Vert_{X}\leq
C+1\,\,.\label{e52}
\end{align}
\end{proposition}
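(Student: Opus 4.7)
The proof is by a convex-analysis and extreme-point argument. The hypothesis asserts that $(\alpha_n)_{n\in\si}$ belongs to a bounded convex polytope $P$ determined by the constraints in (\ref{e51}), while the conclusion is a uniform bound on the convex function $(\beta_n)\mapsto\Vert\sum_{k}\lambda_k\sum_{n\in\si_k}\beta_n e_n\Vert_X$. By convexity, it suffices to establish the bound $C+1$ at each extreme point of $P$. The key observation is that every extreme point is either a $\{0,1\}$-valued configuration to which $CCP$ applies directly, or such a configuration perturbed by a single coordinate lying in $(0,1)$.

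First I would reduce to finitely supported sequences. For each $N\in\N$, let $\alpha^{(N)}$ denote the truncation of $\alpha$ to its first $N$ coordinates; then $\alpha^{(N)}$ lies in the finite-dimensional polytope
\[
P_N=\Bigl\{(\beta_n)_{n\le N}: 0\le\beta_n\le 1,\ \sum_{k}\tfrac{1}{\#\si_k}\sum_{n\in\si_k,\,n\le N}\beta_n\le 1\Bigr\}.
\]
Once $\Vert\sum_{k}\lambda_k\sum_{n\le N}\alpha_n e_n\Vert_X\le C+1$ is established uniformly in $N$, reflexivity of $X$ together with $1$-unconditionality of $(e_n)$ yields the same bound for the full series.

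The heart of the argument is the extreme-point analysis of $P_N$. The defining inequalities are $2N$ box constraints $0\le\beta_n\le 1$ together with the single sum inequality; counting linearly independent active constraints at a vertex $\beta$ shows that every extreme point has $\beta_n\in\{0,1\}$ for all but at most one coordinate $n^\ast$, and if $\tau:=\beta_{n^\ast}\in(0,1)$ then the sum constraint is necessarily tight at $\beta$. Given such a vertex, set $I_k=\{n\in\si_k:\beta_n=1\}$ and let $n^\ast\in\si_{k^\ast}$. The tight sum constraint gives $\sum_k\#I_k/\#\si_k\le 1$, so $CCP$ yields
\[
\Bigl\Vert\sum_{k}\lambda_k\sum_{n\in I_k}e_n\Bigr\Vert_X\le C.
\]
The exceptional summand is controlled by $\tau\lambda_{k^\ast}\Vert e_{n^\ast}\Vert_X\le 1$, using $\Vert e_{n^\ast}\Vert_X\le\Vert\sum_{n\in\si_{k^\ast}}e_n\Vert_X=1/\lambda_{k^\ast}$ from $1$-unconditionality and positivity. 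The triangle inequality then produces the bound $C+1$ at every extreme point of $P_N$.

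Finally, applying Krein--Milman to $\alpha^{(N)}\in P_N$ writes it as a convex combination of extreme points, and convexity of the norm transfers the bound to $\alpha^{(N)}$; letting $N\to\infty$ completes the proof. The one delicate step is the vertex characterization of $P_N$, a standard but careful linear-programming computation whose crucial feature is the observation that at most one coordinate can be non-integer at an extreme point. Once this characterization is in hand, the rest of the proof is a direct combination of $CCP$, the triangle inequality, and convexity.
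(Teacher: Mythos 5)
Your proof is correct and follows essentially the same route as the paper: both arguments reduce to a finite-dimensional convex compact set, observe that at an extreme point at most one coordinate can lie strictly in $(0,1)$, apply $CCP$ to the $\{0,1\}$-part and absorb the single exceptional term (of norm at most $1$) via the triangle inequality, and then conclude by convexity. Your write-up merely makes explicit two details the paper leaves implicit, namely the bound $\tau\lambda_{k^{\ast}}\Vert e_{n^{\ast}}\Vert_{X}\leq 1$ and the passage from the truncations to the full series.
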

\begin{proof}
It is enough to show that for every $k_{0}\in\N$ and every
$(\alpha_{n})_{n\in\cup_{k=1}^{k_{0}}\si_{k}}$ satisfying
(\ref{e51}) we have that
$$
\Vert\sum_{k=1}^{k_{0}}\sum_{n\in\si_{k}}
\lambda_{k}\alpha_{n}e_{n}\Vert_{X}\leq C+1\,\,.
$$
Fix $k_{0}$ and set
$$
K= \Bigl\{\sum_{k=1}^{k_{0}}
\sum_{n\in\si_{k}}\lambda_{k}\alpha_{n}e_{n}
:\,\,(\alpha_{n})_{n\in\cup_{1}^{k_{0}}\si_{k}}\,\,
\text{satisfies}\,(\ref{e51})\Bigl\}\,\,.
$$
\noindent Then $K$ is a closed convex bounded subset of a finite
dimensional subspace of $X$, hence it is convex and compact. It is
easy to see that
\begin{center}
If $(\alpha_{n})_{n\in\cup_{k=1}^{k_{0}}\si_{k}}$ satisfies
(\ref{e51}) and there exists \,$n_{1}$,
$n_{2}\in\cup_{k=1}^{k_{0}}\si_{k}$ with $n_{1}\not=n_{2}$\, and\,
$0<\alpha_{n_{1}},\alpha_{n_{2}}<1$ then\,
$\sum_{k=1}^{k_{0}}\sum_{n\in\si_{k}}\lambda_{k}\alpha_{n}e_{n}$
is not an extreme point of $K$.
\end{center}
Hence every $x\in Ex(K)$ is of the form
$x=\sum_{k=1}^{k_{0}}\sum_{n\in
I_{k}}\lambda_{k}e_{n}+\alpha_{n}\lambda_{k}e_{n}$, and from $CCP$
we obtain that $\Vert x\Vert_{X}\leq C+1$. This yields that for
every $x\in K$, $\Vert x\Vert_{X}\leq C+1$.
\end{proof}
Next we summarize some simple properties of Rademacher functions
$(r_{n})_{n}$.
\begin{lemma}\label{l53} The following hold
\begin{enumerate}
\item For every interval $I\subset [0,1]$, $n\in\N$,
$\vert\int_{I}r_{n}d\mu\vert\leq\frac{1}{2^{n}}$.
\item If
$\lambda_{k}^{-1}=\Vert\sum_{i=1}^{2^{n}}e_{i}\Vert_{X}$ then
$\Vert r_{n}\Vert_{JF_{X}}=\lambda_{n}^{-1}2^{-n}$.
\item Denote $\tilde{r}_{n}=\frac{r_{n}}{\Vert
r_{n}\Vert_{JF_{X}}}=\lambda_{n}2^{n}r_{n}$. Then
$\int_{\frac{k}{2^{n}}}^{\frac{k+1}{2^{n}}}
\tilde{r}_{n}=\pm\lambda_{n}$.
\item The reflexivity of $X$ implies that
$\lambda_{n}^{-1}\to\infty$, $\lambda_{n}^{-1}2^{-n}\to 0$.
\item
For each interval $I$, $\vert\int_{I}\tilde{r}_{n}\vert\leq
\lambda_{n}{}_{\stackrel{\longrightarrow}{n}}0$, hence
$(\tilde{r}_{n})_{n}$ is weakly null sequence in $JF_{X}$.
\end{enumerate}
\end{lemma}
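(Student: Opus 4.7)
The plan is to handle the five assertions in the order listed, since each later one builds on earlier ones, and to rely on two structural tools already available in the paper: Proposition 2.1 (partition refinement for simple functions) and Corollary 1.6 (weak Cauchy criterion via interval functionals).

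For (1) I would consider the primitive $R_{n}(t)=\int_{0}^{t}r_{n}d\mu$. Since $r_{n}\equiv\pm 1$ alternately on the dyadic blocks of length $2^{-n}$, $R_{n}$ is piecewise linear on the grid $\mathcal{P}_{0}=\{k/2^{n}:0\leq k\leq 2^{n}\}$ and oscillates between $0$ and $2^{-n}$; hence $\Vert R_{n}\Vert_{\infty}\leq 2^{-n}$, which gives $|\int_{I}r_{n}d\mu|=|R_{n}(b)-R_{n}(a)|\leq 2^{-n}$. For (2) the dyadic partition $\{[(j-1)/2^{n},j/2^{n}]\}_{j=1}^{2^{n}}$ witnesses the lower bound $\Vert r_{n}\Vert_{JF_{X}}\geq 2^{-n}\Vert\sum_{j=1}^{2^{n}}e_{j}\Vert_{X}=2^{-n}\lambda_{n}^{-1}$. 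For the reverse inequality, $r_{n}$ is constant on each piece of $\mathcal{P}_{0}$, so Proposition 2.1 lets me replace any admissible partition $Q$ by a sub\-partition $\mathcal{P}\subseteq\mathcal{P}_{0}$ with $\tau(Q,r_{n})\leq\tau(\mathcal{P},r_{n})$. Each block of $\mathcal{P}$ is a union of consecutive length-$2^{-n}$ dyadic intervals, so by the alternating structure its integral against $r_{n}$ is either $0$ or $\pm 2^{-n}$; discarding the zero contributions and using $1$-unconditionality together with $1$-symmetry, the resulting norm is majorized by $2^{-n}\Vert\sum_{j=1}^{2^{n}}e_{j}\Vert_{X}=2^{-n}\lambda_{n}^{-1}$. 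Claim (3) is immediate from $\int_{k/2^{n}}^{(k+1)/2^{n}}\tilde r_{n}=\lambda_{n}2^{n}\cdot(\pm 2^{-n})=\pm\lambda_{n}$.

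For (4), $\lambda_{n}^{-1}=\Vert\sum_{i=1}^{2^{n}}e_{i}\Vert_{X}\to\infty$ is exactly the property recorded in the Notation following Lemma \ref{l1}. For $\lambda_{n}^{-1}2^{-n}\to 0$ I would invoke that reflexivity of $X$ forces reflexivity of $X^{*}$, which also carries a $1$-symmetric basis $(e_{i}^{*})$, and hence the same notation yields $\Vert\sum_{i=1}^{2^{n}}e_{i}^{*}\Vert_{X^{*}}\to\infty$. Combined with the standard identity for symmetric bases $\Vert\sum_{i=1}^{2^{n}}e_{i}\Vert_{X}\cdot\Vert\sum_{i=1}^{2^{n}}e_{i}^{*}\Vert_{X^{*}}=2^{n}$ (which holds because $\sum e_{i}^{*}/\Vert\sum e_{i}^{*}\Vert$ is the norming functional of $\sum e_{i}$, by the averaging/symmetry argument), this gives $2^{n}\lambda_{n}=\Vert\sum_{i=1}^{2^{n}}e_{i}^{*}\Vert_{X^{*}}\to\infty$, i.e., $\lambda_{n}^{-1}2^{-n}\to 0$.

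For (5), applying (1) and (4) I obtain $|\int_{I}\tilde r_{n}|=\lambda_{n}2^{n}|\int_{I}r_{n}|\leq\lambda_{n}\to 0$ uniformly over intervals $I\subset[0,1]$. Since $(\tilde r_{n})$ is normalized in $JF_{X}$, Corollary \ref{l6} yields that it is weakly Cauchy, and its weak limit kills every interval functional $I^{*}$; because $\{I^{*}\}$ separates points of $JF_{X}$ (a function whose integral vanishes on every interval is zero in $L^{1}$, hence in $JF_{X}$), this limit must be $0$, so $(\tilde r_{n})$ is weakly null. The only delicate step is the duality identity in (4), where care is required in identifying the norming functional of $\sum_{i=1}^{n}e_{i}$; everything else is a direct calculation combined with the earlier machinery of the paper.
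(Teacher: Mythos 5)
Your proposal is essentially correct for items (1)--(4) and supplies exactly the details the paper leaves implicit: the paper's own proof of Lemma~\ref{l53} is a single line of citations, your treatment of (2) via Proposition~\ref{p21} is precisely the intended route, and your proof of (4) through the exact duality identity $\Vert\sum_{i=1}^{n}e_{i}\Vert_{X}\cdot\Vert\sum_{i=1}^{n}e_{i}^{*}\Vert_{X^{*}}=n$ for a $1$-symmetric basis (the averaging argument, cf.\ \cite{LT}) is a clean substitute for the reference the paper gives.

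The one step that does not work as written is the conclusion of (5). After obtaining from Corollary~\ref{l6} that $(\tilde r_{n})$ is weakly Cauchy, you speak of ``its weak limit'' and identify it as $0$ because the interval functionals separate points of $JF_{X}$. But a weakly Cauchy sequence in $JF_{X}$ need not have a weak limit in $JF_{X}$: the space is non-reflexive and does not contain $\ell_{1}$, hence is not weakly sequentially complete, so a priori the sequence only has a $weak^{*}$ limit $F\in JF_{X}^{**}$. To conclude $F=0$ from $F(I^{*})=0$ for all intervals $I$ one needs the linear span of $\{I^{*}\}$ to be \emph{norm}-dense in $JF_{X}^{*}$; the fact that $\{I^{*}\}$ separates points of $JF_{X}$ only gives $weak^{*}$-density of that span, which is not enough to annihilate an element of the bidual. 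The repair is immediate with tools you already invoke: by Lemmas~\ref{l2} and \ref{l5} the extreme points of $B_{JF_{X}^{*}}$ lie in $\overline{\mathcal{S}}^{\Vert\cdot\Vert}$, hence in the norm-closed span of the interval functionals, so Rainwater's theorem applied to the bounded sequence $(\tilde r_{n})$ with candidate limit $0$ yields weak convergence to $0$ directly (this is exactly the mechanism inside the proof of Corollary~\ref{l6}); alternatively, one may quote the identity $JF_{X}^{*}=\overline{\langle\{I^{*}\}\rangle}$ recorded at the beginning of Section~\ref{sdual}, which rests on Haydon's theorem and the non-embedding of $\ell_{1}$.
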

\begin{proof} (1) is well known, (2) follows from
Proposition~\ref{p21}, (3) It is easy. (4) It is well known,
\cite{LT}, \cite{pe}. (5) follows from (1) and (4).
\end{proof}
\begin{notation}{\rm (a) In the Section~\ref{sHaar}, for $f\in
L^{1}(\mu)$ and $Q=\{I_{j}\}_{j=1}^{n}$ partition of $(0,1)$, the
quantity $\tau(Q,f)$ was defined. This is extended to each $f\in
JF_{X}$ as follows:
$$
\tau(Q,f)=\Vert\sum_{j=1}^{n}I^{*}_{j}(f)e_{j}\Vert_{X}\,\,.
$$
(b). We recall that for $Q=\{I_{j}\}_{j=1}^{n}$ partition of
$(0,1)$, the width of $Q$ is defined as
$\delta(Q)=\max\{\mu(I_{j}): j=1,\dots,n\}$}\,\,.
\end{notation}
\begin{lemma}\label{l54} Let $f\in JF_{X}(\Omega)$. Then
$$
\lim_{\e\to 0}\sup\{\tau(Q,f): \delta(Q)\leq\e\}=0\,\,.
$$
\end{lemma}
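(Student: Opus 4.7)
The plan is to reduce to a dense class of simple functions where the conclusion follows from the fact that $X$ (being reflexive) does not contain $\ell_1$, packaged as Lemma~\ref{pe1}.

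First, for every interval partition $Q$ of $(0,1)$, the quantity $\tau(Q,f)$ is one of the terms inside the supremum defining $\Vert\cdot\Vert_{JF_X}$, so $\tau(Q,f)\le\Vert f\Vert_{JF_X}$. By the triangle inequality this gives, for any $g$,
$$
\sup_{\delta(Q)\le\e}\tau(Q,f)\le \Vert f-g\Vert_{JF_X}+\sup_{\delta(Q)\le\e}\tau(Q,g).
$$
Since $\Vert\cdot\Vert_{JF_X}\le\Vert\cdot\Vert_1$ and step functions (finite linear combinations of characteristic functions of intervals) are dense in $L^1$, they are dense in $JF_X$. It therefore suffices to establish the conclusion for $g=\sum_{i=1}^{m}\alpha_i\chi_{R_i}$ with $\{R_i\}$ disjoint intervals.

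For such $g$ and $Q=\{I_j\}_{j=1}^{n}$ with $\delta(Q)\le\e$, set $c_j:=I_j^{*}(g)=\sum_i\alpha_i\mu(I_j\cap R_i)$. Two elementary estimates control the coefficient sequence $(c_j)_j$: summing over $j$ and using disjointness of the $R_i$ gives $\sum_j|c_j|\le\sum_i|\alpha_i|\mu(R_i)=\Vert g\Vert_1$; and for each fixed $j$, disjointness of the $R_i$ inside $I_j$ yields $|c_j|\le\Vert g\Vert_\infty\mu(I_j)\le\Vert g\Vert_\infty\,\e$. Normalizing by $\Vert g\Vert_1$, the sequence $(c_j/\Vert g\Vert_1)_j$ has $\ell_1$-norm at most $1$ and supremum-norm at most $\Vert g\Vert_\infty\e/\Vert g\Vert_1$. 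Since $X$ is reflexive and does not contain $\ell_1$, Lemma~\ref{pe1} supplies, for every $\eta>0$, a $\delta>0$ so that $\Vert\sum\beta_i e_i\Vert_X<\eta$ whenever $\sum|\beta_i|\le 1$ and $\max|\beta_i|\le\delta$. Choosing $\e$ with $\Vert g\Vert_\infty\e/\Vert g\Vert_1\le\delta$ yields $\tau(Q,g)\le\Vert g\Vert_1\eta$ uniformly over $Q$ with $\delta(Q)\le\e$.

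Combining the two parts, given $\eta>0$ pick a step function $g$ with $\Vert f-g\Vert_{JF_X}<\eta$ and then $\e>0$ with $\sup_{\delta(Q)\le\e}\tau(Q,g)<\eta$; the display above gives $\sup_{\delta(Q)\le\e}\tau(Q,f)<2\eta$. Since $\eta$ was arbitrary, the lemma follows. The only non-routine ingredient is the invocation of Lemma~\ref{pe1}; everything else is a $\Vert\cdot\Vert_{JF_X}\le\Vert\cdot\Vert_1$ density reduction together with the crude bounds on $(c_j)$ above.
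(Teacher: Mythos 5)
Your proof is correct and follows essentially the same route as the paper's: both rest on Lemma~\ref{pe1} (small $\ell_1$-norm plus small sup-norm of the coefficients forces small $X$-norm) combined with a density argument in $JF_{X}$. The only cosmetic difference is that where the paper controls $\max_{j}\vert\int_{I_{j}}f\,d\mu\vert$ for a general $f\in L^{1}$ via uniform integrability, you first pass to step functions, for which the bound $\vert I_{j}^{*}(g)\vert\leq\Vert g\Vert_{\infty}\,\delta(Q)$ is immediate.
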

\begin{proof} Notice that if $X$ has a symmetric basis
$(e_{n})_{n}$ and it does not contain $\ell_{1}$, then for every
$\e>0$ there exists $\delta>0$ such that for every
$(\alpha_{n})_{n}$, $0\leq\alpha_{n}\leq 1$, $\max\{\alpha_{n}:
n\in\N\}\leq\delta$ and $\sum_{n}\alpha_{n}\leq 1$ we have that
$\Vert\sum_{n=1}^{\infty}\alpha_{n}e_{n}\Vert_{X}<\e$,
Lemma~\ref{pe1}. This property together with the uniform
integrability of $f\in L^{1}(\mu)$ yields the conclusion for $f\in
L^{1}(\mu)$. The general case follows easily from the density of
$L^{1}(\mu)$ in $JF_{X}$.
\end{proof}
\begin{proposition}\label{p55} Assume that $(\tilde{r}_{n})_{n\in M}$ is a
subsequence of the normalized Rademacher functions which endowed
with $\Vert\cdot\Vert_{JF_{X}}$ is equivalent to the unit vector
basis of $c_{0}$. Then $X$ satisfies the $CCP$.
\end{proposition}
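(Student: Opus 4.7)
The plan is to convert the $c_0$-equivalence hypothesis into a concrete witness for $CCP$. Let $C'>0$ be such that $\|\sum_{n\in F}\tilde{r}_{n}\|_{JF_X}\leq C'$ for every finite $F\subset M$, a constant supplied by the $c_0$-equivalence. Set $a_n=\|\sum_{i=1}^{2^n}e_i\|_X/2^n$; since $X$ is reflexive it contains no $\ell_1$, so Lemma~\ref{pe1} applied with $\alpha_i=1/N$, $i\leq N$, forces $a_n\to 0$. Fix $\varepsilon\in(0,1/2)$ and inductively select $n_1<n_2<\cdots$ in $M$ with $a_{n_k}/a_{n_{k-1}}\leq\varepsilon\cdot 2^{-k}$. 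Since $(a_n)$ is non-increasing, this immediately yields $\sum_{j<k}a_{n_k}/a_{n_j}\leq\varepsilon$ for every $k$. This sparse $(n_k)_k$ will be my candidate sequence for $CCP$.

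Given successive $\sigma_k\subset\mathbb{N}$ with $\#\sigma_k=2^{n_k}$ and any $I_k\subset\sigma_k$ with $\sum_k \#I_k/\#\sigma_k\leq 1$, set $m_k=\#I_k$; by $1$-symmetry of $(e_n)$, $\lambda_k=\lambda_{n_k}$. I then construct pairwise disjoint dyadic intervals $J_{k,1},\ldots,J_{k,m_k}\subset(0,1)$ of level $n_k$ by processing $k=1,2,\ldots$ in order: at stage $k$, the complement of previously-chosen intervals is a union of dyadic intervals of level $n_k$ (earlier intervals being at coarser levels) of total measure $1-\sum_{j<k}m_j/2^{n_j}\geq m_k/2^{n_k}$, so $m_k$ fresh level-$n_k$ intervals fit.

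I use the partial partition $\{J_{k,i}\}$ to test $f=\sum_{k=1}^{K}\tilde{r}_{n_k}$. Direct computation yields $\int_{J_{k,i}}\tilde{r}_{n_\ell}=0$ for $\ell>k$ (sign cancellation on the $2^{n_\ell-n_k}$ level-$n_\ell$ subintervals of $J_{k,i}$), $\eta_{k,i}\lambda_{n_k}$ with $\eta_{k,i}\in\{\pm1\}$ for $\ell=k$, and $\pm\lambda_{n_\ell}2^{n_\ell-n_k}$ for $\ell<k$ (since $\tilde{r}_{n_\ell}$ is constant $\pm\lambda_{n_\ell}2^{n_\ell}$ on $J_{k,i}$). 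Therefore $\int_{J_{k,i}}f=\eta_{k,i}\lambda_{n_k}+E_{k,i}$ with $|E_{k,i}|\leq\lambda_{n_k}\sum_{j<k}a_{n_k}/a_{n_j}\leq\varepsilon\lambda_{n_k}$. The dual description of the $JF_X$-norm, combined with $1$-unconditionality, the triangle inequality in $X$, and $1$-symmetry, yields
\begin{align*}
C'&\geq \|f\|_{JF_X} \;\geq\; \Bigl\|\sum_{k,i}\bigl(\textstyle\int_{J_{k,i}}f\bigr)e_{(k,i)}\Bigr\|_X\\
&\geq \Bigl\|\sum_{k,i}\lambda_{n_k}e_{(k,i)}\Bigr\|_X-\Bigl\|\sum_{k,i}E_{k,i}e_{(k,i)}\Bigr\|_X \;\geq\; (1-\varepsilon)\Bigl\|\sum_{k=1}^{K}\lambda_k\sum_{n\in I_k}e_n\Bigr\|_X.
\end{align*}
Sending $K\to\infty$ delivers $CCP$ with constant $C=C'/(1-\varepsilon)$.

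The crux is the interaction term $E_{k,i}$: a level-$n_k$ dyadic interval is too coarse to annihilate the lower-index Rademacher functions $\tilde{r}_{n_j}$, $j<k$, and a priori the contribution $\pm\lambda_{n_j}2^{n_j-n_k}$ could dominate the intended main term $\lambda_{n_k}$. The rescue is the identity $(\lambda_{n_j}/\lambda_{n_k})\,2^{n_j-n_k}=a_{n_k}/a_{n_j}$, whose smallness is forced by reflexivity of $X$ via Lemma~\ref{pe1}. This is precisely what dictates the rapid-growth selection of $(n_k)_k$ inside $M$.
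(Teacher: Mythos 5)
Your proof is correct and follows essentially the same route as the paper's: both extract a rapidly growing subsequence $(n_{k})\subset M$, test $\sum_{k}\tilde{r}_{n_{k}}$ against $\# I_{k}$ disjoint dyadic intervals of level $n_{k}$ per block, and exploit that the diagonal integrals equal $\pm\lambda_{n_{k}}$ while the fine-over-coarse integrals vanish. The only difference is in how the coarse-over-fine cross terms are killed: the paper controls them collectively through the modulus condition $\sup\{\tau(\tilde{r}_{n_{j}},Q):\delta(Q)\leq 2^{-(j+1)}\}\leq 2^{-j}$ coming from Lemma~\ref{l54}, whereas you bound each coefficient pointwise via the ratio $a_{n_{k}}/a_{n_{j}}$ and then invoke $1$-unconditionality --- both selections ultimately rest on Lemma~\ref{pe1}.
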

\begin{proof} Choose $C>0$ such that for any finite subset $F$ of
$M$, $\Vert\sum_{n\in F}\tilde{r}_{n}\Vert_{JF_{X}}\leq C$. Next
choose inductively $n_{1}<\ldots<n_{k}<\ldots$, such that
$\{n_{k}\}\subset M$ and
\begin{equation}\label{e53}
\sup\left\{\tau(\tilde{r}_{n_{k}},Q) :
\delta(Q)\leq\frac{1}{2^{k+1}}\right\}\leq\frac{1}{2^{k}}\,\,.
\end{equation}
\noindent We claim that $\{n_{k}\}_{k}$, $C+1$ satisfy $CCP$ in
$X$.

Indeed given $\Lambda_{1}\subset \si_{1},\ldots,\Lambda_{k}\subset
\si_{k}$ such that
$\sum_{j=1}^{k}\frac{\#\Lambda_{j}}{\#\si_{j}}\leq 1$, setting
$d_{j}=\# L_{j}$, we choose a partition
$\{I^{1}_{1},\ldots,I^{1}_{d_{1}},\ldots$,
$I^{k}_{1},\ldots,I^{k}_{d_{k}}\}$ of $(0,1)$ as follows:

Let $A_{1}=[0,\frac{\# \Lambda_{1}}{2^{n_{1}}}]$ and
$A_{j}=[\sum_{i<j}\frac{\# \Lambda_{i}}{2^{n_{i}}}, \sum_{i\leq j}
\frac{\# \Lambda_{i}}{2^{n_{i}}}]$ for $j=2,\ldots,k$.
For every $j\leq k$ we consider $d_{j}$ intervals
$\{I_{i}^{j}\}_{i=1}^{d_{j}}$ which form a partition of $A_{j}$,
and each $I_{i}^{j}$ is of the form
$I_{i}^{j}=[\frac{\ell}{2^{n_{j}}},\frac{\ell+1}{2^{n_{j}}}]$, for
an appropriate $\ell$. Observe that
$\mu(\cup_{i=1}^{d_{j}}I^{j}_{i})=\frac{\#\Lambda_{j}}{\#\si_{j}}$.
Further for every $1\leq j\leq k$, $i=1,\ldots,d_{j}$
\begin{align}
&\vert\int_{I^{j}_{i}}\tilde{r}_{n_{j}}\vert=\lambda_{j}=
\Vert\sum_{n\in\si_{j}}e_{n}\Vert_{X}^{-1}\,\,.\notag\\
\intertext{Therefore}
\Vert\sum_{j=1}^{k}\sum_{n\in\Lambda_{j}}&\lambda_{j}e_{n}
\Vert_{X}= \Vert\sum_{j=1}^{k}\sum_{i=1}^{d_{j}}\left(
\int_{I^{j}_{i}}\tilde{r}_{n_{j}}d\mu\right)e_{n_{i}}\Vert_{X}\,\,.
\label{e54}
\end{align}
Further
\begin{align}
C\geq&\Vert\sum_{j=1}^{k}\tilde{r}_{n_{j}}\Vert_{JF_{X}}\geq
\Vert\sum_{j=1}^{k}\sum_{i=1}^{d_{j}}\left(\int_{I^{j}_{i}}
\sum_{l=1}^{k}\tilde{r}_{n_{l}}d\mu\right)e_{n_{i}}\Vert_{X}
\notag\\
= &\Vert\sum_{j=1}^{k}\sum_{i=1}^{d_{j}}\left(\int_{I^{j}_{i}}
\sum_{l\leq j}\tilde{r}_{n_{l}}d\mu\right)e_{n_{i}}\Vert_{X}
\notag\\
\geq &\Vert\sum_{j=1}^{k}\sum_{i=1}^{d_{j}}\left(\int_{I^{j}_{i}}
\tilde{r}_{n_{j}}d\mu\right)e_{n_{i}}\Vert_{X}-
\sum_{j=1}^{k}\tau(\tilde{r}_{n_{j}}, Q_{j})\,\,,\label{e55}
\end{align}
where
$Q_{j}=\cup_{l>j}\left\{\{I^{l}_{i}\}_{i=1}^{d_{l}}\right\}$.
Condition (\ref{e53}) yields that
$$
\sum_{j=1}^{k}\tau(\tilde{r}_{n_{j}},Q_{j})\leq 1.
$$
Setting together (\ref{e54}), (\ref{e55}) we obtain the desired
result.
\end{proof}
\begin{proposition}\label{p36} Assume that $X$ satisfies $CCP$.
Then there exists a subsequence of Rademacher functions
equivalent, in $JF_{X}$ norm,
to the unit vector basis of $c_{0}$.
\end{proposition}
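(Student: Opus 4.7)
The plan is to establish the upper $c_0$-bound $\|\sum_k a_k\tilde r_{n_k}\|_{JF_X}\leq M\|a\|_\infty$ (with $|a_k|\leq 1$) for a suitable subsequence of the CCP sequence $(\tilde r_{n_k})_k$. The lower bound is then automatic: since $(\tilde r_{n_k})$ is weakly null (Lemma~\ref{l53}(5)) and normalized, a further standard passage to a subsequence renders it basic, and for any normalized basic sequence $(x_k)$ the inequality $\|\sum_k a_k x_k\|\geq \frac{1}{2K}\|a\|_\infty$ holds with $K$ the basis constant; combined with the upper bound this yields the $c_0$-equivalence.

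For the upper bound, fix a partition $Q$ of $[0,1]$ and coefficients $|a_k|\leq 1$. By Proposition~\ref{p21} we may restrict attention to partitions with endpoints at dyadic rationals of level $n_K$, where $K$ exceeds every index with $a_k\neq 0$. Classify each $I_j\in Q$ by its \emph{scale} $k(j)$, the unique index with $|I_j|\in(2^{-n_{k(j)}},2^{-n_{k(j)-1}}]$; the resulting blocks $\sigma_k=\{j:k(j)=k\}$ are disjoint, satisfy $\#\sigma_k<2^{n_k}$, and $\sum_k\#\sigma_k\cdot 2^{-n_k}\leq\sum_j|I_j|\leq 1$. For $j\in\sigma_k$ and each frequency $m$, set $b_{j,m}=\int_{I_j}\tilde r_{n_m}$ and estimate $|b_{j,m}|$ in two regimes: if $m<k$ (so $|I_j|\leq 2^{-n_m}$) then $|b_{j,m}|\leq\lambda_{n_m}2^{n_m}|I_j|$, while if $m\geq k$ then $|b_{j,m}|\leq 3\lambda_{n_m}$ by the cancellation of whole half-periods of $r_{n_m}$. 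These bounds split $|c_j|=|\sum_m a_m b_{j,m}|$ into two contributions, each of the form $\lambda_{n_\ell}\alpha$ with $\alpha\in[0,1]$, and with the global CCP mass $\sum_\ell\sum_j\alpha/2^{n_\ell}$ controlled by $\sum_j|I_j|\leq 1$.

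Proposition~\ref{p52} (the strengthened CCP) then bounds $\|\sum_j c_j e_j\|_X$ by an absolute constant, giving the upper $c_0$-bound. The main obstacle is the bookkeeping required to match the scale-blocks $\sigma_k$ with the reference CCP-blocks of size $2^{n_k}$: the case (a) contribution of $j\in\sigma_k$ carries the natural weight $\lambda_{n_{k-1}}$, while $\sigma_k$ itself does not in general fit inside a CCP block at level $k-1$ (since $\#\sigma_k$ may reach $2^{n_k}\gg 2^{n_{k-1}}$). This is circumvented by passing to a subsequence $(n_{k_\ell})_\ell$ of $(n_k)_k$ whose consecutive gaps $n_{k_{\ell+1}}-n_{k_\ell}$ remain uniformly bounded, so that the case (a) and case (b) contributions can be absorbed at a common CCP level, while the measure constraint $\sum_j|I_j|\leq 1$ keeps the total CCP mass below $1$ throughout.
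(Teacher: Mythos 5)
Your overall architecture --- reduce to dyadic partitions via Proposition~\ref{p21}, group the intervals of $Q$ by scale so that the diagonal contributions fall under the strengthened $CCP$ of Proposition~\ref{p52}, and get the lower $c_{0}$-estimate for free from a normalized weakly null basic sequence --- is the same as the paper's. The gap is in the off-diagonal (cross-scale) terms, and it is genuine. For $j\in\sigma_{k}$ your two regimes give $\vert\sum_{m<k}a_{m}b_{j,m}\vert\leq \vert I_{j}\vert\sum_{m<k}\lambda_{n_{m}}2^{n_{m}}$ and $\vert\sum_{m>k}a_{m}b_{j,m}\vert\leq 2\sum_{m>k}\lambda_{n_{m}}$. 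Neither sum is ``of the form $\lambda_{n_{\ell}}\alpha$ with $\alpha\in[0,1]$'': that would require $\sum_{m>k}\lambda_{n_{m}}\lesssim\lambda_{n_{k}}$ and $\sum_{m<k}\lambda_{n_{m}}2^{n_{m}}\lesssim\lambda_{n_{k-1}}2^{n_{k-1}}$, i.e.\ geometric behaviour of the ratios $\lambda_{n_{m+1}}/\lambda_{n_{m}}$ and $\lambda_{n_{m+1}}2^{n_{m+1}}/(\lambda_{n_{m}}2^{n_{m}})$. For a general reflexive $X$ with $1$-symmetric basis one only knows $\lambda_{n}\to 0$ and $\lambda_{n}2^{n}\to\infty$; if $\Vert\sum_{i=1}^{N}e_{i}\Vert_{X}$ grows very slowly these ratios tend to $1$, the tails are not comparable to their first terms, and the cross-scale contribution cannot be absorbed into finitely many applications of $CCP$.

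Your proposed remedy does not repair this. A subsequence $(n_{k_{\ell}})_{\ell}$ of a strictly increasing sequence $(n_{k})_{k}$ always satisfies $n_{k_{\ell+1}}-n_{k_{\ell}}\geq n_{k_{\ell}+1}-n_{k_{\ell}}$, so if the original gaps tend to infinity no subsequence has uniformly bounded gaps; and even bounded gaps would not produce the geometric decay needed above. What the paper does instead is choose the subsequence adaptively: using Lemma~\ref{l54} it picks $\e_{\ell}$ so small that $\tau(\tilde{r}_{n_{k_{\ell}}},Q)<4^{-\ell}$ whenever $\delta(Q)\leq\e_{\ell}$, and then, using $\vert\int_{I}r_{n}\vert\leq 2^{-n}$ together with the fact that a partition all of whose intervals have length $\geq\e_{\ell}$ has at most $\e_{\ell}^{-1}$ members, picks $n_{k_{\ell+1}}$ so large that $\tau(\tilde{r}_{n_{k_{\ell+1}}},Q)<4^{-\ell-1}$ for every such coarse partition. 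Splitting the given partition into $Q_{\ell}=\{I_{j}:\e_{\ell}\leq\mu(I_{j})<\e_{\ell-1}\}$, the entire cross-scale contribution is then at most $\sum_{\ell}2\cdot 4^{-\ell}\leq 1$ by the triangle inequality, independently of any regularity of $(\lambda_{n_{k}})_{k}$, and only the diagonal terms are fed into Proposition~\ref{p52}. Some adaptive choice of this kind, rather than a gap condition on the exponents $n_{k}$, is what your argument is missing.
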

\begin{proof} Let $(n_{k})_{k}$, $C>0$ witness the presence of
$CCP$ in $X$. We inductively choose a subsequence
$(n_{k_{\ell}})_{\ell}$ of $(n_{k})_{k}$ and a decreasing sequence
$(\e_{\ell})_{\ell}\subset (0,1)$ satisfying the following
properties:
\begin{enumerate}
\item  For each $\ell\in\N$,
$$
\sup\left\{\tau(\tilde{r}_{n_{k_{\ell}}},Q):\,\,\delta(Q)\leq
\e_{\ell}\right\}<\frac{1}{4^{\ell}}\,\,.
$$
\item For each $\ell>1$,
$$
\sup\{\tau(\tilde{r}_{n_{k_{\ell}}},Q) :\,\,\min\{\mu(I): I\in
Q\}\geq\e_{\ell-1}\}<\frac{1}{4^{\ell}}\,\,.
$$
\end{enumerate}
The inductive choice proceeds as follows. We set $k_{1}=1$ and
from Lemma~\ref{l54} there exists $\e_{1}>0$ such that for
$\ell=1$ the inductive assumption $(1)$ is fulfilled. Observe that
for every partition $Q$ of $(0,1)$ such that $\min\{\mu(Q): I\in
Q\}\geq\e_{1}$ satisfies $\#Q<\frac{1}{\e_{1}}$. Hence
Lemma~\ref{l53}(1) yields that there exists $k_{2}$ such that the
inductive assumption $(2)$ is fulfilled. The general inductive
step follows the same argument.
\begin{claim} The sequence $(\tilde{r}_{n_{k_{\ell}}})_{\ell}$ is
equivalent to the unit vector basis of $c_{0}$.
\end{claim}
For this, we show that the inductive assumptions $(1)$ and $(2)$
together with $CCP$ in $X$ yield that for every $d\in\N,$
\begin{equation}\label{e56}
\Vert\sum_{\ell=1}^{d}\tilde{r}_{n_{k_{\ell}}}\Vert_{JF_{X}}\leq
3(C+1)+1\,\,.
\end{equation}
Since every subsequence of $(\tilde{r}_{n_{k_{\ell}}})_{\ell}$
satisfies $(1)$ and $(2)$, we obtain that every subsequence of
$(\tilde{r}_{n_{k_{\ell}}})_{\ell}$ also satisfies (\ref{e56}),
and this will end the proof.

To see (\ref{e56}), we consider $Q=\{I_{j}\}_{j=1}^{q}$ arbitrary
partition of $(0,1)$ and show that
\begin{equation}\label{e57}
\tau(\sum_{\ell=1}^{d}\tilde{r}_{n_{k_{\ell}}},Q)\leq
3(C+1)+1\,\,.
\end{equation}
Consider the partition of $Q$ into $\{Q_{\ell}\}_{\ell=1}^{d}$
where $Q_{\ell}=\{I_{j}: \e_{\ell}\leq\mu(I_{j})<\e_{\ell-1}\}$,
where $\e_{0}=0$. Observe that $(1)$ and $(2)$ implies that for
$\ell=1,\ldots,d$
\begin{equation}\label{e58}
\tau(\tilde{r}_{n_{k_{\ell}}},\cup_{\ell^{\prime}\not=\ell}
Q_{\ell^{\prime}})<\frac{2}{4^{\ell}}\,\,.
\end{equation}
\noindent Hence
\begin{align*}
\tau(\sum_{\ell=1}^{d}\tilde{r}_{n_{k_{\ell}}},Q)&\leq
\tau(\sum_{\ell=1}^{d}(\tilde{r}_{n_{k_{\ell}}}{}_{|\cup
Q_{\ell}}),Q) +\sum_{\ell=1}^{d}\tau(\tilde{r}_{n_{k_{\ell}}},
\cup_{\ell^{\prime}\not=\ell}Q_{\ell^{\prime}})\\
&\leq
\tau(\sum_{\ell=1}^{d}(\tilde{r}_{n_{k_{\ell}}}{}_{|\cup
Q_{\ell}}),Q)+1\,\,.
\end{align*}
Here $\tilde{r}_{n_{k_{\ell}}}{}_{|\cup Q_{\ell}}$ denotes the
restriction of $\tilde{r}_{n_{k_{\ell}}}$ on the set $\cup\{I:
I\in Q_{\ell}\}$. In the last step we show that
$$
\tau(\sum_{\ell=1}^{d}(\tilde{r}_{n_{k_{\ell}}}{}_{|\cup
Q_{\ell}}), Q)\leq 3(C+1)\,\,.
$$
For this we split each $Q_{\ell}$ into three set $Q_{\ell}^{1}$,
$Q^{2}_{\ell}$, $Q^{3}_{\ell}$ as follows:
\begin{align*}
Q^{1}_{\ell}&=\{ I\subset Q_{\ell} :\,\,\exists\,\, 0\leq m <
2^{n_{k_{\ell}}},\,
(\frac{m}{2^{n_{k_{\ell}}}},\frac{m+1}{2^{n_{k_{\ell}}}})
\subset I \}\,\,, \\
Q^{2}_{\ell}&=\{ I\subset Q_{\ell} :\,\,\exists\,\, 0\leq m <
2^{n_{k_{\ell}}},\, I\subset(\frac{m}{2^{n_{k_{\ell}}}},
\frac{m+1}{2^{n_{k_{\ell}}}})\}\,\,,\\
Q^{3}_{\ell}&=\{ I\subset Q_{\ell} :\,\,\exists\,\, 0\leq m <
2^{n_{k_{\ell}}},\, \frac{m}{2^{n_{k_{\ell}}}}<\min
I<\frac{m+1}{2^{n_{k_{\ell}}}} <\max
I<\frac{m+2}{2^{n_{k_{\ell}}}} \}\,\,.
\end{align*}
Clearly $Q^{1}_{\ell}\cup Q^{2}_{\ell}\cup Q^{3}_{\ell}=Q_{\ell}$,
and we set $Q^{i}=\cup_{\ell=1}^{d}Q^{i}_{\ell}$ for $i=1,2,3$.
With the aid of $CCP$ we show that
$$
\tau(\sum_{\ell=1}^{d}\tilde{r}_{n_{k_{\ell}}}{}_{|\cup Q_{\ell}},
Q^{i})\leq C+1\,\,,
$$
which yields the entire proof.

\noindent We prove it for $i=2$, which is the most complicated
case. The other two cases follow from similar arguments.

For this, we choose $\si_{1}^{\prime}<\ldots<\si_{d}^{\prime}$
successive subsets of $\N$ with $\#\si^{\prime}_{\ell}=\#
Q_{\ell}^{2}$. Then
$$
\tau(\sum_{\ell=1}^{d}\tilde{r}_{n_{k_{\ell}}}{}_{|\cup Q_{\ell}},
Q^{2})= \Vert\sum_{\ell=1}^{d} \sum_{m\in\si_{\ell}^{\prime}}
\left(
\int_{I_{m}}\tilde{r}_{n_{k_{\ell}}}d\mu\right)e_{m}\Vert_{X}\,\,.
$$
We decompose each $Q_{\ell}^{2}$ into
$\{Q_{\ell,s}\}_{s=0}^{2^{n_{k_{\ell}}}}$ where $Q_{\ell,s}=\{
I\in Q_{\ell}^{2} :\, I\subset
(\frac{s}{2^{n_{k_{\ell}}}},\frac{s+1}{2^{n_{k_{\ell}}}})\}$.
Observe that for $I\in Q_{\ell}^{2}$
$$
\vert\int_{I}\tilde{r}_{n_{k_{\ell}}}d\mu\vert=
\mu(I)\cdot\lambda_{k_{\ell}}\cdot 2^{n_{k_{\ell}}}\,\,,
$$
where $\lambda_{k_{\ell}}=
\Vert\sum_{i=1}^{2^{n_{k_{\ell}}}}e_{i}\Vert_{X}^{-1}$. Hence
$$
\sum_{I\in Q_{\ell,s}}
\vert\int_{I}\tilde{r}_{n_{k_{\ell}}}d\mu\vert=\mu(\cup_{I\in
Q_{\ell,s}}I)\lambda_{k_{\ell}}2^{n_{k_{\ell}}}\,\,.
$$
Lemma~\ref{l22} yields that
$$
\Vert\sum_{\ell=1}^{d}\sum_{m\in\si_{\ell}^{\prime}}\left(
\int_{I_{m}}\tilde{r}_{n_{k_{\ell}}}d\mu\right)e_{m}\Vert_{X} \leq
\Vert \sum_{\ell=1}^{d}\sum_{m\in\si_{\ell}}\mu(\cup_{I\in
Q_{\ell,s_{m}}}I)\lambda_{k_{\ell}}2^{n_{k_{\ell}}}e_{m}\Vert_{X}\,\,,
$$
\noindent where $\si_{\ell}\subset\si_{\ell}^{\prime}$ with
$\#\si_{\ell}\leq 2^{n_{k_{\ell}}}$. Here $(\ell,s_{m})$ denotes a
one to one corresponding of $\si_{\ell}$ onto
$\{\ell,s\}_{s=0}^{2^{n_{k_{\ell}}}-1}.$ Also
$\sum_{\ell=1}^{d}\sum_{m\in\si_{\ell}}\mu(\cup_{I\in
Q_{\ell,s_{m}}}I)\lambda_{k_{\ell}}2^{n_{k_{\ell}}}e_{m}$
satisfies the assumptions of Proposition~\ref{p52}, hence
$$
\Vert \sum_{\ell=1}^{d}\sum_{m\in\si_{\ell}}\mu(\cup_{I\in
Q_{\ell,s_{m}}}I)\lambda_{k_{\ell}}2^{n_{k_{\ell}}}e_{m}\Vert_{X}
\leq C+1\,\,.
$$
\end{proof}
\begin{proof}[Proof of Theorem \ref{500}] It follows from
Proposition \ref{p55} and \ref{p36}.
\end{proof}

\section{The space $V_{X}$ of functions of
bounded $X-$variation.}\label{sVX} In the final section we present
a representation of $JF_{X}$ and $JF_{X}^{**}$ as function spaces
of bounded $X$-variation, which generalizes the representation of
$JF_{p}$ as spaces of functions of bounded $p-$variation pointed
out by J.Lindendstaruss and C.Stegall \cite{ls}, and used also by
S.V.Kisliakov \cite{K}, in his proof that $\ell_{1}$ does not
embed in $JF_{p}$.
\newline
Let $f:\; [0,1]\to\mathbb{R}$. We adopt the following notation.

For $\mathcal{P}=\{t_{i}\}_{i=0}^{n}$ a partition of $[0,1]$ and
$X$  a reflexive Banach space with 1-symmetric basis we set
$$
\alpha_{X}(f,\mathcal{P})=\Vert\sum_{i=0}^{n-1}(f(t_{i+1})
-f(t_{i}))e_{i}\Vert_{X}\,\,,
$$
and
$$
\Vert f\Vert_{V_{X}}=\sup\{\alpha_{X}(f,\mathcal{P})
:\mathcal{P}\,\,\text{partition of}\,\,[0,1]\}\,\,.
$$
We also set
$$
V_{X}=\{ f: f(0)=0\,\,\text{and}\,\,\Vert
f\Vert_{V_{X}}<\infty\}\,\,,
$$
and
$$
V_{X}^{0}=\{f\in V_{X}:\lim_{\varepsilon\to
0}\sup\{\alpha_{X}(f,\mathcal{P}):\,\delta(\mathcal{P})\leq\e\}=0\}\,\,.
$$
It is easy to see that $V_{X}$, $V_{X}^{0}$ are Banach spaces
endowed with $\Vert\cdot\Vert_{V_{X}}$. Further Lemma \ref{e54}
yields that the Voltera operator $ V(f)(t)=\int_{0}^{t}f(x)d\mu$
defines an isometry from $JF_{X}$ into $V_{X}^{0}$, which is
actually onto. Let's observe that the dual isometry maps $I^{*}$,
where $I=[t_{1},t_{2}]$, to $\delta_{t_{2}}-\delta_{t_{1}}$. Hence
the set
$$
\mathcal{S}=\left\{\sum_{n=1}^{\infty}a_{n}I_{n}^{*} :
\{I_{n}\}_{n=1}^{\infty}\,\,\,\text{pairwise
disjoint}\,\,\,\text{and}
\,\,\,\Vert\sum_{n=1}^{\infty}a_{n}e_{n}^{*} \Vert_{X^{*}}\leq
1\right\}
$$ is mapped onto the set
\begin{equation}\label{e61}
\mathcal{K}=\left\{\sum_{n=1}^{\infty}a_{n}(\delta_{d_{n}}-
\delta_{c_{n}}) :\{(c_{n},d_{n})\}_{n}\,\,\text{pairwise disjoint
and}\,\,\Vert\sum_{n=1}^{\infty}a_{n}e_{n}^{*} \Vert_{X^{*}}\leq
1\right\}\,,
\end{equation}
and so the set $\mathcal{K}$ shares the properties proved for the
set $\mathcal{S}$. Namely
\begin{enumerate}
\item $\mathcal{K}$ is $w^{*}-$compact and norming.
\item
$B_{(V_{X}^{0})^{*}}=
\overline{\rm{co}}^{\Vert\cdot\Vert}(\mathcal{K})$.
\newline
\noindent These two properties yield that
\item The space $V_{X}=(V_{X}^{0})^{**}$.
\item A bounded sequence $(f_{n})_{n}\subset V_{X}^{0}$ is
$w-$Cauchy iff $(f_{n}(t))_{n}$ is convergent for all $t\in[0,1]$.
\end{enumerate}
The next theorem summarize the above observations.
\begin{theorem}\label{t61}
Let $X$ be a reflexive Banach space with a symmetric basis. Then
\item $(i)$\,\,\, $JF_{X}$ is isometric to $V_{X}^{0}$.
\item $(ii)$\,\,\, $(V_X^{0})^{**}=V_{X}$.
\item $(iii)$\,\,\, On the bounded subsets of $V_{X}^{0}$
the weak topology coincides with the topology of pointwise
convergence.
\end{theorem}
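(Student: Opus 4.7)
My overall plan is to transfer all the dual-side structure developed in Section~\ref{sl1} (the norming set $\mathcal{S}$, Lemmas~\ref{l2} and~\ref{l5}, Haydon--Rainwater arguments) to the $V_X^0$ setting via the adjoint of the Volterra operator, and then read off (i)--(iii). For (i), I define $V\colon L^1(0,1)\to V_X$ by $V(f)(t)=\int_0^t f\,d\mu$. A direct comparison yields $\alpha_X(V(f),\mathcal{P})=\Vert\sum_i(\int_{I_i}f)e_i\Vert_X$ for every partition $\mathcal{P}=\{t_i\}$, so $\Vert V(f)\Vert_{V_X}=\Vert f\Vert_{JF_X}$ on $L^1$, and Lemma~\ref{l54} places $V(f)$ in $V_X^0$. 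Extending to the completion gives an isometric embedding $JF_X\hookrightarrow V_X^0$; for surjectivity, I approximate $g\in V_X^0$ by its piecewise-linear interpolants $g_n$ on dyadic partitions, each of which equals $V(f_n)$ for a step function $f_n\in L^1$, and $\Vert g-g_n\Vert_{V_X}\to 0$ by the defining property of $V_X^0$.

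For (ii), the adjoint $V^{*}\colon (V_X^0)^{*}\to JF_X^{*}$ sends $\delta_t-\delta_s$ to the interval functional $(s,t)^{*}$, so it carries $\mathcal{K}$ isometrically onto $\mathcal{S}$. Consequently the analogues of Lemmas~\ref{l2} and~\ref{l5} hold verbatim for $\mathcal{K}$: it is $w^{*}$-compact, contains the extreme points of $B_{(V_X^0)^{*}}$ in its $w^{*}$-closure, and $B_{(V_X^0)^{*}}=\overline{\mathrm{co}}^{\Vert\cdot\Vert}(\mathcal{K})$ (the latter via Haydon's theorem, applicable because $\ell_1\not\hookrightarrow V_X^0\cong JF_X$). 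I then define $\Phi\colon(V_X^0)^{**}\to V_X$ by $(\Phi F)(t)=F(\delta_t-\delta_0)$ for $t>0$ and $(\Phi F)(0)=0$. Testing against $\sum_i\alpha_i(\delta_{t_{i+1}}-\delta_{t_i})\in\mathcal{K}$ with $\Vert\sum_i\alpha_i e_i^{*}\Vert_{X^{*}}\le 1$ gives $\alpha_X(\Phi F,\mathcal{P})\le\Vert F\Vert$, while the norming property of $\mathcal{K}$ supplies the reverse inequality. Surjectivity is obtained by defining, for $f\in V_X$, the functional $F_f$ on the linear span of $\{\delta_t-\delta_s\}\subset (V_X^0)^{*}$ by $F_f(\delta_t-\delta_s)=f(t)-f(s)$ and noting that the very definition of $\Vert f\Vert_{V_X}$ gives $|F_f(\mu)|\le\Vert f\Vert_{V_X}$ on $\mathcal{K}$; since $\overline{\mathrm{co}}^{\Vert\cdot\Vert}(\mathcal{K})=B_{(V_X^0)^{*}}$, $F_f$ extends uniquely to an element of $(V_X^0)^{**}$ with $\Phi(F_f)=f$. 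The delicate step is verifying that $F_f$ is well defined on the span: if $\sum_i\alpha_i(\delta_{t_i}-\delta_{s_i})=0$ in $(V_X^0)^{*}$, then refining to a common partition $\{0=u_0<\ldots<u_m=1\}$ and rewriting the combination as $\sum_j\beta_j(\delta_{u_j}-\delta_{u_{j-1}})$, piecewise-linear test functions in $V_X^0$ isolate each $\beta_j$ and force it to vanish; hence $\sum_i\alpha_i(f(t_i)-f(s_i))=0$ automatically. This is the main technical obstacle.

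For (iii), each $\delta_t-\delta_0$ belongs to $(V_X^0)^{*}$, so weak convergence forces pointwise convergence. Conversely, let $(f_\alpha)\subset B$ be a bounded net in $V_X^0$ converging pointwise to $f\in B$. By Rainwater's theorem it suffices to show $\mu(f_\alpha)\to\mu(f)$ for every extreme point $\mu$ of $B_{(V_X^0)^{*}}$, and by the transferred versions of Lemmas~\ref{l2} and~\ref{l5} each such $\mu$ has the form $\sum_n a_n(\delta_{d_n}-\delta_{c_n})$ with $\Vert\sum_n a_n e_n^{*}\Vert_{X^{*}}\le 1$. Termwise, $a_n(f_\alpha(d_n)-f_\alpha(c_n))\to a_n(f(d_n)-f(c_n))$; the uniform tail bound $|\sum_{n>N}a_n(f_\alpha(d_n)-f_\alpha(c_n))|\le\sup_\alpha\Vert f_\alpha\Vert_{V_X}\cdot\Vert\sum_{n>N}a_n(\delta_{d_n}-\delta_{c_n})\Vert_{(V_X^0)^{*}}$, combined with norm convergence of the series in $(V_X^0)^{*}$, permits interchange of limit and sum, completing the proof.
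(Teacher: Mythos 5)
Your route is the paper's: identify $JF_X$ with $V_X^0$ via the Volterra operator, transport the norming set $\mathcal{S}$ to $\mathcal{K}$ through $V^{*}$, and read off (ii) and (iii) from the $w^{*}$-compactness and norming properties of $\mathcal{K}$. Part (i), the isometry half of (ii), and the substance of (iii) are sound (in (iii) you should not appeal to Rainwater's theorem, which is a statement about sequences and fails for nets; but you do not need it --- once $\mu(f_\alpha)\to\mu(f)$ for every $\mu\in\mathcal{K}$ via your $\varepsilon/3$ tail estimate, boundedness of the net together with the norm-density of $\langle\mathcal{K}\rangle$ in $(V_X^0)^{*}$ already gives weak convergence).

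There is, however, a genuine gap in the surjectivity step of (ii). You define $F_f$ on $D=\langle\{\delta_t-\delta_s\}\rangle$, verify $|F_f|\le\Vert f\Vert_{V_X}$ on $\mathcal{K}$, and conclude from $B_{(V_X^0)^{*}}=\overline{\mathrm{co}}^{\Vert\cdot\Vert}(\mathcal{K})$ that $F_f$ extends to a bounded functional. This inference is invalid for a linear functional not yet known to be continuous: what is needed is the bound $|F_f(\mu)|\le C\Vert f\Vert_{V_X}\Vert\mu\Vert$ for $\mu\in D$, and a norm-one element of $D$ need not belong to $\mathcal{K}$. For instance, $\mu=\tfrac1N\sum_{k=1}^{N}\delta_{k/(N+1)}$ has $\Vert\mu\Vert_{(V_X^0)^{*}}=1$, while its telescoped representation over the partition $\{k/(N+1)\}$ --- the one your well-definedness argument produces --- has coefficient vector $\tfrac1N(N,N-1,\dots,1)$, whose $X^{*}$-norm grows like $\sqrt{N}$ for $X=\ell_2$; so the bound on $\mathcal{K}$ gives no uniform control on $B_{(V_X^0)^{*}}\cap D$ through disjoint-interval representations, and the identity $B=\overline{\mathrm{co}}^{\Vert\cdot\Vert}(\mathcal{K})$ (a norm-closure of convex combinations of infinite sums lying outside $D$) cannot be used to propagate the bound. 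What would suffice is the finite statement $B\cap D\subseteq\mathrm{co}(\mathcal{K}\cap D)$, which is true but requires a separate (bipolar-type) argument you do not give. The quickest repair stays inside your framework: for $\mu=\sum_j\beta_j(\delta_{u_j}-\delta_{u_{j-1}})\in D$ test $\mu$ against the piecewise-linear interpolant $\tilde f$ of $f$ at $\{u_j\}$; by Proposition~\ref{p21} one has $\tilde f\in V_X^0$ with $\Vert\tilde f\Vert_{V_X}\le\Vert f\Vert_{V_X}$, and $\mu(\tilde f)=F_f(\mu)$, whence $|F_f(\mu)|\le\Vert f\Vert_{V_X}\Vert\mu\Vert$ directly. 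Alternatively, and closer to the paper's intent, realize $f$ as the pointwise limit of its piecewise-linear interpolants $f_n$, uniformly bounded by $\Vert f\Vert_{V_X}$; by (iii) (or the transferred Corollary~\ref{l6}) the sequence $(f_n)$ is weakly Cauchy, hence $w^{*}$-convergent in $(V_X^0)^{**}$ to some $F$ with $\Phi F=f$.
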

\begin{remark} It is clear that $V_{X}^{0}$ is a subspace of
$C[0,1]$ and further the identity $I:V_{X}^{0}\mapsto C[0,1]$ is a
bounded operator. Next we shall see that any function $f\in
C[0,1]\cap(V_{X}\setminus V_{X}^{0})$ has a remarkable property.
\end{remark}
\begin{definition}\label{d62}
Let $K$ be a compact metric space. Following
\cite{HOR},\cite{Ro} we denote by $D(K)$ the set of all bounded
functions on $K$ which are differences of bounded semicontinuous
functions.
\end{definition}
If $X$ is a separable Banach space and $K=(B_{X^{*}},w^{*})$ then
as it is shown in \cite{HOR} the classical Bessaga- Pelczynski
theorem,\cite{BP}, yields that there exists $x^{**}\in
D(K)\cap(X^{**}\setminus X)$ iff $c_{0}$ embeds into $X$.

Therefore $D(K)$ provides a characterization of the embedding of
$c_{0}$ into Banach spaces. Our intention is to prove the
following.
\begin{theorem}\label{t63}
If $K=(B_{(V_{X}^{0})^{*}},w^{*})$ then
$$
D(K)\cap V_{X}=C[0,1]\cap V_{X}\,\,.
$$
\end{theorem}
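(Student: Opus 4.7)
The plan is to use the Kechris--Louveau/Rosenthal characterisation of $D(K)$ from \cite{KL,Ro}: for a bounded Baire-1 function $F$ on the metrisable compact $K=(B_{(V_X^0)^*},w^*)$, define the transfinite derivation $L_0^\varepsilon(F)=K$, $L_{\alpha+1}^\varepsilon(F)=\{x\in L_\alpha^\varepsilon:\text{every }w^*\text{-neighbourhood }U\text{ of }x\text{ contains }y,z\in U\cap L_\alpha^\varepsilon\text{ with }|F(y)-F(z)|>\varepsilon\}$, and $L_\lambda^\varepsilon=\bigcap_{\alpha<\lambda}L_\alpha^\varepsilon$ at limit ordinals; then $F\in D(K)$ iff the rank $\beta_\varepsilon(F)=\min\{\alpha:L_\alpha^\varepsilon=\emptyset\}$ is finite for every $\varepsilon>0$. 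The two inclusions of Theorem~\ref{t63} will be handled by controlling this rank.

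For the inclusion $C[0,1]\cap V_X\subseteq D(K)\cap V_X$, the subcase $f\in V_X^0$ is immediate, since the predual sits inside $C(K)\subseteq D(K)$. For $f\in(V_X\setminus V_X^0)\cap C[0,1]$, I would verify $\beta_\varepsilon(f)<\omega$ for each $\varepsilon>0$ by working with the representation $\mu=\sum_j a_j(\delta_{d_j}-\delta_{c_j})$ of Lemma~\ref{l5}: the uniform continuity of $\tilde f$ and the boundedness of $\|\tilde f\|_{V_X}$ conspire to restrict the scales at which the oscillation of $f$ at a point $\mu\in L_\alpha^\varepsilon$ can persist, while the disjointness of the intervals $(c_j,d_j)$ and the 1-symmetry of $(e_j)$ cap the number of intervals that contribute substantially at a given scale; this forces the derivation to terminate in finitely many steps.

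For the reverse inclusion $D(K)\cap V_X\subseteq C[0,1]\cap V_X$, take $f\in V_X\cap D(K)$ and suppose, toward a contradiction, that $\tilde f$ has a jump $c:=\tilde f(t_0)-\tilde f(t_0-)>0$ at some $t_0$. Choose $s_k\uparrow t_0$ with $\tilde f(s_k)\to\tilde f(t_0-)$; then $\mu_k:=\delta_{t_0}-\delta_{s_k}$ and $\nu_k:=-\mu_k$ lie in $\mathcal K$ by \eqref{e61}, tend $w^*$ to $0$ (since $w^*$-convergence on $K$ reduces to continuity against $V_X^0\subseteq C[0,1]$), and satisfy $f(\mu_k)\to c$, $f(\nu_k)\to-c$. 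I would then prove by induction on $n$ that every single-interval point $\pm(\delta_d-\delta_c)$ with $d$ in a sufficiently small neighbourhood of $t_0$ and $c$ on the appropriate side lies in $L_n^{c/2}(f)$: the inductive step uses that perturbations moving $d$ across $t_0$ change $f$ by approximately $c$, while both perturbations themselves sit in the previous level of the derivation. Consequently $0\in L_n^{c/2}(f)$ for every $n$ (witnessed by $\mu_k,\nu_k\in L_{n-1}^{c/2}$), so $\beta_{c/2}(f)\geq\omega$, contradicting $f\in D(K)$.

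The principal obstacle is the forward direction: converting the continuity of $\tilde f$ together with the bounded $X$-variation into a uniform finite bound on $\beta_\varepsilon(f)$ requires a careful scale decomposition of the oscillation of $f$ at a point of $K$, working with the disjoint intervals and the 1-symmetric dual basis that control the norming set $\mathcal K$. The reverse direction is conceptually cleaner but technically requires verifying, at every level of the derivation, that the oppositely-oriented single-interval perturbations on both sides of $t_0$ simultaneously survive to that level; this is exactly where the structural flexibility of $\mathcal K$ provided by Lemma~\ref{l5} is invoked.
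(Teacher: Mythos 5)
The central problem is in your forward inclusion: the characterization of $D(K)$ you invoke is not correct in the direction you need it. Finiteness of the oscillation rank $\beta_{\e}(F)$ for every $\e>0$ characterizes the first small Baire class $\mathcal{B}_{1}^{1}(K)$ of Kechris--Louveau, which for an uncountable compact metric $K$ \emph{properly} contains $D(K)$ (e.g.\ on $[0,1]$ the function $\sum_{n}c_{n}\chi_{\{q_{n}\}}$ with $c_{n}\to 0$, $\sum_{n}c_{n}=\infty$ has $\beta_{\e}\leq 2$ for every $\e$ but is not a difference of bounded semicontinuous functions). So even a complete verification that the derivation terminates in finitely many steps would not put $f$ into $D(K)$. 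The correct intrinsic criterion --- the one the paper uses (Proposition~\ref{p66}) --- is that the transfinite oscillations $osc_{\xi}f$ remain bounded for \emph{every} countable ordinal $\xi$, and the paper's Lemma~\ref{l67} establishes the quantitative bound $\Vert osc_{\xi}f\Vert_{\infty}\leq\Vert f\Vert_{V_{X}}$ by a genuine transfinite induction whose inductive hypothesis (producing, near a given $s\in\mathcal{K}$, an $s^{\prime}$ that matches $f$ on all initial blocks $s_{|(m_{1},m_{2}]}$ up to $n_{0}$ while carrying a large tail value $\vert f(s^{\prime}_{|>n_{1}})\vert>\delta$) is the technical heart of the theorem; your ``scale decomposition'' paragraph gestures at the relevant ingredients but contains no argument, and in particular no mechanism for handling limit ordinals. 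You also elide the passage from the norming set $\mathcal{K}$ to the full ball $K=(B_{(V_{X}^{0})^{*}},w^{*})$: the paper needs Proposition~\ref{p610}, which combines Odell--Rosenthal with the Haydon--Odell--Rosenthal extraction of a summing-basis sequence to upgrade $f\in D(\mathcal{K})$ to $f\in D(K)$.

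Your reverse inclusion uses only the valid implication $f\in D(K)\Rightarrow\beta_{\e}(f)<\omega$, so its logic is sound, and the idea of exhibiting $0\in L_{n}^{c/2}(f)$ for all $n$ at a jump point is plausible (note that $f\in V_{X}$ is automatically regulated since $\Vert\sum_{i=1}^{n}e_{i}\Vert_{X}\to\infty$, so discontinuity does mean a jump); but the inductive step showing that both oppositely oriented single-interval perturbations survive to every level is asserted rather than proved. The paper takes a different and more self-contained route here: Lemma~\ref{l612} and Proposition~\ref{p613} show that any bounded sequence in $V_{X}^{0}$ converging $w^{*}$ to a discontinuous $f$ has differences admitting a lower $X$-estimate, which is incompatible with the weakly unconditionally Cauchy (summing-basis) behaviour that membership in $D(K)$ would force; this yields Corollary~\ref{c614} without any computation of derivation ranks. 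I would recommend replacing your forward direction entirely by the transfinite oscillation estimate of Lemma~\ref{l67} together with Proposition~\ref{p610}, and either completing your derivation argument for the reverse direction or adopting the lower-estimate argument of the paper.
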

As a corollary we obtain the following characterization.
\begin{corollary}\label{c64}
Let $X$ be a reflexive space with $1-$symmetric basis. Then the
following are equivalent
\begin{enumerate}
\item $c_{0}$ is isomorphic to a subspace of $V_{X}^{0}\equiv
JF_{X}$.
\item There exists a function $f\in C[0,1]$ such that $f\in
V_{X}\setminus V_{X}^{0}$.
\end{enumerate}
\end{corollary}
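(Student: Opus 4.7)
The plan is to combine Theorem~\ref{t63} with the Haydon-Odell-Rosenthal ($c_0$-embedding) characterization already recalled just before Theorem~\ref{t63}. Recall that for any separable Banach space $Y$ with $K=(B_{Y^{\ast}},w^{\ast})$, the Bessaga--Pelczynski argument of \cite{HOR},\cite{BP} gives: $c_{0}$ embeds isomorphically into $Y$ if and only if $D(K)\cap(Y^{\ast\ast}\setminus Y)\neq\emptyset$. I will apply this with $Y=V_{X}^{0}$, so that $Y^{\ast\ast}=V_{X}$ by Theorem~\ref{t61}(ii) and $K=(B_{(V_{X}^{0})^{\ast}},w^{\ast})$.

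Under this identification, the HOR criterion reads: $c_{0}$ embeds into $V_{X}^{0}$ if and only if there exists $f\in D(K)\cap(V_{X}\setminus V_{X}^{0})$. Now Theorem~\ref{t63} asserts the identity
\[
D(K)\cap V_{X}=C[0,1]\cap V_{X},
\]
so intersecting both sides with the complement $V_{X}\setminus V_{X}^{0}$ gives
\[
D(K)\cap(V_{X}\setminus V_{X}^{0})=C[0,1]\cap(V_{X}\setminus V_{X}^{0}).
\]
Therefore the existence of an element of the left-hand set (equivalent to (1) by HOR) is the same as the existence of an element of the right-hand set, which is exactly condition (2). This yields the equivalence $(1)\Leftrightarrow(2)$.

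I expect no serious obstacle in writing this out, since the two ingredients are cited results: Theorem~\ref{t61}(ii) to identify the bidual of $V_{X}^{0}$ with $V_{X}$, Theorem~\ref{t63} to collapse the set $D(K)\cap V_{X}$ to $C[0,1]\cap V_{X}$, and the HOR/Bessaga--Pelczynski theorem to translate $c_{0}$-embeddings into the existence of a difference-of-semicontinuous element in the bidual outside the space. The only points worth spelling out carefully in the proof are the verification that $V_{X}^{0}$ is separable (so that HOR applies) and the natural identification of $V_{X}$ with a subset of functions on $K$ via evaluation, so that the expressions $D(K)\cap V_{X}$ and $V_{X}\setminus V_{X}^{0}$ make literal sense inside the same ambient set. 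The genuine work has been done in Theorem~\ref{t63}; Corollary~\ref{c64} is a formal consequence of it together with the HOR theorem.
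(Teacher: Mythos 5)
Your proposal is correct and is precisely the argument the paper intends: Corollary~\ref{c64} is stated as an immediate consequence of Theorem~\ref{t63} combined with the Haydon--Odell--Rosenthal/Bessaga--Pelczynski characterization recalled just before it, using Theorem~\ref{t61}(ii) to identify $(V_{X}^{0})^{\ast\ast}$ with $V_{X}$. Nothing further is needed.
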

As we have mentioned in the introduction for the proof of the
above stated theorem we shall make use of methods from descriptive
set theory. We start with the following notation and definition.
\begin{notation} (a) Let $K$ be a metric space,
$ f:\; K\to\mathbb{R}$ and $s\in K$. We set
$$
\overline{\lim_{s^{\prime}\to s}}f(s^{\prime})=\inf\{\sup f(V):
V\,\,\,\text{is a neighborhood of}\,s\}\,\,.
$$
(b) For $f$ as above we denote by $Uf$ the upper semicontinuous
envelope of $f$, which alternatively is defined as follows:
$$
Uf(s)=\overline{\lim_{s^{\prime}\to s}}f(s^{\prime})\,\,.
$$
\end{notation}
\begin{definition} Let $K$ be a compact metric space and
$f:K\mapsto\mathbb{R}$ be a bounded function. For each countable
ordinal $\xi$ the function
$osc_{\xi}(f):K\mapsto\mathbb{R}\cup\{\infty\}$ is defined
inductively as follows.
\newline
For $\xi=0$ we set $osc_{\xi}(f)(s)=0\,\,\forall s\in K $.
\newline
If $0<\xi<\omega_{1}$ and $osc_{\xi}f$ has been defined, we first
set
$$
\widetilde{osc}_{\xi+1}f(s)=\overline{\lim_{s^{\prime}\to s}}\{
\vert f(s)-f(s^{\prime})\vert + osc_{\xi}f(s^{\prime})\}\,\,,
$$
and then we set
$$
osc_{\xi+1}f=U\,\widetilde{osc}_{\xi+1}f\,\,.
$$
If $\xi$ is a limit ordinal and for $\zeta<\xi$, $osc_{\zeta}f$
has been defined then we set
$$
\widetilde{osc}_{\xi}f(s)=\sup_{\zeta<\xi} osc_{\zeta}f(s)\,\,,
$$
and finally
$$
osc_{\xi}f=U\,\widetilde{osc}_{\xi}f\,\,.
$$
This completes the inductive definition.
\end{definition}
The family $\{osc_{\xi}f\}_{\xi<\omega_{1}}$ was introduced by
A.Kechris and A.Louveau, \cite{KL}. H. Rosenthal, \cite{Ro},
recognized the key role of this family in the study of non trivial
$w-Cauchy$ sequences in Banach spaces. The definition presented
here is due to H.Rosenthal and is a modification of the original
one. Some recent results related to this family are obtained in
\cite{AK}. The basic property of the family
$\{osc_{\xi}f\}_{\xi<\omega_{1}}$ is described by the next
proposition.
\begin{proposition}\label{p66} \cite{KL}, \cite{Ro}.
Let $K$ be a compact metric space and $f:K\mapsto\mathbb{R}$ be a
bounded function. The following are equivalent.
\newline (a) The function $f$ is a difference of bounded
semicontinuous functions.
\newline (b) For each $\xi<\omega_{1}$, the function $osc_{\xi}f$
is a bounded function.
\end{proposition}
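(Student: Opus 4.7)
The plan is to prove both implications by transfinite induction on $\xi<\omega_{1}$, relying on the pointwise monotonicity of the family $\{osc_{\xi}f\}_{\xi}$ and on the fact that the $U$-operator produces upper semicontinuous envelopes.

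For the direction $(a)\Rightarrow (b)$, I would write $f=u-v$ with $u$ bounded upper semicontinuous and $v$ bounded lower semicontinuous, and show by induction on $\xi$ that $osc_{\xi}f$ is bounded above by $M:=2(\Vert u\Vert_{\infty}+\Vert v\Vert_{\infty})$. The base case is trivial. At a successor step, the semicontinuity of $u$ and $v$ gives $\overline{\lim_{s^{\prime}\to s}}(u(s^{\prime})-u(s))\leq 0$ and $\overline{\lim_{s^{\prime}\to s}}(v(s)-v(s^{\prime}))\leq 0$, allowing one to control $\overline{\lim_{s^{\prime}\to s}}\vert f(s)-f(s^{\prime})\vert$ and thence $\widetilde{osc}_{\xi+1}f$ by a quantity bounded in terms of $M$ and the induction hypothesis; the $U$-operation then preserves the bound. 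Limit ordinals are handled by the defining formula $\widetilde{osc}_{\xi}f=\sup_{\zeta<\xi}osc_{\zeta}f$, since the bound $M$ is uniform in $\xi$.

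For the direction $(b)\Rightarrow (a)$, the key is a stabilization argument. First, I would verify by induction that $\{osc_{\xi}f\}_{\xi}$ is pointwise non-decreasing in $\xi$. Combined with boundedness at each stage and the uncountability of $\omega_{1}$ (together with pointwise convergence of any bounded monotone transfinite family of reals at each $s\in K$), one concludes that there exists $\xi_{0}<\omega_{1}$ at which $osc_{\xi_{0}+1}f=osc_{\xi_{0}}f$. Setting $\phi:=osc_{\xi_{0}}f$, which is upper semicontinuous by construction, I would define $u:=\tfrac{1}{2}(\phi+f)$ and $v:=\tfrac{1}{2}(\phi-f)$, so that $f=u-v$. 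The fixed-point relation $osc_{\xi_{0}+1}f=osc_{\xi_{0}}f$, unwound through the definitions of $\widetilde{osc}$ and $U$, yields estimates of the form $\overline{\lim_{s^{\prime}\to s}}\vert f(s)-f(s^{\prime})\vert \leq \phi(s)-\phi_{\ast}(s)$ (with $\phi_{\ast}$ the lower semicontinuous envelope of $\phi$), and combined with the upper semicontinuity of $\phi$ this forces $u$ to be lower semicontinuous and $v$ to be upper semicontinuous (or vice versa).

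The principal obstacle lies in the direction $(b)\Rightarrow (a)$: translating the stabilized identity $osc_{\xi_{0}+1}f=osc_{\xi_{0}}f$ into the semicontinuity of the constructed decomposition requires a delicate manipulation of upper and lower limits, and is the heart of the Kechris--Louveau--Rosenthal characterization. The transfinite induction in $(a)\Rightarrow (b)$ and the stabilization argument itself are comparatively routine once the framework is in place.
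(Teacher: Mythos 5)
The paper does not actually prove this proposition: it is quoted from \cite{KL} and \cite{Ro}, so there is no internal argument to compare with, and your proposal has to be judged on its own. Your outline of $(b)\Rightarrow(a)$ is essentially the standard Kechris--Louveau/Rosenthal argument and is sound in structure: monotonicity of $\xi\mapsto osc_{\xi}f$ plus the impossibility of an uncountable strictly increasing family of reals gives stabilization (though to get a single $\xi_{0}$ valid for all $s$ simultaneously you should invoke second countability of $K$ --- e.g.\ the decreasing $\omega_{1}$-family of open sets $\{s: osc_{\xi}f(s)<q\}$, $q\in\mathbb{Q}$, must stabilize --- rather than only pointwise convergence at each $s$). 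Once $osc_{\xi_{0}+1}f=osc_{\xi_{0}}f=:\phi$, one in fact gets $\widetilde{osc}_{\xi_{0}+1}f=\phi$, i.e.\ $\overline{\lim_{s'\to s}}\{|f(s)-f(s')|+\phi(s')\}=\phi(s)$; reading this separately for the two signs of $f(s)-f(s')$ says precisely that $\phi+f$ and $\phi-f$ are \emph{both} upper semicontinuous, so $u=\tfrac12(\phi+f)$, $v=\tfrac12(\phi-f)$ works. Your detour through $\phi-\phi_{\ast}$ decouples the term $|f(s)-f(s')|$ from the term $\phi(s')$ inside the upper limit, and it is exactly that coupling which produces the semicontinuity; also both $u$ and $v$ come out upper semicontinuous, not one of each.

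The genuine gap is in $(a)\Rightarrow(b)$. First, the decomposition ``$u$ upper semicontinuous minus $v$ lower semicontinuous'' is the degenerate case: such a difference is itself upper semicontinuous, so with that reading $D(K)$ would collapse to the USC functions. The general element of $D(K)$ is $u-v$ with $u,v$ both USC (equivalently both LSC), and then your one-sided inequalities control only one sign of $f(s)-f(s')$. Second, and more seriously, an induction carrying only a numerical bound $M$ cannot close: the crude estimate is $\Vert osc_{\xi+1}f\Vert_{\infty}\leq 2\Vert f\Vert_{\infty}+\Vert osc_{\xi}f\Vert_{\infty}$, which grows with $\xi$ and gives nothing at limit ordinals, where $\widetilde{osc}_{\xi}f=\sup_{\zeta<\xi}osc_{\zeta}f$. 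The induction hypothesis must be the \emph{pointwise} inequality $osc_{\xi}f\leq\phi-\inf\phi$ against the USC potential $\phi=u+v$: one checks $\overline{\lim_{s'\to s}}\{\pm(f(s)-f(s'))+\phi(s')\}\leq\phi(s)$, because $f(s)-f(s')+\phi(s')=u(s)-v(s)+2v(s')$ and $f(s')-f(s)+\phi(s')=2u(s')-u(s)+v(s)$ and $u,v$ are USC; hence $\widetilde{osc}_{\xi+1}f\leq\phi-\inf\phi$, and $U(\phi-\inf\phi)=\phi-\inf\phi$ since $\phi$ is USC. This yields the uniform bound $\sup\phi-\inf\phi$ at every $\xi$, including limits, which is what the statement requires.
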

\begin{lemma}\label{l67} Let $f\in C[0,1]\cap V_{X}$ and
$\mathcal{K}$ be the $w^{*}-$compact subset of
$B_{(V_{X}^{0})^{*}}$ defined in (\ref{e61}). Then for all
$\xi<\omega_{1}$ we have that
\begin{equation}\label{e62}
\Vert osc_{\xi}f\Vert_{\infty}\leq\Vert f_{|\mathcal{K}}\Vert=
\Vert f\Vert_{V_{X}}\,\,.
\end{equation}
\end{lemma}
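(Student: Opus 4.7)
I would split the statement into two parts: the equality $\|f_{|\mathcal{K}}\|=\|f\|_{V_X}$ and the main inequality $\|osc_\xi f\|_\infty\leq\|f\|_{V_X}$. The equality is essentially a restatement of the norming property of $\mathcal{K}$: by observations (1)--(3) preceding Theorem~\ref{t61} we have $B_{(V_X^0)^*}=\overline{\rm{co}}^{\|\cdot\|}(\mathcal{K})$, and since $V_X=(V_X^0)^{**}$ the canonical bilinear pairing yields
$$\|f\|_{V_X}=\sup\{\phi(f):\phi\in B_{(V_X^0)^*}\}=\sup\{\phi(f):\phi\in\mathcal{K}\}=\|f_{|\mathcal{K}}\|_\infty.$$

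For the main inequality I would argue by transfinite induction on $\xi<\omega_1$, viewing $osc_\xi f$ as a function on the compact metrizable space $K=(B_{(V_X^0)^*},w^*)$ (equivalently, on $\mathcal{K}$, since the extreme points of $B_{(V_X^0)^*}$ lie in $\overline{\mathcal{K}}^{w^*}=\mathcal{K}$ by the analog of Lemma~\ref{l5}). The case $\xi=0$ is automatic because $osc_0 f\equiv 0$. The limit-ordinal step is routine: if $osc_\zeta f\leq\|f\|_{V_X}$ holds pointwise for every $\zeta<\xi$, then the supremum $\widetilde{osc}_\xi f$ satisfies the same bound, and applying the upper-semicontinuous envelope $U$ preserves it. The successor step is where the work lies. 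Fix $\phi_0=\sum_{i=1}^{N} a_i(\delta_{d_i}-\delta_{c_i})\in\mathcal{K}$ and let $\phi_n\in\mathcal{K}$ with $\phi_n\xrightarrow{w^*}\phi_0$. The key geometric decomposition is $\phi_n=\phi_n^{\rm main}+\phi_n^{\rm new}$, where $\phi_n^{\rm main}$ collects those summands of $\phi_n$ whose defining intervals have endpoints in small prescribed neighborhoods of the endpoints of $\phi_0$'s intervals, and $\phi_n^{\rm new}$ collects the remainder. By the $1$-unconditionality of the symmetric basis of $X^*$, both $\phi_n^{\rm main}$ and $\phi_n^{\rm new}$ remain in $\mathcal{K}$; the $w^*$-convergence forces $\phi_n^{\rm main}\to\phi_0$ coordinate-wise and $\phi_n^{\rm new}\to 0$ in $w^*$; and because $f\in C[0,1]$ the coordinate-wise convergence gives $\phi_n^{\rm main}(f)\to\phi_0(f)$. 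Consequently
$$|f(\phi_n)-f(\phi_0)|=|\phi_n^{\rm new}(f)|+o(1)\leq\|f\|_{V_X}+o(1),$$
which already settles the case $\xi=1$.

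For the inductive step at $\xi+1$ one unfolds $\widetilde{osc}_{\xi+1}f(\phi_0)=\overline{\lim}_{\phi_n\to\phi_0}\{|f(\phi_0)-f(\phi_n)|+osc_\xi f(\phi_n)\}$ and iterates the decomposition: the ``new'' parts produced at successive levels of the oscillation have, by construction, pairwise disjoint interval supports and hence, by $1$-unconditionality, can be assembled into one element $\phi^*\in\mathcal{K}$ dominating the telescoped sum $|f(\phi_0)-f(\phi_n)|+osc_\xi f(\phi_n)$ in the limit. Then $\phi^*(f)\leq\|f_{|\mathcal{K}}\|=\|f\|_{V_X}$ closes the induction, and the final passage from $\widetilde{osc}_{\xi+1}f$ to $osc_{\xi+1}f=U\widetilde{osc}_{\xi+1}f$ preserves the bound. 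The main obstacle is exactly this gluing step: one must maintain the tight constant $1$ (rather than the naive $2$, or worse, a bound that deteriorates with $\xi$) by using the specific one-dimensional structure of $\mathcal{K}$ together with the continuity of $f$ on $[0,1]$ to ensure that the ``main'' pieces cancel cleanly at every level of the oscillation hierarchy, leaving only the genuinely $\mathcal{K}$-recombinable new contributions to be estimated.
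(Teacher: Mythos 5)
Your overall strategy --- reduce the equality to the norming identity, then run a transfinite induction whose successor step packages the accumulated oscillation into a single element of $\mathcal{K}$ assembled from disjointly supported pieces via $1$-unconditionality --- is exactly the mechanism of the paper's proof, and your treatment of $\xi=1$ and of limit ordinals is sound. The gap is in what you actually induct on. The pointwise bound $osc_{\xi}f\leq\Vert f\Vert_{V_{X}}$ is too weak to close the successor step: knowing only that $osc_{\xi}f(\phi_{n})\leq\Vert f\Vert_{V_{X}}$ yields $\widetilde{osc}_{\xi+1}f\leq 3\Vert f\Vert_{V_{X}}$ and the constant degenerates with $\xi$, as you yourself observe. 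Your proposed repair, ``iterate the decomposition through successive levels of the oscillation,'' is only meaningful for finitely many unfoldings of the definition; at a general countable ordinal there is no finite chain of levels to unfold, so the disjointly supported ``new'' parts you want to glue into $\phi^{*}$ are simply not available from the pointwise bound alone. What is needed --- and what the paper supplies --- is a strengthened inductive hypothesis that already delivers the witness: for every $s\in\mathcal{K}$, $\varepsilon>0$, $\delta>0$ and $n_{0}\in\mathbb{N}$, if $osc_{\xi}f(s)>\delta$ then there exist $s'\in\mathcal{K}$ and $n_{1}>n_{0}$ such that (i) $\vert f(s_{|(m_{1},m_{2}]})-f(s'_{|(m_{1},m_{2}]})\vert<\varepsilon$ for all $0\leq m_{1}<m_{2}\leq n_{0}$, and (ii) $\vert f(s'_{|>n_{1}})\vert>\delta$. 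Condition (ii) gives the lemma at once, since $s'_{|>n_{1}}\in\mathcal{K}$; condition (i) is precisely what allows the next level's term $\vert f(\tilde{s})-f(s_{k})\vert$ to be localized, using the linearity of $f$ on $\mathcal{K}$ and the smallness of tails (Lemma~\ref{l68}), to a single block $\vert f(s_{k|(n_{1},n_{2}]})\vert$ disjoint from the tail block produced by the hypothesis at level $\xi$.

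A second point your ``dominating'' step glosses over: the three blocks $s''_{|(n_{1},n_{2}]}$, $s''_{|(n_{2},n_{3}]}$, $s''_{|>n_{3}}$ sum to $s''_{|>n_{1}}$, but their $f$-values may cancel, so the two lower estimates $c_{1}$ and $c_{2}$ need not add up in $\vert f(s''_{|>n_{1}})\vert$. The paper fixes this by flipping the sign of each block separately (the result stays in $\mathcal{K}$ by $1$-unconditionality of the basis of $X^{*}$) so that all three contributions are nonnegative and $c_{1}+c_{2}>\delta$ survives. This is a small repair, but together with the missing strengthened hypothesis it is where the real content of the proof lies; as written, your induction does not go through beyond finite $\xi$.
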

Clearly (\ref{e62}) and Proposition~\ref{p66} yields that $f\in
D(\mathcal{K})$. Before passing to the proof, we state some
abbreviations and notations.
\begin{notation} In the sequel we consider the set
$\mathcal{K}$ endowed with the $weak^{*}$ topology. Let
$s\in\mathcal{K}$. Then
$s=\sum_{i=1}^{\infty}\alpha_{i}(\delta_{t_{i}^{2}}-
\delta_{t_{i}^{1}})$ where $\{(t_{i}^{1},t_{i}^{2})\}_{i=1
}^{\infty}$ is a family of disjoint intervals and
$\Vert\sum_{i=1}^{\infty}\alpha_{i}e_{i}^{*}\Vert_{X^{*}}\leq 1$.
It is obvious that any permutation  of $\N$ yields a new
representation of the vector $s$. For a fixed representation we
denote by
\begin{align*}
s_{|n_{0}}&=\sum_{i=1}^{n_{0}}\alpha_{i}(\delta_{t_{i}^{2}}-
\delta_{t_{i}^{1}})\in\mathcal{K}\,\,,\\
s_{|>n_{0}}&=
\sum_{i=n_{0}+1}^{\infty}\alpha_{i}(\delta_{t_{i}^{2}}-
\delta_{t_{i}^{1}})\in\mathcal{K}\,\,,\\
s_{|(n_{0},n_{1}]}&=\sum_{i=n_{0}+1}^{n_{1}}
\alpha_{i}(\delta_{t_{i}^{2}}-
\delta_{t_{i}^{1}})\in\mathcal{K}\,\,.
\end{align*}
Moreover, observe that if $(s_{k})_{k}$, $s\in\mathcal{K}$ with
$s_{k}\to s$, $s=
\sum_{i=1}^{\infty}\alpha_{i}(\delta_{t_{i}^{2}}-
\delta_{t_{i}^{1}})$ then for every $n_{0}\in\N$ there exists a
representation of
$s_{k}=\sum_{i=1}^{\infty}\alpha_{i,k}(\delta_{t_{i,k}^{2}}-
\delta_{t_{i,k}^{1}})$ such that for every $i\leq n_{0}$,
$\lim_{k\to\infty}\alpha_{i,k}=\alpha_{i}$,
$\lim_{k\to\infty}t_{i,k}^{1}=t_{i}^{1}$,
$\lim_{k\to\infty}t_{i,k}^{2}=t_{i}^{2}$\,.
\end{notation}
We pass to the following
\begin{lemma}\label{l68} For every $f\in V_{X}$ and every $s\in
\mathcal{K}$, $\lim f(s_{|>n})=0$\,.
\end{lemma}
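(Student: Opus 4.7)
The plan is to interpret $s_{|>n}(f)$ as a duality pairing between $X^{*}$ and $X$, and then exploit three ingredients: the disjointness of the intervals, the $1$-symmetry of the basis, and the reflexivity of $X$. First, I would note that since $(e_{i})_{i}$ is a $1$-symmetric basis of a reflexive space, it is shrinking, hence for the sequence $(\alpha_{i})_{i}$ appearing in the representation of $s\in\mathcal{K}$ with $\|\sum_{i}\alpha_{i}e_{i}^{*}\|_{X^{*}}\le 1$, one has
$$
\varepsilon_{n}:=\Vert\sum_{i=n+1}^{\infty}\alpha_{i}e_{i}^{*}\Vert_{X^{*}}\,\,\longrightarrow\,\,0.
$$

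Next, I would write
$$
s_{|>n}(f)=\sum_{i=n+1}^{\infty}\alpha_{i}\bigl(f(t_{i}^{2})-f(t_{i}^{1})\bigr),
$$
and estimate via the duality of $X^{*}$ and $X$ that
$$
|s_{|>n}(f)|\le\Vert\sum_{i=n+1}^{\infty}\alpha_{i}e_{i}^{*}\Vert_{X^{*}}\cdot\Vert\sum_{i=n+1}^{\infty}\bigl(f(t_{i}^{2})-f(t_{i}^{1})\bigr)e_{i}\Vert_{X}=\varepsilon_{n}\cdot A_{n},
$$
where $A_{n}:=\Vert\sum_{j=1}^{\infty}(f(t_{j+n}^{2})-f(t_{j+n}^{1}))e_{j}\Vert_{X}$ by $1$-symmetry (we shift indices by $n$).

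The remaining step is to show $A_{n}\le\Vert f\Vert_{V_{X}}$ for every $n$. For this, I would enumerate the disjoint intervals $\{(t_{j+n}^{1},t_{j+n}^{2})\}_{j\ge 1}$ in increasing order of left endpoints via a permutation $\pi$; by disjointness, the points $\{0\}\cup\{t_{\pi(j)+n}^{1},t_{\pi(j)+n}^{2}\}_{j}\cup\{1\}$ form (after removing possible duplicates) a partition $\mathcal{P}$ of $[0,1]$. The quantity $\alpha_{X}(f,\mathcal{P})$ is a norm in $X$ of a vector whose coefficients at the slots corresponding to the intervals $(t_{\pi(j)+n}^{1},t_{\pi(j)+n}^{2})$ are exactly $f(t_{\pi(j)+n}^{2})-f(t_{\pi(j)+n}^{1})$, while the coefficients at the slots corresponding to gaps between consecutive intervals are possibly nonzero. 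Truncating to $0$ the gap coefficients only decreases the norm (by $1$-unconditionality, a consequence of the $1$-symmetry), and re-permuting coordinates preserves it; hence $A_{n}\le\alpha_{X}(f,\mathcal{P})\le\Vert f\Vert_{V_{X}}$. Combining, $|s_{|>n}(f)|\le\varepsilon_{n}\Vert f\Vert_{V_{X}}\to 0$.

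The argument is essentially a tail estimate, so there is no serious obstacle; the only delicate point I would emphasize is the use of reflexivity to guarantee that the dual basis $(e_{i}^{*})$ is a Schauder basis of $X^{*}$, so that the $X^{*}$-norm of the tail vanishes. The bookkeeping for ordering the disjoint intervals and dropping gap terms via $1$-unconditionality is routine but should be spelled out, since it is what turns the Hölder-type pairing into the desired bound by $\Vert f\Vert_{V_{X}}$.
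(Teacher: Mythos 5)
Your proof is correct and is essentially the paper's argument spelled out: the paper disposes of the lemma by citing $\Vert s_{|>n}\Vert_{(V_{X}^{0})^{*}}\to 0$, which rests on exactly the two ingredients you make explicit, namely the norm convergence of $\sum_{i}\alpha_{i}e_{i}^{*}$ in $X^{*}$ (reflexivity) and the estimate $\Vert s_{|>n}\Vert\leq\Vert\sum_{i>n}\alpha_{i}e_{i}^{*}\Vert_{X^{*}}$ coming from the norming-set inequality (1.2), which is what your partition-and-drop-the-gaps computation reproves. The only cosmetic caveat is that for infinitely many disjoint intervals you should run the gap argument on finite subfamilies and pass to the supremum, since a countable point set is not literally a partition in the sense of the definition of $\Vert\cdot\Vert_{V_{X}}$.
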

This is an immediate consequence of the property $\lim_{n}\Vert
s_{|>n}\Vert=0$.
\begin{lemma}\label{l69} Let $f\in C[0,1]\cap V_{X}$ and
$(s_{k})_{k}$, $s$ in $\mathcal{K}$ such that $s_{k}\rightarrow
s$. Then for every $n_{0}\in\N$, $\e>0$ there exists $k\in\N$ such
that for $0\leq n_{1}<n_{2}\leq n_{0}$
$$
\vert f(s_{|(n_{1},n_{2}]})-f(s_{k}{}_{|(n_{1},n_{2}]})\vert
<\e\,\,.
$$
\end{lemma}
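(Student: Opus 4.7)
The plan is to use the Notation block immediately preceding the lemma to align representations of $s$ and of the $s_k$ so that their first $n_0$ atoms converge coordinatewise, and then to exploit the continuity of $f$ at the resulting finitely many endpoints together with uniform boundedness of the scalars.

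First I would write $s=\sum_{i=1}^{\infty}\alpha_i(\delta_{t_i^2}-\delta_{t_i^1})$ and, for each $k$, fix a representation $s_k=\sum_{i=1}^{\infty}\alpha_{i,k}(\delta_{t_{i,k}^2}-\delta_{t_{i,k}^1})$ for which $\alpha_{i,k}\to\alpha_i$, $t_{i,k}^1\to t_i^1$ and $t_{i,k}^2\to t_i^2$ as $k\to\infty$, for every $i\leq n_0$. This is exactly what the Notation block guarantees. Under the identification $V_X=(V_X^0)^{**}$, the value of $f\in V_X$ at $s_{|(n_1,n_2]}$ reads
$$f(s_{|(n_1,n_2]})=\sum_{i=n_1+1}^{n_2}\alpha_i\bigl(f(t_i^2)-f(t_i^1)\bigr),$$
with the analogous pointwise formula for $s_k{}_{|(n_1,n_2]}$.

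Next I would record two uniform estimates. Since $(e_n)_n$ is $1$-symmetric, all biorthogonals share the common norm $c=\Vert e_1^{*}\Vert_{X^{*}}$, so from $\Vert\sum_i\alpha_i e_i^{*}\Vert_{X^{*}}\leq 1$ one obtains $|\alpha_i|,|\alpha_{i,k}|\leq M:=1/c$, independently of $i$ and $k$. Also, $|f(t)|=|f(t)-f(0)|\leq \Vert f\Vert_{V_X}/\Vert e_1\Vert_X =:B$ for every $t\in[0,1]$, so $f$ is bounded.

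Finally, invoking the continuity of $f$ at each of the at most $2n_0$ points $\{t_i^1,t_i^2 : i\leq n_0\}$, I would pick $k$ so large that, simultaneously for every $i\leq n_0$ and every $j\in\{1,2\}$,
$$|\alpha_i-\alpha_{i,k}|<\frac{\varepsilon}{4Bn_0}, \qquad |f(t_i^j)-f(t_{i,k}^j)|<\frac{\varepsilon}{4Mn_0}.$$
Expanding each summand as
$$(\alpha_i-\alpha_{i,k})\bigl(f(t_i^2)-f(t_i^1)\bigr)+\alpha_{i,k}\bigl[\bigl(f(t_i^2)-f(t_{i,k}^2)\bigr)-\bigl(f(t_i^1)-f(t_{i,k}^1)\bigr)\bigr]$$
and summing over $i\in(n_1,n_2]$, which contains at most $n_0$ indices, yields the bound $<\varepsilon$, uniformly in the pair $(n_1,n_2)$. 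The only subtle point is obtaining simultaneous convergence of the first $n_0$ atoms of the aligned representations, and this is precisely what the Notation block hands us; once that is in place the argument reduces to a routine continuity and uniform-boundedness estimate, so I do not anticipate any further obstacle.
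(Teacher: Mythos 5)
Your proof is correct and follows exactly the route the paper intends: the paper disposes of this lemma with the one-line remark that it ``follows from the convergence of $(s_k)_k$ to $s$ and the continuity of $f$,'' and your argument is precisely that remark carried out in detail, using the aligned representations from the Notation block, the uniform bounds $|\alpha_{i,k}|\leq 1/\Vert e_1^{*}\Vert_{X^{*}}$ and $\Vert f\Vert_{\infty}\leq\Vert f\Vert_{V_X}/\Vert e_1\Vert_X$, and continuity of $f$ at the finitely many endpoints $t_i^j$, $i\leq n_0$. The final bound is uniform over all pairs $(n_1,n_2]$ because you dominate each such partial sum by the full sum over $i\leq n_0$, so a single choice of $k$ works for all of them, as the statement requires.
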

This is also easy and follows from the convergence of
$(s_{k})_{k}$ to $s$ and the continuity of the function $f$. It is
worth noticing that this is the only point where the continuity of
the function $f$ is used.
\begin{proof}[Proof of Lemma~\ref{l67}] The proof follows from the
next inductive hypothesis.

For every $s\in\mathcal{K}$, $\e>0$, $\delta>0$ and $n_{0}\in\N$,
if $s=\sum_{i=1}^{\infty}\alpha_{i}(\delta_{t_{i}^{1}}-
\delta_{t_{i}^{2}})$ and $osc_{\xi}f(s)>\delta$, there exist
$s^{\prime}\in\mathcal{K}$ and $n_{1}>n_{0}$ such that
\begin{flushleft}
$(i)\quad\quad\,\text{For}\,\,0\leq m_{1}<m_{2}\leq
n_{0},\quad\text{it holds}\,\,\,\,\vert
f(s_{|(m_{1},m_{2}]})-f(s^{\prime}_{|(m_{1},m_{2}]})\vert<\e$\,.
\\
$(ii)\,\quad\quad\vert f(s^{\prime}_{|>n_{1}})\vert>\delta$\,.
\end{flushleft}
A proof of the inductive hypothesis immediately yields a proof of
the lemma. We proceed by induction.
\newline For $\xi=0$ is trivial.
\newline Assume that for some $\xi<\omega_{1}$ the inducive
hypothesis has been established. Let $s\in\mathcal{K}$, $\e>0$,
$\delta>0$ and $n_{0}\in\N$ such that $osc_{\xi+1}f(s)>\delta$.
>From the definition of $osc_{\xi+1}f$ and Lemma~\ref{l69}, we
choose $\tilde{s}$ such that
\begin{align}
&\widetilde{osc}_{\xi+1}f(\tilde{s})>\delta\,\,, \label{e63}\\
\intertext{and for $0\leq m_{1}<m_{2}\leq n_{0}$}
\vert f(s_{|(m_{1},m_{2}]}&)-
f(\tilde{s}_{|(m_{1},m_{2}]})\vert<\frac{\e}{4}\,\,.\label{e64}
\end{align}
Next we choose  a sequence $(s_{k})_{k}$ such that
\\
$(i)\,\,\,\quad\quad s_{k}\to \tilde{s}$\,. \\
$(ii)\,\quad\quad
\lim_{k}\{\vert f(\tilde{s})- f(s_{k})\vert+osc_{\xi}f(s_{k})\}=
\widetilde{osc}_{\xi+1}f(\tilde{s})$\,. \\
$(iii)\,\quad\quad
\lim_{k}\vert f(\tilde{s})- f(s_{k})\vert=\alpha$\,\,. \\
$(iv)\,\quad\quad \lim_{k} osc_{\xi}f(s_{k})=\beta$\,.\\
Assume that
both $\alpha, \beta$ are positive (If at least one of them is
equal to zero the proof is simpler). Set
$\delta_{1}=\widetilde{osc}_{\xi+1}f(\tilde{s})-\delta>0$ and
since $\alpha+\beta=\widetilde{osc}_{\xi+1}f(\tilde{s})$ there
exist $0<c_{1}<\alpha$, $0<c_{2}<\beta$ such that
$c_{1}+c_{2}>\delta+\frac{3\delta_{1}}{4}$. Next choose
$n_{1}\in\N$ such that
\begin{equation}\label{e65}
\vert
f(\tilde{s}_{|>n})\vert<\frac{\alpha-c_{1}}{16}\,\,\,\text{for
all}\,\,\, n>n_{1}\,\,.
\end{equation}
Further choose $k\in\N$ such that
\begin{align}
\text{For $0\leq m_{1}<m_{2}\leq n_{1}$},\,\,\quad &\vert
f(\tilde{s}_{|(m_{1},m_{2}]})- f(s_{k}{}_{|(m_{1},m_{2}]})\vert
<\min\{\frac{\e}{4},\frac{\alpha-c_{1}}{8}\}\label{e66}\,\,,\\
\vert f(\tilde{s})-& f(s_{k})\vert > c_{1}+\frac{\alpha-c_{1}}{2}
\label{e67}\,\,,\\
&osc_{\xi}f(s_{k})>c_{2}\,\,.\label{e68}
\end{align}
\noindent For this $s_{k}$ we choose $n_{2}>n_{1}$ such that for
all $n\geq n_{2}$,
\begin{equation}\label{e69}
\vert f(s_{k}{}_{|>n})\vert<\frac{\alpha-c_{1}}{8}\,\,.
\end{equation}
Applying the inductive hypothesis for the ordinal $\xi$, the
element $s_{k}$, $\min\{\frac{\e}{4},\frac{\alpha-c_{1}}{8}\}$,
$c_{2}$ and $n_{2}$ we obtain $s^{\prime\prime}\in\mathcal{K}$ and
$n_{3}>n_{2}$ such that
\begin{align}
\text{for $0\leq m_{1}<m_{2}\leq n_{2}$},&\quad \vert
f(s_{k}{}_{|(m_{1},m_{2}]})-f(s^{\prime\prime}_{|(m_{1},m_{2}]})
\vert <\min\{\frac{\e}{4},\frac{\alpha-c_{1}}{8}\}\,\,,\label{e610}\\
&f(s^{\prime\prime}_{|>n_{3}})>c_{2}\,\,.\label{e611}
\end{align}
For the element $s^{\prime\prime}$ we have the following
estimates.
\newline
\indent For $0\leq m_{1}<m_{2}<n_{0}$, (\ref{e64}), (\ref{e66})
and (\ref{e610}) yield that
$$
\vert f(s_{|(m_{1},m_{2}]})-
f(s^{\prime\prime}_{|(m_{1},m_{2}]})\vert<\e\,\,.
$$
Next observe that (\ref{e65}), (\ref{e66}), (\ref{e67}) and
(\ref{e69}) yield
\begin{align*}
\vert f(s_{k}{}_{|(n_{1},n_{2}]})\vert & >\\
&>\vert f(\tilde{s})-f(s_{k})\vert-\vert f(\tilde{s}_{|n_{1}})-
f(s_{k}{}_{|n_{1}})\vert-\vert f(\tilde{s}_{|>n_{1}})\vert-\vert
f(s_{k}{}_{|>n_{2}})\vert
\\
& > c_{1}+\frac{\alpha-c_{1}}{2}-\frac{\alpha-c_{1}}{8}-
\frac{\alpha-c_{1}}{8}-\frac{\alpha-c_{1}}{8}
\\
& =c_{1}+\frac{\alpha-c_{1}}{8}\,\, \intertext{and from
(\ref{e610}) we get} &\quad\quad\quad\quad\vert
f(s^{\prime\prime}_{|(n_{1},n_{2}]})\vert>
c_{1}+\frac{\alpha-c_{1}}{8}-\frac{\alpha-c_{1}}{8}=c_{1}\,\,.
\end{align*}
Observe that
$$
s^{\prime\prime}_{|>n_{1}}=s^{\prime\prime}_{|(n_{1},n_{2}]}+
s^{\prime\prime}_{|(n_{2},n_{3}]}+s^{\prime\prime}_{|>n_{3}}\,\,,
$$
and since $f$ is an affine function on $\mathcal{K}$ we obtain
that there exist $\e_{1}$, $\e_{2}$, $\e_{3}\in\{-1,1\}$ such that
$$
f(\e_{1}s^{\prime\prime}_{|(n_{1},n_{2}]})\geq 0,\quad
f(\e_{2}s^{\prime\prime}_{|(n_{2},n_{3}]})\geq 0,\quad
f(\e_{3}s^{\prime\prime}_{|>n_{3}]})\geq 0\,\,.
$$
It is easy to check that the element
$$
s^{\prime}=s^{\prime\prime}_{|n_{1}}+
\e_{1}s^{\prime\prime}_{|(n_{1},n_{2}]}+
\e_{2}s^{\prime\prime}_{|(n_{2},n_{3}]}+
\e_{3}s^{\prime\prime}_{|>n_{3}}
$$
satisfies the conclusion of the inductive hypothesis.

If $\xi$ is a limit ordinal and for some $s\in\mathcal{K}$, such
that $\widetilde{osc}_{\xi}f(s)>\delta$, then from the definition
of $osc_{\xi}f$ and Lemma~\ref{l69}, there exist $\zeta<\xi$ and
$\tilde{s}$ such that
\begin{align*}
&osc_{\zeta}f(\tilde{s})>\delta\,\,,\\
\text{and for}\,\,\, 0\leq m_{1}<m_{2}\leq n_{0}&,\,\,\quad \vert
f(s_{|(m_{1},m_{2}]})-
f(\tilde{s}_{|(m_{1},m_{2}]})\vert<\frac{\e}{2}\,\,.
\end{align*}
Applying the inductive hypothesis for $\zeta$ the element
$\tilde{s}$, $\frac{\e}{2}$, $\delta$ and $n_{0}$ we get an
element $s^{\prime}$, which it is easily seen that satisfies the
conclusion of the lemma. The proof is complete.
\end{proof}
As a consequence of the previous lemma we obtain the following
\begin{proposition}\label{p610} Let $f\in C[0,1]\cap V_{X}$. Then
$f$ is a difference of bounded semicontinuous functions defined on
$L=(B_{(V_{X}^{0})^{*}},w^{*})$.
\end{proposition}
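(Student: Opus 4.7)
The strategy is to apply the Kechris--Louveau--Rosenthal criterion (Proposition~\ref{p66}): on the compact metric space $L=(B_{(V_{X}^{0})^{*}},w^{*})$, which is metrizable since $V_{X}^{0}\cong JF_{X}$ is separable, a bounded function is a difference of bounded semicontinuous functions if and only if all iterated oscillations $osc_{\xi}f$ are bounded for $\xi<\omega_{1}$. Thus it suffices to establish a uniform bound $\Vert osc_{\xi}f\Vert_{\infty}\leq C$ on $L$ for every $\xi<\omega_{1}$.

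The key input is Lemma~\ref{l67}, which provides exactly such a bound $\Vert osc_{\xi}f\Vert_{\infty}\leq\Vert f\Vert_{V_{X}}$ on the $w^{*}$-compact norming subset $\mathcal{K}\subset L$ of (\ref{e61}). The remaining step is to transfer this bound from $\mathcal{K}$ to all of $L$. Here I would exploit two structural facts: (i) $L=\overline{\mathrm{co}}^{w^{*}}(\mathcal{K})$ (the $V_{X}^{0}$-analogues of Lemmas~\ref{l2} and \ref{l5}), and (ii) $f$ is affine on $L$, being the restriction to the dual ball of a linear functional under the canonical identification $V_{X}=(V_{X}^{0})^{**}$. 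For an arbitrary $s\in L$, the inductive construction of Lemma~\ref{l67} can be reproduced by selecting approximating elements from $\mathcal{K}$ rather than from $L$; the tail segments $s'|_{>n}$ produced at each inductive step still lie in $B_{(V_{X}^{0})^{*}}$, so that $|f(s'|_{>n})|\leq\Vert f\Vert_{V_{X}}$ remains the controlling estimate. Combined with the affineness of $f$, this propagates the oscillation bound from $\mathcal{K}$ to every point of $L$.

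Once the uniform bound $\Vert osc_{\xi}f\Vert_{\infty}\leq\Vert f\Vert_{V_{X}}$ is in hand on the full compact metric space $L$, Proposition~\ref{p66} immediately yields the decomposition of $f$ as a difference of two bounded semicontinuous functions on $L$, which is the desired conclusion.

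The principal obstacle is precisely the extension step, since $\mathcal{K}$ is $w^{*}$-compact but \emph{not} $w^{*}$-dense in $L$ (only its convex hull is). One therefore cannot argue by mere density; instead one must adapt the inductive scheme of Lemma~\ref{l67} --- in particular Lemma~\ref{l69}, which hinges on the continuity of $f$ on $[0,1]$ --- to produce, for any base point $s\in L$, witnessing elements drawn from $\mathcal{K}$ itself. The relevant ingredients are the stability of $\mathcal{K}$ under the ``disjoint-support'' grouping of its representatives, the affineness of $f$, and Lemma~\ref{l68} ensuring $\lim_{n}f(s'|_{>n})=0$, which together guarantee that the tail-extraction argument of Lemma~\ref{l67} goes through verbatim at every $s\in L$.
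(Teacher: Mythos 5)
Your reduction to Proposition~\ref{p66} and your use of Lemma~\ref{l67} on $\mathcal{K}$ are fine, and you have correctly located the crux: passing from $D(\mathcal{K})$ to $D(L)$. But the method you propose for that step does not work. The quantity $osc_{\xi+1}f(s)$ at a point $s\in L$ is defined by a $\overline{\lim}$ over \emph{all} $s'$ in $w^{*}$-neighborhoods of $s$ inside $L$, so you are not free to restrict the approximating elements to $\mathcal{K}$; and the entire inductive scheme of Lemma~\ref{l67} (the truncations $s_{|n_{0}}$, $s_{|(n_{1},n_{2}]}$, $s_{|>n}$, Lemma~\ref{l68}, the sign-adjustment producing $s'$ at the end of the successor step) is formulated in terms of the representation $s=\sum_{i}\alpha_{i}(\delta_{t_{i}^{2}}-\delta_{t_{i}^{1}})$ with pairwise disjoint intervals. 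A generic element of $L=B_{(V_{X}^{0})^{*}}$ is only a $w^{*}$-limit of convex combinations of such elements and admits no tail decomposition of this kind, so the assertion that the argument ``goes through verbatim at every $s\in L$'' has no content: the objects $s'_{|>n}$ you invoke are simply undefined there. Affineness of $f$ alone does not let you propagate an oscillation bound from a norming $w^{*}$-compact set to the whole ball; that transfer is a genuine theorem, not a formality.

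The paper closes this gap by a different, functional-analytic route. Since $\ell_{1}$ does not embed in $V_{X}^{0}$, Odell--Rosenthal gives a bounded sequence $(f_{n})_{n}\subset V_{X}^{0}$ converging $w^{*}$ to $f$; restricted to $\mathcal{K}$ these are continuous functions converging pointwise to $f_{|\mathcal{K}}\in D(\mathcal{K})$, so by the Haydon--Odell--Rosenthal theory some convex block $(g_{n})_{n}$ has $(g_{n}{}_{|\mathcal{K}})_{n}$ equivalent to the summing basis of $c_{0}$ in $C(\mathcal{K})$. Because $\mathcal{K}$ norms $V_{X}^{0}$, the sequence $(g_{n})_{n}$ remains equivalent to the summing basis in $V_{X}^{0}$ itself and still converges $w^{*}$ to $f$, and the summing-basis characterization of $D$ membership for elements of the bidual then yields $f\in D(L)$. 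If you want to keep your outline, you must replace your extension step with an argument of this type (or an explicit proof that, for affine functions, $D(\mathcal{K})=D(L)$ when $\mathcal{K}$ is $w^{*}$-compact and norming); as written, the proposal is incomplete at its decisive point.
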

\begin{proof} If $f\in V_{X}^{0}$ the conclusion trivially holds.
Hence assume that $f\in V_{X}\setminus V_{X}^{0}$. From
Lemma~\ref{l67} we obtain that $f\in D(\mathcal{K})$. Since
$\ell_{1}$ does not embed in $V_{X}^{0}$, Odell- Rosenthal's
theorem \cite{OR}, yields that there exists a bounded sequence
$(f_{n})_{n}$ in $V_{X}^{0}$ converging $weak^{*}$ to $f$. Hence
$(f_{n}{}_{|\mathcal{K}})_{n}\subset C(\mathcal{K})$ and converges
pointwise to $f_{|\mathcal{K}}$. From \cite{HOR} we obtain that
there exists $(g_{n})_{n}$ block of convex combinations of
$(f_{n})_{n}$ such that $(g_{n}{}_{|\mathcal{K}})_{n}$ is
isomorphic to the summing basis of $c_{0}$. Since $\mathcal{K}$
norms $V_{X}^{0}$, we obtain that $(g_{n})_{n}$ remains isomorphic
to the summing basis and converges $weak^{*}$ to $f$. Hence $f\in
D(L).$
\end{proof}
\noindent This proposition and known results yield the following.
\begin{proposition}\label{p611}
Let $Y$ be a non reflexive subspace of $V_{X}^{0}$. Assume that
there exists $f\in\overline{Y}^{w^{*}}$ such that $f\in
C[0,1]\cap(V_{X}\setminus V_{X}^{0})$. Then $c_{0}$ is isomorphic
to a subspace of $Y$.
\end{proposition}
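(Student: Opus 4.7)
The plan is to mimic the argument of Proposition 6.10 but to produce the bounded approximating sequence inside $Y$ rather than inside $V_{X}^{0}$ itself. Concretely, I proceed in four steps.

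First, since $f\in C[0,1]\cap(V_{X}\setminus V_{X}^{0})$, Proposition 6.10 applies directly and tells us that $f$ belongs to $D(L)$, where $L=(B_{(V_{X}^{0})^{*}},w^{*})$; in particular the restriction $f|_{\mathcal{K}}$ is a difference of bounded semicontinuous functions on the norming $w^{*}$-compact set $\mathcal{K}$ of (\ref{e61}). Second, I want to produce a bounded sequence in $Y$ converging $w^{*}$ to $f$. The space $Y\subset V_{X}^{0}\equiv JF_{X}$ does not contain $\ell_{1}$ by Theorem~\ref{l112}, so the theorem of Odell--Rosenthal \cite{OR} applies to $Y$: every element of $Y^{**}$ is the $w^{*}$-sequential limit of a bounded sequence in $Y$. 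Since $f\in\overline{Y}^{w^{*}}\subset V_{X}=(V_{X}^{0})^{**}$, the canonical identification $Y^{**}\cong Y^{\perp\perp}$ shows that $f$ determines an element of $Y^{**}$, and we obtain a bounded sequence $(f_{n})\subset Y$ with $f_{n}\to f$ in the $w^{*}$-topology of $V_{X}$. In particular $f_{n}(s)\to f(s)$ for every $s\in\mathcal{K}$.

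Third, I apply the Bessaga--Pe\l czy\'nski / Haydon--Odell--Rosenthal extraction principle \cite{HOR} on the compact set $\mathcal{K}$. Each $f_{n}|_{\mathcal{K}}$ lies in $C(\mathcal{K})$ (evaluation is $w^{*}$-continuous), the sequence is bounded, and it converges pointwise on $\mathcal{K}$ to $f|_{\mathcal{K}}$. I have to check that $f|_{\mathcal{K}}\notin C(\mathcal{K})$: if it were continuous, then since $f$ is affine on $(V_{X}^{0})^{*}$ and $B_{(V_{X}^{0})^{*}}=\overline{\rm{co}}^{w^{*}}(\mathcal{K})$, it would be $w^{*}$-continuous on the whole ball, hence $f\in V_{X}^{0}$, contradicting our hypothesis. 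Combined with $f|_{\mathcal{K}}\in D(\mathcal{K})$, the cited result yields a convex block sequence $(g_{n})$ of $(f_{n})$ such that $(g_{n}|_{\mathcal{K}})$ is isomorphic to the summing basis of $c_{0}$.

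Finally, since $\mathcal{K}$ norms $V_{X}^{0}$, the sequence $(g_{n})$ is already equivalent to the summing basis in the norm of $V_{X}^{0}$, and therefore the successive differences $(g_{n+1}-g_{n})$ are equivalent to the unit vector basis of $c_{0}$. These differences lie in $Y$ (as $(g_{n})\subset Y$), so their closed linear span provides the desired isomorphic copy of $c_{0}$ inside $Y$. The only delicate point in the plan is the second step, namely transferring Odell--Rosenthal from the ambient space $V_{X}^{0}$ to the subspace $Y$ so that the approximating sequence actually lies in $Y$; everything else consists in routing the conclusion of Proposition 6.10 through the extraction theorem of \cite{HOR}.
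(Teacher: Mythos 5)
Your proof is correct and is essentially the one the paper intends: Proposition~\ref{p611} is stated there as a consequence of Proposition~\ref{p610} ``and known results,'' and you supply exactly those ingredients --- Odell--Rosenthal applied inside $Y$ (valid since $\ell_{1}$ does not embed in $Y\subset V_{X}^{0}$ and $\overline{Y}^{w^{*}}\cong Y^{**}$), followed by the Haydon--Odell--Rosenthal extraction over the isometrically norming compact set $\mathcal{K}$, with the summing basis pulled back into $Y$ and differenced to get the $c_{0}$ basis. The one step to tighten is your claim that continuity of $f|_{\mathcal{K}}$ would force $f\in V_{X}^{0}$ via affineness over the closed convex hull of $\mathcal{K}$ (not a valid general principle); instead observe that if $f|_{\mathcal{K}}$ were continuous then, by dominated convergence, $f_{n}|_{\mathcal{K}}\to f|_{\mathcal{K}}$ weakly in $C(\mathcal{K})$, so Mazur gives convex blocks converging uniformly on $\mathcal{K}$ and hence in $V_{X}^{0}$-norm (as $\mathcal{K}$ norms isometrically), contradicting $f\notin V_{X}^{0}$.
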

Up to this point we have proved one direction of the
Theorem~\ref{t63}. For the inverse part we need the following
lemma.
\begin{lemma}\label{l612}
Let $(f_{n})_{n}$ be a normalized weakly null sequence in
$V_{X}^{0}$. Let $\varepsilon>0$, $([\alpha_{n},\beta_{n}])_{n}$
be a sequence of intervals such that
$\lim_{n}(\beta_{n}-\alpha_{n})=0$ and $\vert
f_{n}(\alpha_{n})-f_{n}(\beta_{n})\vert >\varepsilon,$ for all
$n\in\mathbb{N}.$ Then there exists a subsequence
$(f_{n_{l}})_{l}$ of $(f_{n})_{n}$ which admits a lower
$X$-estimate, i.e. there exists $c>0$ such that
$$
c\,\Vert\sum_{l}\alpha_{l}e_{l}\Vert_{X}\leq \Vert\sum_{l}
\alpha_{l}f_{n_{l}}\Vert_{V_{X}^{0}}\,\,\text{for all}\,\,
(\alpha_{l})_{1}^{n}\subset\mathbb{R}\,\,.
$$
\end{lemma}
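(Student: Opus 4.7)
The plan is a diagonal / gliding-hump construction. The essential input from the section is Theorem~\ref{t61}(iii): on bounded subsets of $V_X^{0}$ the weak topology coincides with the topology of pointwise convergence, so the weak nullity of $(f_n)_n$ forces $f_n(t)\to 0$ for every $t\in[0,1]$.

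Fix $\eta_l=\varepsilon/(C\cdot 2^l)$ with a constant $C>0$ to be chosen at the end. I would inductively select $n_1<n_2<\cdots$ so that, writing $I_l=[\alpha_{n_l},\beta_{n_l}]$, three properties hold: (a)~the intervals $(I_l)$ are pairwise disjoint, which is possible because $\beta_n-\alpha_n\to 0$ allows us to shrink $I_l$ further at each step; (b)~for every $k<l$, $|f_{n_l}(\alpha_{n_k})|+|f_{n_l}(\beta_{n_k})|<\eta_l$, possible by pointwise nullity of $(f_n)$ on the finite set already chosen; and (c)~for every $k<l$, $|f_{n_k}(\beta_{n_l})-f_{n_k}(\alpha_{n_l})|<\eta_l$, possible by uniform continuity of the previously chosen $f_{n_k}\in V_X^{0}\subset C[0,1]$, taking $\beta_{n_l}-\alpha_{n_l}$ small enough.

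Given scalars $(\alpha_l)_{l=1}^N$, set $g=\sum_l \alpha_l f_{n_l}$ and pick any partition $P$ of $[0,1]$ whose nodes include every $\alpha_{n_l}$ and $\beta_{n_l}$. Using $1$-unconditionality of $(e_i)_i$ to discard from $\alpha_X(g,P)$ all jumps except those over the intervals $I_l$, and $1$-symmetry to relabel the $N$ surviving indices as $e_1,\dots,e_N$, we obtain
\[
\|g\|_{V_X^{0}}\ \ge\ \Bigl\|\sum_{l=1}^N \bigl(g(\beta_{n_l})-g(\alpha_{n_l})\bigr)e_l\Bigr\|_X.
\]
Write $g(\beta_{n_l})-g(\alpha_{n_l})=\alpha_l d_l+E_l$, where $d_l:=f_{n_l}(\beta_{n_l})-f_{n_l}(\alpha_{n_l})$ has $|d_l|>\varepsilon$ and $E_l:=\sum_{k\ne l}\alpha_k\bigl(f_{n_k}(\beta_{n_l})-f_{n_k}(\alpha_{n_l})\bigr)$. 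The main term satisfies $\|\sum_l \alpha_l d_l e_l\|_X\ge\varepsilon\|\sum_l \alpha_l e_l\|_X$, obtained by multiplying coordinate-wise by $\varepsilon/d_l$ (of modulus $\le 1$) and invoking $1$-unconditionality. Using $|\alpha_k|\le\|\sum_m \alpha_m e_m\|_X$ (from $1$-unconditionality of the normalized basis) together with properties (b)--(c), we get $|E_l|\le\bigl((l-1)\eta_l+\sum_{k>l}\eta_k\bigr)\|\sum_m \alpha_m e_m\|_X$, so with $\eta_l=\varepsilon/(C\cdot 2^l)$ and the triangle inequality, $\|\sum_l E_l e_l\|_X\le(\text{const}/C)\,\varepsilon\,\|\sum_m \alpha_m e_m\|_X$, which is $\le(\varepsilon/2)\|\sum_m \alpha_m e_m\|_X$ for $C$ large. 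Combining the two estimates yields $\|g\|_{V_X^{0}}\ge(\varepsilon/2)\|\sum_l \alpha_l e_l\|_X$, the desired lower $X$-estimate with constant $c=\varepsilon/2$.

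The main obstacle is controlling the error $\|\sum_l E_l e_l\|_X$ uniformly in $N$ and in $(\alpha_l)$: a naive bound $|E_l|\le$~const.\ would blow up once summed against $e_l$ in an infinite-dimensional $X$. What saves the day is the rapid geometric decay $\eta_l=\varepsilon/(C\cdot 2^l)$, the coordinate bound $|\alpha_k|\le\|\sum_m \alpha_m e_m\|_X$, and the elementary estimate $\|\sum_l l\cdot 2^{-l}e_l\|_X\le\sum_l l\cdot 2^{-l}<\infty$ (triangle inequality for the normalized basis). Together these make $C$ choosable once and for all, independently of $N$, which is exactly what finishes the argument.
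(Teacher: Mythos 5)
Your error analysis (the geometric weights $\eta_l$, the coordinate bound $|\alpha_k|\le\Vert\sum_m\alpha_m e_m\Vert_X$, the partition/unconditionality reduction, and the coordinate-wise multiplication giving $\Vert\sum_l\alpha_l d_l e_l\Vert_X\ge\varepsilon\Vert\sum_l\alpha_l e_l\Vert_X$) is sound and in substance matches the paper's conditions (4)--(5) and its final display. But your step (a) contains a genuine gap: it is \emph{not} true that a subsequence of the original intervals $[\alpha_{n_l},\beta_{n_l}]$ can be made pairwise disjoint just because $\beta_n-\alpha_n\to 0$. Take $\alpha_n=\tfrac12-\tfrac1{2n}$, $\beta_n=\tfrac12+\tfrac1{2n}$: all the intervals contain $\tfrac12$, so no subsequence is disjoint. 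Your parenthetical fix --- ``shrink $I_l$ further at each step'' --- does restore disjointness, but it breaks the rest of your proof as written: after shrinking, the quantity $d_l$ is the increment of $f_{n_l}$ over the \emph{new} endpoints, and the hypothesis only gives $|f_{n_l}(\beta_{n_l})-f_{n_l}(\alpha_{n_l})|>\varepsilon$ over the \emph{original} ones. A subinterval of $[\alpha_{n_l},\beta_{n_l}]$ chosen merely to dodge the earlier intervals can carry an arbitrarily small increment, so the main-term estimate collapses.

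This is exactly the point where the paper does its work: after passing to monotone subsequences with $\alpha_n\to\alpha\leftarrow\beta_n$, it uses pointwise nullity at the common accumulation point (so $|f_n(\alpha)|<\varepsilon/4$ eventually) to split each interval at $\alpha$ and keep the half carrying an increment $>\varepsilon/2$, then uses continuity of $f_n$ to trim that half away from $\alpha$ while retaining an increment $>\varepsilon/4$; since the later intervals collapse onto $\alpha$, the trimmed pieces $[c_n,d_n]$ become pairwise disjoint. You have all the needed ingredients on the table (pointwise nullity via Theorem~\ref{t61}(iii) and continuity), but you never deploy them to relocate the large increment onto disjoint subintervals. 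Once you insert this selection step (accepting the degraded constant $\varepsilon/4$ in place of $\varepsilon$, hence a final constant like $\varepsilon/8$), the rest of your argument goes through and is essentially the paper's proof.
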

\begin{proof}Passing to a subsequence, we may assume that
$(\alpha_{n})_{n}$, $(\beta_{n})_{n}$ are monotone sequences, such
that $\lim_{n}\alpha_{n}=\alpha=\lim_{n}\beta_{n}$. Using that
$(f_{n})_{n}$ is weakly null and the fact that finite subsets of
$C(0,1)$ are equicontinuous, by a diagonal process we obtain a
subsequence $([c_{n},d_{n}])_{n\in L}$, $L\subset \mathbb{N}$,
such that
\begin{enumerate}
\item $[c_{n},d_{n}]\subset [\alpha_{n},\beta_{n}]$ for all
$n\in L$.

\smallskip
\item For $n_{1},n_{2}\in L$, $n_{1}<n_{2}$, we have that
$[c_{n_{1}},d_{n_{1}}]\cap
[\alpha_{n_{2}},\beta_{n_{2}}]=\emptyset $.

\smallskip
\item $\vert f_{n}(d_{n})-f_{n}(c_{n})\vert>\frac{\varepsilon}{4}$
, for all $n\in L.$

\smallskip
\item For all $n\in L$,
$\sum_{\substack{k<n \\ k\in L}}\vert
f_{k}(d_{n})-f_{k}(c_{n})\vert <\frac{\varepsilon}{2\cdot
8^{n+1}}$, from the equicontinuity.

\smallskip
\item For all $n\in L$,
$\sum_{L\ni k>n}\vert f_{k}(d_{n})-f_{k}(c_{n})\vert <
\frac{\varepsilon}{2\cdot 8^{n+1}}$, since $(f_{n})_{n}$ is weakly
null.
\end{enumerate}
Set $(f_{n})_{n\in L}=(f_{n_{l}})_{l\in \mathbb{N}}.$ We prove
that the subsequence $(f_{n_{l}})_{l\in \mathbb{N}}$ admits a
lower $\frac{\varepsilon}{8}-X$ estimate. Indeed,
\begin{align*}
\Vert\sum_{l=1}^{k}\alpha_{l}f_{n_{l}}&\Vert_{V_{X}^{0}}\geq \Vert
\sum_{j=1}^{k}\left( \sum_{l=1}^{k}\alpha_{l}f_{n_{l}}(d_{j})-
\sum_{l=1}^{k}\alpha_{l}f_{n_{l}}(c_{j})\right)e_{j}\Vert_{X}\\
\geq & \Vert\sum_{l=1}^{k} \alpha_{l}
\left(f_{n_{l}}(d_{n_{l}})-f_{n_{l}}(c_{n_{l}})\right)e_{l}\Vert_{X}-
\max_{1\leq l\leq k}\vert\alpha_{l}\vert\sum_{j=1}^{k}
\sum_{l\not=j}\vert f_{n_{l}}(d_{n_{j}})-f_{n_{l}}(c_{n_{j}})\vert\\
\geq &\frac{\varepsilon}{4}
\Vert\sum_{l=1}^{k}\alpha_{l}e_{l}\Vert_{X}- \max_{1\leq l\leq
k}\vert\alpha_{l}\vert\sum_{j=1}^{k}\left((\sum_{l<j}+\sum_{l>j})
\vert f_{n_{l}}(d_{n_{j}})-f_{n_{l}}(c_{n_{j}})\vert\right)
\\
\geq & \frac{\varepsilon}{4}
\Vert\sum_{l=1}^{k}\alpha_{l}e_{l}\Vert_{X}-(\max_{1\leq l\leq
k}\vert\alpha_{l}\vert)\frac{\varepsilon}{8}\geq
\frac{\varepsilon}{8}\Vert\sum_{l=1}^{k}\alpha_{l}e_{l}\Vert_{X}\,\,.
\end{align*}
This completes the proof of the lemma.
\end{proof}
\begin{proposition}\label{p613}
Let  $f\in V_{X}\setminus C[0,1]$. Then for every bounded sequence
$(f_{n})_{n}$ converging $weak^{*}$ to $f$ there exists a
subsequence $(f_{n_{j}})_{j}$ such that
$(f_{n_{2j+1}}-f_{n_{2j}})_{j}$ has a lower $X$ estimate.
\end{proposition}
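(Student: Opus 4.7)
The plan is to convert the discontinuity of $f$ into a uniform oscillation of the successive differences $g_j := f_{n_{2j+1}}-f_{n_{2j}}$ on tiny intervals that shrink to a common point, and then to apply Lemma~\ref{l612}. Since $V_X=(V_X^0)^{\ast\ast}$ by Theorem~\ref{t61}(ii) and each point evaluation $\delta_t-\delta_0$ belongs to the norming set $\mathcal K$ of (\ref{e61}), the hypothesis that $(f_n)_n$ is $w^{\ast}$-convergent to $f$ forces $f_n(t)\to f(t)$ for every $t\in[0,1]$. Since $f\notin C[0,1]$, I fix a point $t_0\in[0,1]$, a number $\varepsilon>0$ and a sequence $s_k\to t_0$ with $|f(s_k)-f(t_0)|\geq\varepsilon$ for every $k$.

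Next I build inductively on $j$ indices $n_0<n_1<n_2<\cdots$ and points $b_j=s_{k_j}$ with $k_j\uparrow\infty$ satisfying
\begin{equation*}
|f_{n_{2j}}(t_0)-f_{n_{2j}}(b_j)|<\tfrac{\varepsilon}{8}\quad\text{and}\quad|f_{n_{2j+1}}(t_0)-f_{n_{2j+1}}(b_j)|>\tfrac{7\varepsilon}{8}.
\end{equation*}
At the even step, having selected $n_{2j}$, I use that $f_{n_{2j}}\in V_X^0\subset C[0,1]$ is continuous at $t_0$ to choose $b_j$ close enough to $t_0$ that the first estimate holds (and with $|b_j-t_0|$ as small as I wish for later use). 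At the odd step, I use pointwise convergence at the two now-fixed points $t_0$ and $b_j$ to pick $n_{2j+1}>n_{2j}$ with $|f_{n_{2j+1}}(t)-f(t)|<\varepsilon/16$ for $t\in\{t_0,b_j\}$; combined with $|f(t_0)-f(b_j)|\geq\varepsilon$ this yields the second estimate. Subtracting produces $|g_j(t_0)-g_j(b_j)|>3\varepsilon/4$ on an interval of length $b_j-t_0\to 0$.

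The sequence $(g_j)\subset V_X^0$ is bounded (as $(f_n)_n$ is), and because on $V_X^0$ the $w^{\ast}$-topology of $(V_X^0)^{\ast\ast}$ restricts to the weak topology, the $w^{\ast}$-convergence of $(f_n)$ in $V_X$ forces $(g_j)$ to be weakly null in $V_X^0$. A single-interval partition estimate yields $\|g_j\|_{V_X^0}\geq|g_j(t_0)-g_j(b_j)|>3\varepsilon/4$, so normalizing $h_j=g_j/\|g_j\|_{V_X^0}$ produces a normalized weakly null sequence in $V_X^0$ with $|h_j(t_0)-h_j(b_j)|$ bounded below by a positive constant. Lemma~\ref{l612} then applies to $(h_j)$ on the intervals $[t_0,b_j]$ and produces a further subsequence admitting a lower $X$-estimate; since the scalars $\|g_j\|_{V_X^0}$ are bounded both above and below, the $1$-unconditionality of the symmetric basis $(e_n)$ transfers this lower $X$-estimate (with a different constant) to the corresponding subsequence of $(g_j)_j$, which is what the proposition asserts.

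The main technical point will be matching the geometry of my intervals with the hypothesis of Lemma~\ref{l612}: all my intervals share the left endpoint $t_0$, so the almost disjoint subintervals $[c_j,d_j]\subset[t_0,b_j]$ produced inside its proof must be separated using the equicontinuity at $t_0$ of the already-fixed finite family $\{f_{n_0},\ldots,f_{n_{2j+1}}\}$. I would therefore shrink each $|b_{j+1}-t_0|$ in the induction below the appropriate modulus of equicontinuity of that finite family for tolerance $\varepsilon/2^{j+10}$, so that the diagonal extraction inside Lemma~\ref{l612} goes through verbatim.
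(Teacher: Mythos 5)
Your proof is correct and follows essentially the same route as the paper's: both exploit the discontinuity of $f$ at a point $t_{0}$ together with pointwise (weak$^{*}$) convergence to make $f_{n_{2j+1}}$ oscillate by almost $\varepsilon$ on a shrinking interval $[t_{0},b_{j}]$ while the continuity of the already-fixed $f_{n_{2j}}$ keeps its oscillation there small, so the difference oscillates by at least $3\varepsilon/4$, and then both invoke Lemma~\ref{l612}. Your extra care about normalization and about the intervals sharing the endpoint $t_{0}$ is sound but already absorbed by the monotone-subsequence and equicontinuity argument inside the proof of Lemma~\ref{l612} itself.
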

\begin{proof}
Since $f$ is not continuous there exists $t\in [0,1]$,
$(t_{k})_{k}$ converging to $t$ and $\e>0$ such that $\vert
f(t)-f(t_{k})\vert>\e$. Therefore there exist subsequences
$(f_{n_{j}})_{j}$ and $(t_{k_{j}})_{j}$ such that $\vert
f_{n_{j}}(t)-f_{n_{j}}(t_{k_{j}})\vert>\e$. Further since each
$f_{n_{j}}$ is continuous we may assume that $\vert
f_{n_{j}}(t_{k_{j+1}})-f_{n_{j}}(t)\vert <\frac{\e}{4}$. Clearly
$(f_{n_{2j+1}}-f_{n_{2j}})_{j}$ satisfies the assumption of
Lemma~\ref{l612}, and hence has a further subsequence with lower
$X-$estimate.
\end{proof}
As a corollary of Proposition~\ref{p613} we get the following.
\begin{corollary}\label{c614}
If $f\in V_{X}\setminus C[0,1]$, then $f\not\in D(L)$, where
$L=(B_{(V_{X}^{0})^{*}},w^{*})$.
\end{corollary}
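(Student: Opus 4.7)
The plan is to argue by contradiction, combining Proposition~\ref{p613} with the Haydon--Odell--Rosenthal analysis already carried out in the proof of Proposition~\ref{p610}. Suppose $f\in V_{X}\setminus C[0,1]$ and $f\in D(L)$. Since the condition defining $V_{X}^{0}$ forces uniform continuity on $[0,1]$, we have $V_{X}^{0}\subset C[0,1]$, so in fact $f\in V_{X}\setminus V_{X}^{0}$.

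First I would produce a bounded sequence in $V_{X}^{0}$ converging $w^{*}$ to $f$. Because $\ell_{1}\not\hookrightarrow V_{X}^{0}\equiv JF_{X}$ by Theorem~\ref{l112}, the Odell--Rosenthal theorem \cite{OR} furnishes a bounded sequence $(f_{n})_{n}\subset V_{X}^{0}$ with $f_{n}\to f$ pointwise on $L$. Then, repeating the argument of Proposition~\ref{p610}, which invokes \cite{HOR} precisely because $f\in D(L)\cap(V_{X}\setminus V_{X}^{0})$, I would extract a convex block $(g_{n})_{n}$ of $(f_{n})_{n}$ whose restrictions to $\mathcal{K}$ are equivalent to the summing basis of $c_{0}$. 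Since $\mathcal{K}$ norms $V_{X}^{0}$ (Lemmas~\ref{l1} and~\ref{l5} applied via the duality that sends $\mathcal{S}$ onto $\mathcal{K}$), this equivalence upgrades to the full $V_{X}^{0}$-norm. In particular the differences $h_{n}=g_{n+1}-g_{n}$ satisfy an upper $c_{0}$-estimate
$$
\Vert\sum_{n}\alpha_{n}h_{n}\Vert_{V_{X}^{0}}\leq C\max_{n}\vert\alpha_{n}\vert,
$$
and this upper bound is inherited by any sequence of the form $(g_{n_{2l+1}}-g_{n_{2l}})_{l}$, since each such vector is a consecutive block sum of the $h_{n}$'s (a block sum of $c_{0}$-basis vectors has norm equal to the maximum of the coefficients).

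Next I would invoke Proposition~\ref{p613} on $(g_{n})_{n}$: as $(g_{n})_{n}$ is bounded, converges $w^{*}$ to $f$, and $f\notin C[0,1]$, there exist a subsequence $(g_{n_{j}})_{j}$ and a constant $c>0$ such that
$$
c\,\Vert\sum_{l}\alpha_{l}e_{l}\Vert_{X}\leq \Vert\sum_{l}\alpha_{l}(g_{n_{2l+1}}-g_{n_{2l}})\Vert_{V_{X}^{0}}
$$
for all finitely supported scalar sequences. Combining this lower $X$-estimate with the upper $c_{0}$-estimate derived above and specializing to $\alpha_{l}=1$ for $l=1,\ldots,N$ yields
$$
c\,\Vert\sum_{l=1}^{N}e_{l}\Vert_{X}\leq C\qquad\text{for every }N\in\mathbb{N},
$$
contradicting the property, recorded in the Notation preceding Lemma~\ref{l1}, that $\Vert\sum_{i=1}^{n}e_{i}\Vert_{X}\to\infty$ (which holds because $X$ is reflexive with $1$-symmetric basis, cf.~\cite{LT}).

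The main obstacle is verifying that the HOR-type convex block selection really produces $(g_{n})_{n}$ equivalent to the summing basis in the $V_{X}^{0}$-norm, and not merely in the restriction to $\mathcal{K}$; but this is exactly the passage justified inside the proof of Proposition~\ref{p610}, where the norming property of $\mathcal{K}$ plays the decisive role, so no new difficulty arises here.
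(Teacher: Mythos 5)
Your proposal is correct and follows exactly the route the paper intends: the paper leaves this corollary as an immediate consequence of Proposition~\ref{p613}, the implicit argument being precisely your combination of Odell--Rosenthal, the Haydon--Odell--Rosenthal convex-block/summing-basis extraction already used in Proposition~\ref{p610} (with the norming set upgrading the $c_{0}$-estimate to the full norm), and the resulting clash between the upper $c_{0}$-estimate on the differences and the lower $X$-estimate from Proposition~\ref{p613}, since $\Vert\sum_{i=1}^{n}e_{i}\Vert_{X}\to\infty$. No gaps.
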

\begin{proof}[Proof of Theorem~\ref{t63}]
Follows from Proposition~\ref{p610} and Corollary~\ref{c614}.
\end{proof}
We pass now to give a criterion for upper $X-$estimate.
\begin{lemma}\label{l615}
Let $(f_{n})_{n}$ be a normalized weakly null sequence in
$V_{X}^{0}$ such that $\Vert f_{n}\Vert_{\infty}\geq C$ for every
$n\in\N$. Then for every $\e>0$ there exist $M\in [\N]$ and
$t_{1},\ldots,t_{k(\e)}$ points such that, for every $\delta>0$
there exists $n_{0}$, such that
$$
\Vert f_{n}{}_{|[0,1]\setminus \cup_{i=1}^{k(\e)}B(t_{i},\delta)}
\Vert_{\infty}<\e\,\,\text{for every}\,\,\,\, n>n_{0}, n\in M\,\,.
$$
\end{lemma}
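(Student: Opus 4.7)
Fix $\e>0$. The plan is to identify, for each $f_n$, the ``spike regions'' where $|f_n|\geq\e/2$ that actually meet $\{|f_n|\geq\e\}$, to bound their number uniformly in $n$ by exploiting the growth $\Vert\sum_{j=1}^{N} e_j\Vert_X\to\infty$ implied by reflexivity of $X$, and to extract a subsequence along which these regions shrink to at most finitely many limit points $t_1,\ldots,t_k$.

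First I would set $A_n=\{t\in[0,1]:|f_n(t)|\geq\e\}$ and $B_n=\{t\in[0,1]:|f_n(t)|\geq\e/2\}$ (both closed, by continuity of $f_n$) and let $C_n^1,\ldots,C_n^{p_n}$ be the connected components of $B_n$ that meet $A_n$, ordered from left to right. Since $f_n(0)=0$ lies outside $B_n$, picking $s_{n,i}\in C_n^i\cap A_n$ and, in each open gap between $C_n^i$ and $C_n^{i+1}$, a point $u_{n,i}\notin B_n$, the partition $0<s_{n,1}<u_{n,1}<s_{n,2}<\cdots<u_{n,p_n-1}<s_{n,p_n}$ produces $2p_n-1$ consecutive differences, each of magnitude strictly greater than $\e-\e/2=\e/2$. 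By $1$-unconditionality of $(e_j)_j$ this yields
\[
1=\Vert f_n\Vert_{V_X}\geq\frac{\e}{2}\Vert\sum_{j=1}^{2p_n-1}e_j\Vert_X,
\]
and since $\Vert\sum_{j=1}^{N}e_j\Vert_X\to\infty$ (reflexivity of $X$; see the Notation in Section~\ref{sl1}) I would conclude a uniform bound $p_n\leq P(\e)$. Pigeonhole then produces $M\in[\mathbb{N}]$ on which $p_n=p$ is constant, and a further Bolzano--Weierstrass extraction ensures that a chosen representative $t_{n,i}\in C_n^i$ converges to some $t_i$ for each $i\leq p$.

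The decisive step will be to show that $\mathrm{diam}(C_n^i)\to 0$ along $M$ for every $i$. If this failed, a sub-subsequence would provide intervals $[a_n,b_n]\subset C_n^i$ of length at least some $\eta>0$ converging to a nondegenerate $[a,b]$; every $t$ in the interior of $[a,b]$ would eventually lie in $C_n^i\subset B_n$, forcing $|f_n(t)|\geq\e/2$ and contradicting the pointwise convergence $f_n(t)\to 0$, which is available because by Theorem~\ref{t61}(iii) the weak topology on $V_X^0$ agrees with pointwise convergence on bounded sets. Once the diameters shrink, then for any $\delta>0$ one picks $n_0$ so that $\mathrm{diam}(C_n^i)<\delta/2$ and $|t_{n,i}-t_i|<\delta/2$ for all $n>n_0$ in $M$; this forces $C_n^i\subset B(t_i,\delta)$, hence $A_n\subset\bigcup_i B(t_i,\delta)$, and the desired estimate follows with $k(\e):=p$. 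The main obstacle is precisely this diameter-shrinkage step, since it is the only place where the weak nullity of $(f_n)$ (rather than just the $V_X$-norm bound) is invoked.
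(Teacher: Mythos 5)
Your proof is correct, and it rests on the same two pillars as the paper's own argument: the alternating spike/valley partition, which via $1$-unconditionality gives $1=\Vert f_{n}\Vert_{V_{X}}\geq\frac{\e}{2}\Vert\sum_{j=1}^{N}e_{j}\Vert_{X}$ and hence a uniform bound on the number of $\e$-spikes because $\Vert\sum_{j=1}^{N}e_{j}\Vert_{X}\to\infty$, together with pointwise convergence to zero (weak nullity) to pin those spikes down to finitely many limit points. The paper runs the same estimate as a proof by contradiction (if no finite set of points worked, one could inductively extract $k$ spikes accumulating at distinct points $s_{j}$ with $\vert f_{n}(s_{j})\vert<\frac{\e}{2}$, forcing $\Vert f_{n}\Vert\geq\frac{\e}{2}\Vert\sum_{i=1}^{k}e_{i}\Vert_{X}$), whereas your direct construction through the components of $\{\vert f_{n}\vert\geq\frac{\e}{2}\}$ is simply a more explicit rearrangement of the same idea.
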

\begin{proof}
Suppose that the conclusion does not hold. Inductively we choose
$M_{1}\supset\ldots\supset M_{k}$, $(t_{n}^{j})_{n\in M_{j}}$,
$s_{j}$, $\delta_{j}>0$, $j\leq k$ such that
\begin{enumerate}
\item $\vert f_{n}(t_{n}^{j})\vert >\e$ for every $n\in M_{j}$.
\item $\lim_{n\in M_{j}}t_{n}^{j}=s_{j}$ for every $j\leq k$
and $s_{i}\not=s_{j}$ for $i\not= j$.
\item $\Vert f_{n}{}_{|[0,1]\setminus \cup_{i=1}^{j}
B(s_{i},\delta_{j})}\Vert_{\infty}>\e$ for every $n\in M_{j+1}$.
\end{enumerate}
Passing to a further subset of $M_{k}$ we may assume  that for
every $n\in M_{k}$ it holds, $\vert
f_{n}(s_{j})\vert<\frac{\e}{2}$ for every $j\leq k$. Then for
$n\in M_{k}$ sufficiently large, we have that
$$
\Vert f_{n}\Vert_{V_{X}^{0}}\geq
\frac{\e}{2}\Vert\sum_{i=1}^{k}e_{i}\Vert_{X}\,\,,
$$
a contradiction for large $k$, since
$\Vert\sum_{i=1}^{k}e_{i}\Vert_{X}\to\infty$.
\end{proof}
\begin{lemma}\label{l616}
Let $(f_{n})_{n}$ be a normalized weakly null sequence in
$V_{X}^{0}$ such that $\Vert f_{n}\Vert_{\infty}\geq C$ for every
$n\in\N$. Then for every $\e>0$ there exists $M\in [\N]$ such the
sets $U_{n}=\{t :\vert f_{n}(t)\vert\geq\e\}$ are pairwise
disjoint.
\end{lemma}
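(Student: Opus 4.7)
The plan is to combine Lemma~\ref{l615} with a diagonal construction that exploits continuity of each individual $f_n$ together with pointwise nullity of $(f_n)_n$. Note that each $U_n$ is closed in $[0,1]$ since $|f_n|$ is continuous.

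First I would apply Lemma~\ref{l615} with the given $\varepsilon>0$ to obtain an infinite set $M'\subset\mathbb{N}$ and finitely many points $t_1,\dots,t_k\in[0,1]$ with the following property: for every $\delta>0$ there exists $n_0$ such that $|f_n(t)|<\varepsilon$ whenever $t\notin\bigcup_{i=1}^{k}B(t_i,\delta)$ and $n\in M'$, $n>n_0$. Rephrased, this gives $U_n\subset\bigcup_{i=1}^{k}B(t_i,\delta)$ eventually in $M'$, so every $U_n$ is asymptotically concentrated near the finite set $\{t_1,\dots,t_k\}$. Next, since $(f_n)_n$ is weakly null in $V_X^0$ and since on bounded subsets of $V_X^0$ the weak topology coincides with pointwise convergence (Theorem~\ref{t61}(iii)), we have $f_n(t_i)\to 0$ for each $i\leq k$. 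Continuity of each $f_n$ then yields, for every $n\in M'$ with $|f_n(t_i)|<\varepsilon/2$, a radius $r_n^{i}>0$ such that $|f_n(t)|<\varepsilon$ on $(t_i-r_n^{i},\,t_i+r_n^{i})$; equivalently $U_n$ is disjoint from this interval.

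I would then choose $M=\{n_1<n_2<\cdots\}\subset M'$ inductively. Having selected $n_1,\dots,n_{j-1}$, put
$$
\delta_j=\tfrac{1}{2}\min\bigl\{r_{n_{j'}}^{i}:\ j'<j,\ i\leq k\bigr\}>0
$$
(for $j=1$ just take $\delta_1=1$) and pick $n_j\in M'$, $n_j>n_{j-1}$, large enough that simultaneously $|f_{n_j}(t_i)|<\varepsilon/2$ for every $i\leq k$ (possible by pointwise nullity at each $t_i$) and $U_{n_j}\subset\bigcup_{i=1}^{k}B(t_i,\delta_j)$ (possible by Lemma~\ref{l615}). For every $j'<j$ and every $i$, the ball $B(t_i,\delta_j)$ is contained in $(t_i-r_{n_{j'}}^{i},\,t_i+r_{n_{j'}}^{i})$, which by construction is disjoint from $U_{n_{j'}}$; hence $U_{n_j}\cap U_{n_{j'}}=\emptyset$. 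Iterating yields pairwise disjoint $(U_n)_{n\in M}$ as required.

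The only delicate point is to reconcile the two regimes of smallness: Lemma~\ref{l615} only provides a \emph{uniform} (in $n$) localization around the $t_i$'s, while the gap inside which $f_{n_{j'}}$ stays below $\varepsilon$ depends on the individual continuity modulus of $f_{n_{j'}}$ at $t_i$. The role of $\delta_j$, built from the previously chosen radii $r_{n_{j'}}^{i}$, is precisely to force the later $U_{n_j}$ to fall inside these intervals; the availability of such $n_j\in M'$ is guaranteed because $\delta_j$ is fixed at stage $j$ and Lemma~\ref{l615} is applied with this $\delta_j$.
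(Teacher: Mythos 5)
Your proof is correct and follows essentially the same route as the paper's: localize the sets $U_n$ near the finitely many points supplied by Lemma~\ref{l615}, use pointwise (weak) nullity plus continuity to carve out neighborhoods of the $t_i$ on which each previously chosen $f_{n_{j'}}$ stays below $\e$, and then force the next $U_{n_j}$ inside those neighborhoods via the ``for every $\delta$'' clause of Lemma~\ref{l615}. The paper organizes this with a decreasing chain of balls $B_1(t_i,\delta_1)\supset B_2(t_i,\delta_2)\supset\cdots$ so that $U_j$ lands in the annulus $B_{j-1}\setminus B_j$, which is just a repackaging of your minimum-of-radii bookkeeping.
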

\begin{proof} Let $\e>0$. Passing to a subsequence,
by Lemma~\ref{l615}, we may assume that there exists $k(\e)$
points $t_{1},\ldots,t_{k(\e)}$  such that
$$
\Vert f_{n}{}_{|[0,1]\setminus \cup_{i=1}^{k(\e)}B(t_{i},\delta)}
\Vert_{\infty}<\e \quad\quad \text{for every}\,\,n\in\N\,\,,
$$
and also $\vert f_{n}(t_{i})\vert<\frac{\e}{4}$, for every
$n\in\N$ and every $i=1,\ldots,k(\e)$. Set $f_{n_{1}}=f_{1}$.
Since $f_{n_{1}}$ is continuous, we find for every $i\leq k(\e)$ a
neighborhood $B_{1}(t_{i},\delta_{1})\subset B(t_{i},\delta)$ of
$t_{i}$ such that $\vert
f_{n_{1}}(B_{1}(t_{i},\delta_{1})\vert<\frac{\e}{4}$. Set
$U_{1}=\{t: \vert f_{n_{1}}(t)\vert>\e\}$. Then $U_{1}\cap
(\cup_{i=1}^{k(\e)} B_{1}(t_{i},\delta_{1}))=\emptyset$. Using
Lemma~\ref{l615}, we pass to a subsequence $(f_{n})_{n\in M_{2}}$
such that
$$
\Vert f_{n}{}_{|[0,1]\setminus
\cup_{i=1}^{k(\e)}B_{1}(t_{i},\delta_{1})}\Vert_{\infty}
<\e\,\quad\quad\text{for every}\,\,\,n\in M_{2}.
$$
Set $f_{n_{2}}=f_{\min M_{2}}$. Since $f_{n_{2}}$ is continuous,
we find for every $i\leq k(\e)$ a neighborhood
$B_{2}(t_{i},\delta_{2})\subset B_{1}(t_{i},\delta_{1})$ of
$t_{i}$ such that $\vert
f_{n_{2}}(B_{2}(t_{i},\delta_{2}))\vert<\frac{\e}{4}$. Set
$U_{2}=\{t: \vert f_{n_{2}}(t)\vert>\e\}$. Then $U_{2}\subset
\cup_{i=1}^{k(\e)}( B_{1}(t_{i},\delta_{1})\setminus
B_{2}(t_{i},\delta_{2}))$ and $U_{2}\cap U_{1}=\emptyset$.
Continuing in the same manner we get the desired subsequence.
\end{proof}
The following corollary of the above lemmas seems to be of
independent interest.
\begin{corollary}[Splitting lemma]\label{l617}
Let $(f_{n})$ be a normalized weakly null sequence in $V_{X}^{0}.$
Then there exists a subsequence $(f_{n})_{n\in M}$ of
$(f_{n})_{n}$ such that $f_{n}=g_{n}+h_{n}$ for every $n\in M$,
$\lim_{n}\Vert g_{n}\Vert_{\infty}=0$ and $\lim_{n}\mu({\rm
supp}h_{n})=0$\,.
\end{corollary}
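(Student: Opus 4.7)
The plan is to combine Lemma \ref{l616} with a standard diagonal extraction. First, by passing to a subsequence, I may assume $\Vert f_n\Vert_\infty$ converges to some $L\ge 0$. If $L=0$, the conclusion is immediate by setting $g_n=f_n$ and $h_n=0$. Otherwise, after dropping finitely many terms, $\Vert f_n\Vert_\infty\ge C$ for some $C>0$, which is precisely the hypothesis under which Lemma \ref{l616} applies.

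Next, for each $k\in\N$ I apply Lemma \ref{l616} iteratively with $\e=1/k$ to produce a nested family $\N\supset M_1\supset M_2\supset\cdots$ with the property that for every $k$ and every $n\in M_k$, the super-level sets
$U_n^{1/k}:=\{t\in[0,1]:\vert f_n(t)\vert\ge 1/k\}$ are pairwise disjoint as $n$ varies in $M_k$. Since the $U_n^{1/k}$ are disjoint subsets of $[0,1]$, one has $\sum_{n\in M_k}\mu(U_n^{1/k})\le 1$, and in particular $\mu(U_n^{1/k})\to 0$ along $n\in M_k$. I then extract, by a standard diagonal argument, a strictly increasing sequence $n_1<n_2<\cdots$ with $n_k\in M_k$ satisfying $\mu(U_{n_k}^{1/k})<1/k$ for every $k$.

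Finally, setting $M=\{n_k:k\in\N\}$, I define for $n=n_k\in M$
$$h_{n_k}:=f_{n_k}\,\chi_{U_{n_k}^{1/k}},\qquad g_{n_k}:=f_{n_k}-h_{n_k}=f_{n_k}\,\chi_{[0,1]\setminus U_{n_k}^{1/k}}.$$
By construction $\Vert g_{n_k}\Vert_\infty\le 1/k$ and $\mu(\mathrm{supp}\, h_{n_k})\le\mu(U_{n_k}^{1/k})<1/k$, both tending to $0$ as $k\to\infty$, which is exactly the required conclusion.

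I do not expect any real obstacle: all substantive work has already been done in Lemmas \ref{l615} and \ref{l616}, and the remainder is essentially bookkeeping for the diagonal procedure. The only point worth a brief verification is the initial reduction to the case $\Vert f_n\Vert_\infty\ge C>0$; this is harmless since the trivial case $\Vert f_n\Vert_\infty\to 0$ already yields the splitting with $h_n\equiv 0$.
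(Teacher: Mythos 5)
Your argument is correct for the statement as written, but it follows a genuinely different route from the paper's. You rest everything on Lemma~\ref{l616} (pairwise disjointness of the super-level sets $\{|f_n|\ge\e\}$ along a subsequence), sum the measures of disjoint sets, diagonalize over $\e=1/k$, and then split by hard truncation $h_{n_k}=f_{n_k}\chi_{U_{n_k}^{1/k}}$. The paper instead applies Lemma~\ref{l615} inductively to localize the large values of $f_{n_j}$ inside finitely many small balls $B(t_i^j,\delta_j)$ and defines $g_j$ by replacing $f_{n_j}$ on each such ball by the \emph{linear interpolation} of its endpoint values, setting $h_j=f_{n_j}-g_j$. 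The interpolation is not cosmetic: it keeps $g_j$ continuous, so the splitting takes place inside $C[0,1]$ (and, modulo the normalization $g_j(0)=0$, inside $V_X^0$), whereas your $g_{n_k}=f_{n_k}\chi_{[0,1]\setminus U_{n_k}^{1/k}}$ has jump discontinuities of size $1/k$ at each boundary point of the closed set $U_{n_k}^{1/k}$ and therefore does not belong to $V_X^0\subset C[0,1]$. Since the corollary only asserts $f_n=g_n+h_n$ with $\Vert g_n\Vert_\infty\to 0$ and $\mu(\mathrm{supp}\,h_n)\to 0$, your proof does establish the stated conclusion, and it even yields the extra information that the supports of the $h_{n_k}$ are pairwise disjoint; but if one wants the two pieces to remain in the space (which the paper's construction clearly aims at), you should replace the truncation by the interpolation step of Lemma~\ref{l615}'s proof pattern --- the diagonal extraction in your argument carries over unchanged. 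Your initial reduction to the case $\Vert f_n\Vert_\infty\ge C>0$ is exactly the paper's.
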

\begin{proof}
For the sequence $(f_{n})_{n}$, we may assume
that there exists a constant $C>0$ such that $\Vert
f_{n}\Vert_{\infty}\geq C$ for all $n\in\N$. Otherwise there
exists a subsequence $(f_{n})_{n\in M}$ of $(f_{n})_{n}$ such that
$\lim_{n\in M}\Vert f_{n}\Vert_{\infty}=0$, and the result follows
immediately, setting $g_{n}=f_{n}$ and $h_{n}=0$.

Applying Lemma~\ref{l615} inductively for $\e=\frac{1}{2^{j}}$ we
get a decreasing sequence $(M_{j})_{j}$ of infinite subsets of
$\N$ and subsequences $(f_{n})_{n\in M_{j}}$ and $t^{j}_{1},\ldots
,t^{j}_{k(j)}$ points such that
$$
\Vert f_{n}{}_{|[0,1]\setminus\cup_{i=1}^{k(j)}
B(t^{j}_{i},\delta_{j})}\Vert_{\infty} <
\frac{1}{2^{j}}\,\,\text{for every}\,\,\,n\in M_{j}\,\,,
$$
where $\delta_{j}=\frac{k(j)}{2^{j}}.$ Let $n_{j}=\min M_{j}.$ For
every $i\leq k(j)$, let $g^{j}_{i}$ be a linear function defined
in $[t^{j}_{i}-\delta_{j}, t^{j}_{i}+\delta_{j}]$ with endpoints
$f(t^{j}_{i}-\delta_{j})$, $f(t^{j}_{i}+\delta_{j})$. Define
$g_{j}(t)=f_{n_j}(t)$  for every
$t\not\in\cup_{i=1}^{k(j)}B(t^{j}_{i},\delta_{j})$, while
$g_{j}(t)=g^{j}_{i}(t)$ if $t\in B(t^{j}_{i},\delta_{j})$, $i\leq
k(j)$. We also set $h_{j}=f_{n_{j}}-g_{j}$. It is easy to see that
$g_{j}$, $h_{j}$ have the properties we claim.
\end{proof}
\begin{remark} The isomorphic structure of the subspaces of
$JF_{X}(\Omega)$ remains unclear, even in the case of James
function space. We have been informed by E.Odell that in the Ph.D.
Thesis of his student S.Buechler \cite{Bu}, is included the
following result. Every normalized weakly null sequence
$(x_{n})_{n}$ and $\varepsilon>0$ there exists a subsequence
$(x_{n})_{n\in M}$ admitting an $\sqrt{2+\varepsilon}$ upper
$\ell_{2}$-estimate. We present a slightly more general result.
\end{remark}
\begin{definition}Let $X$ be a Banach space with 1-symmetric basis
$(e_{i})_{i}$. The space $X$ has the block dominated property, if
there exists $C>0$ such that for every normalized block sequence
$(u_{i})_{i}$ we have that
$$
\Vert\sum_{i}\alpha_{i}u_{i}\Vert \leq
C\Vert\sum_{i}\alpha_{i}e_{i}\Vert\,\,.
$$
\end{definition}
\noindent As a consequence of Lemmas~\ref{l615}, \ref{l616} we get
the following theorem.
\begin{theorem}\label{t619}
Let $X$ have the block dominated property. Then every normalized
weakly null sequence $(f_{n})$ in $V_{X}^{0}$, has a further
subsequence $(f_{n})_{n\in M}$ which admits un upper $X-$estimate.
\end{theorem}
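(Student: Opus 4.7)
The plan is to split each $f_n$ via the Splitting Lemma (Corollary~\ref{l617}) into a piece of shrinking support and a piece of shrinking supremum norm, dispose of the shrinking-support piece using the block dominated property of $X$, and iterate the decomposition to treat the sup-norm residue.

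After passing to the subsequence provided by Corollary~\ref{l617}, write $f_n=g_n+h_n$ with $\|g_n\|_\infty\to 0$ and $\mu({\rm supp}\,h_n)\to 0$. By Theorem~\ref{t61}(iii) the sequence $(f_n)$ is pointwise null, so combined with the uniform convergence $g_n\to 0$ we get $h_n(t)\to 0$ for every $t\in[0,1]$, whence $(h_n)$ and $(g_n)$ are both weakly null and uniformly bounded in $V_X^0$. A further extraction lets me assume ${\rm supp}\,h_n\subset J_n$ for pairwise disjoint closed intervals $J_n\subset[0,1]$, using that in the proof of Corollary~\ref{l617} each ${\rm supp}\,h_n$ is contained in a union of finitely many small balls of total length tending to zero.

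The decisive step is the upper $X$-estimate for $(h_n)$. For any partition $\mathcal{P}$ of $[0,1]$, refine it to $\mathcal{P}'=\{t'_i\}$ by adjoining the finitely many endpoints of those $J_n$ which meet an interval of $\mathcal{P}$. Every resulting subinterval lies either inside a unique $J_n$ (where only $h_n$ has a nonzero increment) or outside every $J_m$ (where all $h_m$-increments vanish), and therefore
\[
\alpha_X\!\Bigl(\sum_n\alpha_n h_n,\mathcal{P}'\Bigr)=\Bigl\|\sum_n\alpha_n w_n\Bigr\|_X,\qquad w_n:=\sum_{i\,:\,[t'_i,t'_{i+1}]\subset J_n}(h_n(t'_{i+1})-h_n(t'_i))\,e_i.
\]
The vectors $w_n$ have pairwise disjoint supports in $(e_i)$ and satisfy $\|w_n\|_X\le\|h_n\|_{V_X^0}\le M$. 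Writing $w_n=\|w_n\|_X u_n$ with $(u_n)$ a normalized block basic sequence of $(e_i)$, the block dominated property and the $1$-symmetry of $(e_i)$ give $\bigl\|\sum_n\alpha_n w_n\bigr\|_X\le CM\bigl\|\sum_n\alpha_n e_n\bigr\|_X$; taking the supremum over $\mathcal{P}$ yields the upper $X$-estimate for $(h_n)$.

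The main obstacle is the residual $(g_n)$: although $\|g_n\|_\infty\to 0$, its $V_X^0$-norm need not vanish, nor is $(g_n)$ automatically equivalent to the $c_0$-basis. I would iterate Corollary~\ref{l617}: set $g^{(0)}_n:=g_n$, and inductively apply the Splitting Lemma to $(g^{(k-1)}_n)_n$ to obtain $g^{(k-1)}_n=g^{(k)}_n+h^{(k)}_n$ with $\|g^{(k)}_n\|_\infty\le 2^{-k}$ (which the inductive parameter choice permits) and $\mu({\rm supp}\,h^{(k)}_n)\to 0$ in $n$ for every fixed $k$. A diagonal extraction produces a single subsequence along which, for every fixed $k$, the supports of $(h^{(k)}_n)_n$ may be assumed pairwise disjoint, so the argument of the previous paragraph supplies an upper $X$-estimate for $(h^{(k)}_n)_n$ with constant $CM_k$, where $M_k$ bounds the $V_X^0$-norms of the $k$-th family. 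The geometric decay $\|g^{(k-1)}_n-g^{(k)}_n\|_\infty\le 2^{-(k-1)}$ combined with the locally-linear structure of $g^{(k)}_n$ built into the Splitting Lemma's construction should force the $M_k$ to decay geometrically in $k$; summing the resulting upper $X$-estimates over $k$ against the telescoping identity $f_n=\sum_{k\ge 1}h^{(k)}_n$ (convergent in $V_X^0$) then completes the proof.
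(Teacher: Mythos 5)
Your strategy of splitting off a small-support part with disjoint supports and iterating does not go through; there are two genuine gaps. First, the claim that a further extraction gives $\mathrm{supp}\,h_{n}\subset J_{n}$ with the $J_{n}$ pairwise disjoint is false. In the construction behind Corollary~\ref{l617} (via Lemma~\ref{l615}) the support of $h_{n}$ is a union of small balls centred at the \emph{concentration points} $t_{1},\dots,t_{k}$ of the whole sequence, and these points are essentially the same for every $n$; each $\mathrm{supp}\,h_{n}$ contains a neighbourhood of every $t_{i}$, so no subsequence has disjoint supports. The prototype is a sequence of normalized tents shrinking onto a single point $t_{0}$: there $g_{n}=0$, $h_{n}=f_{n}$, and every support contains $t_{0}$. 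What is actually available (Lemma~\ref{l616}) is disjointness of the super-level sets $\{t:|f_{n}(t)|\ge\e\}$ for a \emph{fixed} threshold $\e$, which is strictly weaker, and it is exactly this that the paper exploits: it fixes a scale $\delta_{j}$ and threshold $\e_{j}$ per index, splits an arbitrary partition $\mathcal{Q}$ into the classes $Q_{j}$ according to interval length, kills the contribution of $f_{n_{k}}$ with $k<j$ on $Q_{j}$ by the uniform-continuity condition $\tau(f_{n_{k}},\mathcal{P})<\e/2^{k}$ for $\delta(\mathcal{P})\le\delta_{k}$, and observes that for $k\ge j$ at most three of the $f_{n_{k}}$ have a non-negligible increment on a given interval of $Q_{j}$; only then does the block dominated property enter. (A secondary problem with your ``decisive step'': $\alpha_{X}(g,\mathcal{P})$ is not monotone under refinement of $\mathcal{P}$ — coarsening increases it when increments have equal signs, by Lemma~\ref{l22} — so bounding $\alpha_{X}(\cdot,\mathcal{P}')$ for the refined partition does not bound $\alpha_{X}(\cdot,\mathcal{P})$.)

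Second, the iteration on the residue $(g_{n})$ rests on the unproved assertion that the $V_{X}$-norms $M_{k}$ of the successive layers $h^{(k)}_{n}$ decay geometrically. Nothing forces this: a function with sup-norm $2^{-k}$ can have $V_{X}$-norm of order one (a highly oscillating function), and this is precisely the difficulty the theorem addresses, so $\sum_{k}CM_{k}$ need not converge and the telescoping sum gives no estimate. The paper never iterates; the case $\Vert f_{n}\Vert_{\infty}\to 0$ is handled by the same single partition-splitting argument, with the disjointness condition $(3)$ replaced by the smallness condition $(3^{\prime})$ $\Vert f_{n_{i}}\Vert_{\infty}<\e_{j+1}/2$ for $i>j$.
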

\begin{proof}
We distinguish two cases for the sequence $(f_{n})_{n}$. The proof
is almost identical in the two cases. We present the proof of the
first case, and we shall indicate at the end the modification for
the second case.
\begin{case}1. There exists $C>0$ such that
$\Vert f_{n}\Vert_{\infty}\geq C$ for all $n\in N$.
\end{case}
Applying inductively Lemma~\ref{l616} we choose a subsequence
$(f_{n_{j}})_{j}$ of $(f_{n})_{n}$, $(\delta_{j})_{j}$,
$(\e_{j})_{j}$ sequences of real numbers such that
\begin{enumerate}
\item $\tau(f_{n_{j}},\mathcal{P})<\frac{\e}{2^{j}}$ for every
$\mathcal{P}$ with $\delta(\mathcal{P})\leq\delta_{j}$.
\item $\e_{1}>\ldots>\e_{k}>\ldots$ and
$\e_{j+1}<\e\frac{\delta_{j}}{2^{j}}$ for every $j>1$.
\item The sets $U_{i}=\{t :\vert f_{n_{i}}(t)\vert
\geq\frac{\e_{j}}{2}\}$ are pairwise disjoint for every $i\geq j$.
\end{enumerate}
The subsequence $(f_{n_{j}})_{j}$ admits an $(3C+2\e)$-upper
$X-$estimate i.e.
$$
\Vert\sum_{i=1}^{m}\alpha_{i}f_{n_{i}}\Vert_{V_{X}^{0}}\leq
(3C+2\e)\Vert\sum_{i=1}^{m}\alpha_{i}e_{i}\Vert_{X}
\quad\quad\text{for
every}\,\,\,\,\{\alpha_{i}\}_{i=1}^{m}\subset\mathbb{R}\,.
$$
Indeed, let $\mathcal{Q}=\{t_{i}\}_{i=1}^{r}$ be a partition of
$[0,1]$. Consider the partition of $\mathcal{Q}$ into
$Q_{j}=\{t_{i}: \delta_{j}<t_{i+1}-t_{i}\leq\delta_{j-1}\}$, where
$\delta_{0}=1$, $j\leq m$. Property (1) implies that
$$
\Vert\sum_{j=1}^{m}\sum_{t_{i}\in Q_{j}}\left(\sum_{k<j}
\alpha_{k}(f_{n_{k}}(t_{i+1})-f_{n_{i}}(t_{i}))
\right)e_{i}\Vert_{X}\leq
\sum_{k=1}^{m}\tau(f_{n_{k}},\cup_{j>k}Q_{j})\leq\e\,\,.
$$
Also for fixed $j$ and $t_{i}\in Q_{j}$, property (3) implies that
for at most three  $k\geq j$, $k_{1}^{i},k_{2}^{i},k_{3}^{i}$,
$\vert f_{n_{k}}(t_{i+1})-f_{n_{k}}(t_{i})\vert\geq \e_{j+1}$. For
every $t_{i}$, let $k_{t_{i}}$ be the $k_{j}^{i}$, $j\leq 3$,
which realize $\max\{\vert\alpha_{k_{j}^{i}}(
f_{n_{k_{j}^{i}}}(t_{i+1})-f_{n_{k_{j}^{i}}}(t_{i}))\vert: j\leq
3\}$. Therefore
\begin{align*}
\Vert\sum_{j=1}^{m}\sum_{t_{i}\in Q_{j}}&\left(\sum_{k\geq
j}\alpha_{k}(f_{n_{k}}(t_{i+1})-f_{n_{k}}(t_{i}))\right)
e_{i}\Vert\\
&\leq 3\Vert\sum_{k=1}^{m}\alpha_{k}
\sum_{j=1}^{m}\sum_{\substack{t_{i}\in Q_{j}:\\k_{t_{i}}=k }}
\left(f_{n_{k}}(t_{i+1})-f_{n_{k}}(t_{i})\right)e_{i}\Vert
+\sum_{j=1}^{m}\# Q_{j}\cdot\e_{j+1}\\
&\leq 3C\Vert\sum_{k=1}^{m}\alpha_{k}e_{k}\Vert +\e\,\,,
\end{align*}
since $X$ has the block dominated property.
\begin{case}2 There exists a subsequence $(f_{n})_{n\in M}$ of
$(f_{n})_{n}$ such that $\lim_{n\in M}\Vert
f_{n}\Vert_{\infty}=0$.
\end{case}
In this case we proceed as in Case 1, choosing a further
subsequence $(f_{n_{j}})_{j}$ of $(f_{n})_{n\in M}$,
$(\delta_{j})_{j}$, $(\e_{j})_{j}$ sequences of real numbers such
satisfying $(1)$, $(2)$ as above, and $(3)$ is replaced by

$(3^{\prime})$ For every $j\in\N$, it holds that $\Vert
f_{n_{i}}\Vert_{\infty}<\frac{\e_{j+1}}{2}$ for every $i>j$\,.

Then, following the arguments of Case 1, we easily seen that
$(f_{n_{j}})_{j}$ admits an $(C+2\e)$ upper estimate.
\end{proof}
The following two results follows from our previous work.
\begin{proposition}\label{p620}
Let $X$ have the block dominated property and $(f_{n})_{n}$ be a
normalized $w-Cauchy$ sequence in $V_{X}^{0}$ which converges
$weak^{*}$ to $f\in V_{X}\setminus C[0,1]$. Then there exists a
subsequence $(f_{n_{k}})_{k}$ of $(f_{n})_{n}$ such that
$(f_{n_{2k+1}}-f_{n_{2k}})_{k}$ is equivalent to the basis of $X$.
\end{proposition}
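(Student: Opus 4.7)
The plan is to combine the lower $X$-estimate furnished by the non-continuity of the $w^{*}$-limit (Proposition \ref{p613}) with the upper $X$-estimate produced by the block dominated property (Theorem \ref{t619}). The non-trivial content has essentially been isolated in those two results, so this proposition is a matter of arranging the subsequence extractions consistently.

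First, since $f\in V_{X}\setminus C[0,1]$ and $(f_{n})_{n}$ is bounded with $f_{n}\xrightarrow{w^{*}}f$, Proposition \ref{p613} yields a subsequence $(f_{n_{k}})_{k}$ such that the sequence
$$
g_{k}=f_{n_{2k+1}}-f_{n_{2k}}
$$
admits a lower $X$-estimate, i.e. there is $c>0$ with
$c\,\Vert\sum_{k}\alpha_{k}e_{k}\Vert_{X}\leq \Vert\sum_{k}\alpha_{k}g_{k}\Vert_{V_{X}^{0}}$ for every finitely supported sequence $(\alpha_{k})$. In particular $\Vert g_{k}\Vert_{V_{X}^{0}}\geq c$ for every $k$, and since $(f_{n})$ is bounded by some constant $M$ we also have $\Vert g_{k}\Vert_{V_{X}^{0}}\leq 2M$.

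Second, because $(f_{n})$ is weakly Cauchy, the sequence $(g_{k})_{k}$ is weakly null in $V_{X}^{0}$, and by the bounds above so is the normalized sequence $h_{k}=g_{k}/\Vert g_{k}\Vert_{V_{X}^{0}}$. Applying Theorem \ref{t619} to $(h_{k})_{k}$, we obtain an infinite $M\subset\N$ and a constant $C'>0$ such that
$$
\Vert\sum_{k\in M}\beta_{k}h_{k}\Vert_{V_{X}^{0}}\leq C'\,\Vert\sum_{k\in M}\beta_{k}e_{k}\Vert_{X}
$$
for every finitely supported $(\beta_{k})_{k\in M}$. Setting $\beta_{k}=\alpha_{k}\Vert g_{k}\Vert_{V_{X}^{0}}$ and using the $1$-symmetry of $(e_{k})$ together with $c\leq \Vert g_{k}\Vert_{V_{X}^{0}}\leq 2M$, this transfers to an upper $X$-estimate for $(g_{k})_{k\in M}$ with constant $\tfrac{2M}{c}C'$ (this last step uses only that multiplying the coordinates of a vector in $X$ by scalars in a bounded interval changes the norm by a bounded factor, which holds because the basis is $1$-symmetric hence $1$-unconditional).

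Combining the lower estimate from the first step (which obviously survives passing to the further subsequence indexed by $M$) with the upper estimate just obtained, the sequence $(g_{k})_{k\in M}=(f_{n_{2k+1}}-f_{n_{2k}})_{k\in M}$ is equivalent to the basis $(e_{k})$ of $X$, which is the desired conclusion. The only delicate point is that the two extractions must be performed in this order, so that the lower estimate is preserved by the further subsequence produced by Theorem \ref{t619}; this is immediate since a lower $X$-estimate for a sequence passes to every subsequence, by $1$-symmetry of $(e_{k})$.
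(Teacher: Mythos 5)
Your proposal is correct and follows essentially the same route as the paper: Proposition \ref{p613} supplies the lower $X$-estimate for the differences $f_{n_{2k+1}}-f_{n_{2k}}$, and Theorem \ref{t619} (applicable since these differences are weakly null because $(f_{n})_{n}$ is weakly Cauchy, and are bounded above and below in norm) supplies the upper estimate. Your extra care with normalizing before invoking Theorem \ref{t619} and with the stability of the lower estimate under passing to a further subsequence only makes explicit what the paper leaves implicit.
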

\begin{proof}
By Proposition~\ref{p613} and the discontinuity of $f$ there exist
$\e>0$, and a subsequence $(f_{n})_{n\in M}$ of $(f_{n})_{n}$ such
that $(f_{2n+1}-f_{2n})_{n\in M}$ has a lower estimate, and $\Vert
f_{2n+1}-f_{2n}\Vert_{\infty}\geq\e$ for all $n\in M$. The
sequence $(f_{2n+1}-f_{2n})_{n\in M}$ satisfies the assumptions of
Theorem~\ref{t619}, and therefore there exists a further
subsequence $(f_{n_{2k+1}}-f_{n_{2k}})_{k}$ which admits an upper
$X-$estimate.
\end{proof}
Proposition~\ref{p611} and Proposition~\ref{p620} immediately
yield the following.
\begin{theorem}\label{t621}
Let $X$ have the block dominated property and $Y$ be a non
reflexive subspace of $V_{X}^{0}$. Then $Y$ contains
isomorphically $c_{0}$ or $X$.
\end{theorem}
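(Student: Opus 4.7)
The plan is to combine the characterization of non-reflexivity via weakly Cauchy sequences with the two dichotomy results already proved in this section, namely Propositions~\ref{p611} and~\ref{p620}.

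First, since $Y$ is non-reflexive, by the Eberlein--\v{S}mulian theorem its closed unit ball contains a sequence $(f_{n})_{n}$ admitting no subsequence weakly convergent in $Y$. Because $V_{X}^{0}\equiv JF_{X}$ does not contain $\ell_{1}$ (Theorem~\ref{l112}), Rosenthal's $\ell_{1}$ theorem forces $(f_{n})_{n}$ to have a weakly Cauchy subsequence. Passing to it and, if necessary, to a further subsequence along which $\Vert f_{n}\Vert$ is bounded below by some $c>0$ (otherwise $f_{n}\to 0$ in norm, hence weakly, contradicting the choice), I would rescale so that $(f_{n})_{n}$ becomes a normalized weakly Cauchy sequence in $Y$. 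By Theorem~\ref{t61}$(iii)$, weak convergence on bounded subsets of $V_{X}^{0}$ coincides with pointwise convergence, so $(f_{n})_{n}$ converges weak$^{*}$ in $V_{X}=(V_{X}^{0})^{**}$ to some $f\in V_{X}$.

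The crucial intermediate step is to verify that $f\in V_{X}\setminus V_{X}^{0}$. Indeed, if $f$ belonged to $V_{X}^{0}$, then $(f_{n})_{n}$ would converge weakly in $V_{X}^{0}$ to $f$; since $Y$ is norm closed and hence weakly closed, this would place $f$ inside $Y$, contradicting the choice of $(f_{n})_{n}$ as a sequence with no weakly convergent subsequence in $Y$. Thus $f\in V_{X}\setminus V_{X}^{0}$, and in particular $f\in\overline{Y}^{w^{*}}$.

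At this point I would split according to whether $f$ is continuous. If $f\in C[0,1]\cap(V_{X}\setminus V_{X}^{0})$, then Proposition~\ref{p611} applies verbatim and yields an isomorphic copy of $c_{0}$ inside $Y$. If instead $f\in V_{X}\setminus C[0,1]$, then the block dominated property of $X$ together with Proposition~\ref{p620} produces a subsequence $(f_{n_{k}})_{k}$ such that the differences $(f_{n_{2k+1}}-f_{n_{2k}})_{k}$ are equivalent to the unit vector basis $(e_{n})_{n}$ of $X$; as these differences lie in $Y$, we conclude that $X$ embeds isomorphically into $Y$. The principal obstacle in this argument is the intermediate step of forcing the weak$^{*}$ limit $f$ out of $V_{X}^{0}$, so that the continuous-versus-discontinuous dichotomy is meaningful; once that is done, the two previously established propositions cover the remaining two cases and the theorem follows.
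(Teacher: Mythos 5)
Your proof is correct and follows exactly the route the paper intends: the paper states only that Propositions~\ref{p611} and~\ref{p620} ``immediately yield'' the theorem, and your argument is precisely the missing fleshing-out (Rosenthal's theorem plus Theorem~\ref{t61}$(iii)$ to produce a normalized weakly Cauchy sequence in $Y$ whose weak$^{*}$ limit lies in $V_{X}\setminus V_{X}^{0}$, followed by the continuity dichotomy). No gaps.
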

The following result answer partially the question on the
structure of the subspaces of $JF_{X}$, $X$ having the block
dominated property.
\begin{proposition}\label{p622}
Let $X$ have the block dominated property, and $(x_{n})_{n}$ be a
normalized $\delta-$separated (i.e $\Vert
x_{n}-x_{m}\Vert>\delta>0$) sequence in $JF_{X}\cap L^{1}$, such
that $(\Vert x_{n}\Vert_{L^{1}})_{n}$ is bounded. Then the
subspace $Y$ generated by $(x_{n})_{n}$ contains isomorphically
$X$.
\end{proposition}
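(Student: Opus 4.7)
The plan is to extract from differences of $(x_n)$ a subsequence equivalent to the unit vector basis of $X$; since such differences lie in $Y$, this suffices. Since $\ell_1$ does not embed into $JF_X$ by Theorem~\ref{l112}, Rosenthal's $\ell_1$ theorem allows us to pass to a weakly Cauchy subsequence, still denoted $(x_n)$. Applying the Kadec--Pelczy\'nski subsequence splitting lemma in $L^1$, after a further subsequence we write $x_n = g_n + h_n$ with $(g_n)$ equi-integrable in $L^1$ and $h_n = x_n\chi_{A_n}$ supported on pairwise disjoint sets $A_n$ with $\mu(A_n)\to 0$. By Dunford--Pettis compactness a subsequence of $(g_n)$ converges weakly in $L^1$ to some $g$, and Corollary~\ref{115} upgrades this to $\|g_n-g\|_{JF_X}\to 0$. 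Hence
\[
z_n := h_{2n+1} - h_{2n} = (x_{2n+1}-x_{2n}) - (g_{2n+1}-g_{2n})
\]
is weakly null in $JF_X$, satisfies $\|z_n\|_{JF_X}\geq \delta/2$ for large $n$ (by the $\delta$-separation of $(x_n)$), is $L^1$-bounded, and is supported on the pairwise disjoint sets $A_{2n+1}\cup A_{2n}$ of measure tending to zero.

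The crucial technical step is to construct, for each large $n$, an interval $\tilde I_n\subset(0,1)$ with $\mu(\tilde I_n)\to 0$ and $|\int_{\tilde I_n}z_n\,d\mu|\geq \eta$ for some $\eta>0$ independent of $n$. The definition of $\|z_n\|_{JF_X}$ provides pairwise disjoint intervals $\{T^n_j\}_j$ with $\|\sum_j(\int_{T^n_j}z_n)e_j\|_X \geq \delta/4$. Since $\sum_j |\int_{T^n_j}z_n|\leq\|z_n\|_{L^1}$ is uniformly bounded, Lemma~\ref{pe1} furnishes $\eta_0>0$ and, for each $n$, an index $j_n$ with $|\int_{T^n_{j_n}}z_n|\geq \eta_0$. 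Set $I_n:=T^n_{j_n}=(\alpha_n,\beta_n)$ and pass to a subsequence with $\alpha_n\to\alpha$, $\beta_n\to\beta$. If $\alpha=\beta$, set $\tilde I_n:=I_n$ and note $\mu(I_n)\to 0$. Otherwise $I:=(\alpha,\beta)$ is nondegenerate; the weak nullity of $(z_n)$ gives $I^*(z_n)\to 0$, so $|\int_{I_n\triangle I}z_n|\geq \eta_0/2$ for large $n$. Since $I_n\triangle I$ is a union of at most four intervals whose total Lebesgue measure tends to zero, pigeonholing yields an interval $\tilde I_n\subset I_n\triangle I$ with $\mu(\tilde I_n)\to 0$ and $|\int_{\tilde I_n}z_n|\geq \eta_0/8$.

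Under the isometric identification $JF_X\equiv V_X^0$ from Theorem~\ref{t61}, the normalized Volterra transforms $F_n(t):=\|z_n\|_{JF_X}^{-1}\int_0^t z_n\,d\mu$ are weakly null in $V_X^0$, and writing $\tilde I_n=(\alpha_n',\beta_n')$ we have $\beta_n'-\alpha_n'\to 0$ together with $|F_n(\beta_n')-F_n(\alpha_n')|$ bounded below by a positive constant. Lemma~\ref{l612} then extracts a subsequence of $(F_n)$ admitting a lower $X$-estimate; since $X$ satisfies the block dominated property, Theorem~\ref{t619} extracts a further subsequence admitting an upper $X$-estimate, so this subsequence is equivalent to the unit vector basis of $X$. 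Pulling back through the isometry and absorbing the norm-null perturbation $g_{2n+1}-g_{2n}$, the corresponding subsequence of $(x_{2n+1}-x_{2n})\subset Y$ is equivalent to the basis of $X$, completing the proof. The main obstacle is the second step, in which the measure of the interval witnessing $\|z_n\|_{JF_X}$ must be forced to zero; this is achieved by combining the Kadec--Pelczy\'nski reduction with weak nullity in $JF_X$ to rule out the case where $I_n$ stabilizes at a nondegenerate interval.
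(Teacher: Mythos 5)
Your proof is correct and follows essentially the same route as the paper's: reduce to weakly null differences of the $x_{n}$, use Lemma~\ref{pe1} together with weak nullity and a limit--interval/pigeonhole argument to produce intervals of length tending to zero that carry a fixed amount of mass, and then combine the lower $X$-estimate of Lemma~\ref{l612} (via the $V_{X}^{0}$ identification, i.e.\ Lemma~\ref{l623}) with the upper estimate of Theorem~\ref{t619}. The only real difference is your Kadec--Pelczy\'nski splitting $x_{n}=g_{n}+h_{n}$, which is harmless but superfluous: your ``crucial step'' never actually uses the disjoint small supports of the $h_{n}$, only weak nullity and the $L^{1}$ bound, so the same argument runs directly on the differences $x_{2n+1}-x_{2n}$, which is exactly what the paper does.
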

In the proof we shall use the following lemma, which holds for any
reflexive Banach space with 1-symmetric basis.
\begin{lemma}\label{l623}
Let $(x_{n})_{n}$ be a normalized weakly null sequence in
$JF_{X}\cap L^{1}(\mu)$. Let $\varepsilon>0$,
$([\alpha_{n},\beta_{n}])_{n}$ be a sequence of intervals such
that $\lim_{n}(\beta_{n}-\alpha_{n})=0$ and
$\vert\int_{\alpha_{n}}^{\beta_{n}}x_{n}\vert >\varepsilon$ for
all $n\in\mathbb{N}.$ Then there exists a subsequence
$(x_{n_{l}})_{l}$ of $(x_{n})_{n}$ which admits a lower
$X$-estimate, i.e there exists $c>0$ such that
$$
c\,\Vert\sum_{l}\alpha_{l}e_{l}\Vert_{X}\leq \Vert\sum_{l}
\alpha_{l}x_{n_{l}}\Vert_{JF_{X}}\,\,\text{for all}\,\,
(\alpha_{l})_{1}^{n}\subset\mathbb{R}\,\,.
$$
\end{lemma}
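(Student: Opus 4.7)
The plan is to adapt the proof of Lemma~\ref{l612} to the $JF_X \cap L^1(\mu)$ setting, replacing the pointwise evaluations $f_n(t)$ by the interval integrals $\int_a^b x_n\, d\mu$, and replacing the equicontinuity of a finite family of continuous functions by the absolute continuity of the signed measures $A \mapsto \int_A x_k$ for $x_k \in L^1(\mu)$. Set-up: since $\beta_n - \alpha_n \to 0$, by passing to a subsequence I may assume that $(\alpha_n)$ and $(\beta_n)$ are both monotone with a common limit $\alpha \in [0,1]$; the two geometric sub-cases (same-side or opposite-side approach) are handled uniformly.

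The core is a diagonal extraction: I inductively pick an increasing subsequence $(n_l)$ and intervals $[c_l, d_l] \subset [\alpha_{n_l}, \beta_{n_l}]$ satisfying, in parallel with (1)--(5) of Lemma~\ref{l612}:
\begin{enumerate}
\item[(1)] $[c_l, d_l] \cap [\alpha_{n_m}, \beta_{n_m}] = \emptyset$ for all $m > l$;
\item[(2)] $\bigl|\int_{c_l}^{d_l} x_{n_l}\, d\mu\bigr| > \varepsilon/4$;
\item[(3)] $\sum_{k < l}\bigl|\int_{c_l}^{d_l} x_{n_k}\, d\mu\bigr| < \varepsilon/(2 \cdot 8^{l+1})$;
\item[(4)] $\sum_{k > l}\bigl|\int_{c_l}^{d_l} x_{n_k}\, d\mu\bigr| < \varepsilon/(2 \cdot 8^{l+1})$.
\end{enumerate}
Property (3) follows from uniform absolute continuity of the (already chosen) finite family of measures $\{\int\!\cdot\, x_{n_k}\}_{k<l}$ applied to the small-length interval $[c_l,d_l]$, which can be made as short as needed by taking $n_l$ large enough. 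Property (4) uses weak nullity of $(x_n)$ in $JF_X$: for the fixed interval $[c_l,d_l]$ we have $\int_{c_l}^{d_l} x_n \to 0$ since $[c_l,d_l]^* \in JF_X^*$, so the future indices $(n_m)_{m>l}$ can be made sparse enough to give geometric decay. The delicate point is (2) in tandem with (1): I trim two endpoint strips of $[\alpha_{n_l},\beta_{n_l}]$ so that the shrunk $[c_l,d_l]$ lies strictly away from the accumulation point $\alpha$, hence is disjoint from every future $[\alpha_{n_m},\beta_{n_m}]$; absolute continuity of $A \mapsto \int_A x_{n_l}$ ensures the trimmed strips contribute at most $\varepsilon/2$ to the original integral, leaving $|\int_{c_l}^{d_l} x_{n_l}| > \varepsilon/2 > \varepsilon/4$.

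With the construction in hand, for any finite scalar sequence $(\alpha_l)$ the disjoint family $\{[c_j, d_j]\}_j$ is a legitimate test partition in the definition of $\Vert\cdot\Vert_{JF_X}$, so
\begin{align*}
\Vert \sum_{l}\alpha_{l} x_{n_{l}}\Vert_{JF_{X}}
&\geq \Vert \sum_{j}\Bigl(\sum_{l}\alpha_{l}\int_{c_{j}}^{d_{j}} x_{n_{l}}\, d\mu\Bigr) e_{j}\Vert_{X} \\
&\geq \Vert \sum_{l}\alpha_{l}\Bigl(\int_{c_{l}}^{d_{l}} x_{n_{l}}\, d\mu\Bigr) e_{l}\Vert_{X} - \sum_{j}\Bigl|\sum_{k\neq j}\alpha_{k}\int_{c_{j}}^{d_{j}} x_{n_{k}}\, d\mu\Bigr|.
\end{align*}
By (2) and the $1$-unconditionality and $1$-symmetry of $(e_{j})$, the main term is at least $(\varepsilon/4)\Vert \sum_{l}\alpha_{l} e_{l}\Vert_{X}$. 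By (3) and (4) the error term is at most $\max_{l}|\alpha_{l}|\cdot\sum_{j}\varepsilon/8^{j+1}$, which is a small multiple of $\Vert \sum_{l}\alpha_{l} e_{l}\Vert_{X}$ using $\max_{l}|\alpha_{l}| \leq \Vert \sum_{l}\alpha_{l} e_{l}\Vert_{X}$ (valid for the normalized $1$-symmetric basis); the resulting algebra mirrors the final estimate in Lemma~\ref{l612}. The principal obstacle is item (2): unlike in the continuous-function case, where trimming the interval only requires equicontinuity, here we must quantitatively exploit the uniform continuity of indefinite integrals of $L^{1}$-functions to control simultaneously the discarded endpoint contributions and the disjointness from all later intervals $[\alpha_{n_{m}},\beta_{n_{m}}]$.
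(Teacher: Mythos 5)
Your argument is correct and is essentially the paper's: the paper disposes of this lemma in one line by noting that the Volterra isometry $f\mapsto\int_{0}^{t}f\,d\mu$ identifies $JF_{X}$ with $V_{X}^{0}$ and turns $\int_{\alpha_{n}}^{\beta_{n}}x_{n}$ into $F_{n}(\beta_{n})-F_{n}(\alpha_{n})$, so Lemma~\ref{l612} applies verbatim, whereas you re-run that lemma's diagonal extraction directly in the $L^{1}$ picture with uniform integrability replacing equicontinuity. The translation is faithful and the estimates go through, so the only difference is that you redo work the isometry gives for free.
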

The proof of this lemma follows immediately from Lemma~\ref{l612},
considering $JF_{X}$ as the space $V_{X}^{0}$.
\begin{proof}[Proof of Proposition~\ref{p622}]
Since $\ell_{1}$ does not embed into $JF$, passing to a
subsequence and taking differences of successive terms, we assume
that $(x_{n})_{n}$ is normalized weakly null sequence.
\begin{claim}
There exist $\varepsilon>0$, $M$ an infinite subset of $\mathbb{N}$
and a sequence $([\alpha_{n},\beta_{n}])_{n\in M}$ of intervals
such that
$$
\vert\int_{\alpha_{n}}^{\beta_{n}}x_{n}d\mu\vert>\varepsilon\,\,
\text{for all}\,\,n\in M\,\,\,\,\text{and also}\,\,\,
\lim_{n}(\beta_{n}-\alpha_{n})=0\,.
$$
\end{claim}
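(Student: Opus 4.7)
The plan is to argue by contradiction. If the claim fails then, applying a standard diagonal argument to the negation, for every $\varepsilon>0$ one obtains $\eta_\varepsilon>0$ and $N_\varepsilon\in\mathbb{N}$ such that
$$
\Bigl|\int_{I}x_n\,d\mu\Bigr|\leq\varepsilon\qquad\text{for all }n\geq N_\varepsilon\text{ and every interval }I\subset[0,1]\text{ with }|I|<\eta_\varepsilon.
$$
Fix $\varepsilon>0$ to be chosen at the end of the argument and set $\eta=\eta_\varepsilon$, $N=N_\varepsilon$.

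Next I would discretize $[0,1]$: let $\mathcal{G}=\{0,\eta/4,2\eta/4,\ldots\}\cap[0,1]$ and let $\mathcal{F}$ be the finite family of subintervals of $[0,1]$ with both endpoints in $\mathcal{G}$. Because $(x_n)_n$ is weakly null in $JF_X$ and each interval evaluation $I^*$ lies in $JF_X^*$, we have $\int_I x_n\to 0$ for every $I\in\mathcal{F}$; as $\mathcal{F}$ is finite, there exists $N'\geq N$ such that $|\int_I x_n|<\varepsilon$ for every $I\in\mathcal{F}$ and every $n\geq N'$.

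Now fix any partition $Q=\{I_j\}_{j=1}^{k}$ of $[0,1]$ into disjoint intervals and any $n\geq N'$. Split $Q$ into ``big'' intervals with $|I_j|\geq\eta$ and ``small'' ones with $|I_j|<\eta$. For each big $I_j=[a,b]$, choose $I'_j=[a',b']\in\mathcal{F}$ with $|a-a'|,|b-b'|\leq\eta/4$; then $I_j\triangle I'_j$ consists of at most two intervals each of length less than $\eta$, so by the displayed bound $|\int_{I_j}x_n-\int_{I'_j}x_n|\leq 2\varepsilon$, and hence $|\int_{I_j}x_n|<3\varepsilon$. For small $I_j$ the bound $|\int_{I_j}x_n|\leq\varepsilon$ holds directly. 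Thus $|\int_{I_j}x_n|\leq 3\varepsilon$ uniformly in $j$, while $\sum_j|\int_{I_j}x_n|\leq\|x_n\|_{L^1}\leq C$ by hypothesis.

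Finally, Lemma~\ref{pe1} yields $\delta_0>0$ with $\|\sum_i a_ie_i\|_X<1/(2C)$ whenever $\sum_i|a_i|\leq 1$ and $\max_i|a_i|\leq\delta_0$; choosing $\varepsilon$ so small that $3\varepsilon/C<\delta_0$ gives $\|\sum_j(\int_{I_j}x_n)e_j\|_X<1/2$ for every partition $Q$. Hence $\|x_n\|_{JF_X}\leq 1/2$ for all $n\geq N'$, contradicting $\|x_n\|_{JF_X}=1$. The main obstacle is the absence of uniform integrability of $(x_n)_n$; the grid approximation circumvents this by reducing control of the few ``big'' intervals to the weak null property against the finite family $\mathcal{F}$, the ``small'' intervals being controlled directly by the negated hypothesis.
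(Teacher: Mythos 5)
Your proof is correct, but it runs in the opposite logical direction from the paper's. The paper argues directly: since $\Vert x_{n}\Vert_{JF_{X}}=1$ while $\Vert x_{n}\Vert_{L^{1}}\leq C$, Lemma~\ref{pe1} already forces, for a uniform $\varepsilon>0$, the existence of some interval $[\alpha_{n},\beta_{n}]$ with $\vert\int_{\alpha_{n}}^{\beta_{n}}x_{n}\vert>\varepsilon$; if the lengths of these intervals do not tend to zero, the paper passes to monotone subsequences $\alpha_{n}\to\alpha$, $\beta_{n}\to\beta$, uses weak nullness to get $\vert\int_{\alpha}^{\beta}x_{n}\vert<\varepsilon/4$ eventually, and concludes that one of the two end-pieces $[\alpha_{n},\alpha]$ or $[\beta,\beta_{n}]$ (whose lengths do tend to zero) must carry at least $\varepsilon/4$ of the integral. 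You instead negate the claim --- handling the quantifier exchange correctly via the diagonal argument --- and show that the negation forces $\Vert x_{n}\Vert_{JF_{X}}$ to be eventually small: short intervals are controlled by the negated hypothesis, while the ``big'' intervals of an arbitrary norming partition are reduced, through the grid approximation, to a fixed finite family of functionals on which weak nullness applies. Both proofs share the same quantitative core, namely Lemma~\ref{pe1} applied to a norming partition together with the $L^{1}$ bound on the sum of the coefficients; your version avoids the case analysis and the monotone-subsequence extraction at the price of the grid approximation, and it isolates the slightly stronger fact that if no such shrinking intervals exist then $\Vert x_{n}\Vert_{JF_{X}}\to 0$, which is incompatible with normalization.
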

\begin{proof}[Proof of the Claim.] Since $(x_{n})_{n}$ is bounded in
$L^{1}$-norm from Lemma~\ref{pe1} (see also proof of Proposition
\ref{l114}), we conclude that there exist $\varepsilon>0$ and a
sequence $([\alpha_{n},\beta_{n}])_{n}$ of intervals, such that
\begin{equation}\label{e612}
\vert\int_{\alpha_{n}}^{\beta_{n}}x_{n}d\mu\vert>\varepsilon\,\,
\text{for all}\,\,n\in\mathbb{N}\,\,.
\end{equation}
If \,\,$\lim_{n}(\beta_{n}-\alpha_{n})=0$, we have finished,
otherwise let's observe the following.

There exist $\varepsilon^{\prime}>0$, $M$ an infinite subset of
$\mathbb{N}$, such that for all $n\in M$, there exists an interval
$[c_{n},d_{n}]$ such that
\begin{enumerate}
\item
$\vert\int_{c_{n}}^{d_{n}}x_{n}d\mu\vert>\varepsilon^{\prime}$ for
all $n\in M$,
\item \,$\lim_{n}(d_{n}-c_{n})=0.$
\end{enumerate}
Indeed, choose monotone subsequences
$(\alpha_{n}$,$\beta_{n})_{n\in M}$ such that
$\alpha_{n}\to\alpha$, $\beta_{n}\to\beta.$ For simplicity assume
that $[\alpha,\beta]\subset [\alpha_{n},\beta_{n}].$ Since
$(x_{n})_{n}$ is weakly null, we choose $ n_{0}\in M $ such that
$$
\vert\int_{\alpha}^{\beta}x_{n}d\mu\vert
<\frac{\varepsilon}{4}\,\,\,\,\,\text{for all} \,\,n\in M,\,
n>n_{0}\,\,.
$$
Clearly (\ref{e612}) implies that for $n\in M $, $n>n_{0}$,
$$
\text{either}\,\,\,\,\vert
\int_{\alpha_{n}}^{\alpha}x_{n}d\mu\vert
>\frac{\varepsilon}{4}\,\,\,\,\, \text{or}\,\,\,\,\,
\vert\int_{\beta}^{\beta_{n}}x_{n}d\mu\vert>\frac{\varepsilon}{4}\,\,.
$$
If the former holds, set $[c_{n},d_{n}]=[\alpha_{n},\alpha]$
otherwise set $[c_{n},d_{n}]=[\beta,\beta_{n}]$ and let
$\varepsilon^{\prime}=\frac{\varepsilon}{4}$. We easily conclude
that $(x_{n})_{n\in M}$, $([c_{n},d_{n}])_{n\in M}$,
$\varepsilon^{\prime}$ satisfy the desired properties. The proof
of the Claim is complete.
\end{proof}
To complete the proof of Proposition~\ref{p622}, Lemma~\ref{l623}
yields a further subsequence $(x_{n_{k}})_{k\in\mathbb{N}}$ which
admits a lower $X$-estimate. From Theorem~\ref{t619},
$(x_{n_{k}})_{k\in\mathbb{N}}$  has a subsequence
$(x_{n_{k}})_{k\in M}$ with an upper $X$-estimate. Clearly
$(x_{n_{k}})_{k\in M}$ is equivalent to the basis of $X.$
\end{proof}
A direct consequence of Proposition~\ref{p622} is the following.
\begin{corollary}\label{c624}
Let $X$ have the block dominated property. Then every subsequence
$(h_{n_{k}})_{k}$ of the Haar system in $JF_{X}$, generates a
subspace containing $X.$
\end{corollary}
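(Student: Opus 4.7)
\medskip
\noindent\textbf{Proof proposal.} The plan is to apply Proposition~\ref{p622} directly to the normalized subsequence $x_{k}:=h_{n_{k}}/\|h_{n_{k}}\|_{JF_{X}}$, $k\in\mathbb{N}$. Since each Haar function is bounded, the $x_{k}$ automatically lie in $JF_{X}\cap L^{1}$ and have $JF_{X}$-norm one, so it only remains to check that $(x_{k})_{k}$ is $\delta$-separated in $JF_{X}$ and that $(\|x_{k}\|_{L^{1}})_{k}$ is bounded.

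The separation is a soft consequence of Corollary~\ref{c24}: the Haar system is a Schauder basis of $JF_{X}$ with some basis constant $K$, hence its biorthogonal functionals $h_{n}^{*}$ satisfy $\|h_{n}^{*}\|_{JF_{X}^{*}}\leq 2K/\|h_{n}\|_{JF_{X}}$. For $k\neq l$ one has $h_{n_{k}}^{*}(x_{k}-x_{l})=1/\|h_{n_{k}}\|_{JF_{X}}$, which forces $\|x_{k}-x_{l}\|_{JF_{X}}\geq 1/(2K)$.

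For the uniform boundedness of the $L^{1}$ norms the main step is the identity
\[
\|h_{2^{k}+i}\|_{JF_{X}}=\frac{1}{2^{k+1}}\,\|e_{1}+e_{2}\|_{X}\qquad(k\geq 0,\ 1\leq i\leq 2^{k}).
\]
Proposition~\ref{p21} reduces the supremum in the definition of $\|\cdot\|_{JF_{X}}$ to partitions whose endpoints lie in the dyadic partition $\mathcal{P}_{0}$ associated to $h_{2^{k}+i}$; the triangle inequality together with the $1$-symmetry and $1$-unconditionality of $(e_{i})_{i}$ then shows that further splitting of the positive or negative support of $h_{2^{k}+i}$ cannot exceed the value $(1/2^{k+1})\|e_{1}+e_{2}\|_{X}$ produced by the two-interval partition separating them. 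Since $\|h_{2^{k}+i}\|_{L^{1}}=2^{-k}$ (and $\|h_{1}\|_{L^{1}}=\|h_{1}\|_{JF_{X}}=1$), one obtains $\|x_{k}\|_{L^{1}}\leq 2/\|e_{1}+e_{2}\|_{X}$ for every $k$, which is a uniform bound.

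With these hypotheses in hand, Proposition~\ref{p622} delivers an isomorphic copy of $X$ inside the closed linear span of $(x_{k})_{k}$, hence inside $\overline{\langle h_{n_{k}}:k\in\mathbb{N}\rangle}$. The only nontrivial ingredient is the norm computation above; the potential obstacle lies in ruling out finer partitions as candidates for the supremum, but the $1$-symmetry hypothesis on $X$ reduces this to the elementary estimate $\|e_{1}+\dots+e_{2m}\|_{X}\leq m\|e_{1}+e_{2}\|_{X}$, after which everything is routine.
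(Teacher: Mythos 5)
Your proposal is correct and follows exactly the route the paper intends: the paper states Corollary~\ref{c624} as ``a direct consequence of Proposition~\ref{p622}'', and you supply precisely the verifications (the norm identity $\Vert h_{2^{k}+i}\Vert_{JF_{X}}=2^{-(k+1)}\Vert e_{1}+e_{2}\Vert_{X}$ via Proposition~\ref{p21}, the resulting uniform bound $\Vert x_{k}\Vert_{L^{1}}=2/\Vert e_{1}+e_{2}\Vert_{X}$, and $\delta$-separation via the biorthogonal functionals of the Haar basis from Corollary~\ref{c24}) that the paper leaves implicit. No gaps.
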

In the last part of this section we prove the equivalence between
the Point Continuity Property $(PCP)$, and the non embedding of
$c_{0}$ in $V_{X}^{0}$. We recall that a bounded subset $W$ of a
Banach space $X$ has $PCP$ if for every  $w-$closed subset $A$ of
$W$ and $\e>0$ there exists a relatively weakly open subset $U$ of
$A$ with $diam(U)<\e$. For a comprehensive study of $PCP$ and its
relation with other properties we refer to \cite{b}. We start with
the following.
\begin{lemma}\label{l625} Let $W$ be a bounded subset of
$V_{X}^{0}$. Then for every relatively weakly open $U\subset W$
and every $\e>0$ there exists $g\in U$ such that for every weakly
open neighborhood $V$ of $g$ and every $g^{\prime}\in U\cap V$ we
have that $\Vert g-g^{\prime}\Vert_{\infty}\leq\e$.
\end{lemma}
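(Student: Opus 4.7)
The plan is to apply the Baire category theorem to the $w^{*}$-compact closure $K := \overline{U}^{w^{*}}$ of $U$ in $V_{X} = (V_{X}^{0})^{**}$, exploiting that the weak topology on $V_{X}^{0}$ is the restriction of the $w^{*}$-topology on $V_{X}$. Fix $\eta = \e/4$ and $\delta = \e/2$, and for each $n \in \N$ put
$$
\tilde D_{n} = \{f \in V_{X} : \alpha_{X}(f,P) \leq \eta \text{ for every partition } P \text{ of } [0,1] \text{ with } \delta(P) \leq 1/n\}.
$$
For each fixed $P$, the functional $f \mapsto \alpha_{X}(f,P)$ is $w^{*}$-continuous, because every point evaluation $\delta_{t}$ lies in $(V_{X}^{0})^{*}$ (indeed, $|f(t)| = |f(t)-f(0)| \leq \Vert f\Vert_{V_{X}}$ by $1$-unconditionality applied to a two-interval partition through $t$). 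Consequently each $\tilde D_{n}$ is $w^{*}$-closed.

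Since $W$ is bounded in $V_{X}^{0}$, $K$ is $w^{*}$-compact by Alaoglu's theorem, hence a Baire space. Lemma~\ref{l54} yields $V_{X}^{0} \subseteq \bigcup_{n} \tilde D_{n}$ for this $\eta$, so $U \subseteq \bigcup_{n} \tilde D_{n}$. Since $U$ is dense in $K$, the union $\bigcup_{n} (\tilde D_{n} \cap K)$ is dense in $K$; being a countable union of $w^{*}$-closed subsets, the Baire theorem supplies some $n_{0}$ and a $w^{*}$-open set $V \subseteq V_{X}$ with $\emptyset \neq V \cap K \subseteq \tilde D_{n_{0}}$.

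Pick any $g \in V \cap U$ (non-empty by density) and set $t_{i} = i/n_{0}$ for $0 \leq i \leq n_{0}$. Refine $V$ to the $w^{*}$-open set
$$
V' = V \cap \{h \in V_{X} : |h(t_{i}) - g(t_{i})| < \delta, \ 0 \leq i \leq n_{0}\},
$$
which contains $g$ and satisfies $V' \cap U \subseteq \tilde D_{n_{0}} \cap V_{X}^{0}$. For $g' \in V' \cap U$ and $t \in [t_{i}, t_{i+1}]$, complete $\{t_{i},t\}$ to a partition $P$ of $[0,1]$ with $\delta(P) \leq 1/n_{0}$ in which $t_{i},t$ are consecutive; $1$-unconditionality of $(e_{j})$ gives $|g(t)-g(t_{i})| \leq \alpha_{X}(g,P) \leq \eta$ and similarly for $g'$. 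Combined with $|g(t_{i})-g'(t_{i})| < \delta$, this yields $|g(t)-g'(t)| \leq 2\eta + \delta = \e$. Thus $V' \cap W$ is a weak neighborhood of $g$ in $W$ such that every $g' \in U \cap (V' \cap W)$ satisfies $\Vert g-g'\Vert_{\infty} \leq \e$.

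The main obstacle is the Baire step: the weak topology on $W$ itself need not enjoy the Baire property, which is why one must pass to the $w^{*}$-closure in the bidual, where Alaoglu provides compactness. The fact that $K$ can contain elements of $V_{X} \setminus V_{X}^{0}$ is harmless, because density of $U \subseteq V_{X}^{0}$ in $K$ forces the $w^{*}$-interior supplied by Baire to meet $U$, so the argument concludes inside $V_{X}^{0}$.
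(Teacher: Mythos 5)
Your overall strategy (Baire category applied to the $w^{*}$-compact set $K=\overline{U}^{w^{*}}$ in $V_{X}=(V_{X}^{0})^{**}$) is genuinely different from the paper's argument, which proceeds by contradiction through a direct combinatorial construction: assuming every $g\in U$ admits weakly convergent nets from $U$ staying $\e$-far in the sup norm, one builds $2n_{0}^{2}$ functions together with a tree of intervals and extracts $n_{0}$ pairwise disjoint intervals witnessing $\Vert g_{2n_{0}^{2}}\Vert_{V_{X}}\geq 2C$, contradicting the boundedness of $W$. Several of your individual steps are fine: each $\tilde D_{n}$ is indeed $w^{*}$-closed (the map $f\mapsto\alpha_{X}(f,P)$ factors through finitely many point evaluations, which lie in $(V_{X}^{0})^{*}$), the inclusion $V_{X}^{0}\subseteq\bigcup_{n}\tilde D_{n}$ holds by the very definition of $V_{X}^{0}$, and the closing estimate $2\eta+\delta=\e$ is correct.

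The Baire step, however, is a genuine gap. From the fact that $\bigcup_{n}(\tilde D_{n}\cap K)$ is a countable union of $w^{*}$-closed sets that is merely \emph{dense} in the compact space $K$ you cannot conclude that some $\tilde D_{n_{0}}\cap K$ has nonempty relative interior: the rationals in $[0,1]$ are a dense countable union of closed singletons, none of which has interior. Baire's theorem would require $\bigcup_{n}\tilde D_{n}$ to \emph{cover} $K$, or at least some nonempty relatively open subset of it, and that is exactly what can fail: $K$ may contain elements of $V_{X}\setminus V_{X}^{0}$, which belong to no $\tilde D_{n}$, and nothing prevents these from being $w^{*}$-dense in parts of $K$ (this is precisely the situation arising in Proposition~\ref{p626}, where $\overline{W}^{w^{*}}$ meets $C[0,1]\cap(V_{X}\setminus V_{X}^{0})$). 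Convexity and monotonicity of the sets $\tilde D_{n}$ do not rescue the inference either: the balls $(1-\tfrac{1}{n})B_{\ell_{2}}$ are weakly closed, convex, increasing, nowhere dense in $(B_{\ell_{2}},w)$, and their union is weakly dense. So the ``main obstacle'' you flag at the end is not actually overcome by passing to the bidual; one needs a quantitative argument of the kind the paper gives, exploiting the specific geometry of $V_{X}$ (systems of disjoint intervals and the divergence of $\Vert\sum_{i=1}^{n}e_{i}\Vert_{X}$) rather than category.
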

\begin{proof} Assume on the contrary. There exists a bounded set
$W$ and a relatively weakly open subset $U$ of $W$ such that for
every $g\in U$ there exists a net $(g_{i})_{i\in I}\subset U$ with
$g_{i}\stackrel{w}{\longrightarrow}g$ and $\Vert
g-g_{i}\Vert_{\infty}\geq\e.$ Since this property remains
invariant under translations of $W$ we assume that $0\in U$ and
that for every $g\in W$, $\Vert g\Vert_{V_{X}}\leq C$.
\newline\noindent Choose $n_{0}\in\N$ such that
$$
\Vert\sum_{i=1}^{n_{0}}e_{i}\Vert_{X}\geq\frac{4C}{\e}\,\,.
$$
\begin{claim} There exists $g\in U$ with
$\Vert g\Vert_{V_{X}}>2C$.
\end{claim}
Clearly a proof of this claim derives a contradiction and
completes the proof of the lemma.

\noindent {\it Proof of the Claim.} To prove the claim, we proceed
by induction on $k=0,\ldots, 2n_{0}^{2}$ choosing
\begin{enumerate}
\item A partition $\mathcal{P}_{k}$ of $[0,1]$,
\item $g_{k}\in U$,
\item $[\alpha_{k},\beta_{k}]$, $0\leq\alpha_{k}<\beta_{k}\leq 1$,
\,\,\,\,\,such that the following are fulfilled.
\begin{enumerate}
\item $g_{0}=0$, $\mathcal{P}_{0}=\{0,1\}$, $\alpha_{0}=0$,
$\beta_{0}=1$.
\item
$\mathcal{P}_{0}\subset\mathcal{P}_{1}\subset\ldots
\subset\mathcal{P}_{2n_{0}^{2}}$.
\item $\{\alpha_{k},\beta_{k}\}\subset\mathcal{P}_{k+1}$.
\item For $k=1,\ldots,2n_{0}^{2}$,\,\,$\vert (g_{k}-g_{k-1})(t)\vert
>\e$
\,\,\,\,for all $t\in [\alpha_{k+1},\beta_{k+1}]$.
\item $\vert (g_{k}-g_{k+1})(s)\vert<\frac{\e}{8^{k+1}}$\,\,\,\,
for $s\in\mathcal{P}_{k+1}$.
\item $diam(\sum_{j=0}^{k-1}\vert g_{j}-g_{j+1}\vert([s_{i}^{k+1},
s_{i+1}^{k+1}])<\frac{\e}{8}$, where
$\mathcal{P}_{k+1}=\{0=s_{0}^{k+1}<\ldots<s_{d_{k+1}}^{k+1}=1\}$\,.
\end{enumerate}
\end{enumerate}
Assume that for some $0\leq k<2n_{0}^{2}$, $\mathcal{P}_{j}$,
$g_{j}$, $[\alpha_{j},\beta_{j}]$ have been defined for $0\leq
j\leq k$. Then we first choose $\mathcal{P}_{k+1}$ such that
$\mathcal{P}_{k}\cup\{\alpha_{k},\beta_{k}\}\subset
\mathcal{P}_{k+1}$ and condition $(f)$ is satisfied. The latter is
possible because of the continuity of the function
$\sum_{j=1}^{k-1}\vert g_{j}-g_{j+1}\vert$. Next we choose a net
$(g_{i})_{i\in I}\subset U$ such that
$g_{i}-g_{k}\stackrel{w}{\longrightarrow}0$, $\Vert
g_{i}-g_{k}\Vert_{\infty}>\e$, and clearly there exists
$g_{i_{0}}\equiv g_{k+1}$ satisfying condition $(e)$. Finally
choose $\alpha_{k+1}<\beta_{k+1}$ such that $\vert
g_{k}-g_{k+1}(t)\vert>\e$ for all $t\in
[\alpha_{k+1},\beta_{k+1}]$. Let's observe the following.

\smallskip
$(i)$ For all $k=1,\ldots, 2n_{0}^{2}$ there exists
$s_{i}^{k}\in\mathcal{P}_{k}$ such that
$s_{i}^{k}<\alpha_{k}<\beta_{k}<s_{i+1}^{k}$. This follows from
conditions $(d)$ and $(e)$. Set $c_{k}=s_{i}^{k}$ and
$d_{k}=s_{i+1}^{k}$.

$(ii)$ Let $\mathcal{D}=\{(c_{k},\alpha_{k}), (\beta_{k},d_{k}):
k=1,\ldots, 2n_{0}^{2}\}$. Then any $I_{1}, I_{2}$ in
$\mathcal{D}$ satisfy
\begin{center}
either $I_{1}\subset I_{2}$ or $I_{2}\subset I_{1}$ or $I_{1}\cap
I_{2}=\emptyset$\,\,.
\end{center}

$(iii)$ For all $k=1,\ldots,2n_{0}^{2}$,
\begin{align*}
\vert g_{2n_{0}^{2}}&(c_{k})-g_{2n_{0}^{2}}(\alpha_{k})\vert=\\
&\vert\sum_{j=1}^{2n_{0}^{2}}(g_{j}-g_{j-1})(c_{k})-
\sum_{j=1}^{2n_{0}^{2}}(g_{j}-g_{j-1})(\alpha_{k})\vert\geq\\
&\vert (g_{k}-g_{k-1})(c_{k})- (g_{k}-g_{k-1})(\alpha_{k})\vert-
\vert\sum_{j\not= k}(g_{j}-g_{j-1})(c_{k})- \sum_{j\not=
k}(g_{j}-g_{j-1})(\alpha_{k})\vert\geq\\
&\frac{7\e}{8}-\frac{2\e}{8}>\frac{\e}{2}\,\,,
\end{align*}
\noindent and using similar reasoning
$$
\vert
g_{2n_{0}^{2}}(\beta_{k})-g_{2n_{0}^{2}}(d_{k})\vert>\frac{\e}{2}\,\,.
$$
Next we assert that there exists a family
$\mathcal{D}^{\prime}\subset\mathcal{D}$ with
$\#\mathcal{D}^{\prime}\geq n_{0}$ consisting of pairwise disjoint
intervals.

If such a family $\mathcal{D}^{\prime}$ exists then the choice of
$n_{0}$ and relation $(iii)$ yield that $\Vert
g_{2n_{0}^{2}}\Vert_{V_{X}}\geq 2C$, which proves the claim and
completes the proof of the lemma.

Hence consider $\mathcal{D}$ and assume that any
$\mathcal{D}^{\prime}\subset\mathcal{D}$ consisting or pairwise
disjoint intervals satisfies $\#\mathcal{D}^{\prime}<n_{0}.$
Clearly $\mathcal{D}$ with the order of inclusion (i.e $I_{1}\prec
I_{2}$ iff $I_{1}\supset I_{2}$) defines a finite tree and from
our assumption each level has less than $n_{0}$ elements, while
$\#\mathcal{D}>2n_{0}^{2}$. Hence there exists a branch
\{$I_{1}\supsetneqq I_{2}\supsetneqq\ldots\supsetneqq
I_{2n_{0}}\}$ of $\mathcal{D}$ and we may assume that there exists
$\{I_{k_{1}}\supsetneqq I_{k_{2}}\supsetneqq\ldots\supsetneqq
I_{k_{n_{0}}}\}$ such that $I_{k_{i}}=(c_{k_{i}},\alpha_{k_{i}})$
for all $i\leq n_{0}$, or $I_{k_{i}}=(\beta_{k_{i}},d_{k_{i}})$
for all $i\leq n_{0}$. Then it is easy to see that if the first
alternative holds the family $\{(\beta_{k_{i}},d_{k_{i}}): i\leq
n_{0}\}$ consists of pairwise disjoint intervals. A similar
conclusion holds  if the second alternative occurs. This proves
our assertion and the proof of the lemma is complete.
\end{proof}
This lemma yields the following.
\begin{proposition}\label{p626}
Let $W$ be a bounded subset of $V_{X}^{0}$ and
$\delta>0$ such that for every relatively weakly open $U\subset W$
we have that $diam(U)>\delta.$ Then
$$
\overline{W}^{*}\cap C[0,1]\cap(V_{X}\setminus
V_{X}^{0})\not=\emptyset\,\,.
$$
\end{proposition}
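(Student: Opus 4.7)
The plan is to construct $f$ as the uniform limit of an iteratively selected sequence $(h_{k})\subset W$. Continuity of $f$ will be automatic, membership of $f$ in $\overline{W}^{w^{*}}$ will follow from standard passages between pointwise and $w^{*}$-convergence, and the failure $f\notin V_{X}^{0}$ will be witnessed by exhibiting partitions of vanishing mesh on which the oscillation $\alpha_{X}(f,\cdot)$ remains bounded below.

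First, I build by iterating Lemma~\ref{l625} a decreasing sequence $U_{0}\supset U_{1}\supset\cdots$ of relatively weakly open subsets of $W$ whose $\|\cdot\|_{\infty}$-diameters $\varepsilon_{n}$ decrease to $0$ at a rate to be chosen later. Given $U_{n}$, Lemma~\ref{l625} applied with $\e=\varepsilon_{n+1}$ yields a point $g_{n}\in U_{n}$ and a relatively weakly open neighborhood $U_{n+1}\subset U_{n}$ of $g_{n}$ with $\|g_{n}-g'\|_{\infty}\le\varepsilon_{n+1}$ for every $g'\in U_{n+1}$. The hypothesis forces $\mathrm{diam}_{V_{X}}(U_{n+1})>\delta$, so I pick $a_{n},b_{n}\in U_{n+1}$ with $\|a_{n}-b_{n}\|_{V_{X}}>\delta$ and interleave them as $h_{2n}=a_{n}$, $h_{2n+1}=b_{n}$. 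The inclusion $h_{k},h_{l}\in U_{n+1}$ for $k,l\ge 2n$ gives $\|h_{k}-h_{l}\|_{\infty}\le 2\varepsilon_{n+1}$, so $(h_{k})$ converges uniformly to some $f\in C[0,1]$. Separability of $V_{X}^{0}$ makes the ball of $V_{X}=(V_{X}^{0})^{**}$ $w^{*}$-sequentially compact; since $\mathcal{K}\subset(V_{X}^{0})^{*}$ consists of differences of point evaluations, $w^{*}$-convergence in $V_{X}$ implies pointwise convergence, so any $w^{*}$-cluster point of $(h_{k})$ agrees with the pointwise limit $f$, placing $f\in\overline{W}^{w^{*}}$. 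By $w^{*}$-lower semicontinuity of $\|\cdot\|_{V_{X}}$, also $f\in V_{X}$.

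The delicate remaining step is verifying $f\notin V_{X}^{0}$. For each $n$, fix a partition $\mathcal{P}_{n}$ with $\alpha_{X}(h_{2n}-h_{2n+1},\mathcal{P}_{n})>\delta$. Since $\|h_{2n}-h_{2n+1}\|_{\infty}\le 2\varepsilon_{n+1}$, each increment on $\mathcal{P}_{n}$ is bounded by $4\varepsilon_{n+1}$; together with $1$-unconditionality of $(e_{i})$ and the divergence of $\|\sum_{i=1}^{k}e_{i}\|_{X}$ (reflexivity of $X$), this forces $|\mathcal{P}_{n}|\to\infty$. I then split $\mathcal{P}_{n}$ into its pieces of length exceeding $\eta_{n}$ (at most $1/\eta_{n}$ such, whose contribution to $\alpha_{X}(h_{2n}-h_{2n+1},\mathcal{P}_{n})$ is bounded by $1$-unconditionality by $4\varepsilon_{n+1}\|\sum_{i=1}^{\lfloor 1/\eta_{n}\rfloor}e_{i}\|_{X}$, arranged to be $\le\delta/4$ by a suitable choice of $\eta_{n}$) and pieces of length $\le\eta_{n}$; uniformly refining the long gaps to mesh $\eta_{n}$ produces a partition $\mathcal{P}_{n}^{*}$ with $\delta(\mathcal{P}_{n}^{*})\le\eta_{n}\to 0$ and $\alpha_{X}(h_{2n}-h_{2n+1},\mathcal{P}_{n}^{*})>\delta/2$. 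Thus at least one of $\alpha_{X}(h_{2n},\mathcal{P}_{n}^{*}),\alpha_{X}(h_{2n+1},\mathcal{P}_{n}^{*})$ exceeds $\delta/4$, and the triangle inequality
\[
\alpha_{X}(f,\mathcal{P}_{n}^{*})\ge\alpha_{X}(h_{2n},\mathcal{P}_{n}^{*})-\alpha_{X}(h_{2n}-f,\mathcal{P}_{n}^{*})
\]
transfers this lower bound to $f$, provided the approximation error $2\varepsilon_{n+1}\|\sum_{i=1}^{|\mathcal{P}_{n}^{*}|}e_{i}\|_{X}$ stays below $\delta/8$ through the choice of $\varepsilon_{n}$.

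The main obstacle is the simultaneous balancing of the shrinkage rate $\varepsilon_{n}$, the mesh $\eta_{n}$, and the cardinality $|\mathcal{P}_{n}^{*}|$: every decrease in $\eta_{n}$ forces $|\mathcal{P}_{n}^{*}|$ to grow, which in turn demands faster decay of $\varepsilon_{n}$ relative to the growth of $\|\sum_{i=1}^{k}e_{i}\|_{X}$. The flexibility in choosing non-minimal witness partitions $\mathcal{P}_{n}$, together with the divergence of $\|\sum_{i=1}^{k}e_{i}\|_{X}$ guaranteed by reflexivity, affords the slack needed to push the balance through along a subsequence, yielding $\delta(\mathcal{P}_{n_{k}}^{*})\to 0$ with $\alpha_{X}(f,\mathcal{P}_{n_{k}}^{*})$ bounded below, and hence $f\notin V_{X}^{0}$.
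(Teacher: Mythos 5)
Your construction is the paper's: iterate Lemma~\ref{l625} to get nested relatively weakly open slices of small $\Vert\cdot\Vert_{\infty}$-diameter, extract a $\delta$-separated pair from each, and pass to the uniform limit $f$, which lies in $\overline{W}^{w^{*}}\cap C[0,1]\cap V_{X}$ exactly as you argue. The genuine gap is the final step, $f\notin V_{X}^{0}$, and the route you sketch cannot be pushed through. Quantitatively: since every increment of $h_{2n}-h_{2n+1}$ is at most $2\Vert h_{2n}-h_{2n+1}\Vert_{\infty}\leq 4\e_{n+1}$, the inequality $\alpha_{X}(h_{2n}-h_{2n+1},\mathcal{P}_{n})>\delta$ forces $\Vert\sum_{i=1}^{\vert\mathcal{P}_{n}\vert}e_{i}\Vert_{X}\geq\delta/(4\e_{n+1})$, so the bound you propose for the approximation error, namely $2\Vert h_{2n}-f\Vert_{\infty}\,\Vert\sum_{i=1}^{\vert\mathcal{P}_{n}^{*}\vert}e_{i}\Vert_{X}\leq 4\e_{n+1}\Vert\sum_{i=1}^{\vert\mathcal{P}_{n}^{*}\vert}e_{i}\Vert_{X}$, is always at least $\delta$ --- it can never be made smaller than the quantity it is supposed to preserve, whatever the rates $\e_{n},\eta_{n}$. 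Moreover the obstruction is not an artifact of a crude estimate: the properties you actually establish for $(h_{k})$ (uniform convergence plus $\Vert h_{2n}-h_{2n+1}\Vert_{V_{X}}>\delta$ with all terms in a bounded set) simply do not imply that the limit leaves $V_{X}^{0}$. Take $X=\ell_{2}$ and $h_{k}(t)=\int_{0}^{t}\tilde{r}_{k}d\mu$, the primitives of the normalized Rademacher functions: then $\Vert h_{k}\Vert_{V_{2}}=1$, $\Vert h_{k}-h_{l}\Vert_{V_{2}}\geq 1-2^{-1/2}$ for $k\neq l$ (test against the dyadic partition of mesh $2^{-\max(k,l)}$), yet $\Vert h_{k}\Vert_{\infty}=2^{-k/2}\to 0$, so the sequence converges uniformly to $0\in V_{X}^{0}$.

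Consequently any correct proof must use the hypothesis --- that \emph{every} relatively weakly open subset of $W$ has $V_{X}$-diameter exceeding $\delta$ --- after the pairs $a_{n},b_{n}$ have been chosen, which your argument never does; this is precisely what the paper's terse appeal to ``standard perturbation arguments'' conceals. One has to re-select the points $g_{n}$ (using the fact that every further slice still has large diameter) so that the separation $\Vert g_{n+1}-g_{n}\Vert_{V_{X}}>\delta/2$ is witnessed in a form that survives the limit, e.g.\ by increments supported on intervals kept disjoint from those used at later stages, so that a single partition of small mesh collects contributions from infinitely many stages. Extracting one separated pair per slice and hoping the separation passes to the uniform limit is exactly what the Rademacher example defeats.
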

\begin{proof} Applying inductively Lemma~\ref{l625} we obtain
$(g_{n})_{n}\subset W$ such that
\begin{enumerate}
\item $\Vert g_{n}-g_{n+1}\Vert_{V_{X}^{0}}>\frac{\delta}{2}$.
\item $\Vert g_{n}-g_{n+1}\Vert_{\infty}<\frac{1}{2^{n}}$.
\end{enumerate}
Hence $(g_{n})_{n}$ converges uniformly to a continuous function
$g$. Also $(g_{n})_{n}$ converges $weak^{*}$ to the same function
in the space $V_{X}$. Standard perturbation arguments yield that
$(g_{n})_{n}$ could be chosen such that the limit function $g\in
V_{X}\setminus V_{X}^{0}$. This completes the proof.
\end{proof}
As a consequence we obtain the following.
\begin{theorem}\label{t626}
Let $Y$ be a subspace of $V_{X}^{0}$. The
following are equivalent.
\begin{enumerate}
\item $c_{0}$ does not embed into $Y$.
\item The space
$Y$ has the Point of Continuity Property $(PCP)$
\end{enumerate}
In particular if $c_{0}$ does not embed into $V_{X}^{0}$, then
$V_{X}^{0}$ has $PCP$.
\end{theorem}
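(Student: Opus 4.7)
The plan is to prove both implications by contraposition, exploiting the preceding two propositions of this section.

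For $(1)\Rightarrow(2)$, assume $Y$ fails $(PCP)$; then there exist a bounded subset $W\subset Y$ and $\delta>0$ such that every non-empty relatively weakly open piece of $W$ has diameter strictly greater than $\delta$. Proposition~\ref{p626} delivers a function $f\in\overline{W}^{w^*}\cap C[0,1]\cap(V_X\setminus V_X^0)$. Since $W\subset Y$ we have $f\in\overline{Y}^{w^*}$, and since $Y\subset V_X^0$ while $f\notin V_X^0$, we conclude $f\notin Y$. In particular the $w^*$-closure of $Y$ in $V_X=(V_X^0)^{**}$ strictly contains $Y$, so $Y$ is non-reflexive. Proposition~\ref{p611} then yields an isomorphic copy of $c_0$ inside $Y$, contradicting $(1)$.

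For $(2)\Rightarrow(1)$, I would argue the contrapositive. Suppose $T\colon c_0\to Y$ is an isomorphism onto a closed subspace $Z=T(c_0)$ of $Y$. Consider the set
$$
W_0=\Bigl\{\sum_{i\in F}e_i : F\subset\mathbb{N}\text{ finite}\Bigr\}\subset c_0.
$$
Then $W_0$ is bounded (of norm one) and weakly closed in $c_0$: any weak limit in $c_0$ of a net in $W_0$ has coordinates in $\{0,1\}$ and lies in $c_0$, hence has finite support, and so belongs to $W_0$. Moreover, any $\ell_1$-basic weak neighborhood $\{y:|\langle\xi_j,y-x\rangle|<\varepsilon,\ j\leq k\}$ of a point $x=\sum_{i\in F}e_i\in W_0$ contains $x+e_n$ for all $n\notin F$ sufficiently large, since $(\xi_j)_n\to 0$; this forces every non-empty relatively weakly open subset of $W_0$ to have diameter at least $1$. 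Because $Z$ is norm-closed (hence weakly closed) in $Y$ and $T$ is a weak-to-weak homeomorphism of $c_0$ onto $Z$ endowed with the topology inherited from $Y$, the image $W:=T(W_0)$ is bounded and weakly closed in $Y$, and every non-empty relatively weakly open subset of $W$ has diameter at least $\|T^{-1}\|^{-1}$. Thus $Y$ fails $(PCP)$.

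Neither direction presents a substantive technical obstacle: the content of the forward implication is already packaged into Propositions~\ref{p626} and~\ref{p611}, while the reverse implication amounts to the classical failure of $(PCP)$ in $c_0$ transported through an isomorphism. The concluding ``in particular'' assertion is the instance $Y=V_X^0$ of the equivalence.
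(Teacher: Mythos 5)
Your proof is correct and follows essentially the route the paper intends: the forward implication is exactly the combination of Proposition~\ref{p626} (producing $f\in\overline{Y}^{w^{*}}\cap C[0,1]\cap(V_{X}\setminus V_{X}^{0})$, whence $Y$ is non-reflexive) with Proposition~\ref{p611}, and the reverse implication is the classical failure of $(PCP)$ in $c_{0}$ transported through the embedding, which is the standard fact the paper leaves implicit when it states the theorem ``as a consequence'' of the preceding propositions.
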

\section{Remarks and Problems.}
We present some problems related to the spaces $JF_{X}(\Omega)$
and certain remarks related to these problems.
\begin{problem} Let $\Omega$ be an open bounded subset of
$\mathbb{R}^{d_{0}}$, $d_{0}>1$. For $Q(\Omega)$ a family of
convex bodies contained in $\Omega$ we consider the space
$JF(Q(\Omega))$ endowed with the following norm
$$
\Vert
f\Vert_{JF(Q(\Omega))}=\sup\Bigl\{\left(\sum_{i=1}^{n}(\int_{V_{i}}
f)^{2}\right)^{1/2}: \{V_{i}\}_{i=1}^{n}\subset (Q(\Omega)),
V_{i}\cap V_{j}=\emptyset\Bigl\}\,\,.
$$
For what families $Q(\Omega)$ the following hold:

$(i)$ The space $JF(Q(\Omega))$ does not contains $\ell_{1}$.

$(ii)$ The space $JF^{*}(Q(\Omega))$ is non-separable.
\end{problem}
\begin{remark} Our proof for the family  of the
paralellepipeds $\mathcal{P}(\Omega)$ does not yield similar
results for other families. In particular the above problem is
open if $Q(\Omega)$ is either the Euclidean balls or the
$\ell_{\infty}^{d_{0}}$ balls contained in $\Omega$.
\end{remark}
\begin{problem} Suppose that $\Omega$, $\Omega^{\prime}$ are open
and bounded subsets of $\mathbb{R}^{d_{0}}$. Is it true that
$JF_{X}(\Omega)$ is isomorphic to $JF_{X}(\Omega^{\prime})$.
\end{problem}
\begin{remark} As we have mentioned at the beginning of the
first section $JF_{X}((0,1)^{d_{0}})$ is isomorphic to a
complemented subspace of $JF_{X}(\Omega)$ for any open
$\Omega\subset\mathbb{R}^{d_{0}}$.
\end{remark}
\begin{problem} Is it possible for $1<d_{1}<d_{0}$ the space
$JF_{X}((0,1)^{d_{0}})$ be isomorphic to a subspace of
$JF_{X}((0,1)^{d_{1}})$.
\end{problem}
Corollary~\ref{c37} yields that this is not possible if $d_{1}=1$.
But the argument used for this result is not extended in higher
dimensions.
\begin{problem}
Does $c_{0}$ embed into $JF_{X}$ for any $X$ reflexive with
1-symmetric basis.
\end{problem}
This problem is related to our results presented  in the last two
sections. There are two ways to approach a positive answer to this
problem. The first is the following
\begin{question}
Does there exist a property similar to $CCP$ valid in any
reflexive space $X$ with 1-symmetric basis which implies the
existence of a sequence in $JF_{X}$ equivalent to $c_{0}$ basis.
\end{question}
\noindent The second concerns the following which summarize some
of our results from section 6.
\begin{ttheorem} The following are
equivalent.
\begin{enumerate}
\item $c_{0}$ embeds into $JF_{X}$.
\item $C[0,1]\cap (V_{X}\setminus V_{X}^{0})\not=\emptyset$.
\item $V_{X}^{0}$ fails $PCP$.
\item The identity $I:\; V_{X}^{0}\to C[0,1]$ is not
semi-embedding.
\end{enumerate}
\end{ttheorem}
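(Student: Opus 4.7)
Corollary \ref{c64} already gives $(1)\Leftrightarrow(2)$, and Theorem \ref{t626} applied with $Y=V_X^0$ gives $(1)\Leftrightarrow(3)$. It therefore suffices to establish $(2)\Leftrightarrow(4)$.

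For $(4)\Rightarrow(2)$: the identity $I$ fails to be a semi-embedding precisely when $I(B_{V_X^0})$ is not norm-closed in $C[0,1]$, so one may pick $(f_n)\subset B_{V_X^0}$ with $\|f_n-f\|_\infty\to 0$ and $f\notin I(B_{V_X^0})$. Uniform convergence places $f$ in $C[0,1]$. For each fixed partition $\mathcal{P}$ the map $g\mapsto \alpha_X(g,\mathcal{P})$ depends continuously on the values of $g$ at the finitely many nodes, so the $V_X$-norm is lower semicontinuous under pointwise (a fortiori uniform) convergence; hence $\|f\|_{V_X}\leq \liminf \|f_n\|_{V_X}\leq 1$. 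Were $f\in V_X^0$, we would have $f\in B_{V_X^0}$, contradicting $f\notin I(B_{V_X^0})$; so $f\in C[0,1]\cap(V_X\setminus V_X^0)$.

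For $(2)\Rightarrow(4)$: given $f\in C[0,1]\cap(V_X\setminus V_X^0)$, I produce a sequence in $B_{V_X^0}$ converging uniformly to a nonzero scalar multiple of $f$ outside $V_X^0$. Let $f_n$ be the continuous piecewise linear interpolant of $f$ at the nodes $P_n=\{k/n:0\leq k\leq n\}$; then $f_n\to f$ uniformly from continuity of $f$ on $[0,1]$. Two points remain. First, $f_n\in V_X^0$: given a partition $\mathcal{P}$ of small mesh $\delta(\mathcal{P})$, split its sub-intervals into those containing a node of $P_n$ (at most $n-1$ of them, each with increment of size $\leq \mathrm{Lip}(f_n)\cdot\delta(\mathcal{P})$, whose total $X$-norm contribution vanishes with $\delta(\mathcal{P})$) and those lying inside a single linearity cell $[k/n,(k+1)/n]$; on the latter the increments are $nc_k(t_{j+1}-t_j)$ with $c_k=f((k+1)/n)-f(k/n)$, $\sum_j(t_{j+1}-t_j)\leq 1/n$ and $\max_j(t_{j+1}-t_j)\leq \delta(\mathcal{P})$, so Lemma \ref{pe1} (available because $X$ is $\ell_1$-free) forces this contribution to zero as well. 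Second, $\|f_n\|_{V_X}\leq \|f\|_{V_X}$: for any partition $\mathcal{Q}=\{0=q_0<\cdots<q_k=1\}$, within each cell $[j/n,(j+1)/n]$ the values $(f_n(q_i))_{q_i\in [j/n,(j+1)/n]}$ are monotone and lie between $f(j/n)$ and $f((j+1)/n)$, so inductive use of the intermediate value theorem produces $\tilde q_i$ in that cell, monotone in $i$, with $f(\tilde q_i)=f_n(q_i)$; stitching across cells yields a non-decreasing sequence $0=\tilde q_0\leq \cdots\leq \tilde q_k=1$ with $f(\tilde q_i)=f_n(q_i)$, and the $1$-symmetry of $(e_n)$ permits collapsing the duplicate $\tilde q_i$'s to identify $\alpha_X(f_n,\mathcal{Q})$ with $\alpha_X(f,\tilde{\mathcal{Q}})\leq \|f\|_{V_X}$. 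Setting $g_n:=f_n/\|f\|_{V_X}\in B_{V_X^0}$ we then have $g_n\to f/\|f\|_{V_X}\notin V_X^0$ uniformly, so $I(B_{V_X^0})$ is not closed.

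The main obstacle is the variation bound $\|f_n\|_{V_X}\leq \|f\|_{V_X}$: the intermediate value theorem trick works only because $f$ is continuous, and the $1$-symmetry of the basis is exactly what allows the duplicate interpolation points to be discarded without enlarging the norm; weakening either hypothesis would break the estimate.
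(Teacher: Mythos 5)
Your proposal is correct. Note first that the paper never writes out a proof of this theorem: it is stated in the final section as a summary, with $(1)\Leftrightarrow(2)$ coming from Corollary~\ref{c64} and $(1)\Leftrightarrow(3)$ from Theorem~\ref{t626}, exactly as you use them, while the equivalence with $(4)$ is left entirely to the reader. The implicit route the paper suggests for $(4)$ runs through Proposition~\ref{p626}: the failure of $PCP$ produces, via the slice argument of Lemma~\ref{l625}, a sequence in the ball that is uniformly Cauchy but $\Vert\cdot\Vert_{V_{X}}$-separated, whose uniform limit is a continuous function outside $V_{X}^{0}$ --- this gives $(3)\Rightarrow(4)$ and, combined with the lower semicontinuity of $\Vert\cdot\Vert_{V_{X}}$ under pointwise convergence (your $(4)\Rightarrow(2)$ step, which the paper also leaves tacit), closes the cycle. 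Your $(2)\Rightarrow(4)$ is a genuinely different and more self-contained link: the piecewise-linear interpolants $f_{n}$ are automatically in $V_{X}^{0}$ (being Lipschitz, by Lemma~\ref{pe1}), and the intermediate-value reparametrization gives the clean quantitative bound $\Vert f_{n}\Vert_{V_{X}}\leq\Vert f\Vert_{V_{X}}$, so that $f/\Vert f\Vert_{V_{X}}$ is exhibited directly as a uniform limit of ball elements lying outside the ball. What this buys is independence from the $PCP$ machinery: your argument shows $(2)\Leftrightarrow(4)$ using only the continuity of $f$ and the $1$-symmetry of the basis, whereas the paper's route needs the full strength of Section~6. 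The only points worth polishing are cosmetic: in the $f_{n}\in V_{X}^{0}$ step the $\ell_{1}$-mass of the increments coming from the linearity cells is bounded by $\sum_{k}\vert f((k+1)/n)-f(k/n)\vert$, which is finite only because $n$ is fixed, so Lemma~\ref{pe1} is applied after rescaling by this constant (or one simply observes that every Lipschitz function lies in $V_{X}^{0}$); and one should record that $f(0)=0$ passes to the uniform limit in $(4)\Rightarrow(2)$ so that the limit genuinely lies in $V_{X}$.
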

Hence a second approach is to show that some of the above
equivalents holds for any $JF_{X}$ space.

The last problem concerns the structure of $JF$.
\begin{problem}
Does every subspace of $JF$ contains either $c_{0}$ or $\ell_{p}$,
$2\leq p<\infty$.
\end{problem}
As we have mentioned in the introduction the space $JF$ contains
$\ell_p$ for $2\le p<\infty$ (\cite{Bu}). From the results of
section 6, follows that this problem is reduced to the case of
subspaces $Y$ of $V_{X}^{0}$ which are reflexive and the identity
$I:\;Y\to C[0,1]$ is a compact operator.

\end{document}